\newcommand*{\@old@slash}{}\let\@old@slash\slash
\def\slash{\relax\ifmmode\delimiter"502F30E\mathopen{}\else\@old@slash\fi}
\titleformat{\section}{\normalsize\bfseries}{\thesection}{1em}{}
\titleformat{\subsection}{\normalsize\bfseries}{\thesubsection}{1em}{}
\numberwithin{equation}{subsection}
\theoremstyle{plain}
\newtheorem{prop}[subsection]{Proposition}
\newtheorem{lem}[subsection]{Lemma}
\newtheorem{cor}[subsection]{Corollary}
\newtheorem{thm}[subsection]{Theorem}
\theoremstyle{definition}
\newtheorem{defn}[subsection]{Definition}
\newtheorem{exa}[subsection]{Example}
\newtheorem{rem}[subsection]{Remark}
\newtheorem{para}[subsection]{}
\newcommand*{\emptybox}{\leavevmode\hbox{}}
\DeclareMathAlphabet{\mathpzc}{OT1}{pzc}{m}{it}
\DeclareMathAlphabet{\mathcalligra}{T1}{calligra}{m}{n}
\newcommand{\sfV}{\mathsf{V}}
\newcommand{\cP}{\ensuremath{\mathcal{P}}}
\newcommand{\A}{\ensuremath{\mathscr{A}}}
\newcommand{\B}{\ensuremath{\mathscr{B}}}
\newcommand{\C}{\ensuremath{\mathscr{C}}}
\newcommand{\D}{\ensuremath{\mathscr{D}}}
\newcommand{\E}{\ensuremath{\mathscr{E}}}
\newcommand{\F}{\ensuremath{\mathscr{F}}}
\newcommand{\G}{\ensuremath{\mathscr{G}}}
\newcommand{\normalJ}{\ensuremath{\mathscr{J}}}
\newcommand{\J}{\ensuremath{{\kern -0.4ex \mathscr{J}}}}
\newcommand{\K}{\ensuremath{\mathscr{K}}}
\newcommand{\R}{\ensuremath{\mathscr{R}}}
\newcommand{\sS}{\ensuremath{\mathscr{S}}}
\newcommand{\T}{\ensuremath{\mathscr{T}}}
\newcommand{\V}{\ensuremath{\mathscr{V}}}
\newcommand{\W}{\ensuremath{\mathscr{W}}}
\newcommand{\X}{\ensuremath{\mathscr{X}}}
\newcommand{\shortunderline}[1]{\underline{#1\mkern-5mu}\mkern5mu}
\newcommand{\CAT}{\ensuremath{\operatorname{\textnormal{\text{CAT}}}}}
\newcommand{\VCAT}{\ensuremath{\V\textnormal{-\text{CAT}}}}
\newcommand{\ALGCAT}{\ensuremath{\operatorname{\textnormal{\text{ALGCAT}}}}}
\newcommand{\ALGCATJ}{\ensuremath{\ALGCAT_{\kern -0.6ex \normalJ}}}
\newcommand{\ADA}{\ensuremath{\operatorname{\textnormal{\text{ADA}}}}}
\newcommand{\ADAJ}{\ensuremath{\ADA_{\kern -0.6ex \normalJ}}}
\newcommand{\MCAT}{\ensuremath{\operatorname{\textnormal{\text{MCAT}}}}}
\newcommand{\TWOCAT}{\ensuremath{\operatorname{\textnormal{\text{2CAT}}}}}
\newcommand{\NN}{\ensuremath{\mathbb{N}}}
\newcommand{\TT}{\ensuremath{\mathbb{T}}}
\newcommand{\ob}{\ensuremath{\operatorname{\textnormal{\textsf{ob}}}}}
\newcommand{\Th}{\ensuremath{\textnormal{Th}}}
\newcommand{\ThJ}{\ensuremath{\Th_{\kern -0.5ex \normalJ}}}
\newcommand{\otimesJ}[2]{\ensuremath{#1 \kern-0.15ex \otimes_{\kern -1ex \normalJ} \kern-0.5ex #2}}
\newcommand{\totimes}{\mathbin{\tilde{\otimes}}}
\newcommand{\totimesJ}[2]{\ensuremath{#1 \kern-0.15ex \totimes_{\kern -1ex \normalJ} \kern-0.5ex #2}}
\newcommand{\Vfp}{\ensuremath{\V_{\kern -0.5ex fp}}}
\newcommand{\TTperpJ}{\ensuremath{\TT^\perp_{\kern-.1ex\scriptscriptstyle\J}}}
\newcommand{\TTperpJwrt}[1]{\ensuremath{\TT^\perp_{\kern-.1ex\scriptscriptstyle\J#1}}}
\newcommand{\Ev}{\ensuremath{\textnormal{\textsf{Ev}}}}
\newcommand{\y}{\ensuremath{\textnormal{\textsf{y}}}}
\newcommand{\VCATJ}{\VCAT_{\kern -0.7ex \normalJ}}
\newcommand{\PhiJ}{\Phi_{\kern -0.8ex \normalJ}}
\newcommand{\MndJ}{\Mnd_{\kern -0.8ex \normalJ}}
\newcommand{\Set}{\ensuremath{\operatorname{\textnormal{\text{Set}}}}}
\newcommand{\SET}{\ensuremath{\operatorname{\textnormal{\text{SET}}}}}
\newcommand{\Top}{\ensuremath{\operatorname{\textnormal{\text{Top}}}}}
\newcommand{\Mnd}{\ensuremath{\operatorname{\textnormal{\text{Mnd}}}}}
\newcommand{\Alg}[1]{\ensuremath{#1\kern -.5ex\operatorname{\textnormal{-Alg}}}}
\newcommand{\CAlg}[1]{\ensuremath{#1\kern -.5ex\operatorname{\textnormal{-\text{CAlg}}}}}
\newcommand{\CAlgPair}[2]{\ensuremath{(#1,#2)\kern -.5ex\operatorname{\textnormal{-CPair}}}}
\newcommand{\AlgPair}[2]{\ensuremath{(#1,#2)\kern -.5ex\operatorname{\textnormal{-Pair}}}}
\newcommand{\CinftyRing}{\ensuremath{C^\infty\kern -.5ex\operatorname{\textnormal{-\text{Ring}}}}}
\newcommand{\CRProf}{\ensuremath{\operatorname{\textnormal{\text{CRProf}}}}}
\newcommand{\CRProfJ}{\ensuremath{\CRProf_{\kern -0.6ex \normalJ}}}
\newcommand{\Algs}{\ensuremath{\operatorname{\textnormal{\text{Alg}}}}}
\newcommand{\AlgsJ}{\Algs_{\kern -0.6ex \normalJ}}
\newcommand{\BifoldAlg}{\ensuremath{\operatorname{\textnormal{\text{BAlg}}}}}
\newcommand{\BifoldAlgJ}{\BifoldAlg_{\kern -0.6ex \normalJ}}
\newcommand{\Adjn}[6]{\xymatrix {#1 \ar@/_0.5pc/[rr]_{#2}^(0.4){#4}^(0.6){#5}^{\top} & & #6 \ar@/_0.5pc/[ll]_{#3}}}
\newcommand{\Equiv}[6]{\xymatrix {#1 \ar@/_0.5pc/[rr]_{#2}^(0.4){#4}^(0.6){#5}^{\sim} & & #6 \ar@/_0.5pc/[ll]_{#3}}}
\newcommand{\EquivAlt}[6]{\xymatrix {#1 \ar@{<-}@/_0.5pc/[rr]_{#3}^(0.4){#4}^(0.6){#5}^{\sim} & & #6 \ar@{<-}@/_0.5pc/[ll]_{#2}}}
\newcommand{\AlgDual}[4]{\xymatrix {#1:#2 \ar@/_0.2pc/[r] & #3:#4 \ar@/_0.2pc/[l] }}
\def\modto{\mathop{\rlap{\hspace{1.35ex}\raisebox{0.1ex}{$\shortmid$}}{\longrightarrow}}\nolimits}
\newcommand{\op}{\ensuremath{\mathsf{op}}}
\newcommand{\co}{\ensuremath{\mathsf{co}}}
\newcommand{\coop}{\ensuremath{\mathsf{coop}}}
\newcommand{\pushoutcorner}{\ar@{}[dr]|(.3)\ulcorner}
\newcommand{\pullbackcorner}{\ar@{}[dr]|(.3)\lrcorner}
\newcommand{\cmt}[1]{}
\newcommand{\strong}{\ensuremath{\mathrm{strong}}}
\newcommand{\rev}{\ensuremath{\mathsf{rev}}}
\newcommand{\lrsub}[3]{\ensuremath{\tensor[_{#1}]{\textnormal{#2}}{_{#3}}}}
\newcommand{\LRCAT}[2]{\lrsub{#1}{\textnormal{CAT}}{#2}}
\newcommand{\LCAT}[1]{\LRCAT{#1}{}}
\newcommand{\RCAT}[1]{\LRCAT{}{#1}}
\newcommand{\GCAT}[2]{\lrsub{#1}{\textnormal{GCAT}}{#2}}
\newcommand{\LGCAT}[1]{\GCAT{#1}{}}
\newcommand{\RGCAT}[1]{\GCAT{}{#1}}
\newcommand{\ACT}[2]{\lrsub{#1}{\textnormal{ACT}}{#2}}
\newcommand{\LACT}[1]{\ACT{#1}{}}
\newcommand{\LRMOD}[2]{\lrsub{#1}{\textnormal{MOD}}{#2}}
\newcommand{\LMOD}[1]{\LRMOD{#1}{}}
\newcommand{\RMOD}[1]{\LRMOD{}{#1}}
\newcommand{\LRGMOD}[2]{\lrsub{#1}{\textnormal{GMOD}}{#2}}
\newcommand{\LGMOD}[1]{\LRGMOD{#1}{}}
\newcommand{\RGMOD}[1]{\LRGMOD{}{#1}}
\newcommand{\oplax}{\ensuremath{\mathrm{oplax}}}
\newcommand{\gc}{,}
\newcommand{\gct}{\kern -0.6ex\cdot\kern-0.4ex}
\newcommand{\gcl}{\kern -0.1ex\cdot\kern-0.2ex}
\newcommand{\dpr}{\star}
\newcommand{\bigdot}{{\mathbin{\textnormal{\raisebox{-0.9ex}[0ex][0ex]{\scalebox{2.5}{\kern -0.1ex$\cdot$\kern -0.1ex}}}}}}
\newcommand{\smsub}[2]{#1_{\scriptscriptstyle#2}}
\newcommand{\otsub}[1]{\smsub{\otimes}{#1}}
\newcommand{\dV}{\mathrm{V}}
\newcommand{\tinydV}{\scriptscriptstyle\mathrm{V}}
\newcommand{\CleftV}{\tensor[_\V]{\kern -0.5ex\C}{}}
\newcommand{\gid}{\mathsf{i}}
\begin{document}

\author{\normalsize  Rory B. B. Lucyshyn-Wright\thanks{We acknowledge the support of the Natural Sciences and Engineering Research Council of Canada (NSERC), [funding reference numbers RGPIN-2019-05274, RGPAS-2019-00087, DGECR-2019-00273].  Cette recherche a \'et\'e financ\'ee par le Conseil de recherches en sciences naturelles et en g\'enie du Canada (CRSNG), [num\'eros de r\'ef\'erence RGPIN-2019-05274, RGPAS-2019-00087, DGECR-2019-00273].}\let\thefootnote\relax\footnote{Keywords: Graded category; enriched category; actegory; action; monoidal category; functor category; bifunctor; duoidal category; module; profunctor; presheaf category; Yoneda lemma.}\footnote{2020 Mathematics Subject Classification: 18A23, 18A25, 18A50, 18C40, 18D20, 18D25, 18M05, 18M50.}
\\
\small Brandon University, Brandon, Manitoba, Canada}

\title{\large \textbf{$\V$-graded categories and $\V$-$\W$-bigraded categories:\\Functor categories and bifunctors over non-symmetric bases}}

\date{}

\maketitle

\abstract{
In the well-known settings of category theory enriched in a monoidal category $\V$, the use of $\V$-enriched functor categories and bifunctors demands that $\V$ be equipped with a symmetry, braiding, or duoidal structure. In this paper, we establish a theory of functor categories and bifunctors that is applicable relative to an arbitrary monoidal category $\V$ and applies both to $\V$-enriched categories and also to $\V$-actegories. We accomplish this by working in the setting of \textit{($\V$-)graded categories}, which generalize both $\V$-enriched categories and $\V$-actegories and were introduced by Wood under the name \textit{large $\V$-categories}. We develop a general framework for graded functor categories and graded bifunctors taking values in \textit{bigraded categories}, noting that $\V$ itself is canonically bigraded. We show that $\V$-graded modules (or profunctors) are examples of graded bifunctors and that $\V$-graded presheaf categories are examples of $\V$-graded functor categories. In the special case where $\V$ is normal duoidal, we compare the above graded concepts with the enriched bifunctors and functor categories of Garner and L\'opez Franco. Along the way, we study several foundational aspects of graded categories, including a contravariant change of base process for graded categories and a formalism of commutative diagrams in graded categories that arises by freely embedding each $\V$-graded category into a $\V$-actegory.
}

\newpage

\tableofcontents

\section{Introduction} \label{sec:intro}

Categories graded by a monoidal category $\V$, or \textit{($\V$-)graded categories}, were introduced by Wood \cite{Wood:thesis,Wood:Vindexed} under the name \textit{large $\V$-categories} and have been discussed also in \cite{KLSS:CatsEnrTwoSides,Mell:Param,Campbell:SkewEnriched,Levy:LocGraded,McDUu,McDUu:StrMnd,Gav:Thesis} and, in special cases, in \cite{Levy:AdjunctionModels,EgMoSi,GKOS}. Here we employ the term \textit{$\V$-graded category} as an abbreviation of the term \textit{locally $\V$-graded category} introduced by Levy \cite{Levy:LocGraded}, while the term \textit{procategory} is also used \cite{KLSS:CatsEnrTwoSides}. Categories graded by $\V$ provide a simultaneous generalization of $\V$-enriched categories and \textit{$\V$-actegories}, in the terminology of McCrudden \cite{McCr:CatsRepsCoa}, which are categories equipped with an action of $\V$ in the sense of B\'enabou \cite[(2.3)]{Ben:Bicats} and appear in noncommutative algebra and representation theory under the name \textit{module categories} \cite{EGNO:TensorCategories}. Indeed, the 2-categories of $\V$-categories and of $\V$-actegories embed into the 2-category of $\V$-graded categories, whose 1-cells we call \textit{$\V$-graded functors}.  The 1-cells between $\V$-actegories that we consider here are functors laxly preserving the action, which are also called \textit{strong functors} and play an important role in computer science, as discussed in \cite{McDUu:StrMnd} and the references therein. In particular, we may conveniently regard an arbitrary monoidal category $\V$ itself as a $\V$-actegory and so as a $\V$-graded category, even when $\V$ is \textit{not} assumed closed, and thus we recover the notion of strong endofunctor $F:\V \to \V$ that originates with Kock \cite{Kock:Comm}. But $\V$-graded categories offer added flexibility over $\V$-actegories, since $\V$-enriched categories and full subcategories of $\V$-actegories may be regarded as $\V$-graded categories.

Categories graded by a monoidal category $\V$ may be defined succinctly as categories enriched in the functor category $\hat{\V} := [\V^\op,\SET]$ equipped with its Day convolution monoidal structure \cite{Day:ClCatsFunc}, where $\SET$ is the category of (possibly large) sets. But the notion of $\V$-graded category also admits a direct, elementwise definition that makes it an elementary and intuitive categorical concept in its own right. Explicitly, a $\V$-graded category $\C$ consists of a set $\ob\C$ and sets $\C(A,B)X$ $(A,B \in \ob\C, X \in \ob\V)$ whose elements we call \textit{graded morphisms from $A$ to $B$ with grade $X$}, plus some further data as we recall in \S \ref{sec:graded_cats}. In this paper, we write $\C(X \gc A;B)$ as a notation for $\C(A,B)X$, and we write $f:X \gc A \to B$ to mean that $f \in \C(X \gc A;B)$. For example, every $\V$-actegory $\C$ underlies a $\V$-graded category in which a graded morphism $f:X \gc A \to B$ is a morphism $f:X.A \to B$ in $\C$, where $X.A$ is the object obtained via the action. 

The fact that an arbitrary monoidal category $\V$ may be regarded as a $\V$-graded category is one of several ways in which $\V$-graded categories are convenient when working with monoidal categories $\V$ on which few assumptions (if any) are to be made. After all, $\V$-graded categories are categories enriched in the above monoidal category $\hat{\V} = [\V^\op,\SET]$, which is biclosed and has $\SET$-small limits and colimits, without any assumptions on $\V$. Hence the full force of known results in the theory of categories enriched in a biclosed, complete, and cocomplete monoidal category is applicable to $\V$-graded categories by way of the base of enrichment $\hat{\V}$, and this allows the methods of categories enriched in a locally cocomplete and (bi)closed bicategory to be applied; see \cite{Str:EnrCatsCoh,BCSW:VarEnr,GP} for example. In particular, this entails that $\V$-graded categories admit a calculus of $\hat{\V}$-enriched modules and weighted colimits \cite{Str:EnrCatsCoh,BCSW:VarEnr}, and even that certain $\V$-graded categories of $\hat{\V}$-enriched presheaves can be formed, following Street \cite[\S 3]{Str:EnrCatsCoh}. When $\V$ is assumed \textit{symmetric} monoidal, the biclosed monoidal category $\hat{\V}$ is also symmetric, so that the full-blown methods of enrichment in a complete and cocomplete symmetric monoidal closed category \cite{Ke:Ba} are applicable to $\V$-graded categories; thus if $\V$ is symmetric, or more generally \textit{braided} \cite{JoyStr}, then one may form monoidal products $\A \otimes \B$ of $\V$-graded categories $\A$ and $\B$ and so consider $\V$-graded bifunctors $\A \otimes \B \to \C$, and one may form $\V$-graded functor categories $[\A,\B]$ as was discussed by Wood \cite{Wood:thesis}.

But for an arbitrary monoidal category $\V$, these familiar constructions of $\V$-graded functor categories and monoidal products of $\V$-graded categories are \textit{not} available, and $\hat{\V}$-modules are a basic notion of their own and are \textit{not} defined as $\V$-graded bifunctors. Indeed, the well-known definitions of enriched functor categories \cite{DayKe:EnrFuncCats} and bifunctors \cite[III.4]{EiKe} depend on assuming that the base of enrichment is equipped with a symmetry or braiding, or more generally a \textit{normal duoidal structure} \cite[\S 2, \S 3.2--3.4]{GaLf}.

In this paper, we define a framework of $\V$-graded functor categories and bifunctors for \textit{arbitrary} monoidal categories $\V$, \textit{not} assumed symmetric, and \textit{not} assumed duoidal. To do this, we consider also the monoidal category $\V^\rev$ that is called the \textit{reverse} of $\V$ and has the same underlying ordinary category as $\V$ but is equipped with the monoidal product $\otsub{\V^\rev}$ given by $X \otsub{\V^\rev} Y = Y \otimes X$. We simultaneously consider both $\V$-graded categories and $\V^\rev$-graded categories, calling the former \textit{left} $\V$-graded categories and the latter \textit{right} $\V$-graded categories, as they generalize left $\V$-actegories and right $\V$-actegories, respectively. In fact, we work in a more general setting by fixing two monoidal categories $\V$ and $\W$ and considering left $\V$-graded categories and right $\W$-graded categories. Crucially, we also define \textit{$\V$-$\W$-bigraded categories} as left $(\V \times \W^\rev)$-graded categories, so that each $\V$-$\W$-bigraded category has an underlying left $\V$-graded category and an underlying right $\W$-graded category. For example, $\V$ itself underlies a $\V$-$\V$-bigraded category, as does $\hat{\V}$.  In the special case where $\V$ is braided, which we do not assume, every $\V$-graded category is $\V$-$\V$-bigraded.

Given a left $\V$-graded category $\A$ and a $\V$-$\W$-bigraded category $\C$, we show that (left) $\V$-graded functors $F:\A \to \C$ are the objects of a right $\W$-graded category that we denote by $[\A,\C]$ or $\tensor[^\V]{[\A,\C]}{_\W}$. Similarly, we show that if $\B$ is a right $\W$-graded category and $\C$ is a $\V$-$\W$-bigraded category, then right $\W$-graded functors $F:\B \to \C$ are the objects of a left $\V$-graded category $[\B,\C] = \tensor[_\V]{[\B,\C]}{^\W}$. For example, if $\A$ is a left $\V$-graded category then $[\A,\V]$ is a right $\V$-graded category, while if $\B$ is a right $\V$-graded category then $[\B,\V]$ is a left $\V$-graded category.

Given a left $\V$-graded category $\A$, a right $\W$-graded category $\B$, and a $\V$-$\W$-bigraded category $\C$ we define a \textit{graded bifunctor} $F:\A,\B \to \C$ to consist of left $\V$-graded functors $F(-,B):\A \to \C$ $(B \in \ob\B)$ and right $\W$-graded functors $F(A,-):\B \to \C$ $(A \in \ob\A)$ that agree on objects and satisfy a certain compatibility condition. Graded bifunctors $F:\A,\B \to \C$ are the objects of a category $\tensor[_\V]{\textnormal{\textsf{GBif}}}{_\W}(\A,\B;\C)$. Writing $\LGCAT{\V}$, $\RGCAT{\W}$, and $\GCAT{\V}{\W}$ for the 2-categories of left $\V$-graded categories, right $\W$-graded categories, and $\V$-$\W$-bigraded categories, respectively, we show that there are isomorphisms
$$\LGCAT{\V}(\A,\tensor[_\V]{[\B,\C]}{^\W}) \cong \tensor[_\V]{\textnormal{\textsf{GBif}}}{_\W}(\A,\B;\C) \cong \RGCAT{\W}(\B,\tensor[^\V]{[\A,\C]}{_\W})$$
2-natural in $\A \in \LGCAT{\V}$, $\B \in \RGCAT{\W}$, $\C \in \GCAT{\V}{\W}$, so that a graded bifunctor $F:\A,\B \to \C$ is equivalently given by a left $\V$-graded functor $\A \to \tensor[_\V]{[\B,\C]}{^\W}$ or a right $\W$-graded functor $\B \to \tensor[^\V]{[\A,\C]}{_\W}$.

Given a left $\V$-graded category $\A$ and a right $\W$-graded category $\B$, we define a $\V$-$\W$-bigraded category $\A \boxtimes \B$ that we call the \textit{bigraded product} of $\A$ and $\B$, whose objects are pairs $(A,B)$ with $A \in \ob\A$ and $B \in \ob\B$. We show that
$$\tensor[_\V]{\textnormal{GCAT}}{_\W}(\A \boxtimes \B,\C) \cong \tensor[_\V]{\textnormal{\textsf{GBif}}}{_\W}(\A,\B;\C)$$
2-naturally in $\A \in \LGCAT{\V}$, $\B \in \RGCAT{\W}$, $\C \in \GCAT{\V}{\W}$, so that a $\V$-$\W$-bigraded functor $F:\A \boxtimes \B \to \C$ is equivalently given by a graded bifunctor $F:\A,\B \to \C$. Thus we show that
$$\LGCAT{\V}(\A,\tensor[_\V]{[\B,\C]}{^\W}) \cong \tensor[_\V]{\textnormal{GCAT}}{_\W}(\A \boxtimes \B,\C) \cong \RGCAT{\W}(\B,\tensor[^\V]{[\A,\C]}{_\W})$$
2-naturally in $\A \in \LGCAT{\V}$, $\B \in \RGCAT{\W}$, $\C \in \GCAT{\V}{\W}$. 

In the special case where $\V$ is \textit{symmetric} monoidal and we take $\W = \V$, there is no essential distinction between left and right $\V$-graded categories, while every $\V$-graded category is canonically $\V$-$\V$-bigraded, and we recover the usual $\hat{\V}$-enriched concepts for the symmetric monoidal category $\hat{\V} = [\V^\op,\textnormal{SET}]$, though $\A \boxtimes \B$ does not coincide with the monoidal product of $\hat{\V}$-categories $\A \otimes \B$. More generally, if $\V$ is a \textit{duoidal category} \cite{AgMa} that is \textit{normal} in the sense of \cite{GaLf}, as is the case for any braided monoidal category, then $\hat{\V}$ carries the structure of a normal duoidal category by \cite{BooSt}, so in this case one can also apply the work of Garner and L\'opez Franco \cite{GaLf} to define $\hat{\V}$-enriched bifunctors and functor categories, and we show that these are recovered as instances of the above concepts. Furthermore, for a normal duoidal category $\V$ we show that the notions of $\V$-enriched bifunctor and functor category of Garner and L\'opez Franco \cite[\S 3.2--3.4]{GaLf} can be regarded as examples of the above graded bifunctors and functor categories.

For an arbitrary monoidal category $\V$, we refer to $\hat{\V}$-enriched modules (or $\hat{\V}$-enriched profunctors) as \textit{$\V$-graded modules}. We show that $\V$-graded modules are examples of graded bifunctors. In detail, every right $\V$-graded category $\B$ determines a left $\V$-graded category $\B^\circ$ that we call the \textit{formal opposite} of $\B$, and we show that if $\A$ and $\B$ are right $\V$-graded categories then a $\V$-graded module $M:\A \modto \B$ is equivalently given by a graded bifunctor $M:\B^\circ,\A \to \hat{\V}$ or a $\V$-$\V$-bigraded functor $M:\B^\circ \boxtimes \A \to \hat{\V}$, where we regard $\hat{\V}$ as a $\V$-$\V$-bigraded category.

Given a right $\V$-graded category $\B$, we show that Street's $\hat{\V}$-enriched presheaf category $\cP\B$ \cite[\S 3]{Str:EnrCatsCoh} is recovered up to isomorphism as an example of a graded functor category, namely the right $\V$-graded category $[\B^\circ,\hat{\V}] = \tensor[^\V]{[\B^\circ,\hat{\V}]}{_\V}$ whose objects are left $\V$-graded functors $F:\B^\circ \to \hat{\V}$ with the above notations. Indeed, we deduce this as a consequence of the above results, which yield 2-natural correspondences between right $\V$-graded functors $\A \to [\B^\circ,\hat{\V}]$, graded bifunctors $\B^\circ,\A \to \hat{\V}$, and $\V$-graded modules $\A \modto \B$. Under these correspondences, the identity module on $\B$ corresponds to a fully faithful $\V$-graded functor $\y:\B \to [\B^\circ,\hat{\V}]$ that we identify with Street's Yoneda embedding \cite[\S 3]{Str:EnrCatsCoh}, and we thus obtain a \textit{$\V$-graded Yoneda lemma} to the effect that $[\B^\circ,\hat{\V}](\y B,F) \cong FB$, $\V$-graded naturally in $B \in \B$ and $F \in [\B^\circ,\hat{\V}]$.

Our results are enabled by a novel and convenient formalism of commutative diagrams in $\V$-graded categories that we establish by freely embedding every $\V$-graded category $\C$ into a $\V$-actegory that we call the \textit{enveloping actegory}, which we construct as a free cocompletion of $\C$ with respect to copowers (or tensors) by objects of $\V$, using free cocompletion for a class of weights \cite{Ke:Ba,PCW:Cocompl} in enriched category theory.

We now comment on the relation between this work and two other general frameworks. Firstly, categories and procategories \textit{enriched on two sides} \cite{KLSS:CatsEnrTwoSides} (from one bicategory to another) generalize $\V$-enriched and $\V$-graded categories, respectively; they are defined in \cite[\S 2.2, 6.2]{KLSS:CatsEnrTwoSides} by directly describing their data, which are somewhat more complex than those of a $\V$-enriched or $\V$-graded category. However, in the special case where $\V$ and $\W$ are monoidal categories, procategories from $\V$ to $\W$ are equivalently $(\W \times \V^\op)$-graded categories, in view of \cite[Proposition 6.11]{KLSS:CatsEnrTwoSides}, so that $\V$-$\W$-bigraded categories are equivalently procategories from $\W^{\rev\,\op}$ to $\V$. Furthermore, the bigraded product can be regarded as a special case of an operation on two-sided procategories discussed in \cite[\S 6.11]{KLSS:CatsEnrTwoSides}. But functor categories for categories and procategories enriched on two sides have not been developed, so it would be interesting to investigate whether the constructions in this paper might generalize to that setting; we leave this for future work. Secondly, Wood \cite{Wood:thesis,Wood:Vindexed} defined not only the notion of $\V$-graded category but also a more general (and more complex) concept under the name of \textit{$\V$-indexed category}; these are categories enriched in a cartesian closed category of \textit{$\V$-indexed sets}, which is defined as a fibre product over $\CAT$ of the finite limit theory of monoids and a certain category fibred over $\CAT$ whose fibres are presheaf categories. Because the resulting base of enrichment is cartesian closed, $\V$-indexed functor categories can be formed, but Wood writes in \cite{Wood:Vindexed} that ``It is fairly difficult to explicitly calculate functor categories in'' the 2-category of $\V$-indexed categories. This contrasts with the explicit constructions of $\V$-graded functor categories in this paper. Regardless, it would be interesting to investigate the relation between functor categories in these two settings; we leave this for future work also.

We now provide an overview of the organization of the paper and the supporting methods of $\V$-graded categories that we develop along the way.  In \S \ref{sec:lr_vcats_vacts}, we review some background material on $\V$-enriched categories, $\V$-actegories, and Day convolution. In \S \ref{sec:graded_cats} we provide an introduction to (left and right) $\V$-graded categories, with examples including simplicially, topologically, and cubically graded categories. In \S \ref{sec:ch_base_grcats} we study a contravariant change of base process for graded categories along opmonoidal functors. In \S \ref{sec:bigraded}, we introduce $\V$-$\W$-bigraded categories. In \S \ref{sec:emb} we freely embed each $\V$-graded category into a $\V$-actegory, and we discuss the resulting formalism of commutative diagrams in $\V$-graded categories. In \S\ref{sec:bigr_sq} we develop supporting lemmas on graded commutative squares in bigraded categories. In \S \ref{sec:gr_func_cat} we define graded functor categories valued in bigraded categories. In \S \ref{sec:bifunctors} we define graded bifunctors and bigraded products, and we show that the latter represent the former. In \S \ref{sec:reln_gr_func_cat_bifunc} we establish results relating graded functor categories and graded bifunctors. In \S \ref{sec:enr_mod_psh_cocompl}, we recall background material on enriched modules/profunctors in the non-symmetric setting as well as Street's presheaf category and Yoneda lemma. In \S \ref{sec:vgr_mod_psh_yon} we discuss $\V$-graded modules, $\V$-graded presheaf categories, and the $\V$-graded Yoneda lemma. In \S \ref{sec:duoidal}, we treat the special case where $\V$ is normal duoidal and study the relation of the above concepts to the enriched functor categories and bifunctors of Garner and L\'opez Franco \cite{GaLf}. In an appendix (\S \ref{sec:weighted_colims}), we recall background material on free cocompletion for a class of weights in the non-symmetric setting, which we use only in \ref{para:copower_cocompl} (and, in turn, in the proof of \ref{thm:copower_cocompl_vgr}).

\medskip

\noindent\textit{Acknowledgments.} We thank the anonymous referee for comments leading to enhancements of the paper. We also thank Nathanael Arkor for suggestions and discussions on $\V$-graded categories. We thank Am\'elie Comtois and Richard Blute for various discussions on $\V$-graded categories.

\begin{para}[\textbf{Size conventions and notation}]\label{para:set_small}
One sufficient set-theoretic setting for this paper can be formulated in terms of (Grothendieck) universes. In terms of notation, we identify each universe with its associated category of sets.  We fix a universe $\SET$, and we call a set (resp.~a category) \textit{$\SET$-small} if it is an object of $\SET$ (resp.~a category internal to $\SET$). Correspondingly, we speak of \textit{$\SET$-small (co)limits}. In many applications, one will also have at hand a $\SET$-small universe $\Set$ of \textit{small sets}, but we employ $\Set$ only in discussing certain examples. We fix also a universe $\SET'$ for which $\SET$ is $\SET'$-small, and we refer to the objects of $\SET'$ as \textit{huge sets}. We use the unqualified term \textit{category} as a synonym for $\SET$-small category, unless otherwise specified, and we denote the 2-category of such categories by $\CAT$. We refer to categories internal to $\SET'$ as \textit{huge categories} and denote the 2-category of these by $\CAT'$. We call lax monoidal functors and op-lax monoidal functors simply \textit{monoidal functors} and \textit{opmonoidal functors} respectively. We use the unqualified term \textit{monoidal category} to mean $\SET$-small monoidal category. Monoidal (resp.~opmonoidal) functors between such monoidal categories are the 1-cells of a 2-category $\MCAT$ (resp.~$\MCAT_\oplax$) whose 2-cells are monoidal (resp. opmonoidal) transformations. We write $\MCAT_\strong$ for the locally full sub-2-category of $\MCAT$ spanned by the strong monoidal functors. We have analogously defined 2-categories $\MCAT',\MCAT'_\oplax,\MCAT'_\strong$ whose objects are huge monoidal categories. We write $\TWOCAT$ for the 2-category of 2-categories internal to $\SET'$, and henceforth we call the objects of $\TWOCAT$ simply \textit{2-categories}. 

We write $\cdot$ for composition in ordinary categories (in non-diagrammatic order). In a monoidal category $\V$, we write the monoidal product as $\otimes$, the unit object as $I$ or $I_\V$, and the associators and unitors as $a,\ell,r$.
\end{para}

\section{Background I: Enriched categories, actegories, and Day convolution}\label{sec:lr_vcats_vacts}

Let $\V$ be a monoidal category, allowing here that $\V$ may be huge (\ref{para:set_small}).

\begin{para}[\textbf{Left and right $\V$-categories}]\label{para:lr_vcats}
The classic papers of B\'enabou \cite{Ben:CatRel} and of Eilenberg and Kelly \cite{EiKe} employ the term \textit{$\V$-category} in two distinct but inter-definable senses, with the consequence that $\V$-categories in the sense of B\'enabou are precisely $\V^\rev$-categories in the sense of Eilenberg and Kelly, where $\V^\rev$ is the reverse of $\V$ (\S \ref{sec:intro}). In this paper, we frequently employ both notions, so we put them on equal footing by declaring that a \textbf{left $\V$-category} is a $\V$-category in the sense of Eilenberg and Kelly \cite{EiKe}, while a \textbf{right $\V$-category} is a $\V$-category in the sense of B\'enabou \cite{Ben:CatRel}, i.e.~a left $\V^\rev$-category. The distinction between these two notions is that a left $\V$-category $\C$ has composition morphisms of the form $m_{ABC}:\C(B,C) \otimes \C(A,B) \to \C(A,C)$ in $\V$ for each triple $A,B,C$ of objects of $\C$, while a right $\V$-category instead has composition morphisms of the form $m_{ABC}:\C(A,B) \otimes \C(B,C)  \to \C(A,C)$ in $\V$. The motivation for the choice of the ``left'' and ``right'' terminology will become apparent in \ref{para:modules_vact_vcat}, \ref{para:acts_as_vgr_cats}, \ref{para:right_vgr}, \ref{rem:rmod_in_vgr_cat}, \ref{defn:bigraded}--\ref{exa:bigr_str_hatv}, \ref{defn:env_act}, and \ref{para:cdiag_vgr_cats}. The distinction between the notions of left $\V$-category and right $\V$-category carries over straightforwardly to the more general setting of categories enriched in a bicategory $\X$, and one finds both conventions used in the literature: For example, Street \cite{Str:EnrCatsCoh} and Betti-Carboni-Street-Walters \cite{BCSW:VarEnr} employ right $\X$-categories, while Carboni-Kasangian-Walters \cite{CKW} and Gordon-Power \cite{GP,GP:GabrielUlmer} employ left $\X$-categories.

For convenience within passages involving only left $\V$-categories, we also adopt the term \textbf{$\V$-category} as a synonym for the notion of \textit{left} $\V$-category. We adopt corresponding terms pertaining to enriched functors: A \textbf{(left) $\V$-functor} is a $\V$-functor in the sense of Eilenberg and Kelly \cite{EiKe}, while a \textbf{right $\V$-functor} is a $\V$-functor in the sense of B\'enabou \cite{Ben:CatRel}, i.e.~a left $\V^\rev$-functor.  Similarly, we have the notions of \textbf{(left) $\V$-natural transformation} and \textbf{right $\V$-natural transformation}. Thus we have the 2-category $\LCAT{\V}$ of $\SET$-small left $\V$-categories and the 2-category $\RCAT{\V} = \LCAT{\V^\rev}$ of $\SET$-small right $\V$-categories; we write $\LCAT{\V}'$ (resp. $\textnormal{CAT}_\V'$) for the 2-category of huge left (resp. right) $\V$-categories. Henceforth we use the term \textit{(left) $\V$-category} to refer to $\SET$-small (left) $\V$-categories, and similarly for right $\V$-categories.

In the special case where $\V$ is symmetric monoidal, there is an isomorphism of 2-categories $\LCAT{\V} \cong \RCAT{\V}$ under which the notions of left and right $\V$-category may be identified. More generally, if $\V$ is braided monoidal, then the identity functor on $\V$ underlies a monoidal isomorphism $\V \cong \V^\rev$ \cite[Example 2.5]{JoyStr} with monoidal structure provided by the braiding; in general one can construct multiple such isomorphisms using the braiding, since the braiding is in general distinct from its inverse.

For an arbitrary monoidal category $\V$, one reason to consider both left and right $\V$-categories is the following observation, which appears in \cite[p.~32]{Wood:thesis}, \cite[\S 2.9]{KLSS:CatsEnrTwoSides}, \cite{GP:GabrielUlmer}: Every left $\V$-category $\C$ determines a right $\V$-category $\C^\circ$ that has the same objects as $\C$ but has hom-objects $\C^\circ(A,B) = \C(B,A)$ and composition morphisms $\C^\circ(A,B) \otimes \C^\circ(B,C) \to \C^\circ(A,C)$ defined as the composition morphisms $m_{CBA}:\C(B,A) \otimes \C(C,B) \to \C(C,A)$ for $\C$. We call the right $\V$-category $\C^\circ$ the \textbf{formal opposite} of the left $\V$-category $\C$. There is an isomorphism of 2-categories $(-)^\circ:\LCAT{\V}^\co \to \RCAT{\V}$. When $\V$ is symmetric monoidal, the opposite $\C^\op$ of $\C$ as defined in \cite[\S 1.4]{Ke:Ba} is the \textit{left} $\V$-category that corresponds to $\C^\circ$ under the isomorphism $\RCAT{\V} \cong \LCAT{\V}$. For example, for an arbitrary monoidal category $\V$, the distinction between one-object left $\V$-categories and one-object right $\V$-categories is purely formal, as both are just monoids in $\V$; given a monoid $R = (R,m,e)$ in $\V$, we adopt the convention of writing $R$ also to denote the left $\V$-category thus obtained; the formal opposite $R^\circ$ is then exactly the same monoid $(R,m,e)$, but regarded as a right $\V$-category; contrastingly, if $\V$ is symmetric then $R^\op$ is a genuinely different monoid $(R,m \cdot c_{RR},e)$, where $c_{RR}:R \otimes R \to R \otimes R$ is the symmetry.
\end{para}

\begin{para}[\textbf{Left and right $\V$-actegories}]\label{para:actegories}
A \textbf{(left) $\V$-actegory} is a category $\C$ equipped with a strong monoidal functor $\Phi:\V \to \CAT(\C,\C)$, written as a left action $X.A = (\Phi X)A$ $(X \in \V,A \in \C)$, while we say that the $\V$-actegory $\C$ is \textbf{strict} if $\Phi$ is strict monoidal; see \S \ref{sec:intro} regarding the origins of this concept and its name. A \textbf{right $\V$-actegory} is a category $\C$ equipped with a strong monoidal functor $\V^\rev \to \CAT(\C,\C)$, written as right action $A.X = (\Phi X)A$. Thus, right $\V$-actegories are equivalently left $\V^\rev$-actegories. Every monoidal category $\V$ carries the structure of a left $\V$-actegory and also a right $\V$-actegory, using the monoidal product as the action in each case.  

A left $\V$-actegory is equivalently given by a (strong) homomorphism of bicategories $\Sigma\V \to \CAT$, where $\Sigma\V$ is the suspension of $\V$ as a one-object bicategory.  A \textbf{lax morphism of $\V$-actegories} is then a lax transformation between such homomorphisms of bicategories and so, in concrete terms, is given by a functor $F:\C \to \D$ equipped with morphisms $\lambda_{XA}^F:X.FA \to F(X.A)$ natural in $X \in \V$, $A \in \C$, satisfying certain axioms \cite[(2.3)]{GP}.  We say that $F$ is \textbf{strong} if each $\lambda_{XA}^F$ is an isomorphism. By \cite[\S 2]{GP}, there is a 2-category $\LACT{\V}$ whose 1-cells are lax morphisms of $\V$-actegories, in which a 2-cell $\delta:F \Rightarrow G:\C \to \D$ is a natural transformation that commutes with the morphisms $\lambda_{XA}^F$, $\lambda_{XA}^G$ in the evident sense \cite[(2.4)]{GP}. We write $\LACT{\V}^\strong$ to denote the locally full sub-2-category of $\LACT{\V}$ spanned by the strong morphisms of left $\V$-actegories.

If a left $\V$-actegory $\C$ \textit{has hom-objects} in the sense that the (ordinary) functor $(-).A:\V \to \C$ has a right adjoint $\lrsub{\V}{$\shortunderline{\C}$}{}(A,-):\C \to \V$ for all $A \in \ob\C$, then the category $\C$ underlies a left $\V$-category $\lrsub{\V}{$\shortunderline{\C}$}{}$ with hom-objects $\lrsub{\V}{$\shortunderline{\C}$}{}(A,B)$ \cite[\S 2]{Jan:Actions}.  Similarly, every right $\V$-actegory $\C$ for which each $A.(-)$ has a right adjoint underlies a right $\V$-category $\shortunderline{\C}_\V$.

A monoidal category $\V$ is \textit{biclosed} \cite{Lam:DedSysII} if for each $X \in \ob\V$ the functors $X \otimes (-),(-) \otimes X:\V \to \V$ both have right adjoints, $(-)^X$ and $\tensor[^X]{(-)}{}$ respectively; thus $\V(Y,Z^X) \cong \V(X \otimes Y,Z) \cong \V(X,\tensor[^Y]{Z}{})$ naturally in $X,Y,Z \in \V$. We favour these notations $Z^X$ and $\tensor[^X]{Z}{}$ as they ensure that $Z^{X \otimes Y} \cong (Z^X)^Y$ and $\tensor[^{X \otimes Y}]{Z}{} \cong \tensor[^{\raisebox{-.5ex}{\ensuremath{\scriptstyle X}}}]{{(\tensor[^Y]{Z}{})}}{}$ naturally in $X,Y,Z \in \V$. If $\V$ is biclosed then, by the previous paragraph, $\V$ underlies a left $\V$-category $\lrsub{\V}{$\shortunderline{\V}$}{}$ with $\lrsub{\V}{$\shortunderline{\V}$}{}(X,Y) = \tensor[^X]{Y}{}$ and also a right $\V$-category $\shortunderline{\V}_\V$ with $\shortunderline{\V}_\V(X,Y) = Y^X$.
\end{para}

\begin{exa}\label{exa:actegories}\emptybox
\noindent (1). Write $\Delta$ for the \textit{augmented simplex category}, i.e.~the category of finite ordinals with monotone maps. We may regard $\Delta$ as a (non-symmetric) strict monoidal category with monoidal product $+$ given on objects by addition.  This monoidal category $\Delta$ has a well-known universal property that is discussed in \cite[VII.5]{MacL} and entails that a strict left $\Delta$-actegory is equivalently a category $\C$ equipped with a \textit{monad} $\TT = (T,\mu,\eta)$, noting that the $\Delta$-action is then given on objects by $n.A = T^n A$ $(n \in \NN = \ob\Delta)$.

\medskip

\noindent (2). Write $\Top$ for the category of (small) topological spaces, and let $\sS$ be any full subcategory of $\Top$ that has finite products that are preserved by the inclusion $\sS \hookrightarrow \Top$.  Then we may regard $\sS$ as a cartesian monoidal category and regard $\Top$ as a left $\sS$-actegory under the action given by $X.A = X \times A$ $(X \in \sS, A \in \Top)$.

\medskip

\noindent (3). Write $\Box$ for the \textit{restricted cubical site} \cite{GrMau:Cubical}. Explicitly, if we write $\Set_2$ for the full subcategory of $\Set$ spanned by the finite powers $2^n$ $(n \in \NN)$ of $2 = \{0,1\}$, then $\Set_2$ is a strict monoidal category, and $\Box$ is the smallest (non-full) subcategory of $\Set_2$ that is closed under the monoidal product and contains the unique map $\varepsilon:2 \to 1 = 2^0$ and the maps $0,1:1 \rightrightarrows 2$ that pick out the elements $0,1 \in 2$. With this notation, $\Box$ is a (non-symmetric) strict monoidal category, and by \cite[10.4]{GrMau:Cubical} a left $\Box$-actegory  is equivalently a category $\C$ equipped with a \textit{cylinder functor} $T:\C \to \C$ \cite[I.3.1]{Bau}, i.e.~an endofunctor $T$ equipped with natural transformations $\alpha:T \to 1_\C$ and $\xi_0,\xi_1:1_\C \rightrightarrows T$ such that $\alpha \cdot \xi_0 = 1 = \alpha \cdot \xi_1$. For example, $[0,1] \times (-)$ is a cylinder functor on $\Top$.
\end{exa}

\begin{para}\label{para:tensors}
Given an object $X$ of a monoidal category $\V$ and an object $A$ of a left $\V$-category $\C$, a \textbf{copower} (or \textbf{tensor}) of $A$ by $X$ is an object $X \cdot A$ equipped with a morphism $\upsilon:X \to \C(A,X \cdot A)$ in $\V$ such that for all $B \in \ob\C$ and $Y \in \ob\V$ the function $\V(Y,\C(X \cdot A,B)) \to \V(Y \otimes X,\C(A,B))$ given by $f \mapsto m_{A,X \cdot A,B} \cdot (f \otimes \upsilon)$ is a bijection \cite[Def.~3]{Pare:Mealy}. If $\V$ is biclosed, then the preceding condition on $\upsilon$ requires equivalently that $\tilde{\upsilon}_B:\C(X \cdot A,B) \to \tensor[^X]{\C(A,B)}{}$ be an isomorphism in $\V$ for each $B \in \ob\C$, where $\tilde{\upsilon}_B$ is the transpose of $m_{A,X\cdot A,B} \cdot (1 \otimes \upsilon):\C(X \cdot A,B) \otimes X \to \C(A,B)$, and thus we recover an instance of the notion of tensor in \cite[\S 3]{GP}, which generalizes \cite[(3.44)]{Ke:Ba}.
\end{para}

\begin{para}\label{para:modules_vact_vcat}
Given a monoid $R$ in $\V$, a \textbf{left $R$-module} (or \textbf{left $R$-act}) in a left $\V$-actegory $\C$ is an object $A$ of $\C$ equipped with an associative and unital left $R$-action $R.A \to A$ in $\C$.  On the other hand, a \textbf{left $R$-module} in a left $\V$-\textit{category} is instead a left $\V$-functor $R \to \C$, where we regard $R$ as a one-object left $\V$-category (\ref{para:lr_vcats}). We observe in \ref{rem:rmod_in_vgr_cat} and \ref{para:cdiag_vgr_cats} that these notions of left $R$-module admit a common generalization in left $\V$-graded categories.  Analogously, one can define notions of right $R$-module in right $\V$-actegories and right $\V$-categories (in the latter case regarding $R$ as a right $\V$-category $R^\circ$ as in \ref{para:lr_vcats}), and more generally in right $\V$-graded categories (\ref{rem:rmod_in_vgr_cat}).
\end{para}

\begin{para}[\textbf{Day convolution}]\label{sec:day_conv}
In the remainder of the paper, we assume $\V$ is a $\SET$-small monoidal category (\ref{para:set_small}). We write $\hat{\V} := \hat{\V}(\SET) := [\V^\op,\SET]$, recalling that $\hat{\V}$ carries the structure of a (huge) biclosed monoidal category, via Day convolution \cite{Day:ClCatsFunc}, in such a way that the Yoneda embedding $\mathsf{Y}:\V \to \hat{\V}$ is strong monoidal.  Explicitly, the monoidal product carried by $\hat{\V}$ is written as $\otimes$ and given by
$$(P \otimes Q)X = \int^{Y,Z \in \V} \V(X,Y \otimes Z) \times PY \times QZ\;\;\;\;\;\;\;\;\;\;(P,Q \in \hat{\V}, X \in \V),$$
while the unit object is $\mathsf{Y}(I) = \V(-,I)$. Consequently, there is a bijective correspondence, natural in $P,Q,R \in \hat{\V}$, between morphisms $\theta:P \otimes Q \to R$ in $\hat{\V}$ and families of maps $\theta^{XY}:PX \times QY \to R(X \otimes Y)$ natural in $X,Y \in \V$, under which the identity on $P \otimes Q$ corresponds to a natural family $\kappa_{PQ}^{XY}:PX \times QY \to (P \otimes Q)(X \otimes Y)$. The internal homs in $\hat{\V}$ are given by $Q^PX = \hat{\V}(P,Q(-\otimes X))$ and $\tensor[^P]{Q}{}X = \hat{\V}(P,Q(X \otimes -))$, naturally in $P,Q \in \hat{\V}$ and $X \in \V$. Consequently $Q^{\mathsf{Y}(X)} \cong Q(X\otimes -)$ and $\tensor[^{\mathsf{Y}(X)}]{Q}{} \cong Q(- \otimes X)$ naturally in $Q \in \hat{\V}$, $X \in \V$, so we write $Q^X := Q(X \otimes-)$ and $\tensor[^X]{Q}{} := Q(- \otimes X)$. The identity functor on $\hat{\V}$ underlies an isomorphism of monoidal categories $\widehat{\V^\rev} \cong \hat{\V}^\rev$.

Under the Yoneda bijections $\hat{\V}(\mathsf{Y}X,P) \cong PX$ ($X \in \V, P \in \hat{\V}$), each $p \in PX$ corresponds to a morphism in $\hat{\V}$ that we denote by $\tilde{p}:\mathsf{Y}X \to P$. Given also a morphism $\alpha:Y \to X$ in $\V$, we write $\alpha^* =  P\alpha:PX \to PY$, and for each $p \in PX$ we call $q = \alpha^*(p) \in PY$ the \textit{reindexing of $p$ along $\alpha$}, noting that $\tilde{q} = \tilde{p} \cdot \mathsf{Y}\alpha$.

By \cite[2.1]{BooSt}, $\V(-,?):\V^\op \times \V \to \SET$ is a (lax) monoidal functor when equipped with the maps $\otimes:\V(X,X') \times \V(Y,Y') \to \V(X \otimes Y, X' \otimes Y')$, and similarly for $\hat{\V}(-,?)$. The strong monoidal functor $\mathsf{Y}:\V \to \hat{\V}$ may be regarded also as a strong opmonoidal functor, whose binary opmonoidal constraints are isomorphisms $d_{XY}:\mathsf{Y}(X \otimes Y) \to \mathsf{Y}X \otimes \mathsf{Y}Y$. Consequently, the functor $\hat{\V}(\mathsf{Y}-,?):\V^\op \times \hat{\V} \to \SET$ carries a monoidal structure. By the monoidal Yoneda lemma of \cite[p.~269]{Kou:AugVDblCats}, the Yoneda bijections $\hat{\V}(\mathsf{Y}X,P) \cong PX$ ($X \in \V, P \in \hat{\V}$) constitute an invertible monoidal transformation $\hat{\V}(\mathsf{Y}-,?) \cong \Ev:\V^\op \times \hat{\V} \to \SET$ when we equip the evaluation functor $\Ev$ with the monoidal structure given by the above maps $\kappa = \kappa_{PQ}^{XY}:PX \times QY \to (P \otimes Q)(X \otimes Y)$. In particular, the Yoneda isomorphisms commute with the binary monoidal constraints, which in detail means that for all $p \in PX$ and $q \in QY$, $\kappa(p,q) \in (P \otimes Q)(X \otimes Y)$ corresponds to the composite $\mathsf{Y}(X \otimes Y) \xrightarrow{d_{XY}} \mathsf{Y}X \otimes \mathsf{Y}Y \xrightarrow{\tilde{p} \otimes \tilde{q}} P \otimes Q$. Hence if $\theta:P \otimes Q \to R$ in $\hat{\V}$ then the element $\theta^{XY}(p,q) \in R(X \otimes Y)$ corresponds via Yoneda to the composite $\mathsf{Y}(X \otimes Y) \xrightarrow{d_{XY}} \mathsf{Y}X \otimes \mathsf{Y}Y \xrightarrow{\tilde{p} \otimes \tilde{q}} P \otimes Q \xrightarrow{\theta} R$.

If $\V$ is symmetric monoidal (resp. braided monoidal) then $\hat{\V}$ is so, and the Yoneda embedding $\mathsf{Y}:\V \to \hat{\V}$ is a braided strong monoidal functor \cite[Theorem 3.6]{Day:ClCatsFunc} (resp.~\cite[Proposition 1.1]{DayPanStr}). Similarly, if $\V$ is a \textit{duoidal category} then $\hat{\V}$ is a duoidal category \cite[\S 4]{BooSt}; see \ref{exa:duoidal} and \ref{para:duoidal_str_vhat}.

Since the given $\SET$-small monoidal category $\V$ is, in particular, $\SET'$-small, we may equip $\hat{\V}(\SET') = [\V^\op,\SET']$ with its Day convolution monoidal structure, so that $\hat{\V}(\SET')$ is a monoidal category (but is not even $\SET'$-small). The inclusion $\SET \hookrightarrow \SET'$ preserves $\SET$-small limits and colimits and so induces a fully faithful, strong monoidal functor $\hat{\V}(\SET) \hookrightarrow \hat{\V}(\SET')$ that preserves all $\SET$-small limits and colimits.
\end{para}

\begin{para}\label{para:copower_cocompl}
We now recall results on free cocompletion under copowers (\ref{para:tensors}) that we use only in the proof of \ref{thm:copower_cocompl_vgr}, where we employ the free cocompletion of a $\hat{\V}$-category with respect to copowers by representables. These results are special cases of known results on free cocompletion with respect to a class of weights, which we recall in an appendix (\S \ref{sec:weighted_colims}). Let $\sfV$ be a \textbf{huge biclosed base}, by which we mean a huge biclosed monoidal category that is locally $\SET$-small and has $\SET$-small limits and colimits (such as $\hat{\V}$). Also, let $\R$ be a $\SET$-small full subcategory of $\sfV$. Given $\sfV$-categories $\D$ and $\E$ with $\R$-copowers (i.e.~copowers by objects of $\R$), let us write $\textnormal{$\R$-}\mathsf{COCTS}(\D,\E)$ for the full subcategory of $\LCAT{\sfV}(\D,\E)$ spanned by the $\sfV$-functors that preserve $\R$-copowers. Given a ($\SET$-small left) $\sfV$-category $\C$, a \textit{free cocompletion of $\C$ under $\R$-copowers} is by definition a $\sfV$-category $\R(\C)$ that has $\R$-copowers and is equipped with a $\sfV$-functor $K:\C \to \R(\C)$ such that for each $\sfV$-category $\D$ with $\R$-copowers the functor $(-) \circ K:\textnormal{$\R$-}\mathsf{COCTS}(\R(\C),\D) \to \LCAT{\sfV}(\C,\D)$ is an equivalence. As a special case of free cocompletion with respect to a class of weights, which we recall in \S \ref{sec:weighted_colims}, a free cocompletion $\R(\C)$ of $\C$ under $\R$-copowers exists and is $\SET$-small; furthermore, the $\sfV$-functor $K:\C \to \R(\C)$ is fully faithful, and the closure of the full subcategory $\K = \{KA \mid A \in \ob\C\} \hookrightarrow \R(\C)$ under $\R$-copowers is $\R(\C)$ itself (i.e.~no proper full subcategory of $\R(\C)$ contains $\K$ and is closed under $\R$-copowers).
\end{para}

\section{Left and right graded categories: An introduction}\label{sec:graded_cats}

Let $\V$ be a monoidal category (assumed $\SET$-small, but not necessarily small, \ref{para:set_small}).

\begin{defn}\label{para:graded_cats}
A \textbf{(left) \mbox{$\V$-graded} category} is, by definition, a (left) $\hat{\V}$-category, where $\hat{\V} = [\V^\op,\SET]$ is equipped with the Day convolution monoidal structure (\ref{sec:day_conv}).  Hence we call $\LGCAT{\V} := \LCAT{\hat{\V}}$ the \textit{2-category of (left) $\V$-graded categories}.  See \S\ref{sec:intro} regarding the origins of this concept and its name.
\end{defn}

In view of the discussion in \ref{sec:day_conv}, a (left) $\V$-graded category is equivalently given by the following data, as established originally in \cite[\S 2]{Wood:thesis}: (1) a $\SET$-small set $\ob\C$; (2) an assignment to each pair $A,B \in \ob\C$ a functor $\C(A,B):\V^\op \to \SET$; (3) an assignment to each triple $A,B,C \in \ob\C$ a family of maps $\circ_{ABC}^{YX}:\C(B,C)(Y) \times \C(A,B)(X) \to \C(A,C)(Y \otimes X)$ natural in $Y,X \in \V$; (4) an assignment to each $A \in \ob\C$ an element $\gid_A \in \C(A,A)(I)$; these data are required to satisfy associativity and identity axioms, which we state in an explicit, elementwise form in \ref{para:vgr_cat_conc}(III, IV). But first we introduce some notation:

\begin{para}\label{para:notn_gr}
For reasons that will become apparent when we introduce a form of commutative diagram for $\V$-graded categories in \S \ref{sec:emb}, we introduce the following new notation in (left) $\V$-graded categories $\C$: Given objects $X \in \ob\V$ and $A,B \in \ob\C$, we write
$$\C(X \gc A; B) := \C(A,B)(X)\;,$$
and we write 
$$f:X \gc A \longrightarrow B$$
to mean that $f \in \C(X \gc A;B)$, which we convey by saying that $f$ is a \textbf{graded morphism} from $A$ to $B$ with grade $X$; correspondingly we write $\C(- \gc A;B) := \C(A,B)$.  Accordingly, we regard a graded morphism $f$ as a morphism with two inputs, one whose type is an object $X$ of $\V$ and one whose type is an object $A$ of $\C$.
\end{para}

\begin{para}[\textbf{$\V$-graded categories, concretely}]\label{para:vgr_cat_conc}
A (left) $\V$-graded category $\C$ consists of
\begin{enumerate}
\item[(a)] a $\SET$-small set $\ob\C$ whose elements we call \textit{objects};
\item[(b)] for each pair $A,B \in \ob\C$ and each $X \in \ob\V$ a $\SET$-small set $\C(X \gc A;B)$ whose elements we write as $f:X \gc A \to B$ and call \textit{graded morphisms from $A$ to $B$ with grade $X$};
\item[(c)] an assignment to each graded morphism $f:X \gc A \to B$ in $\C$ and each morphism $\alpha:Y \to X$ in $\V$ a graded morphism $\alpha^*(f):Y \gc A \to B$ called the \textit{reindexing of $f$ along $\alpha$};
\item[(d)] an assignment to each pair of graded morphisms $f:X \gc A \to B$, $g:Y \gc B \to C$ a graded morphism $g \circ f:Y \otimes X \gc A \to C$;
\item[(e)] for each $A \in \ob\C$ a graded morphism $\gid_A:I \gc A \to A$ whose grade is the unit object $I$ of $\V$;
\end{enumerate}
these data are required to satisfy the following axioms:
\begin{description}
\item[(I) Functoriality of reindexing.] $1_X^*(f) = f:X \gc A \to B$ and $(\beta \cdot \alpha)^*(f) = \alpha^*(\beta^*(f))$ $:Z \gc A \to B$ for every graded morphism $f:X \gc A \to B$ and every pair of morphisms $\alpha:Z \to Y$, $\beta:Y \to X$ in $\V$;
\item[(II) Naturality of composition.] $\beta^*(g) \circ \alpha^*(f) = (\beta \otimes \alpha)^*(g \circ f):Y'\otimes X' \gc A \to C$ for all graded morphisms $f:X \gc A \to B$, $g:Y \gc B \to C$ and all morphisms $\alpha:X' \to X$ and $\beta:Y' \to Y$ in $\V$, where $\beta \otimes \alpha:Y' \otimes X' \to Y \otimes X$ is obtained by applying the monoidal product in $\V$;
\item[(III) Essential associativity.] For all graded morphisms $f:X \gc A \to B$,  $g:Y \gc B \to C$,  $h:Z \gc C \to D$, the graded morphism $(h \circ g) \circ f:(Z \otimes Y) \otimes X \gc A \to D$ is the reindexing of $h \circ (g \circ f):Z \otimes (Y \otimes X) \gc A \to D$ along the associator $a_{ZYX}:(Z \otimes Y) \otimes X \xrightarrow{\sim} Z \otimes (Y \otimes X)$;
\item[(IV) Essential identity.] For every graded morphism $f:X \gc A \to B$, the composite $f \circ \gid_A:X \otimes I \gc A \to B$ is the reindexing of $f$ along the right unitor $r_X:X \otimes I \xrightarrow{\sim} X$, and $\gid_B \circ f:I \otimes X \gc A \to B$ is the reindexing of $f$ along the left unitor $\ell_X:I \otimes X \xrightarrow{\sim} X$.
\end{description}

Note that (II) requires precisely that the composition maps
$$\circ_{ABC}^{YX}:\C(Y \gc B;C) \times \C(X \gc A;B) \to \C(Y \otimes X \gc A;C)$$
be natural in $Y,X \in \V$ for all $A,B,C \in \ob\C$. When $\V$ is strict monoidal, the essential associativity and identity axioms (III, IV) reduce to \textit{strict} associativity and identity axioms, requiring simply the equations $(h \circ g) \circ f = h \circ (g \circ f):Z \otimes Y \otimes X \gc A \to D$ and $f \circ \gid_A = f = \gid_B \circ f$.
\end{para}

For example, $1$-graded categories for the  the terminal monoidal category $1$ (the ordinal $1$) are precisely ($\SET$-small) ordinary categories. As another simple example, regard the discrete category of natural numbers $\NN$ as a monoidal category under addition; we invite the reader to characterize $\NN$-graded categories.

\begin{para}[\textbf{On size}]
We refer to the $\V$-graded categories of Definition \ref{para:graded_cats} as \textit{$\SET$-small $\V$-graded categories}, as they are $\SET$-small $\hat{\V}(\SET)$-categories. By definition, a \textbf{huge $\V$-graded category} is a $\SET'$-small $\hat{\V}(\SET')$-category with the notation of \ref{sec:day_conv}. We write $\LGCAT{\V}' = \LCAT{\hat{\V}}'$ for the 2-category of huge $\V$-graded categories.
\end{para}

\begin{para}\label{para:vgr_func}
Given $\V$-graded categories $\C$ and $\D$, a \textbf{(left) $\V$-graded functor} $F:\C \to \D$ is, by definition, a (left) $\hat{\V}$-functor and so is given by (1) an assignment to each object $A$ of $\C$ an object $FA$ of $\D$, and (2) a family of maps $F_{AB}^X:\C(X \gc A;B) \to \D(X \gc FA;FB)$ $(A,B \in \ob\C, X \in \ob\V)$, i.e.~an assignment to each graded morphism $f:X \gc A \to B$ in $\C$ a graded morphism $Ff:X \gc FA \to FB$ in $\D$, such that (i) the maps in (2) are natural in $X \in \V$, (ii) $F(g \circ f) = Fg \circ Ff:Y\otimes X \gc FA \to FC$ and $F\gid_A = \gid_{FA}:I \gc FA \to FA$ for all $f:X \gc A \to B$ and $g:Y \gc B \to C$ in $\C$.  These axioms (i) and (ii) require precisely that the assignment in (2) preserves reindexing, composition, and identities.
\end{para}

\begin{para}\label{para:underl_ord_cat}
The \textbf{underlying ordinary category} of a $\V$-graded category $\C$ is the usual underlying ordinary category $\C_0$ of the $\hat{\V}$-category $\C$.  A morphism from $A$ to $B$ in $\C_0$ is given by a graded morphism from $A$ to $B$ whose grade is $I$.
\end{para}

\begin{para}\label{para:vgr_transf}
Given $\V$-graded functors $F,G:\C \to \D$, a \textbf{(left) $\V$-graded-natural transformation} $\delta:F \Rightarrow G$ is a $\hat{\V}$-natural transformation, equivalently, a family of morphisms $\delta_A:FA \to GA$ $(A \in \ob\C)$ in $\D_0$ that are \textbf{(left) $\V$-graded natural in $A \in \C$} in the sense that for every graded morphism $f:X \gc A \to B$ in $\C$, the reindexing of $\delta_B \circ Ff: I \otimes X \gc FA \to GB$ along $\ell_X^{-1}:X \xrightarrow{\sim} I \otimes X$ is equal to the reindexing of $Gf \circ \delta_A:X \otimes I \gc FA \to GB$ along $r_X^{-1}:X \xrightarrow{\sim} X \otimes I$.  When $\V$ is strict monoidal, this amounts to the equation $\delta_B \circ Ff = Gf \circ \delta_A:X \gc FA \to GB$.
\end{para}

\begin{exa}[\textbf{$\V$-enriched categories as $\V$-graded categories}]\label{para:vcats_as_vgr_cats}
It has been known since \cite{Wood:thesis} that $\V$-enriched categories may be described equivalently as $\V$-graded categories $\C$ whose hom-objects $\C(A,B):\V^\op \to \SET$ are representable.  Indeed, by the usual change of base process for enriched categories \cite{EiKe}, the fully faithful, strong monoidal functor $\mathsf{Y}:\V \to \hat{\V}$ determines a fully faithful 2-functor $\mathsf{Y}_*:\LCAT{\V} \to \LGCAT{\V}$ that sends each (left) $\V$-category $\C$ to a $\V$-graded category $\mathsf{Y}_*(\C)$ whose hom-objects are the representable presheaves $\mathsf{Y}\C(A,B) = \V(-,\C(A,B)):\V^\op \to \SET$, so that a graded morphism $f:X \gc A \to B$ in $\mathsf{Y}_*(\C)$ is given by a morphism $f:X \to \C(A,B)$ in $\V$. By abuse of notation, we denote the resulting $\V$-graded category $\mathsf{Y}_*(\C)$ also by $\C$, so that we may write
$$\C(X \gc A;B) = \V(X,\C(A,B))\;\;\;\;(X \in \V).$$
We thus obtain a (strict) 2-equivalence $\LCAT{\V} \simeq \LGCAT{\V}^\mathrm{lrep}$, where the latter is the full sub-2-category of $\LGCAT{\V}$ spanned by those $\V$-graded categories that are \textit{locally representable} in the sense that their hom-objects are representable.
\end{exa}

\begin{exa}[\textbf{$\V$-actegories as $\V$-graded categories}]\label{para:acts_as_vgr_cats}
Every left $\V$-actegory $\C$ may be regarded as a (left) $\V$-graded category via the formula
$$\C(X \gc A;B) = \C(X.A,B)\;\;\;\;(X \in \V),$$
so that a graded morphism $f:X \gc A \to B$ in $\C$ is by definition a morphism $f:X.A \to B$ in $\C$, and the reindexing of $f$ along a morphism $\alpha:X' \to X$ in $\V$ is the composite $f \cdot (\alpha.A):X'.A \to B$ in the actegory $\C$.  Given graded morphisms $f:X \gc A \to B$ and $g:Y \gc B \to C$, the composite $g \circ f:Y \otimes X \gc A \to C$ in the $\V$-graded category $\C$ is the composite $Y \otimes X.A \xrightarrow{\sim} Y.X.A \xrightarrow{Y.f} Y.B \xrightarrow{g} C$ in the actegory $\C$, while $\gid_A:I.A \xrightarrow{\sim} A$ is the unit constraint.

As has been known since \cite{Wood:thesis}, the $\V$-graded categories that arise in the above way from left $\V$-actegories are, up to isomorphism, those that have \textit{$\V$-copowers}, in the following sense.  Given an object $A$ of a $\V$-graded category $\C$ and an object $X$ of $\V$, a \textbf{copower} (or \textbf{tensor}) of $A$ by $X$ is, by definition, a copower of $A$ by the representable $\mathsf{Y}X = \V(-,X) \in \hat{\V}$ in the left $\hat{\V}$-category $\C$ (\ref{para:tensors}).  Equivalently, a copower of $A$ by $X$ is an object $X \cdot A$ of $\C$ equipped with a graded morphism $\upsilon:X \gc A \to X \cdot A$ in $\C$ such that for all $B \in \ob\C$ and $Y \in \ob\V$ the map
$$\C(Y \gc X \cdot A;B) \to \C(Y \otimes X \gc A;B)$$
given by $f \mapsto f \circ \upsilon$ is a bijection. We also refer to copowers by objects of $\V$ as \mbox{\textbf{$\V$-copowers}}. For example, by \ref{para:vcats_as_vgr_cats}, locally representable $\V$-graded categories with $\V$-copowers are equivalently $\V$-categories with copowers (\ref{para:tensors}). We write $\LGCAT{\V}^\bigdot$ for the full sub-2-category of $\LGCAT{\V}$ spanned by the $\V$-graded categories with $\V$-copowers. By \cite[Theorem 3.4]{GP}, there is a fully faithful 2-functor $\LGCAT{\V}^\bigdot \to \LACT{\V}$ that sends each object $\C$ of $\LGCAT{\V}^\bigdot$ to $\C_0$ equipped with an action of $\V$ given on objects by $X.A = X \cdot A$.  But one finds that this 2-functor is also essentially surjective on objects by regarding each left $\V$-actegory as a $\V$-graded category as above.

This shows\footnote{Our proof is a variation on that of \cite[Proposition 14]{Ga}.} that there is a (strict) 2-equivalence $\LACT{\V} \simeq \LGCAT{\V}^\bigdot$. Also, by \cite[Corollary 3.5]{GP}, there is a 2-equivalence between $\LACT{\V}^\strong$ and the locally full sub-2-category $\LGCAT{\V}^{\bigdot\textnormal{pres}}$ of $\LGCAT{\V}^\bigdot$ spanned by those $\V$-graded functors that preserve $\V$-copowers.

Thus regarding each $\V$-actegory as a $\V$-graded category, we can also regard every \textit{full subcategory} of a $\V$-actegory as a $\V$-graded category. Note that a $\V$-actegory $\C$ has hom-objects (\ref{para:actegories}) iff $\C$ is locally representable as a $\V$-graded category (\ref{para:vcats_as_vgr_cats}).
\end{exa}

\begin{exa}[\textbf{$\V$ is a $\V$-graded category}]\label{exa:v_vgr}
The category $\V$ itself carries the structure of a left $\V$-actegory via the formula $X.A = X \otimes A$, so $\V$ underlies a left $\V$-graded category, whose homs are given by $\V(X \gc A;B) = \V(X \otimes A, B)$ $(X,A,B \in \ob\V)$.
\end{exa}

Henceforth we tacitly regard left $\V$-categories and left $\V$-actegories as left $\V$-graded categories, via \ref{para:vcats_as_vgr_cats} and \ref{para:acts_as_vgr_cats}.

\begin{exa}[\textbf{Simplicially, topologically, and cubically graded categories}]\label{exa:gr_cats}\emptybox
\noindent (1). Writing $\Delta$ for the augmented simplex category (\ref{exa:actegories}), left $\Delta$-graded categories are categories enriched in the category $\hat{\Delta}$ of \textit{augmented simplicial sets} (with the \textit{join product} \cite{EhlPor}) and can be described concretely via graded morphisms $f:n \gc A \to B$ with $n \in \NN$. By \ref{exa:actegories} and \ref{para:acts_as_vgr_cats}, every full subcategory of a category equipped with a monad $\TT$ underlies a $\Delta$-graded category in which a graded morphism $f:n \gc A \to B$ is a morphism $f:T^n A \to B$.

\medskip

\noindent (2). Let $\sS$ be any full subcategory of $\Top$ that has finite products that are preserved by the inclusion $\sS \hookrightarrow \Top$. By \ref{exa:actegories} and \ref{para:acts_as_vgr_cats}, every full subcategory $\C$ of $\Top$ underlies an $\sS$-graded category in which a graded morphism $f:X \gc A \to B$ is a continuous map $f:X \times A \to B$ with $X \in \ob\sS$ and $A,B \in \ob\C$.

\medskip

\noindent (3). Writing $\Box$ for the restricted cubical site (\ref{exa:actegories}), $\Box$-graded categories are categories enriched in the monoidal category $\hat{\Box}$ of \textit{(ordinary) cubical sets} \cite{GrMau:Cubical} (with its Day convolution product) and can be described in terms of graded morphisms $f:2^n \gc A \to B$ with $n \in \NN$. By \ref{exa:actegories} and \ref{para:acts_as_vgr_cats}, every full subcategory $\C$ of a category equipped with a cylinder functor $\TT$ underlies a $\Box$-graded category in which a graded morphism $f:2^n \gc A \to B$ is a morphism $f:T^n A \to B$. Cylinder functors provide an abstract setting for aspects of homotopy theory \cite{Bau}, and the notion of homotopy relative to a cylinder functor generalizes to the setting of $\Box$-graded categories $\C$ by declaring that for morphisms $f,g:A \to B$ in $\C_0$ a \textit{homotopy} from $f$ to $g$ is a graded morphism $h:2 \gc A \to B$ in $\C$ with $h_0 = f$ and $h_1 = g$, where we write $h_0 = 0^*(h)$ and $h_1 = 1^*(h)$ for the reindexings of $h$ along $0,1:1 \rightrightarrows 2$.  For example, in view of \ref{exa:actegories}, every full subcategory $\C$ of $\Top$ underlies a $\Box$-graded category in which the notion of homotopy is the usual one.
\end{exa}

\begin{para}\label{para:vgr_subcat}
Let $\G$ be a set of graded morphisms in a $\V$-graded category $\C$, i.e.~a subset of $\coprod_{X \in \ob\V,A,B \in \ob\C} \C(X \gc A;B)$, and write $\ob\G$ for the set of all $G \in \ob\C$ for which there exists some $f:X \gc A \to B$ in $\G$ with $G \in \{A,B\}$. We say that $\G$ is a \textit{$\V$-graded subcategory} of $\C$ if $\G$ is closed under composition and reindexing in $\C$ and contains $\gid_G$ for each $G \in \ob\G$. Every $\V$-graded subcategory of $\C$ may be regarded as $\V$-graded category; with this identification, the largest $\V$-graded subcategory of $\C$ is $\C$ itself. There is a smallest $\V$-graded subcategory $\langle \G \rangle$ of $\C$ that contains the given set $\G$. We say that $\G$ is a \textit{generating set of graded morphisms} of $\C$ if $\langle \G \rangle = \C$.
\end{para}

\begin{exa}\label{exa:loc_rep_univ_elts_gen}
Let $\C$ be a $\V$-category, regarded also as a $\V$-graded category. Then each hom-object $\C(A,B) \in \ob\V$ represents the presheaf $\C(-\gc A;B):\V^\op \to \SET$, and the counit of the representation is a graded morphism $u_{AB}:\C(A,B)\gc A \to B$. The set $\{u_{AB} \mid A,B \in \ob\C\}$ is a generating set of graded morphisms for $\C$.
\end{exa}

\begin{exa}\label{exa:two_sub_x}
Every object $P$ of $\sfV = \hat{\V}$ determines a $\sfV$-category $\tensor[_P]{\mathbbm{2}}{}$ with object set $2 = \{0,1\}$ and hom-objects $\tensor[_P]{\mathbbm{2}}{}(0,1) = P$, $\tensor[_P]{\mathbbm{2}}{}(0,0) = \mathsf{Y}(I) = \tensor[_P]{\mathbbm{2}}{}(1,1)$ (the unit object of $\sfV$), and $\tensor[_P]{\mathbbm{2}}{}(1,0) = 0$ (the initial object of $\sfV$). In particular, given an object $X$ of $\V$, we obtain a $\V$-graded category $\tensor[_X]{\mathbbm{2}}{} := \tensor[_{\mathsf{Y}(X)}]{\mathbbm{2}}{}$. The universal element of $\mathsf{Y}(X) = \V(-,X)$ is a graded morphism $u:X \gc 0 \to 1$ in $\tensor[_X]{\mathbbm{2}}{}$. Every graded morphism in $\tensor[_X]{\mathbbm{2}}{}$ is a reindexing of either $u$, $\gid_0$, or $\gid_1$ along a unique morphism in $\V$, and $\{u\}$ is a generating set of graded morphisms for $\tensor[_X]{\mathbbm{2}}{}$ (\ref{para:vgr_subcat}). Given any $\V$-graded category $\C$, there is a bijective correspondence between $\V$-graded functors $F:\tensor[_X]{\mathbbm{2}}{} \to \C$ and graded morphisms $f:X \gc A \to B$ in $\C$, as a consequence of the Yoneda lemma.
\end{exa}

\begin{defn}\label{defn:right_vgr_cat}
A \textbf{right $\V$-graded category} is a left $\V^\rev$-graded category.  Since $\widehat{\V^\rev} \cong \hat{\V}^\rev$ by \ref{sec:day_conv}, right $\V$-graded categories may be identified with right $\hat{\V}$-categories. We write $\RGCAT{\V} = \LGCAT{\V^\rev}$ to denote the 2-category of right $\V$-graded categories, and we call its 1-cells and 2-cells \textit{right $\V$-graded functors} and \textit{right $\V$-graded-natural transformations}, respectively.
\end{defn}

\begin{para}\label{para:right_vgr}
Given objects $A$ and $B$ of a right $\V$-graded category $\C$, we write the associated functor $\C(A,B):\V^\op \to \SET$ as $\C(A \gc -;B)$, and correspondingly we write
$$f\;:\;A \gc X \longrightarrow B$$
to mean that $f \in \C(A \gc X;B)$.  If $f:A \gc X \to B$ and $g:B \gc Y \to C$ in $\C$, then the composite $g \circ f$ is a graded morphism $g \circ f:A \gc X \otimes Y \to C$. Every right $\V$-category $\C$ (i.e.~every $\V^\mathsf{rev}$-category, \ref{para:lr_vcats}) can be regarded as a right $\V$-graded category, by \ref{para:vcats_as_vgr_cats}. Similarly, every right $\V$-actegory $\C$ (i.e.~every $\V^\mathsf{rev}$-actegory) can be regarded as a right $\V$-graded category, by \ref{para:acts_as_vgr_cats}.  The monoidal category $\V$ itself carries the structure of a right $\V$-actegory and so may be regarded as a right $\V$-graded category.

By \ref{para:lr_vcats}, every left $\V$-graded category $\C$ determines a right $\V$-graded category $\C^\circ$ called its \textit{formal opposite}, where a graded morphism $f:A \gc X \to B$ in $\C^\circ$ is a graded morphism $f:X \gc B \to A$ in $\C$; moreover, by \ref{para:lr_vcats} there is an isomorphism of 2-categories $(-)^\circ:\LGCAT{\V}^\co \to \RGCAT{\V}$.

If $\V$ is symmetric monoidal, then $\hat{\V}$ is symmetric monoidal (\ref{sec:day_conv}), so by \ref{para:lr_vcats} we obtain an isomorphism $\LGCAT{\V} \cong \RGCAT{\V}$ under which left and right $\V$-graded categories may be identified. If $\V$ is braided monoidal, then in general multiple such isomorphisms can be constructed, in view of \ref{para:lr_vcats}; we defer detailed consideration of this case to \ref{exa:duoidal}.
\end{para}

\begin{para}\label{rem:rmod_in_vgr_cat}
Let $R = (R,m,e)$ be a monoid in $\V$. Given a left $\V$-graded category $\C$, the following notion of left $R$-module in $\C$ subsumes the similarly named notions in left $\V$-actegories and left $\V$-categories (\ref{para:modules_vact_vcat}): A \textbf{left $R$-module} (or \textbf{left $R$-act}) in $\C$ is a left $\V$-graded functor from $R$ (regarded as a one-object left $\V$-category) to $\C$, equivalently, an object $A$ of $\C$ equipped with a graded morphism $a:R \gc A \to A$ in $\C$ that is \textit{associative} and \textit{unital} in the sense that $m^*(a) = a \circ a:R \otimes R \gc A \to A$ and $e^*(a) = \gid_A:I \gc A \to A$. In \ref{para:cdiag_vgr_cats} we discuss an equivalent diagrammatic formulation. A \textbf{right $R$-module} in a right $\V$-graded category $\C$ is a right $\V$-graded functor from the right $\V$-category $R^\circ$ (\ref{para:lr_vcats}) to $\C$, equivalently, an object $A$ of $\C$ with a graded morphism $a:A\gc R \to A$ that is associative and unital.
\end{para}

\begin{exa}\label{exa:gr_str_hatv}
By \ref{para:actegories}, the huge biclosed monoidal category $\hat{\V}$ may be regarded as a (huge) left $\hat{\V}$-category (resp. a right $\hat{\V}$-category) with hom-objects $\tensor[^P]{Q}{}$ (resp. $Q^P$), where $P,Q \in \ob\hat{\V}$. In other words, $\hat{\V}$ underlies both a (huge) left $\V$-graded category and also a right $\V$-graded category. Explicitly, a graded morphism $\phi:X \gc P \to Q$ in the left $\V$-graded category $\hat{\V}$ is an element $\phi \in \tensor[^P]{Q}{}(X)$ and so by  \ref{sec:day_conv} is precisely a natural transformation $\phi:P \Rightarrow Q^X = Q(X\otimes-) \cong Q^{\mathsf{Y}(X)}$, or equivalently a natural transformation $\mathsf{Y}(X) \otimes P \Rightarrow Q$. Similarly, a graded morphism $\psi:P \gc X \to Q$ in the right $\V$-graded category $\hat{\V}$ is a natural transformation $\phi:P \Rightarrow \tensor[^X]{Q}{} = Q(-\otimes X)$ and is equivalently given by a natural transformation $P \otimes \mathsf{Y}(X) \Rightarrow Q$.
\end{exa}

\section{Contravariant change of base for graded categories}\label{sec:ch_base_grcats}
In this section, we show that there is a 2-functor $\LGCAT{(-)}:\MCAT_\oplax^\coop \to \TWOCAT$ given on objects by $\V \mapsto \LGCAT{\V}$. Here we use the notation of \ref{para:set_small} in writing $\MCAT_\oplax$ for the 2-category of ($\SET$-small) monoidal categories with opmonoidal functors and $\TWOCAT$ for the 2-category of 2-categories. We prove this by way of the following, in which we write $\MCAT'$ for the 2-category of huge monoidal categories (\ref{para:set_small}).

\begin{lem}\label{thm:contrav_func_hat}
There is a 2-functor $\widehat{(-)}:\MCAT_\oplax^\coop \to \MCAT'$ that sends each $\SET$-small monoidal category $\V$ to the huge monoidal category $\hat{\V}$.
\end{lem}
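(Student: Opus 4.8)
The plan is to define $\widehat{(-)}$ on an opmonoidal functor $F\colon\V\to\W$ to be the restriction functor $\hat F:=[F^\op,\SET]\colon\hat\W\to\hat\V$, given on objects by $\hat F(Q)=Q\circ F^\op$, so that $(\hat F Q)(X)=Q(FX)$ for $X\in\V$; this reverses the direction of $1$-cells, as demanded by the $\op$ in $\MCAT_\oplax^\coop$. Conceptually, $\hat F=F^*$ is right adjoint to the pushforward $\operatorname{Lan}_{F^\op}$, and one could extract its lax structure as a mate via doctrinal adjunction, but the explicit route through the correspondence of \ref{sec:day_conv} is more self-contained. Writing $F_{X,Y}\colon F(X\otimes Y)\to FX\otimes FY$ and $F_0\colon F(I_\V)\to I_\W$ for the opmonoidal constraints of $F$, I would make $\hat F$ a lax monoidal functor by defining its binary constraint $\mu_{P,Q}\colon\hat F(P)\otimes\hat F(Q)\to\hat F(P\otimes Q)$ to be the morphism of $\hat\V$ corresponding, under \ref{sec:day_conv}, to the family
\[
(P\otimes Q)(F_{X,Y})\cdot\kappa_{P,Q}^{FX,FY}\;\colon\;P(FX)\times Q(FY)\longrightarrow(P\otimes Q)(F(X\otimes Y))
\]
natural in $X,Y\in\V$, where $\kappa$ is the structure map of the Day convolution on $\hat\W$; and by defining its unit constraint $\mathsf{Y}(I_\V)\to\hat F(\mathsf{Y}(I_\W))$ to have components sending $g\colon X\to I_\V$ to $F_0\cdot Fg\colon FX\to I_\W$. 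Naturality of these families in $X,Y$ follows from naturality of $\kappa$ in $\hat\W$ together with naturality of the constraints $F_{X,Y}$ and $F_0$.

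The main work, and the principal obstacle, is verifying the three lax monoidal coherence axioms for $\hat F$. Each should be reduced, by the bijective correspondence of \ref{sec:day_conv}, to an equality of natural families of maps out of finite products such as $P(FX)\times Q(FY)\times R(FZ)$; the associativity axiom is the delicate one, since it requires unwinding the Day-convolution associators of both $\hat\V$ and $\hat\W$ in terms of the constraints $\kappa$ and $d_{XY}$ (via the monoidal Yoneda correspondence recorded in \ref{sec:day_conv}) and then matching the result against the opmonoidal associativity axiom relating $F_{X\otimes Y,Z}$, $F_{X,Y\otimes Z}$ and the associators of $\V$ and $\W$. The two unit axioms reduce similarly to the opmonoidal unit axioms for $F$ together with the unitors. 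This is routine in principle but bookkeeping-heavy, and it is where I would expect to spend the most effort.

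It then remains to promote $\widehat{(-)}$ to a $2$-functor. On a $2$-cell, namely an opmonoidal transformation $\theta\colon F\Rightarrow G$ between $F,G\colon\V\to\W$, I would set $\hat\theta\colon\hat G\Rightarrow\hat F$ to have components $(\hat\theta_Q)_X=Q(\theta_X)\colon Q(GX)\to Q(FX)$; this reverses the $2$-cell direction, matching the $\co$ in $\MCAT_\oplax^\coop$. That $\hat\theta$ is a genuine monoidal transformation between the lax functors $\hat G$ and $\hat F$ is precisely the content of the opmonoidal-transformation axioms for $\theta$, i.e.\ its compatibility with the binary and nullary constraints of $F$ and $G$. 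Finally I would check $2$-functoriality: preservation of identities is immediate, preservation of composition of $1$-cells follows because $(G\circ F)^*=F^*\circ G^*$ and because the lax structure induced by the composite opmonoidal functor $G\circ F$ coincides with the composite of the lax structures of $\hat F$ and $\hat G$ (using how opmonoidal constraints compose), and compatibility with vertical and horizontal composition of $2$-cells and with whiskering is a routine verification using functoriality of the presheaf action. Taken together these give the desired $2$-functor $\widehat{(-)}\colon\MCAT_\oplax^\coop\to\MCAT'$ with $\V\mapsto\hat\V$.
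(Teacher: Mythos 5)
Your construction is correct, but it takes a genuinely different route from the paper. You build the 2-functor by hand: $\hat F = [F^\op,\SET]$ on 1-cells, with the lax binary constraint obtained from the family $(P\otimes Q)(F_{X,Y})\cdot\kappa_{P,Q}^{FX,FY}$ via the correspondence of \ref{sec:day_conv}, and $Q(\theta_X)$ on 2-cells; the variances match the $\co$ and $\op$ correctly, and the constraint maps you write down are the standard (and right) ones. The paper instead proceeds by representability: it quotes the universal property $\MCAT'(\X,\hat{\V})\cong\MCAT'(\V^\op\times\X,\SET)$ of \cite{Kou:AugVDblCats}, observes that the right-hand side is 2-functorial in $\V\in\MCAT_\oplax^\coop$ via $(-)^\op:\MCAT_\oplax^\co\to\MCAT$, and invokes parametrized representability \cite[\S 1.10]{Ke:Ba} to manufacture $\widehat{(-)}$ uniquely. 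The trade-off is exactly where you locate the labour: the paper's argument makes the lax monoidal coherence of $\hat F$, the monoidality of $\hat\theta$, and strict 2-functoriality all fall out of 2-naturality at once, whereas in your approach these are the ``bookkeeping-heavy'' verifications that you state but do not carry out. That deferral is the one soft spot in your write-up --- the associativity axiom in particular does require unwinding the Day convolution associator as you indicate --- but it is a standard verification that does go through, so I would count this as a complete alternative proof in outline rather than a gap. Your approach has the compensating virtue of telling the reader explicitly what $\hat F$ and $\hat\theta$ are, which the paper's abstract argument does not (and which \ref{para:cov_ch_base_along_f} later has to extract separately).
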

\begin{proof}
Let $\times:\MCAT' \times \MCAT' \to \MCAT'$ be the 2-functor given by taking the (conical) product in the 2-category $\MCAT'$.  Given any $\SET$-small monoidal category $\V$, we obtain isomorphisms of categories
\begin{equation}\label{eq:univ_pr_vhat}\MCAT'(\X,\hat{\V}) \cong \MCAT'(\V^\op \times \X,\SET)\end{equation}
2-natural in $\X \in \MCAT'$, by \cite[p.~269]{Kou:AugVDblCats}, under which monoidal functors $\X \to \hat{\V}$ correspond by transposition to monoidal functors $\V^\op \times \X \to \SET$. But the right-hand side of \eqref{eq:univ_pr_vhat} is 2-functorial not only in $\X \in \MCAT'$ but also in $\V \in \MCAT_\oplax^\coop$, because there is a 2-functor $(-)^\op:\MCAT_\oplax^\co \to \MCAT$ given by $\V \mapsto \V^\op$, so that by composition we obtain a 2-functor $\MCAT'((-)^\op \times (?),\SET):\MCAT_\oplax^{\coop} \times (\MCAT')^\op \to \CAT'$. Consequently, by \cite[\S 1.10]{Ke:Ba} there is a unique 2-functor $\widehat{(-)}:\MCAT_\oplax^\coop \to \MCAT'$ that is given on objects by $\V \mapsto \hat{\V}$ and makes the isomorphisms \eqref{eq:univ_pr_vhat} 2-natural in $\V \in \MCAT_\oplax^\coop$.
\end{proof}

\begin{thm}\label{thm:backward_ch_base_2functor}
There is a 2-functor $\LGCAT{(-)}:\MCAT_\oplax^{\co\op} \to \TWOCAT$ that sends each \textnormal{(\textit{$\SET$-small})} monoidal category $\V$ to the 2-category $\LGCAT{\V}$ of left $\V$-graded categories.  Similarly, there is a 2-functor $\RGCAT{(-)}:\MCAT_{\oplax}^\coop \to \TWOCAT$.
\end{thm}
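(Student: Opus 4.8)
The plan is to exhibit $\LGCAT{(-)}$ as a composite of two 2-functors. By definition $\LGCAT{\V} = \LCAT{\hat{\V}}$ (\ref{para:graded_cats}), so it suffices to compose the 2-functor $\widehat{(-)} : \MCAT_\oplax^\coop \to \MCAT'$ of Lemma \ref{thm:contrav_func_hat} with a \emph{change-of-base} 2-functor $\LCAT{(-)} : \MCAT' \to \TWOCAT$ sending each huge monoidal category $\W$ to the 2-category $\LCAT{\W}$ of $\SET$-small left $\W$-categories. The composite $\LCAT{(-)} \circ \widehat{(-)}$ is then a 2-functor $\MCAT_\oplax^\coop \to \TWOCAT$ sending $\V$ to $\LCAT{\hat{\V}} = \LGCAT{\V}$, as required; the only real work is in establishing the change-of-base 2-functor.

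That 2-functor extends the classical change-of-base construction of Eilenberg--Kelly \cite{EiKe}. On objects it is $\W \mapsto \LCAT{\W}$. A lax monoidal functor $G = (G,\phi,\phi_0) : \W \to \W'$ is sent to the 2-functor $G_* : \LCAT{\W} \to \LCAT{\W'}$ carrying a left $\W$-category $\C$ to the left $\W'$-category with the same objects, hom-objects $G\C(A,B)$, composition $G\C(B,C) \otimes G\C(A,B) \xrightarrow{\phi} G(\C(B,C) \otimes \C(A,B)) \xrightarrow{Gm} G\C(A,C)$, and units $I' \xrightarrow{\phi_0} GI \xrightarrow{Gj_A} G\C(A,A)$ (writing $m$, $j_A$ for the composition and identities of $\C$); on a $\W$-functor $F$ it applies $G$ to hom-morphisms, and on a $\W$-natural transformation with components $\alpha_A : I \to \D(FA,F'A)$ it yields the components $I' \xrightarrow{\phi_0} GI \xrightarrow{G\alpha_A} G\D(FA,F'A)$. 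A monoidal transformation $\theta : G \Rightarrow G'$ is sent to the 2-natural transformation $\theta_* : G_* \Rightarrow G'_*$ whose component at $\C$ is the identity-on-objects left $\W'$-functor with hom-morphisms $\theta_{\C(A,B)} : G\C(A,B) \to G'\C(A,B)$.

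The remaining content is a battery of routine verifications: that each $G_*$ is a well-defined 2-functor (the $\W'$-category and $\W'$-functor axioms follow from the lax associativity and unit axioms of $G$); that $G \mapsto G_*$ is \emph{strictly} functorial, i.e.\ $(G'G)_* = G'_* \circ G_*$ and $(1_\W)_* = 1_{\LCAT{\W}}$ on the nose, which holds because the lax constraints of a composite are built from those of its factors; that each $\theta_*$ has genuine $\W'$-functor components (using that $\theta$ is monoidal) and is 2-natural; and that $(-)_*$ sends horizontal and vertical composites and whiskers of monoidal transformations to the corresponding operations on 2-natural transformations. I expect the principal obstacle to be organizational rather than conceptual: confirming strict (not merely pseudo) functoriality in $G$ and the full 2-naturality of $\theta_*$ --- in particular the whiskering clause relating $\theta_*$ to the images of arbitrary $\W$-natural transformations --- since each reduces to a single instance of a monoidal-functor or monoidal-transformation axiom, but there are many such instances to track.

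Finally, for $\RGCAT{(-)}$ I would invoke the identification $\RGCAT{\V} = \LGCAT{\V^\rev}$ (\ref{defn:right_vgr_cat}) together with the fact that reversal is a 2-functor. An opmonoidal functor $F : \V \to \W$ induces an opmonoidal functor $F^\rev : \V^\rev \to \W^\rev$ (carried by the same underlying functor, with constraints reindexed by the order swap) and likewise on opmonoidal transformations, so $(-)^\rev$ is a strict, involutive 2-endofunctor of $\MCAT_\oplax$, hence also of $\MCAT_\oplax^\coop$. Then $\RGCAT{(-)} := \LGCAT{(-)} \circ (-)^\rev$ is a 2-functor $\MCAT_\oplax^\coop \to \TWOCAT$ sending $\V$ to $\LGCAT{\V^\rev} = \RGCAT{\V}$.
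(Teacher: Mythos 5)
Your proposal is correct and follows essentially the same route as the paper: the paper likewise defines $\LGCAT{(-)} := \LCAT{\widehat{(-)}}$ by composing the 2-functor of Lemma \ref{thm:contrav_func_hat} with the classical Eilenberg--Kelly change-of-base 2-functor $\LCAT{(-)}:\MCAT' \to \TWOCAT$ (which it cites rather than reconstructs), and obtains $\RGCAT{(-)}$ via the reversal isomorphism $(-)^\rev$ on $\MCAT_\oplax$.
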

\begin{proof}
The well known processes of change of base for enriched categories \cite[6.3]{EiKe} provide a 2-functor $\LCAT{(-)}:\MCAT' \to \TWOCAT$, and by composing with the 2-functor in Lemma \ref{thm:contrav_func_hat} we obtain the needed 2-functor $\LGCAT{(-)} := \LCAT{\widehat{(-)}}$. The remaining claim follows, since there is an isomorphism of 2-categories $(-)^\rev:\MCAT_{\oplax} \xrightarrow{\sim} \MCAT_{\oplax}$.
\end{proof}

\begin{para}\label{para:cov_ch_base_along_f}
The 2-functor $\LGCAT{(-)}$ in Theorem \ref{thm:backward_ch_base_2functor} sends each opmonoidal functor $F:\V \to \W$ to a 2-functor that we denote by $F^*\;:\;\LGCAT{\W} \to \LGCAT{\V}$. Each $\W$-graded category $\C$ is sent by $F^*$ to a $\V$-graded category $F^*\C$ with the same objects, in which a graded morphism $f:X \gc A \to B$ (with grade $X \in \ob\V$) is by definition a graded morphism $f:FX \gc A \to B$ in $\C$; moreover, $(F^*\C)(- \gc A;B) = \C({F-} \gc A;B):\V^\op \to \SET$. The composition and identities in $F^*\C$ are given by employing those in $F^*\C$ and reindexing along the structural morphisms $\delta_{YX}:F(Y \otimes X) \to FY \otimes FX$ and $\varepsilon:FI_\V \to I_\W$ associated to the opmonoidal functor $F$.
\end{para}

\begin{para}\label{exa:comon_ch_base}
A comonoid $R = (R,d:R \to R \otimes R,c:R \to I)$ in a monoidal category $\V$ is equivalently an opmonoidal functor $R:1 \to \V$ and so induces a 2-functor $R^*:\LGCAT{\V} \to \LGCAT{1} = \CAT$. In particular, the unit object $I_\V$ of $\V$ underlies a comonoid in $\V$, and $I^*_\V = (-)_0:\LGCAT{\V} \to \CAT$ is given by taking the underlying ordinary category. If $F:\V \to \W$ is an opmonoidal functor then we may regard its unit constraint $\varepsilon:FI_\V \to I_\W$ as an opmonoidal transformation $\varepsilon:FI_\V \Rightarrow I_\W$, so if $F$ is \textit{normal} in the sense that $\varepsilon$ is invertible then (by \ref{thm:backward_ch_base_2functor}) the 2-functor $F^*:\LGCAT{\W} \to \LGCAT{\V}$ commutes with $(-)_0:\LGCAT{\W} \to \CAT$ and $(-)_0:\LGCAT{\V} \to \CAT$, up to a 2-natural isomorphism consisting of identity-on-objects functors; this isomorphism is an identity if $F$ is \textit{strictly normal} in the sense that $\varepsilon$ is an identity. Every strong monoidal functor may be regarded as a normal opmonoidal functor, and we employ this tacitly in the sequel.
\end{para}

\section{Bigraded categories}\label{sec:bigraded}

\begin{para}\label{para:cats_gr_prod}
Conical finite products in the 2-categories $\MCAT$, $\MCAT_\oplax$, $\MCAT_\strong$ of \ref{para:set_small} are formed by taking the product in $\CAT$ and equipping it with the pointwise monoidal structure. Given monoidal categories $\V$ and $\W$, both of whose monoidal structures we write as $\otimes,I,a,\ell,r$, we can thus form the monoidal category $\V \times \W$. The unit objects determine strong (op)monoidal functors $I:1 \to \V$ and $I:1 \to \W$, so we obtain strong (op)monoidal functors $U_\ell = (-,I):\V \to \V \times \W$ and $U_r = (I,-):\W \to \V \times \W$, which are strictly normal (\ref{exa:comon_ch_base}). By contravariant change of base (\ref{thm:backward_ch_base_2functor}), these determine 2-functors
$$U_\ell^*:\LGCAT{\V \times \W} \to \LGCAT{\V},\;\;\;\;U_r^*:\LGCAT{\V \times \W} \to \LGCAT{\W}$$
that strictly commute with the functors $(-)_0$ valued in $\CAT$, by \ref{exa:comon_ch_base}. Thus every left $(\V \times \W)$-graded category $\C$ has an underlying left $\V$-graded category $U^*_\ell\C$ and also an underlying left $\W$-graded category $U^*_r\C$, and these have the same underlying ordinary category as $\C$. Explicitly, $\ob U_\ell^*\C = \ob U_r^*\C = \ob\C$, while graded morphisms $f:X \gc A \to B$ in $U_\ell^*\C$ and $g:X' \gc A \to B$ in $U_r^*\C$ are by definition graded morphisms of the form $f:(X,I) \gc A \to B$ and $g:(I,X') \gc A \to B$ in $\C$, respectively. The symmetry $\smsub{\mathrm{C}}{\W\V}:\W \times \V \to \V \times \W$ underlies a strict monoidal isomorphism and by \ref{thm:backward_ch_base_2functor} induces an isomorphism $\mathrm{C}^*:\LGCAT{\V \times \W} \xrightarrow{\sim} \LGCAT{\W \times \V}$ with $U_\ell^*\mathrm{C}^* = U_r^*$ and $U_r^*\mathrm{C}^* = U_\ell^*$.
\end{para}

\begin{defn}\label{defn:bigraded}
Given monoidal categories $\V$ and $\W$, a \textbf{$\V$-$\W$-bigraded category} is a (left) $(\V \times \W^\mathsf{rev})$-graded category.  Given objects $A$ and $B$ of a $\V$-$\W$-bigraded category $\C$, we write
$$\C(X \gc A \gc X';B) := \C((X,X') \gc A;B)\;\;\;\;\;\;(X \in \V, X' \in \W),$$
and we write
$$f\;:\;X \gc A \gc X' \longrightarrow B$$
to mean that $f:(X,X') \gc A \to B$.  With this convention, if $f:X \gc A \gc X' \to B$ and $g:Y \gc B \gc Y' \to C$ then $g \circ f:Y \otimes X \gc A \gc X' \otimes Y' \to C$.

The \textbf{2-category of $\V$-$\W$-bigraded categories} is defined as
$$\GCAT{\V}{\W} = \LGCAT{\V \times \W^\rev}\;.$$
We refer to the 1-cells and 2-cells in $\GCAT{\V}{\W}$ as \textbf{$\V$-$\W$-bigraded functors} and \textbf{\mbox{$\V$-$\W$}-bigraded-natural transformations}, respectively. We write $\tensor[_\V]{{\textnormal{GCAT}'}}{_\W} = \LGCAT{\V \times \W^\rev}'$ for the 2-category of huge $\V$-$\W$-bigraded categories. By \ref{para:cats_gr_prod}, we have an isomorphism $\mathrm{C}^*:\GCAT{\V}{\W} \xrightarrow{\sim} \GCAT{\W^\rev}{\V^\rev}$

By \ref{para:cats_gr_prod}, every $\V$-$\W$-bigraded category $\C$ has an underlying left $\V$-graded category $\CleftV := U^*_\ell\C$ and an underlying right $\W$-graded category $\C_\W := U^*_r\C$. A graded morphism $f:X \gc A \to B$ in $\CleftV$ is a graded morphism $f:X \gc A \gc I \to B$ in $\C$, while a graded morphism $g:A \gc X' \to B$ in $\C_\W$ is a graded morphism $g:I \gc A \gc X' \to B$ in $\C$. We sometimes write $\CleftV$ and $\C_\W$ simply as $\C$, and thus we may speak of graded morphisms $f:X \gc A \to B$ and $g:A \gc X' \to B$ in $\C$.
\end{defn}

\begin{exa}\label{exa:biact}
A \textit{$\V$-$\W$-biactegory} \cite{Skoda:Biactegories,Skoda:SomeEquivariant,EGNO:TensorCategories,CapGav:Actegories} is a category $\C$ equipped with the structure of both a left $\V$-actegory and right $\W$-actegory, together with isomorphisms $X.(A.X') \cong (X.A).X'$ $(X \in \V, A \in \C, X' \in \W)$ satisfying certain axioms \cite[4.3.1]{CapGav:Actegories}. For our purposes it will be more convenient to employ the following equivalent definition \cite[4.3.6]{CapGav:Actegories}: By definition, a \textbf{$\V$-$\W$-biactegory} $\C$ is a (left) $(\V \times \W^\mathsf{rev})$-actegory, whose action we write as $X.A.X' = (X,X').A$ $(X \in \V, A \in \C, X' \in \W)$, so that this action determines a functor of the form $\V \times \C \times \W \to \C$.  By \ref{para:acts_as_vgr_cats}, every $\V$-$\W$-biactegory $\C$ may be regarded as a $\V$-$\W$-bigraded category in which a graded morphism $f:X \gc A \gc X' \to B$ is a morphism $f:X.A.X' \to B$ in $\C$. Note that every $\V$-$\W$-biactegory $\C$ has an underlying left $\V$-actegory and an underlying right $\W$-actegory, with actions given by $X.A = X.A.I$ and $A.X' = I.A.X'$ $(X \in \V, A \in \C, X' \in \W)$, respectively. With this notation, we have isomorphisms $(X.A).X' \cong X.A.X' \cong X.(A.X')$, which we sometimes omit with the understanding that the reader can readily supply these when needed.
\end{exa}

\begin{exa}\label{exa:bigr_str_hatv}
Every monoidal category $\V$ itself is a $\V$-$\V$-biactegory via $X.A.X' = X \otimes A \otimes X'$ (actually $(X \otimes A) \otimes X'$, say), so that $\V$ is a $\V$-$\V$-bigraded category.

Similarly, the huge monoidal category $\hat{\V}$ is a $\hat{\V}$-$\hat{\V}$-biactegory, so by restriction along the strong monoidal functor $\mathsf{Y} \times \mathsf{Y}^\rev:\V \times \V^\rev \to \hat{\V} \times \hat{\V}^\rev$ we find that $\hat{\V}$ is a $(\V \times \V^\rev)$-actegory, i.e. a $\V$-$\V$-biactegory, with $X.P.Y = \mathsf{Y}(X) \otimes P \otimes \mathsf{Y}(Y)$ naturally in $X,Y \in \V$, $P \in \hat{\V}$. Each functor $X.(-).Y:\hat{\V} \to \hat{\V}$ $(X,Y \in \ob\V)$ has a right adjoint $\tensor[^Y]{(-)}{^X}$ defined by $\tensor[^Y]{P}{^X} = P(X \otimes - \otimes Y) \cong (\tensor[^Y]{P}{})^X \cong \tensor[^Y]{(P^X)}{}$ $(P \in \V)$ with the notation of \ref{sec:day_conv}, because $\hat{\V}(Q,(\tensor[^Y]{P}{})^X) \cong \hat{\V}(X.Q,\tensor[^Y]{P}{}) \cong \hat{\V}(X.Q.Y,P)$ naturally in $Q \in \hat{\V}$. Thus we  may regard $\hat{\V}$ as a huge $\V$-$\V$-bigraded category, in which a graded morphism $\phi:X \gc P \gc Y \to Q$ in $\hat{\V}$ is a natural transformation $\phi: P \Rightarrow \tensor[^Y]{Q}{^X} = Q(X \otimes - \otimes Y)$.
\end{exa}

\begin{exa}\label{exa:top-box-box-biact}
By \cite[10.4]{GrMau:Cubical}, there is a strong monoidal functor $F:\Box \to \Top$ (on the restricted cubical site, \ref{exa:actegories}, \ref{exa:gr_cats}) that sends the object $2$ of $\Box$ to the unit interval $[0,1]$ and sends the generating morphisms $\varepsilon:2 \to 1 = 2^0$ and $0,1:1 \rightrightarrows 2$ to the unique map $[0,1] \to 1$ and the maps $1 \rightrightarrows [0,1]$ that pick out the endpoints $0,1 \in [0,1]$, respectively. By \ref{exa:bigr_str_hatv}, $\Top$ is a $\Top$-$\Top$-biactegory and so, by restriction along the strong monoidal functor $F \times F^\rev:\Box \times \Box^\rev \to \Top \times \Top^\rev$, $\Top$ is a $\Box$-$\Box$-biactegory and so may be regarded as a $\Box$-$\Box$-bigraded category for the non-symmetric monoidal category $\Box$.
\end{exa}

\begin{exa}\label{exa:v1_1v_bigraded}
$\V$-$1$-bigraded categories (resp.~$1$-$\V$-bigraded categories) are precisely left $\V$-graded categories (resp.~right $\V$-graded categories).
\end{exa}

We develop further general classes of examples of bigraded categories in \ref{exa:vgr_func}(5).

\begin{para}\label{para:ch_base_bigr}
There is a 2-functor $(-) \times (?)^\rev:\MCAT_\oplax \times \MCAT_\oplax \to \MCAT_\oplax$ given on objects by $(\V,\W) \mapsto \V \times \W^\rev$, so by Theorem \ref{thm:backward_ch_base_2functor} we obtain a 2-functor
$$\GCAT{(-)}{(?)} = \LGCAT{(-) \times (?)^\rev}\;:\;\MCAT_\oplax^{\coop}  \times \MCAT_\oplax^{\coop} \longrightarrow \TWOCAT.$$
\end{para}

\begin{para}\label{para:cats_gr_pr_vs_vw_bigr}
Categories graded by a product $\V \times \W$ are equally $\V$-$\W^\rev$-bigraded categories. Our motivations for focusing on $\V$-$\W$-bigraded categories rather than $\V \times \W$-graded categories are threefold. Firstly, $\V$-$\W$-bigraded categories support a convenient diagrammatic formalism (\ref{para:cdiag_vgr_cats}). Secondly, $\V$-$\W$-bigraded categories ease the expression of the notion of graded bifunctor, as we discuss in \ref{rem:bifun_vtimesw_gr}. Thirdly, for an arbitrary monoidal category $\V$, we may regard $\V$ and $\hat{\V}$ as $\V$-$\V$-bigraded categories (\ref{exa:bigr_str_hatv}) but \textit{not} in general as $(\V \times \V)$-graded categories. There is, however, an important special case in which categories graded by $\V \times \V$ abound:
\end{para}

\begin{exa}\label{exa:duoidal}
A \textit{duoidal category} $\dV = (\V,\dpr,J,\alpha,\lambda,\rho)$ consists of a monoidal category $\V$ (for which we use our usual notations) that is equipped with opmonoidal functors $\dpr:\V \times \V \to \V$, $J:1 \to \V$, and opmonoidal transformations $\alpha$, $\lambda$, $\rho$ that make $\dV = (\V,\dpr,J,\alpha,\lambda,\rho)$ a pseudomonoid in $\MCAT_\oplax$, where we regard $\MCAT_\oplax$ as a 2-category with (conical) finite products (\ref{para:cats_gr_prod}) and so as a monoidal 2-category. One says that $\dV$ is \textit{normal} if the opmonoidal functors $\dpr:\V \times \V \to \V$ and $J:1 \to \V$ are both normal (\ref{exa:comon_ch_base}). We discuss duoidal categories in more detail in \S \ref{sec:duoidal}, referring the reader to \cite[6.1.1]{AgMa} for an explicit account of the axioms and to \cite{GaLf} for further discussion of duoidal categories and their history. Given any duoidal category $\dV = (\V,\dpr,J,\alpha,\lambda,\rho)$, we can apply contravariant change of base (\ref{thm:backward_ch_base_2functor}) to obtain a 2-functor $\dpr^*:\LGCAT{\V} \to  \LGCAT{\V \times \V}$ that sends each (left) $\V$-graded category to a $(\V \times \V)$-graded category $\C_\dpr := \dpr^*\C$ that we discuss in \S \ref{sec:duoidal}.

For example, as discussed in \cite{GaLf}, every braided monoidal category $\V$ carries the structure of a normal duoidal category in which $\dpr = \otimes$ and $J = I$. But, by \cite[Example 2.5]{JoyStr}, the braiding $c = c_{XY}:X \otimes Y \to Y \otimes X$ $(X,Y \in \V)$ equips the identity functor on $\V$ with the structure of a strong opmonoidal functor $\bar{c}:\V^\rev \xrightarrow{\sim} \V$ whose unit constraint is an identity. Hence, we have an isomorphism $\bar{c}^*:\LGCAT{\V} \xrightarrow{\sim} \LGCAT{\V^\rev} = \RGCAT{\V}$ under which each left $\V$-graded category $\C$ corresponds to a right $\V$-graded category with the same objects and graded morphisms, but with composition given by first composing in $\C$ and then reindexing along $c$. By \ref{para:ch_base_bigr} we also obtain an isomorphism $\GCAT{\V}{\V} \cong \GCAT{\V}{\V^\rev} = \LGCAT{\V \times \V}$. Thus, by the above, every left $\V$-graded category $\C$ determines a $\V$-$\V$-bigraded category in which a graded morphism $f:X \gc A \gc X' \to B$ is a graded morphism $f:X \otimes X' \gc A \to B$ in $\C$.
\end{exa}

\section{Envelope diagrams for graded categories}\label{sec:emb}

Given a monoidal category $\V$, we now embed each (left) $\V$-graded category $\C$ into a (left) $\V$-actegory and then use this to define a formalism of commutative diagrams for $\V$-graded categories.

\begin{prop}\label{thm:copower_cocompl_vgr}
The forgetful 2-functor $\LGCAT{\V}^{\bigdot\textnormal{pres}} \to \LGCAT{\V}$ has a left biadjoint whose unit consists of fully faithful $\V$-graded functors that are injective on objects. Moreover, given any $\V$-graded category $\C$, there is a $\V$-graded category $\C_\bigdot$ with $\V$-copowers and a $\V$-graded functor $E:\C \to \C_\bigdot$ with the following properties: (1) for every $\V$-graded category $\D$ with $\V$-copowers, the functor $(-) \circ E:\LGCAT{\V}^{\bigdot\textnormal{pres}}(\C_\bigdot,\D) \to \LGCAT{\V}(\C,\D)$ is an equivalence, (2) $E$ is fully faithful and injective on objects, and (3) every object of $\C_\bigdot$ is a copower of some object of the form $EA$ $(A \in \ob\C)$ by some $X \in \ob\V$. 
\end{prop}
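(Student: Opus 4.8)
The plan is to realize $\C_\bigdot$ as the free cocompletion of $\C$ under copowers by representables, so that the proposition becomes an application of \ref{para:copower_cocompl}. First I would take the huge biclosed base to be $\sfV = \hat{\V}$ (which is such a base by \ref{sec:day_conv}) and let $\R \hookrightarrow \hat{\V}$ be the full subcategory on the representables $\mathsf{Y}X$ $(X \in \ob\V)$; since $\V$ is $\SET$-small and $\mathsf{Y}$ is fully faithful, $\R$ is $\SET$-small. By the definitions of $\V$-copower and copower (\ref{para:acts_as_vgr_cats}, \ref{para:tensors}), a $\V$-copower in a $\V$-graded category is precisely an $\R$-copower in the underlying $\hat{\V}$-category; hence $\LGCAT{\V}^\bigdot$ consists of the $\hat{\V}$-categories with $\R$-copowers, and the $1$-cells of $\LGCAT{\V}^{\bigdot\textnormal{pres}}$ are exactly the $\R$-copower-preserving $\hat{\V}$-functors. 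I would then set $\C_\bigdot := \R(\C)$ and $E := K:\C \to \R(\C)$ as in \ref{para:copower_cocompl}, whence $\C_\bigdot$ is $\SET$-small, has $\V$-copowers, and $E$ is fully faithful.

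Property~(1) is then literally the universal property of \ref{para:copower_cocompl}, that $(-) \circ K:\textnormal{$\R$-}\mathsf{COCTS}(\R(\C),\D) \to \LCAT{\hat{\V}}(\C,\D)$ is an equivalence for every $\hat{\V}$-category $\D$ with $\R$-copowers. Under the identifications above this reads $(-) \circ E:\LGCAT{\V}^{\bigdot\textnormal{pres}}(\C_\bigdot,\D) \to \LGCAT{\V}(\C,\D)$, using that $\LGCAT{\V} = \LCAT{\hat{\V}}$ and that the hom-categories of the locally full sub-$2$-category $\LGCAT{\V}^{\bigdot\textnormal{pres}}$ are the full subcategories of $\R$-copower-preserving $\hat{\V}$-functors, with $\V$-graded-natural $2$-cells.

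For property~(3) I would combine the closure statement of \ref{para:copower_cocompl}, that the $\R$-copower closure of $\K = \{EA \mid A \in \ob\C\}$ is all of $\R(\C)$, with the observation that a single copower already suffices. Let $\mathcal{S} \hookrightarrow \C_\bigdot$ be the full subcategory on the objects isomorphic to $X \cdot EA$ for some $X \in \ob\V$ and $A \in \ob\C$. Since $\mathsf{Y}(I)$ is the unit of Day convolution we have $EA \cong I \cdot EA$, so $\K \subseteq \mathcal{S}$; and since copowers compose and $\mathsf{Y}$ is strong monoidal (\ref{sec:day_conv}), we have $Y \cdot (X \cdot EA) \cong (\mathsf{Y}Y \otimes \mathsf{Y}X) \cdot EA \cong \mathsf{Y}(Y \otimes X) \cdot EA = (Y \otimes X) \cdot EA$, so $\mathcal{S}$ is closed under $\R$-copowers. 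Therefore $\mathcal{S} = \R(\C)$, which gives~(3), an object isomorphic to a copower being itself a copower.

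It remains to produce the biadjunction and to upgrade full faithfulness to injectivity on objects in~(2). The equivalence of~(1) is strictly natural in $\D$, by associativity of composition of $\hat{\V}$-functors, so it exhibits $E$ as a bi-universal arrow from $\C$ to the forgetful $2$-functor $\LGCAT{\V}^{\bigdot\textnormal{pres}} \to \LGCAT{\V}$; letting $\C$ vary, these assemble in the standard way into a left biadjoint $(-)_\bigdot$ with unit components $E$. The one point that goes beyond \ref{para:copower_cocompl}, and that I regard as the main thing to pin down, is injectivity of $E$ on objects, since only full faithfulness is asserted there: I would read this off the explicit construction recalled in \S\ref{sec:weighted_colims}, in which $\R(\C)$ is built inside a presheaf category with $K$ a corestricted Yoneda embedding; alternatively, as injectivity on objects is invariant under relabeling objects and the universal property is invariant under isomorphism of the target, one may simply arrange the object-set of $\C_\bigdot$ to contain a faithful copy of $\ob\C$.
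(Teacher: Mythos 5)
Your proposal is correct and follows essentially the same route as the paper: take $\sfV=\hat{\V}$, cocomplete freely under copowers by the representables via \ref{para:copower_cocompl}, use the same $\mathsf{Y}Y\cdot(\mathsf{Y}X\cdot KA)\cong\mathsf{Y}(Y\otimes X)\cdot KA$ computation for (3), and handle injectivity on objects by relabelling. The one caveat is that your first suggestion for injectivity — reading it off the corestricted Yoneda embedding $A\mapsto\C(-,A)$ — does not work in general, since distinct objects of $\C$ can have literally equal representables; the paper explicitly notes that $K$ need not be injective on objects and instead does exactly your fallback, constructing $\C_\bigdot$ with object set $\ob\V\times\ob\C$ together with an equivalence $\Gamma:\C_\bigdot\to\R(\C)$, $(X,A)\mapsto X\cdot KA$, and setting $E:A\mapsto(I,A)$.
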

\begin{proof}
Take $\sfV = \hat{\V}$ and let $\R \hookrightarrow \sfV$ be the full subcategory spanned by the representables $\mathsf{Y} X = \V(-,X)$ $(X \in \ob\V)$, noting that $\R$ is $\SET$-small. By \ref{para:copower_cocompl}, we can form the free cocompletion $\R(\C)$ under $\R$-copowers, which is $\SET$-small, and the associated $\sfV$-functor $K:\C \to \R(\C)$ is fully faithful and has the universal property in (1). By \ref{para:copower_cocompl}, the closure $\bar{\K}$ of $\K = \{KA \mid A \in \ob\C\} \hookrightarrow \R(\C)$ under $\R$-copowers is $\R(\C)$ itself, but $\bar{\K}$ is simply the repletion (i.e.~isomorphism closure) of $\{\mathsf{Y}X \cdot KA \mid X \in \ob\V, A \in \ob\C\} \hookrightarrow \R(\C)$, because $\mathsf{Y}Y \cdot (\mathsf{Y}X \cdot KA) \cong (\mathsf{Y}Y \otimes \mathsf{Y}X) \cdot KA \cong \mathsf{Y}(Y \otimes X) \cdot KA$ for all $Y,X \in \ob\V$, $A \in \ob\C$. Therefore each object of $\R(\C)$ is a copower of some $KA$ $(A \in \ob\C)$ by some $X \in \ob\V$, so while $K$ need not be injective on objects, it follows that there is a $\V$-graded category $\C_{\bigdot}$ with object set $\ob\V \times \ob\C$ and an equivalence $\Gamma:\C_\bigdot \to \R(\C)$ given on objects by $(X,A) \mapsto X \cdot KA$. It follows that there is a $\V$-graded functor $E:\C \to \C_\bigdot$ given on objects by $A \mapsto (I,A)$ such that $\Gamma E \cong K$, and the result follows.
\end{proof}

\begin{cor}\label{thm:cor_free_lact}
The canonical 2-functor $U:\LACT{\V}^\strong \to \LGCAT{\V}$ has a left biadjoint whose unit consists of fully faithful $\V$-graded functors that are injective on objects.
\end{cor}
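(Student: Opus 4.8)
The plan is to derive this corollary from Proposition \ref{thm:copower_cocompl_vgr} by transporting the left biadjoint established there across the 2-equivalence recorded in \ref{para:acts_as_vgr_cats}. First I would observe that the canonical 2-functor $U$ factors as $U = W \circ \Phi$, where $\Phi:\LACT{\V}^\strong \xrightarrow{\sim} \LGCAT{\V}^{\bigdot\textnormal{pres}}$ is the 2-equivalence of \ref{para:acts_as_vgr_cats}, sending each left $\V$-actegory to its underlying $\V$-graded category (which has $\V$-copowers) and each strong morphism to the corresponding copower-preserving $\V$-graded functor, and where $W:\LGCAT{\V}^{\bigdot\textnormal{pres}} \to \LGCAT{\V}$ is the forgetful 2-functor appearing in Proposition \ref{thm:copower_cocompl_vgr}.

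Next I would invoke Proposition \ref{thm:copower_cocompl_vgr} to obtain a left biadjoint $L$ of $W$ whose unit $\eta_\C:\C \to WL\C$ is fully faithful and injective on objects for every $\C$. I would then choose a pseudoinverse $\Psi$ of $\Phi$ and promote the equivalence to an adjoint 2-equivalence $\Psi \dashv \Phi$ whose unit is a 2-natural \emph{isomorphism}, and take $\Psi L$ as the candidate left biadjoint of $U$. Its universal property is witnessed by the chain
$$\LACT{\V}^\strong(\Psi L\C,\D) \cong \LGCAT{\V}^{\bigdot\textnormal{pres}}(L\C,\Phi\D) \cong \LGCAT{\V}(\C,W\Phi\D) = \LGCAT{\V}(\C,U\D),$$
2-natural in $\C \in \LGCAT{\V}$ and $\D \in \LACT{\V}^\strong$, so that $\Psi L \dashv U$.

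It then remains to identify the unit of this composite biadjunction: at each $\C$ it is the composite of $\eta_\C:\C \to WL\C$ with the image under $W$ of the unit of $\Psi \dashv \Phi$ at $L\C$. I expect the one delicate point --- and the only genuine obstacle --- to be the preservation of injectivity on objects here, since a biadjunction determines its unit only up to equivalence, and a generic equivalence of $\V$-graded categories is merely essentially surjective, which would weaken ``injective on objects'' to essential surjectivity. This is exactly why I would insist on an adjoint 2-equivalence with \emph{invertible} unit: then the second factor is an isomorphism of $\V$-graded categories, hence bijective on objects and fully faithful, so composing it with $\eta_\C$ preserves both full faithfulness and injectivity on objects, giving the required form of the unit.
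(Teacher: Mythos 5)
Your proposal is correct and follows essentially the same route as the paper, whose proof simply observes that $U$ factors as the 2-equivalence of \ref{para:acts_as_vgr_cats} followed by the forgetful 2-functor $\LGCAT{\V}^{\bigdot\textnormal{pres}} \to \LGCAT{\V}$ and then cites \ref{thm:copower_cocompl_vgr}. Your additional care in choosing an adjoint 2-equivalence with invertible unit, so that injectivity on objects of the unit is preserved, is exactly the point the paper leaves implicit (relying on the 2-equivalence being strict).
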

\begin{proof}
By \ref{para:acts_as_vgr_cats} we have a (strict) 2-equivalence $\LACT{\V}^\strong \simeq \LGCAT{\V}^{\bigdot\textnormal{pres}}$, and $U$ is simply the composite of this 2-equivalence and the forgetful 2-functor $\LGCAT{\V}^{\bigdot\textnormal{pres}} \to \LGCAT{\V}$, so this follows from \ref{thm:copower_cocompl_vgr}.
\end{proof}

\begin{defn}\label{defn:env_act}
Given a left $\V$-graded category $\C$, we write $\V \gct \C$ to denote the left $\V$-actegory to which $\C$ is sent by the left biadjoint $\LGCAT{\V} \to \LACT{\V}^\strong$ in \ref{thm:cor_free_lact}.  As usual, we also denote the $\V$-graded category $U(\V \gct \C)$ by $\V \gct \C$.  Hence the $\V$-actegory $\V \gct \C$ is equipped with a $\V$-graded functor
$$E:\C \hookrightarrow \V \gct \C$$
that is fully faithful and injective on objects. We call $\V \gct \C$ the \textbf{enveloping actegory} or \textbf{actegorical envelope} of $\C$. We write the $\V$-action carried by $\V \gct \C$ as ``$\gc$'', and via the embedding $E$ we identify $\C$ with the full $\V$-graded subcategory of $\V \gct \C$ spanned by the objects $EA$ with $A \in \ob\C$, denoting these objects simply by $A$. Thus, for each graded morphism $f:X \gc A \to B$ in $\C$ we write simply $f:X \gc A \to B$ to denote the associated morphism $Ef:X \gc EA \to EB$ in $\V \gct \C$. In particular, $\gid_A:I \gc A \to A$ in $\C$ is identified with the unit constraint $\gid_A:I \gc A \xrightarrow{\sim} A$ in $\V \gct \C$ (\ref{para:acts_as_vgr_cats}). By \ref{thm:copower_cocompl_vgr} and the proof of \ref{thm:cor_free_lact}, every object of $\V \gct \C$ is isomorphic to the object $X \gc A$ obtained by applying the $\V$-action to some $X \in \ob\V$ and $A \in \ob\C$. 
\end{defn}

\begin{para}[\textbf{Envelope diagrams}]\label{para:cdiag_vgr_cats}
The embedding of a left $\V$-graded category $\C$ into its enveloping actegory $\V \gct \C$ enables the use of commutative diagrams that denote graded morphisms in $\C$. Given graded morphisms $f:X \gc A \to B$, $g:Y \gc B \to C$ in $\C$, and a morphism $\alpha:X' \to X$ in $\V$, the composite $g \circ f:Y\otimes X \gc A \to C$ and the reindexing $\alpha^*(f):X' \gc A \to B$ are the following composites in $\V \gct \C$, respectively:
$$Y \otimes X \gc A \xrightarrow{\sim} Y\gc X\gc A \xrightarrow{Y\gc f} Y\gc B \xrightarrow{g} C\;\;\;\;\;\;\;\;\;\;X' \gc A \xrightarrow{\alpha \gc A} X \gc A \xrightarrow{f} B\;.$$
We sometimes omit the isomorphisms $Y \gc X \gc A \cong Y \otimes X \gc A$ and $I \gc A \cong A$ associated to the $\V$-action carried by $\V \gct \C$, with the understanding that the reader can readily supply these when needed.

We can thus use certain commutative diagrams in $\V \gct \C$ to express equations in a left $\V$-graded category $\C$. An \textbf{envelope diagram} in a left $\V$-graded category $\C$ is, by definition, a diagram in $\V \gct \C$ that is expressed entirely in terms of graded morphisms in $\C$ and morphisms in $\V$, using the action carried by $\V \gct \C$. For example, given a monoid $R = (R,m,e)$ in $\V$ and a left $\V$-graded category $\C$, a left $R$-module in $\C$ in the sense of \ref{rem:rmod_in_vgr_cat} is an object $A$ of $\C$ equipped with a graded morphism $a:R \gc A \to A$ in $\C$ making the following envelope diagrams commute:
$$
\xymatrix{
R \otimes R \gc A \ar[r]^\sim \ar[d]_{m \gc A} & R\gc R\gc A \ar[rr]^{R\gc a} & & R\gc A \ar[d]^{a} & I \gc A \ar[rr]^(.5){e\gc A} \ar[drr]_{\gid_A} & & R\gc A \ar[d]^{a}\\
R\gc A \ar[rrr]^{a} & & & A & & & A
}
$$

Given a right $\V$-graded category $\C$, we write $\C \gcl \V$ to denote the enveloping actegory $\V^\rev\gct \C$ of the left $\V^\rev$-graded category $\C$.  Since $\V^\rev\gct \C$ is a left $\V^\rev$-actegory, this means that $\C\gcl \V$ is a right $\V$-actegory, which we call the enveloping actegory of $\C$. We have an embedding $E:\C \hookrightarrow \C \gcl \V$ under which we identify $\C$ with a full $\V$-graded subcategory of $\C\gcl \V$, and we write ``$\gc$'' to denote the $\V$-action on $\C \gcl \V$. Thus we may regard each graded morphism $f:A \gc X \to B$ in $\C$ as a morphism in the $\V$-actegory $\C\gcl \V$. Given graded morphisms $f:A \gc X \to B$, $g:B \gc Y \to C$ in $\C$, the composite $g \circ f:A \gc X \otimes Y \to C$ in $\C$ is the composite $A \gc X \otimes Y \xrightarrow{\sim} A \gc X \gc Y \xrightarrow{f \gc Y} B \gc Y \xrightarrow{g} C$ in the right $\V$-actegory $\C\gcl \V$.

Let $\C$ be a $\V$-$\W$-bigraded category for monoidal categories $\V$ and $\W$. Then $\C$ is a left $(\V \times \W^\rev)$-graded category, so its enveloping actegory $(\V \times \W^\rev)\gcl \C$ is a left $(\V \times \W^\rev)$-actegory, i.e. a $\V$-$\W$-biactegory, which we denote by $\V \gct \C\gct \W$ and call the \textbf{enveloping biactegory} of $\C$. We write the action carried by $\V \gct \C \gct \W$ as $X \gc D \gc X'$ $(X \in \V, D \in \V \gct \C \gct \W, X' \in \W)$, and there is a fully faithful $\V$-$\W$-bigraded functor $E:\C \hookrightarrow \V \gct \C\gct \W$ that allows us to regard each graded morphism $f:X\gc A\gc X' \to B$ in $\C$ as a morphism in the $\V$-$\W$-biactegory $\V \gct \C\gct \W$. Given graded morphisms $f:X\gc A\gc X' \to B$ and $g:Y \gc B \gc Y' \to C$ in $\C$ and morphisms $\alpha:Z \to X$ in $\V$ and $\alpha':Z' \to X'$ in $\W$, the composite $g \circ f$ and reindexing $(\alpha,\alpha')^*(f)$ in $\C$ can be written as the following composites in $\V \gct \C \gct \W$, respectively:
$$Y \otimes X \gc A \gc X' \otimes Y' \xrightarrow{\sim} Y\gc X\gc A\gc X'\gc Y' \xrightarrow{Y \gc f\gc Y'} Y\gc B\gc Y' \xrightarrow{g} C$$
$$Z \gc A\gc Z' \xrightarrow{\alpha\gc A\gc \alpha'} X\gc A\gc X' \xrightarrow{f} B\;.$$
In view of \ref{defn:env_act}, every object of $\V \gct \C \gct \W$ is isomorphic to one of the form $X \gc A \gc X'$ with $X \in \ob\V$, $A \in \ob\C$, $X' \in \ob\W$. In view of \ref{exa:biact}, $\V \gct \C \gct \W$ also has an underlying left $\V$-actegory  whose left $\V$-action is given by $X \gc D = X \gc D \gc I$ $(X \in \V, D \in \V \gct \C \gct \W)$, and similarly $\V \gct \C \gct \W$ has an underlying right $\W$-actegory. In particular, given $X \in \ob\V$ and $A,B \in \ob\C$, a morphism $f$ from $X \gc A = X \gc A \gc I$ to $B$ in $\V \gct \C \gct \W$ is equivalently a graded morphism $f:X \gc A \gc I \to B$ in $\C$, i.e.~a graded morphism $f:X \gc A \to B$ in (the left $\V$-graded category underlying) $\C$ (\ref{defn:bigraded}). Similar remarks apply also to graded morphisms of the form $f:A \gc X' \to B$ in (the right $\W$-graded category underlying) $\C$. As discussed in \ref{exa:biact}, we have isomorphisms $(X \gc A) \gc X' \cong X \gc A \gc X' \cong X \gc (A \gc X')$ in $\V \gct \C \gct \W$, which we often omit.
\end{para}

In the remainder of the paper, we freely use the notations in \ref{para:cdiag_vgr_cats}.

\section{Bigraded squares in bigraded categories}\label{sec:bigr_sq}

Let $\V$ and $\W$ be monoidal categories.  The following concept will be useful in our studies of graded functor categories and bifunctors:

\begin{defn}\label{defn:bigr_sq}
Given a $\V$-$\W$-bigraded category $\C$ and objects $X \in \ob\V$ and $X' \in \ob\W$, an \textbf{$(X,X')$-bigraded square} $s = (f,g,\phi,\phi')$ in $\C$ consists of graded morphisms $f:X \gc A \to A'$, $g:X \gc B \to B'$, $\phi:A \gc X' \to B$, $\phi':A' \gc X' \to B'$ in $\C$ such that the following envelope diagram commutes:
\begin{equation}\label{eq:bigr_square}
\xymatrix{
X \gc A \gc X' \ar[d]_{f \gc X'} \ar[r]^(.55){X \gc \phi} & X \gc B \ar[d]^g\\
A' \gc X' \ar[r]^{\phi'} & B'
}
\end{equation}
The \textbf{diagonal} $\Delta s:X \gc A \gc X' \to B'$ of $s$ is the common composite in \eqref{eq:bigr_square}. We use the following schematic notation to denote the $(X,X')$-bigraded square $s = (f,g,\phi,\phi')$:
\begin{equation}\label{eq:bigr_sq_notn}
\xymatrix{
A \ar@{-}[d]_f \ar@{-}[r]^\phi="s1" & B \ar@{-}[d]^g\\
A' \ar@{-}[r]^{\phi'}="t1" & B'
\ar@{}"s1";"t1"|(0.6){X,X'}
}
\end{equation}
Note that, with the conventions of \ref{defn:bigraded} and \ref{para:cdiag_vgr_cats}, $f,g$ are graded morphisms in $\CleftV$ and $\phi,\phi'$ are graded morphisms in $\C_\W$, while \eqref{eq:bigr_square} is a diagram in $\V \gct \C \gct \W$.
\end{defn}

While we usually use envelope diagrams in working with bigraded squares, we first pause to develop a direct formulation of bigraded squares; we later use this in \S \ref{sec:duoidal} to examine what bigraded squares amount to in the duoidal case.

\begin{rem}\label{rem:bigr_square}
In the diagram \eqref{eq:bigr_square}, we omit the isomorphisms $(X \gc A) \gc X' \cong X \gc A \gc X'$ $\cong X \gc (A \gc X')$ in keeping with our convention in \ref{para:cdiag_vgr_cats}. Also, by \ref{para:cdiag_vgr_cats}, $f$ and $g$ can be written as graded morphisms of the form $f:X \gc A \gc I \to A'$ and $g:X \gc B \gc I \to B'$ in $\C$, while $\phi$ and $\phi'$ can be written as $\phi:I \gc A \gc X' \to B$ and $\phi':I \gc A' \gc X' \to B'$. Furthermore, in view of \ref{para:cdiag_vgr_cats}, the diagram \eqref{eq:bigr_square} can be rewritten in terms of the two-sided action carried by $\V \gct \C \gct \W$ as follows:
$$
\xymatrix{
X\gc A\gc X' \ar[d]_(.45)\wr \ar[r]^(.4)\sim & X \gc I \gc A \gc X' \gc I \ar[rr]^(.5){X \gc \phi \,\gc I} & & X \gc B\gc I \ar[d]^g\\
{I \gc X \gc A \gc I \gc X'} \ar[rr]^(.5){I\gc f \gc X'} & & I\gc A' \gc X' \ar[r]^(.6){\phi'} & B'
}
$$
Consequently, the commutativity of the square \eqref{eq:bigr_square} can be formulated without recourse to envelope diagrams, at some cost of elegance and economy, as follows:
\end{rem}

\begin{prop}\label{thm:direct_formulation_of_bigr_squares}
An $(X,X')$-bigraded square $s = (f,g,\phi,\phi')$ in a $\V$-$\W$-bigraded category $\C$ is given by graded morphisms $f:(X,I) \gc A \to A'$, $g:(X,I) \gc B \to B'$, $\phi:(I,X') \gc A \to B$ and $\phi':(I,X') \gc A' \to B'$ in the $(\V \times \W^\rev)$-graded category $\C$ such that the composites 
$$g \circ \phi:(X \otimes I,X' \otimes I) \gc A \to B'\;,\;\;\;\;\phi' \circ f:(I \otimes X,I \otimes X') \gc A \to B'$$
are related by the equation
\begin{equation}\label{eq:common_comp}(r_X^{-1},r_{X'}^{-1})^*(g \circ \phi) = (\ell_X^{-1},\ell_{X'}^{-1})^*(\phi' \circ f)\;:\;(X,X') \gc A \to B,\end{equation}
where $(r_X^{-1},r_{X'}^{-1}):(X,X') \to (X \otimes I,X' \otimes I)$ and $(\ell_X^{-1},\ell_{X'}^{-1}):(X,X') \to (I \otimes X,I \otimes X')$ are the morphisms in $\V \times \W^\rev$ determined by the left and right unitors in $\V$ and $\W$. The diagonal $\Delta s$ is the common value in \eqref{eq:common_comp}.
\end{prop}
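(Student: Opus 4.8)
The plan is to read off the two legs of the envelope diagram \eqref{eq:bigr_square} as morphisms $X \gc A \gc X' \to B'$ in the enveloping biactegory $\V \gct \C \gct \W$, to identify each leg with one of the two reindexed graded composites appearing in \eqref{eq:common_comp}, and thereby to reduce the commutativity of \eqref{eq:bigr_square} to the single equation \eqref{eq:common_comp}. All of the needed translations are supplied by the composition and reindexing formulae of \ref{para:cdiag_vgr_cats}, together with the rewriting of \eqref{eq:bigr_square} already carried out in Remark \ref{rem:bigr_square}. Since the embedding $E:\C \hookrightarrow \V \gct \C \gct \W$ is fully faithful and all four maps in question are graded morphisms of grade $(X,X')$ from $A$ to $B'$, equality of the legs in the biactegory is the same as equality of the corresponding graded morphisms in $\C$.

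First I would treat the top-right leg $X \gc A \gc X' \xrightarrow{X \gc \phi} X \gc B \xrightarrow{g} B'$. Since $\phi$ has grade $(I,X')$ and $g$ has grade $(X,I)$ as graded morphisms in the $(\V \times \W^\rev)$-graded category $\C$, the composition rule of \ref{defn:bigraded} gives that $g \circ \phi$ has grade $(X \otimes I, X' \otimes I)$, and by \ref{para:cdiag_vgr_cats} it is realized in $\V \gct \C \gct \W$ by the composite $X \otimes I \gc A \gc X' \otimes I \xrightarrow{\sim} X \gc I \gc A \gc X' \gc I \xrightarrow{X \gc \phi \gc I} X \gc B \gc I \xrightarrow{g} B'$. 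Comparing with the upper row of the diagram in Remark \ref{rem:bigr_square}, the top-right leg traverses exactly these same maps but issues from $X \gc A \gc X'$; hence it equals $g \circ \phi$ precomposed with the canonical isomorphism $X \gc A \gc X' \cong X \otimes I \gc A \gc X' \otimes I$. By the unit coherence of the biactegory this isomorphism is the action of $(r_X^{-1}, r_{X'}^{-1})$, and precomposition with such an action is, by the reindexing formula of \ref{para:cdiag_vgr_cats}, exactly reindexing; thus the top-right leg is $(r_X^{-1}, r_{X'}^{-1})^*(g \circ \phi)$.

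By the symmetric computation, using that $f$ has grade $(X,I)$ and $\phi'$ has grade $(I,X')$, the bottom-left leg $X \gc A \gc X' \xrightarrow{f \gc X'} A' \gc X' \xrightarrow{\phi'} B'$ is identified with $(\ell_X^{-1}, \ell_{X'}^{-1})^*(\phi' \circ f)$, where $\phi' \circ f$ has grade $(I \otimes X, I \otimes X')$. The diagram \eqref{eq:bigr_square} therefore commutes if and only if these two reindexed composites coincide, which is precisely \eqref{eq:common_comp}; and in that case the diagonal $\Delta s$, being the common composite of the two legs, is the common value in \eqref{eq:common_comp}.

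The only delicate point, and the step I would be most careful about, is the bookkeeping of the suppressed unit-constraint isomorphisms together with the $\W^\rev$-reversal in the second grade-component. One must check that reindexing in the product base $\V \times \W^\rev$ is computed componentwise, that the reversed order of $\otimes_\W$ places the right unitor of $\W$ in the $g \circ \phi$ leg and the left unitor of $\W$ in the $\phi' \circ f$ leg (so that the relevant reindexing morphisms are $(r_X^{-1}, r_{X'}^{-1})$ and $(\ell_X^{-1}, \ell_{X'}^{-1})$ in the slots claimed), and that the omitted associativity and unit constraints of the enveloping biactegory really do assemble, by coherence, into precisely these reindexing morphisms rather than some other canonical isomorphism.
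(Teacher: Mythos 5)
Your proposal is correct and follows essentially the same route as the paper, which establishes the proposition via the rewriting of the envelope diagram \eqref{eq:bigr_square} carried out in Remark \ref{rem:bigr_square}: the two legs through $X \gc I \gc A \gc X' \gc I$ and $I \gc X \gc A \gc I \gc X'$ are exactly the biactegorical realizations of $g \circ \phi$ and $\phi' \circ f$ precomposed with the unit isomorphisms, i.e.\ the reindexings along $(r_X^{-1},r_{X'}^{-1})$ and $(\ell_X^{-1},\ell_{X'}^{-1})$. Your grade bookkeeping in $\V \times \W^\rev$ (in particular that the $\W^\rev$-reversal puts the right unitor of $\W$ on the $g \circ \phi$ side and the left unitor on the $\phi' \circ f$ side) matches the statement.
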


\begin{para}\label{para:flip_bigraded_square}
$(f,g,\phi,\phi')$ is a bigraded square in a $\V$-$\W$-bigraded category $\C$ iff $(\phi,\phi',f,g)$ is a bigraded square in the $\W^\rev$-$\V^\rev$-bigraded category $\mathrm{C}^*\C$ of \ref{para:cats_gr_prod}.
\end{para}

\begin{para}\label{rem:bigr_sq_prod_gr}
A \textit{bigraded square} $(f,g,\phi,\phi')$ in a $(\V \times \W)$-graded category $\C$ is, by definition, a bigraded square in the $\V$-$\W^\rev$-bigraded category $\C$ and so, by \ref{thm:direct_formulation_of_bigr_squares}, consists of graded morphisms $f:(X,I) \gc A \to A'$, $g:(X,I) \gc B \to B'$, $\phi:(I,X') \gc A \to B$ and $\phi':(I,X') \gc A' \to B'$ in $\C$ such that the composites $g \circ \phi:(X \otimes I,I \otimes X') \gc A \to B'$ and $\phi' \circ f:(I \otimes X,X' \otimes I) \gc A \to B'$
satisfy the equation $(r_X^{-1},\ell_{X'}^{-1})^*(g \circ \phi) = (\ell_X^{-1},r_{X'}^{-1})^*(\phi' \circ f):(X,X') \gc A \to B'$. The added complexity of this formulation in comparison with \ref{defn:bigr_sq} is part of why we favour the formulation in \ref{defn:bigr_sq} via bigraded categories. Note that we can write $f,g,\phi,\phi'$ as graded morphisms $f:X \gc A \to A'$, $g:X \gc B \to B'$ in $U_\ell^*\C$ and $\phi:X' \gc A \to B$, $\phi':X' \gc A' \to B'$ in $U_r^*\C$. 
\end{para}

\begin{exa}\label{exa:bigr_sq_hatv}
We now characterize bigraded squares in the $\V$-$\V$-bigraded category $\hat{\V}$ (\ref{exa:bigr_str_hatv}). Let $f:X \gc P \to P'$, $g:X \gc Q \to Q'$, $\phi:P \gc X' \to Q$, and $\phi':P' \gc X' \to Q'$ be graded morphisms in $\hat{\V}$, i.e.~natural transformations $f:P \Rightarrow P'(X \otimes-)$, $g:Q \Rightarrow Q'(X \otimes-)$, $\phi:P \Rightarrow Q(-\otimes X')$, and $\phi':P' \Rightarrow Q'(-\otimes X')$. Then $(f,g,\phi,\phi')$ is a bigraded square in $\hat{\V}$ if and only if the following diagram commutes, where we omit the isomorphism $Q'(X \otimes (- \otimes X')) \cong Q'((X \otimes -) \otimes X')$ given by reindexing along the associator in $\V$:
$$
\xymatrix{
P \ar@{=>}[d]_f \ar@{=>}[r]^(.4){\phi} & Q(-\otimes X') \ar@{=>}[d]^{g_{-\otimes X'}}\\
P'(X \otimes -) \ar@{=>}[r]^(.43){\phi'_{X \otimes-}} & Q'(X \otimes - \otimes X')
}
$$
\end{exa}

For the remainder of this section, we fix a $\V$-$\W$-bigraded category $\C$, and we develop general results on bigraded squares that we employ later. Our first basic result is easily verified using either envelope diagrams or \ref{thm:direct_formulation_of_bigr_squares}:

\begin{prop}\label{thm:id_bigr_sq}
Let $f:X \gc A \to A'$ and $\phi:A \gc X' \to B$ in $\C$. Then $(f,f,\gid_A,\gid_{A'})$ and $(\gid_A,\gid_B,\phi,\phi)$ are bigraded squares in $\C$ with diagonals $f$ and $\phi$, respectively.
\end{prop}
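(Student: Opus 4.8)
The plan is to verify each square directly against the defining envelope diagram \eqref{eq:bigr_square}, recognizing it as a naturality square for one of the two unit constraints of the enveloping biactegory $\V \gct \C \gct \W$ (\ref{para:cdiag_vgr_cats}). First I would record the grades: since $\gid_A,\gid_{A'}$ carry $\W$-grade $I$, the tuple $(f,f,\gid_A,\gid_{A'})$ is an $(X,I)$-bigraded square, while since $\gid_A,\gid_B$ carry $\V$-grade $I$, the tuple $(\gid_A,\gid_B,\phi,\phi)$ is an $(I,X')$-bigraded square. In each case the two identity components play the role of $\phi,\phi'$ (resp.\ $f,g$), and it is the other pair that supplies the nontrivial grade.

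For $(f,f,\gid_A,\gid_{A'})$, by \ref{defn:env_act} the graded morphisms $\gid_A:A \gc I \to A$ and $\gid_{A'}:A' \gc I \to A'$ are the right $\W$-unit constraints of the biactegory, so the envelope diagram \eqref{eq:bigr_square} specializes to
$$
\xymatrix{
X \gc A \gc I \ar[d]_{f \gc I} \ar[r]^{X \gc \gid_A} & X \gc A \ar[d]^f\\
A' \gc I \ar[r]^{\gid_{A'}} & A'
}
$$
Here $f \gc I$ is the image of the $\V \gct \C \gct \W$-morphism $f:X \gc A \to A'$ under the functor $(-) \gc I$, and the top edge $X \gc \gid_A$ coincides with the unit constraint $\gid_{X \gc A}:(X \gc A) \gc I \to X \gc A$ once the suppressed coherence isomorphism $X \gc (A \gc I) \cong (X \gc A) \gc I$ (\ref{para:cdiag_vgr_cats}) is accounted for. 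The square is therefore exactly the naturality square of the right $\W$-unit constraint evaluated at $f$, so it commutes, and its diagonal is $f \circ \gid_{X \gc A} = f$ after identifying $X \gc A \gc I \cong X \gc A$.

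The square $(\gid_A,\gid_B,\phi,\phi)$ is treated symmetrically, now using that $\gid_A:I \gc A \to A$ and $\gid_B:I \gc B \to B$ are the left $\V$-unit constraints; its envelope diagram is the naturality square of the left $\V$-unit constraint evaluated at $\phi:A \gc X' \to B$, which commutes with diagonal $\phi$. Alternatively, one could bypass envelope diagrams entirely and check the defining equation \eqref{eq:common_comp} of \ref{thm:direct_formulation_of_bigr_squares}, which in each case collapses to a unitor identity following from axioms (I)--(IV) of \ref{para:vgr_cat_conc}. The only delicate point is the identification of $X \gc \gid_A$ with $\gid_{X \gc A}$ (and dually of $\gid_A \gc X'$ with $\gid_{A \gc X'}$), which expresses the compatibility of the two actions of $\V \gct \C \gct \W$ with their units; this is precisely the coherence that our conventions in \ref{para:cdiag_vgr_cats} and \ref{rem:bigr_square} permit us to suppress, so at the level of this argument it is routine rather than a genuine obstacle.
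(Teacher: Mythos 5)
Your proof is correct and is precisely the verification the paper leaves to the reader, which it describes as ``easily verified using either envelope diagrams or \ref{thm:direct_formulation_of_bigr_squares}'': you carry out the envelope-diagram route, correctly identifying each square as a naturality square for a unit constraint of $\V \gct \C \gct \W$, and you rightly flag the identification of $X \gc \gid_A$ with $\gid_{X \gc A}$ as the only coherence point being suppressed. Nothing further is needed.
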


\begin{prop}\label{thm:pres_bigr_sq}
Let $F:\C \to \D$ be a $\V$-$\W$-bigraded functor, and let $s = (f,g,\phi,\phi')$ be an $(X,X')$-bigraded square in $\C$. Then $Fs := (Ff,Fg,F\phi,F\psi)$ is an $(X,X')$-bigraded square in $\D$ with diagonal $\Delta(Fs) = F(\Delta s)$.
\end{prop}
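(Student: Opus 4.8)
The plan is to reduce the claim to the direct, reindexing-based characterization of bigraded squares in Proposition~\ref{thm:direct_formulation_of_bigr_squares}, and then to exploit that a $\V$-$\W$-bigraded functor, being by definition a $(\V \times \W^\rev)$-graded functor (\ref{defn:bigraded}), preserves reindexing, composition, and identities (\ref{para:vgr_func}). Under this reduction the statement becomes a one-line consequence of functoriality: applying $F$ to the defining equation of a bigraded square yields the defining equation of its image.

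Concretely, first I would record, via Proposition~\ref{thm:direct_formulation_of_bigr_squares}, that the hypothesis that $s = (f,g,\phi,\phi')$ is an $(X,X')$-bigraded square in $\C$ is precisely the equation \eqref{eq:common_comp}, namely
$$(r_X^{-1},r_{X'}^{-1})^*(g \circ \phi) = (\ell_X^{-1},\ell_{X'}^{-1})^*(\phi' \circ f)$$
of graded morphisms $(X,X') \gc A \to B'$ in the $(\V \times \W^\rev)$-graded category $\C$. Next I would apply $F$ to both sides. Since $F$ preserves composition we have $F(g \circ \phi) = Fg \circ F\phi$ and $F(\phi' \circ f) = F\phi' \circ Ff$, and since $F$ preserves reindexing it commutes with the reindexing operators $(r_X^{-1},r_{X'}^{-1})^*$ and $(\ell_X^{-1},\ell_{X'}^{-1})^*$ along the relevant morphisms of $\V \times \W^\rev$. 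Hence the image equation reads
$$(r_X^{-1},r_{X'}^{-1})^*(Fg \circ F\phi) = (\ell_X^{-1},\ell_{X'}^{-1})^*(F\phi' \circ Ff),$$
which, by the converse direction of Proposition~\ref{thm:direct_formulation_of_bigr_squares}, says exactly that $Fs = (Ff,Fg,F\phi,F\phi')$ is an $(X,X')$-bigraded square in $\D$. Its diagonal is by definition the common value of the two sides, which is the image under $F$ of the common value $\Delta s$; thus $\Delta(Fs) = F(\Delta s)$, completing the argument.

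I do not anticipate a genuine obstacle here: the only point requiring care is to confirm that $F$'s preservation of reindexing applies to the specific reindexings along $(r_X^{-1},r_{X'}^{-1})$ and $(\ell_X^{-1},\ell_{X'}^{-1})$ in $\V \times \W^\rev$, but this is immediate from the definition of a graded functor (\ref{para:vgr_func}). As an alternative entirely within the envelope-diagram formalism of \ref{para:cdiag_vgr_cats}, one could instead apply the strong morphism of $\V$-$\W$-biactegories that $F$ induces by functoriality of the enveloping (bi)actegory construction (\ref{thm:cor_free_lact}): since this morphism extends $F$ and preserves the two-sided action up to coherent isomorphism, it carries the commuting envelope diagram \eqref{eq:bigr_square} witnessing $s$ to the envelope diagram witnessing $Fs$, and the diagonal, being the image of a composite, is preserved. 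I would present the first, elementary argument as the main proof.
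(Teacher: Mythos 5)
Your main argument is exactly the paper's proof: the paper disposes of this in one line by noting that $F$ preserves composition and reindexing and invoking Proposition \ref{thm:direct_formulation_of_bigr_squares}, which is precisely the reduction you carry out in detail. The proposal is correct and takes essentially the same approach.
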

\begin{proof}
Since $F$ preserves composition and reindexing, this follows from \ref{thm:direct_formulation_of_bigr_squares}.
\end{proof}

\begin{prop}\label{thm:reindex_bigr_sq}
Given an $(X,X')$-bigraded square $s = (f,g,\phi,\phi')$ in $\C$ and a morphism $(\alpha,\beta):(Y,Y') \to (X,X')$ in $\V \times \W^\rev$, we obtain a $(Y,Y')$-bigraded square $(\alpha,\beta)^*(s) := (\alpha^*(f),\alpha^*(g),\beta^*(\phi),\beta^*(\phi'))$ where $\alpha^*$ and $\beta^*$ denote reindexing in $\CleftV$ and $\C_\W$, respectively. Furthermore, $\Delta((\alpha,\beta)^*(s)) = (\alpha,\beta)^*(\Delta s)$ is the reindexing of $\Delta s$ along $(\alpha,\beta)$ in $\C$.
\end{prop}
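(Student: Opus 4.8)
The plan is to run the argument through the direct formulation of bigraded squares in \ref{thm:direct_formulation_of_bigr_squares}, which reduces everything to reindexing and composition inside the single $(\V \times \W^\rev)$-graded category $\C$, where the functoriality of reindexing and naturality of composition axioms \ref{para:vgr_cat_conc}(I,\,II) are directly at hand. First I would record that reindexing in $\CleftV$ along $\alpha$ is reindexing in $\C$ along $(\alpha,1_I)\colon(Y,I)\to(X,I)$, and reindexing in $\C_\W$ along $\beta$ is reindexing in $\C$ along $(1_I,\beta)\colon(I,Y')\to(I,X')$, so that $\alpha^*(f),\alpha^*(g)$ and $\beta^*(\phi),\beta^*(\phi')$ are the graded morphisms $(\alpha,1_I)^*(f),(\alpha,1_I)^*(g)$ and $(1_I,\beta)^*(\phi),(1_I,\beta)^*(\phi')$ in $\C$. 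By \ref{thm:direct_formulation_of_bigr_squares} it then suffices to prove the single equation
$$(r_Y^{-1},r_{Y'}^{-1})^*\!\big(\alpha^*(g)\circ\beta^*(\phi)\big) = (\ell_Y^{-1},\ell_{Y'}^{-1})^*\!\big(\beta^*(\phi')\circ\alpha^*(f)\big)$$
and to identify this common value with $(\alpha,\beta)^*(\Delta s)$.

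The key computation is to massage each side into a reindexing of $\Delta s$. Applying naturality of composition \ref{para:vgr_cat_conc}(II) in $\C$ gives $\alpha^*(g)\circ\beta^*(\phi) = \big((\alpha,1_I)\otimes(1_I,\beta)\big)^*(g\circ\phi) = (\alpha\otimes 1_I,\,\beta\otimes 1_I)^*(g\circ\phi)$, where the second component is computed using that $1_I \otsub{\W^\rev}\beta = \beta\otimes 1_I$ in $\W$. Precomposing with the unitor reindexing and using functoriality of reindexing \ref{para:vgr_cat_conc}(I) collapses the two reindexings into a single one along $(\alpha\otimes 1_I,\beta\otimes 1_I)\cdot(r_Y^{-1},r_{Y'}^{-1})$; by naturality of the right unitors in $\V$ and $\W$ this composite equals $(r_X^{-1},r_{X'}^{-1})\cdot(\alpha,\beta)$, so the left-hand side becomes $(\alpha,\beta)^*\big((r_X^{-1},r_{X'}^{-1})^*(g\circ\phi)\big)$. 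An entirely parallel computation, now using $\beta\otsub{\W^\rev}1_I = 1_I\otimes\beta$ and naturality of the left unitors, rewrites the right-hand side as $(\alpha,\beta)^*\big((\ell_X^{-1},\ell_{X'}^{-1})^*(\phi'\circ f)\big)$. Since $s$ is an $(X,X')$-bigraded square, \ref{thm:direct_formulation_of_bigr_squares} tells us the two inner reindexings agree and their common value is $\Delta s$; applying $(\alpha,\beta)^*$ to that equation yields both the desired square identity and the diagonal formula $\Delta\big((\alpha,\beta)^*(s)\big) = (\alpha,\beta)^*(\Delta s)$ simultaneously.

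The only genuine obstacle is bookkeeping: one must track carefully how the reverse monoidal structure of $\W^\rev$ swaps tensor factors when forming the products $(\alpha,1_I)\otimes(1_I,\beta)$ and $(1_I,\beta)\otimes(\alpha,1_I)$, and correspondingly which unitor (left or right, in $\V$ versus $\W$) is invoked at each stage. There is no conceptual difficulty beyond this, since axioms (I) and (II) together with unitor naturality do all the work and the diagonal claim comes for free from the same chain of equalities. As an alternative I would note that the statement can equally be proved diagrammatically in the enveloping biactegory $\V\gct\C\gct\W$ of \ref{para:cdiag_vgr_cats}: there $(\alpha,\beta)^*(s)$ is literally the square \eqref{eq:bigr_square} precomposed with $\alpha\gc A\gc\beta$, and commutativity of the two paths follows from naturality of $\alpha\gc(-)$ and $(-)\gc\beta$ together with the interchange $(\alpha\gc A\gc X')\cdot(Y\gc A\gc\beta) = \alpha\gc A\gc\beta = (X\gc A\gc\beta)\cdot(\alpha\gc A\gc Y')$. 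This route is more visual but relies on more of the biactegory formalism, so I would present the \ref{thm:direct_formulation_of_bigr_squares}-based argument as primary.
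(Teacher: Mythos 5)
Your primary argument is correct, and it takes a genuinely different route from the paper. The paper proves this proposition entirely with a single pasted envelope diagram in $\V \gct \C \gct \W$: it precomposes the square \eqref{eq:bigr_square} with $\alpha \gc A \gc \beta$, fills in the resulting $2\times 2$ grid using naturality of the actions $\alpha\gc(-)$ and $(-)\gc\beta$ together with the original square, and observes that the periphery is exactly the envelope diagram for $(\alpha^*(f)\gc Y',\,\alpha^*(g),\,Y\gc\beta^*(\phi),\,\beta^*(\phi'))$ with the diagonal $\alpha \gc A \gc \beta$ followed by $\Delta s$ cutting through the middle --- this is essentially the alternative you sketch in your last paragraph. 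Your main proof instead works elementwise in the $(\V\times\W^\rev)$-graded category $\C$ via \ref{thm:direct_formulation_of_bigr_squares}, using only axioms \ref{para:vgr_cat_conc}(I,\,II) and naturality of the unitors; the key identities $(\alpha\otimes 1_I)\cdot r_Y^{-1} = r_X^{-1}\cdot\alpha$ and $(1_I\otimes\alpha)\cdot \ell_Y^{-1} = \ell_X^{-1}\cdot\alpha$ (and their $\W$-counterparts, with the $\otimes_{\W^\rev}$ swaps you flag) correctly collapse both sides to $(\alpha,\beta)^*(\Delta s)$, giving the square condition and the diagonal formula at once. What each buys: the paper's diagrammatic proof is short and visual but presupposes the enveloping-biactegory formalism of \S\ref{sec:emb}; yours is more elementary and self-contained, trading that machinery for the unitor/reversal bookkeeping you correctly identify as the only delicate point. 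Both are complete.
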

\begin{proof}
Writing $s$ in the usual manner as in \ref{defn:bigr_sq}, we have an envelope diagram
\begin{equation}\label{eq:bigr_sq_reindex}
\xymatrix{
Y \gc A \gc Y' \ar[drr]|(.5){\alpha \gc A \gc \beta} \ar[d]_{\alpha \gc A \gc Y'} \ar[rr]^{Y \gc A \gc \beta} & & Y \gc A \gc X' \ar[d]^{\alpha \gc A \gc X'}\ar[rr]^{Y \gc \phi} & & Y \gc B \ar[d]^{\alpha \gc B} \\
X \gc A \gc Y' \ar[d]_{f \gc Y'} \ar[rr]_{X \gc A \gc \beta} & & X \gc A \gc X' \ar[drr]|{\Delta s} \ar[rr]^{X \gc \phi} \ar[d]^{f \gc X'}& & X \gc B \ar[d]^g\\
A' \gc Y' \ar[rr]^{A' \gc \beta} & & A' \gc X' \ar[rr]^{\phi'} & & B'
}
\end{equation}
whose periphery is precisely the following envelope diagram:
$$
\xymatrix{
Y \gc A \gc Y' \ar[d]_{\alpha^*(f) \gc Y'} \ar[rr]^{Y \gc \beta^*(\phi)} & & Y \gc B \ar[d]^{\alpha^*(g)}\\
A' \gc Y' \ar[rr]^{\beta^*(\phi')} & & B'
}
$$
\end{proof}

\begin{prop}\label{thm:paste_bigr_sq}
Suppose we are given bigraded squares of the following form in $\C$:
$$
\xymatrix{
A \ar@{-}[d]_f \ar@{-}[r]^\phi="s1" & B \ar@{-}[d]|g \ar@{-}[r]^\psi="s2" & C \ar@{-}[d]^{h}\\
A' \ar@{-}[d]_{f'} \ar@{-}[r]^{\phi'}="t1" & B' \ar@{-}[d]|{g'} \ar@{-}[r]^{\psi'}="t2" & C' \ar@{-}[d]^{h'} \\
A'' \ar@{-}[r]^{\phi''}="u1" & B'' \ar@{-}[r]^{\psi''}="u2" & C''
\ar@{}"s1";"t1"|(0.6){X,X'}
\ar@{}"s2";"t2"|(0.6){X,Y'}
\ar@{}"t1";"u1"|(0.6){Y,X'}
\ar@{}"t2";"u2"|(0.6){Y,Y'}
}
$$
Then we obtain bigraded squares
$$
\xymatrix{
A \ar@{-}[d]_f \ar@{-}[rr]^{\psi \circ \phi}="s2" & & C \ar@{-}[d]^h & & A \ar@{-}[d]_{f' \circ f}\ar@{-}[rr]^{\phi}="s3" & &B \ar@{-}[d]^{g' \circ g} & & A \ar@{-}[d]_{f' \circ f} \ar@{-}[rr]^{\psi \circ \phi}="s1" & & C \ar@{-}[d]^{h' \circ h}\\
A' \ar@{-}[rr]^{\psi' \circ \phi'}="t2" & & C' & & A'' \ar@{-}[rr]^{\phi''}="t3"& & B'' & & A'' \ar@{-}[rr]^{\psi'' \circ \phi''}="t1" & & C''
\ar@{}"s1";"t1"|(0.6){Y \otimes X,X' \otimes Y'}
\ar@{}"s2";"t2"|(0.6){X,X' \otimes Y'}
\ar@{}"s3";"t3"|(0.6){Y \otimes X,X'}
}
$$
in which $\circ$ denotes composition in $\CleftV$ and in $\C_\W$. The diagonal of the rightmost square is the composite
$\Delta(g',h',\psi',\psi'') \circ \Delta(f,g,\phi,\phi'):Y \otimes X \gc A \gc X' \otimes Y' \to C''$.
\end{prop}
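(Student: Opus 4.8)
The plan is to run the entire argument inside the enveloping biactegory $\V \gct \C \gct \W$ of \ref{para:cdiag_vgr_cats}. There, by \ref{defn:bigr_sq}, each of the four given bigraded squares is literally an ordinary commutative square, the left action $Y \gc (-)$ and the right action $(-) \gc Y'$ are honest functors, and left- and right-graded composites are computed by the formulas recalled in \ref{para:cdiag_vgr_cats}. Consequently every assertion reduces to ordinary pasting of commutative squares together with functoriality of the two actions; the only genuine labour is tracking the biactegory coherence isomorphisms that the envelope-diagram conventions of \ref{para:cdiag_vgr_cats} are designed to suppress.

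First I would treat horizontal pasting. Applying the functor $(-) \gc Y'$ to the commutative square witnessing $(f,g,\phi,\phi')$ and placing the result to the left of the square witnessing $(g,h,\psi,\psi')$ produces the commutative rectangle
$$\xymatrix{X \gc A \gc X' \gc Y' \ar[d]_{f \gc X' \gc Y'} \ar[r]^{X \gc \phi \gc Y'} & X \gc B \gc Y' \ar[d]|{g \gc Y'} \ar[r]^{X \gc \psi} & X \gc C \ar[d]^{h}\\ A' \gc X' \gc Y' \ar[r]^{\phi' \gc Y'} & B' \gc Y' \ar[r]^{\psi'} & C'}$$
whose left cell is the image under $(-) \gc Y'$ of the first square and whose right cell is the second square. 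By the formula for composition in $\C_\W$ (\ref{para:cdiag_vgr_cats}), after absorbing the coherence isomorphism $A \gc (X' \otimes Y') \cong A \gc X' \gc Y'$ (and likewise at $A'$) the top and bottom edges become $X \gc (\psi \circ \phi)$ and $\psi' \circ \phi'$, so the outer square is exactly the bigraded square $(f,h,\psi\circ\phi,\psi'\circ\phi')$ of grade $(X, X'\otimes Y')$. The identical computation on the lower row yields $(f',h',\psi'\circ\phi',\psi''\circ\phi'')$ of grade $(Y, X'\otimes Y')$.

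Vertical pasting is dual, and I would obtain it most economically via \ref{para:flip_bigraded_square}: a square $(f,g,\phi,\phi')$ in $\C$ is a square $(\phi,\phi',f,g)$ in $\mathrm{C}^*\C$, under which vertical pasting in $\C$ becomes horizontal pasting in $\mathrm{C}^*\C$, so the previous step applied to $\mathrm{C}^*\C$ gives the square $(f'\circ f, g'\circ g, \phi, \phi'')$ of grade $(Y\otimes X, X')$ (alternatively one applies $Y \gc (-)$ to the first square, stacks it above the square for $(f',g',\phi',\phi'')$, and reads off $g'\circ g$ and $f'\circ f$ via the composition formula in $\CleftV$). Finally, vertically pasting the two horizontally-pasted rows from the preceding paragraph produces the rightmost square $(f'\circ f, h'\circ h, \psi\circ\phi, \psi''\circ\phi'')$ of grade $(Y\otimes X, X'\otimes Y')$.

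It then remains to identify its diagonal, and here the correct factorization passes through the \emph{centre} object $B'$, not through the diagonals of the intermediate rows (which do not compose). Write $\Delta_1 = \Delta(f,g,\phi,\phi')\colon X \gc A \gc X' \to B'$ and $\Delta_4 = \Delta(g',h',\psi',\psi'')\colon Y \gc B' \gc Y' \to C''$. Since all four given squares commute in $\V \gct \C \gct \W$, so does the fully expanded tiling obtained by surrounding each square with the appropriate actions (the additional cells being images of the given squares under the action functors together with naturality cells of the coherence isomorphisms); hence every path from $Y\otimes X \gc A \gc X'\otimes Y'$ to $C''$ agrees. Reading off the path through the image $Y \gc B' \gc Y'$ of $B'$ gives
$$Y\otimes X \gc A \gc X'\otimes Y' \xrightarrow{\sim} Y \gc X \gc A \gc X' \gc Y' \xrightarrow{Y \gc \Delta_1 \gc Y'} Y \gc B' \gc Y' \xrightarrow{\Delta_4} C'',$$
which by the description of bigraded composition in \ref{para:cdiag_vgr_cats} and \ref{defn:bigraded} is precisely $\Delta(g',h',\psi',\psi'') \circ \Delta(f,g,\phi,\phi')$, as claimed. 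The main obstacle throughout is this coherence bookkeeping; should the naturality of the isomorphisms across the several tilings prove delicate, the whole argument can instead be run through the coherence-free reformulation of \ref{thm:direct_formulation_of_bigr_squares}.
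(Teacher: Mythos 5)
Your proposal is correct and follows essentially the same route as the paper: both arguments work in the enveloping biactegory $\V \gct \C \gct \W$, tile the given squares after applying the action functors, and identify the diagonal of the rightmost square as the path through $Y \gc B' \gc Y'$, i.e.\ as $\Delta(g',h',\psi',\psi'') \circ \Delta(f,g,\phi,\phi')$. Your observation that the diagonals of the intermediate rows do not themselves compose, so that the factorization must pass through the centre object $B'$, matches exactly the shape of the paper's envelope diagram.
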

\begin{proof}
We treat the rightmost bigraded square, and we leave the first two as exercises for which similar methods are applicable; also see Remark \ref{para:horiz_vert_comp}. We have a commutative envelope diagram as follows:
$$
\xymatrix{
Y \gc X \gc A \gc X' \gc Y' \ar[drr]|{Y \gc \Delta(f,g,\phi,\phi') \gc Y'} \ar[d]_(.45){Y \gc f \gc X' \gc Y'} \ar[rr]^{Y \gc X \gc \phi \gc Y'} & & Y \gc X \gc B \gc Y' \ar[d]|(.45){Y \gc g \gc Y'}\ar[rr]^{Y \gc X \gc \psi} & & Y \gc X \gc C \ar[d]^(.45){Y \gc h}\\
Y \gc A' \gc X' \gc Y' \ar[d]_{f' \gc X' \gc Y'}\ar[rr]_{Y \gc \phi' \gc Y'} & & Y \gc B' \gc Y' \ar[d]_{g' \gc Y'} \ar[rr]^{Y \gc \psi'} \ar[drr]|{\Delta(g',h',\psi',\psi'')} & & Y \gc C' \ar[d]^{h'}\\
A'' \gc X' \gc Y' \ar[rr]^{\phi'' \gc Y'} & & B'' \gc Y' \ar[rr]^{\psi''} & & C''
}
$$
By composing with the isomorphism $Y \otimes X \gc A \gc X' \otimes Y' \xrightarrow{\sim} Y \gc X \gc A \gc X' \gc Y'$, we obtain a commutative envelope diagram
$$
\xymatrix{
Y \otimes X \gc A \gc X' \otimes Y' \ar[d]_{f' \circ f \gc X' \otimes Y' }\ar[rr]^(.6){Y \otimes X \gc \psi \circ \phi} & & Y \otimes X \gc B \ar[d]^{h \circ h'}\\
A'' \gc X' \otimes Y' \ar[rr]^(.6){\psi'' \circ \phi''}& & C''
}
$$
in which the common composite is $\Delta(g',h',\psi',\psi'') \circ \Delta(f,g,\phi,\phi')$.
\end{proof}

\begin{para}\label{para:horiz_vert_comp}
We call the first and second bigraded squares constructed in \ref{thm:paste_bigr_sq} the \textbf{horizontal composite} and \textbf{vertical composite}, respectively. These operations satisfy the relevant \textit{interchange law}, namely that in the situation of \ref{thm:paste_bigr_sq} if we write $s = (f,g,\phi,\phi')$, $s' = (f',g',\phi',\phi'')$, $t = (g,h,\psi,\psi')$, $t' = (g',h',\psi',\psi'')$ and write horizontal composition as $\circ$ and vertical composition as $*$, then $(t' * t) \circ (s' * s)  = (t' \circ s') * (t \circ s)$. The common value in the latter equation is the rightmost square in \ref{thm:paste_bigr_sq}, and the statement that it is a bigraded square can be deduced using horizontal and vertical composition, but in \ref{thm:paste_bigr_sq} we treat it directly to facilitate the characterization of its diagonal. Neither horizontal nor vertical composition is strictly associative in general; we leave for future work the question whether bigraded squares are the cells of a double bicategory in the sense of \cite[\S 1.6]{Ver:PhDThesis}.
\end{para}

\section{Graded functor categories valued in bigraded categories}\label{sec:gr_func_cat}

\begin{defn}\label{defn:graded_transf}
Let $\V$ and $\W$ be monoidal categories, let $\A$ be a left $\V$-graded category, and let $\C$ be a $\V$-$\W$-bigraded category. Given left $\V$-graded functors $F,G:\A \to \C$ and an object $X' \in \ob\W$, a \textbf{graded transformation} $\phi:F \gc X' \Rightarrow G$ is a family of graded morphisms $\phi_A:FA \gc X' \to GA$ $(A \in \ob\A)$ in $\C$ that are \textbf{left $\V$-graded natural in} $A \in \A$ in the sense that the following envelope diagram commutes for every graded morphism $f:X \gc A \to B$ in $\A$:
\begin{equation}\label{eq:defn_gr_func_cat}
\xymatrix{
X \gc FA \gc X' \ar[r]^(.55){X \gc \phi_A} \ar[d]_{Ff \gc X'} & X \gc GA \ar[d]^{Gf}\\
FB \gc X' \ar[r]^{\phi_B} & GB
}
\end{equation}
We also say that $(\phi_A)_{A \in \ob\A}$ is \textbf{natural at $f$} if the latter diagram commutes.
\end{defn}

In this definition, we employ the conventions of \ref{defn:bigraded} and \ref{para:cdiag_vgr_cats}, so that $F$ and $G$ are left $\V$-graded functors valued in $\CleftV$, and the components $\phi_A$ of $\phi$ are graded morphisms in $\C_\W$. The commutativity of the envelope diagram \eqref{eq:defn_gr_func_cat} is precisely the requirement that $(Ff,Gf,\phi_A,\phi_B)$ be a bigraded square in $\C$ (\ref{defn:bigr_sq}) and so (by \ref{thm:direct_formulation_of_bigr_squares}) can be expressed as the equation
$$(r_X^{-1},r_{X'}^{-1})^*(Gf \circ \phi_A) = (\ell_X^{-1},\ell_{X'}^{-1})^*(\phi_B \circ Ff)\;:\;(X,X') \gc FA \to GB.$$
If $\V$ and $\W$ are strict monoidal, this is simply the equation $Gf \circ \phi_A = \phi_B \circ Ff$. We call $\phi_f := (Ff,Gf,\phi_A,\phi_B)$ the \textbf{naturality square} for $\phi$ at $f$ and denote it as follows:
$$
\xymatrix{
FA \ar@{-}[d]_{Ff} \ar@{-}[r]^{\phi_A}="s1" & GA \ar@{-}[d]^{Gf}\\
FB \ar@{-}[r]^{\phi_B}="t1" & GB
\ar@{}"s1";"t1"|(0.6){X,X'}
}
$$

\begin{prop}\label{thm:gr_tr_gen_set}
In the situation of \ref{defn:graded_transf}, if $\G$ is a generating set of graded morphisms of $\A$ (\ref{para:vgr_subcat}), then a family $\phi = (\phi_A:FA \gc X' \to GA)_{A \in \ob\A}$ is left $\V$-graded natural in $A \in \A$ iff $\phi$ is natural at every $f \in \G$.
\end{prop}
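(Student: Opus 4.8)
The plan is to establish the nontrivial \emph{if} direction by showing that the collection $\sH$ of all graded morphisms of $\A$ at which the family $\phi = (\phi_A)_{A \in \ob\A}$ is natural (\ref{defn:graded_transf}) is a $\V$-graded subcategory of $\A$ in the sense of \ref{para:vgr_subcat}. Granting this, the hypothesis $\G \subseteq \sH$ yields $\langle \G \rangle \subseteq \sH$ by minimality of $\langle \G \rangle$, and since $\G$ generates $\A$ we have $\langle \G \rangle = \A$, so $\sH$ contains every graded morphism of $\A$; that is, $\phi$ is natural at every $f$ and hence is left $\V$-graded natural. (The \emph{only if} direction is immediate, since $\G$ is a set of graded morphisms of $\A$.) The observation that makes the subcategory argument work is that, by \ref{defn:graded_transf}, naturality of $\phi$ at a graded morphism $f : X \gc A \to B$ is exactly the statement that the naturality square $\phi_f = (Ff, Gf, \phi_A, \phi_B)$ is an $(X,X')$-bigraded square in $\C$ (\ref{defn:bigr_sq}). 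Thus closure of $\sH$ under identities, composition, and reindexing can be read off from the corresponding operations on bigraded squares developed in \S\ref{sec:bigr_sq}, using crucially that $F$ and $G$ preserve identities, composition, and reindexing (\ref{para:vgr_func}).

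Concretely, I would check the three closure conditions as follows. \emph{Identities:} for $A \in \ob\A$ the naturality square at $\gid_A$ is $\phi_{\gid_A} = (F\gid_A, G\gid_A, \phi_A, \phi_A) = (\gid_{FA}, \gid_{GA}, \phi_A, \phi_A)$, which is a bigraded square by \ref{thm:id_bigr_sq}; hence $\gid_A \in \sH$. \emph{Composition:} suppose $\phi$ is natural at $f : X \gc A \to B$ and at $g : Y \gc B \to C$. The squares $\phi_f$ and $\phi_g$ share the edge $\phi_B$, so their vertical composite (\ref{thm:paste_bigr_sq}, \ref{para:horiz_vert_comp}) is the $(Y \otimes X, X')$-bigraded square $(Fg \circ Ff, Gg \circ Gf, \phi_A, \phi_C)$; since $F$ and $G$ preserve composition this equals $(F(g \circ f), G(g \circ f), \phi_A, \phi_C) = \phi_{g \circ f}$, so $g \circ f \in \sH$. \emph{Reindexing:} suppose $\phi$ is natural at $f : X \gc A \to B$ and let $\alpha : Y \to X$ in $\V$. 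Reindexing $\phi_f$ along $(\alpha, 1_{X'}) : (Y, X') \to (X, X')$ in $\V \times \W^\rev$ gives, by \ref{thm:reindex_bigr_sq}, the $(Y, X')$-bigraded square $(\alpha^*(Ff), \alpha^*(Gf), 1_{X'}^*(\phi_A), 1_{X'}^*(\phi_B))$; since $F$ and $G$ preserve reindexing and $1_{X'}^*$ acts as the identity (axiom (I) of \ref{para:vgr_cat_conc}), this is $(F(\alpha^*f), G(\alpha^*f), \phi_A, \phi_B) = \phi_{\alpha^* f}$, so $\alpha^*(f) \in \sH$.

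These verifications show that $\sH$ is closed under composition and reindexing and contains all identities, hence is a $\V$-graded subcategory of $\A$, and the generating property completes the argument as above. I do not expect a genuine obstacle: the whole proof is the dictionary between ``naturality at $f$'' and ``$\phi_f$ is a bigraded square'', together with functoriality of $F$ and $G$. The only place demanding care is the bookkeeping in the composition step --- confirming that the shared middle edge of $\phi_f$ and $\phi_g$ is $\phi_B$ and that the grade $Y \otimes X$ of the vertical composite matches the grade of $g \circ f$ --- but this agrees exactly with the vertical composite of \ref{thm:paste_bigr_sq}, so no further work is needed.
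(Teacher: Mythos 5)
Your proposal is correct and follows essentially the same route as the paper: the paper's proof likewise defines the set of graded morphisms at which $\phi$ is natural, observes that it is a $\V$-graded subcategory of $\A$ by appeal to \ref{thm:id_bigr_sq}, \ref{thm:reindex_bigr_sq}, and \ref{thm:paste_bigr_sq}, and concludes from the generating hypothesis. Your explicit verifications of the three closure conditions are accurate and simply spell out what the paper leaves to those citations.
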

\begin{proof}
Let $\F$ be the set of all graded morphisms $f$ in $\A$ such that $\phi$ is natural at $f$. Then $\F$ is a $\V$-graded subcategory of $\A$, by \ref{thm:id_bigr_sq}, \ref{thm:reindex_bigr_sq}, \ref{thm:paste_bigr_sq}. Hence if $\G \subseteq \F$ then $\F = \A$ by our hypothesis on $\G$, and the result follows.
\end{proof}

\begin{thm}\label{thm:vgr_func_cat}
Let $\V$ and $\W$ be monoidal categories.  (1) If $\A$ is a left $\V$-graded category and $\C$ is a $\V$-$\W$-bigraded category, then left $\V$-graded functors from $\A$ to $\C$ are the objects of a right $\W$-graded category $\tensor[^\V]{[\A,\C]}{_\W}$ that we denote also by $[\A,\C]$, in which a graded morphism is a graded transformation. (2) Similarly, if $\B$ is a right $\W$-graded category and $\C$ is a $\V$-$\W$-bigraded category, then right $\W$-graded functors from $\B$ to $\C$ are the objects of a left $\V$-graded category $\tensor[_\V]{[\B,\C]}{^\W}$ that we denote also by $[\B,\C]$. With the notation of \ref{defn:bigraded}, $\tensor[^\V]{[\A,\C]}{_\W} = \tensor[_{\W^\rev}]{[\A,\mathrm{C}^*\C]}{^{\V^\rev}}$ and $\tensor[_\V]{[\B,\C]}{^\W} = \tensor[^{\W^\rev}]{[\B,\mathrm{C}^*\C]}{_{\V^\rev}}$.
\end{thm}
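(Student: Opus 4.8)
The plan is to prove (1) by constructing the right $\W$-graded category $\tensor[^\V]{[\A,\C]}{_\W}$ explicitly via the elementwise description of \ref{para:vgr_cat_conc} (applied with base $\W^\rev$), and then to deduce (2) together with both stated identifications by a formal argument using the isomorphism $\mathrm{C}^*$ of \ref{defn:bigraded} and the flip of bigraded squares in \ref{para:flip_bigraded_square}. For (1), I take the objects to be the left $\V$-graded functors $\A \to \CleftV$, and for each pair $F,G$ and each $X' \in \ob\W$ I take the graded morphisms $F \gc X' \to G$ to be the graded transformations $\phi : F \gc X' \Rightarrow G$ of \ref{defn:graded_transf}; this set is $\SET$-small, being a subset of the product $\prod_{A \in \ob\A} \C_\W(FA \gc X'; GA)$ over the $\SET$-small set $\ob\A$. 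Reindexing, composition, and identities are all defined componentwise in the underlying right $\W$-graded category $\C_\W$: namely $(\beta^*\phi)_A = \beta^*(\phi_A)$ for $\beta : Y' \to X'$ in $\W$, $(\psi \circ \phi)_A = \psi_A \circ \phi_A$, and $(\gid_F)_A = \gid_{FA}$.

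The substance of the construction is to check that these componentwise operations preserve graded transformations, i.e.\ that the resulting families remain left $\V$-graded natural; this is exactly where the bigraded-square machinery of \S\ref{sec:bigr_sq} is used. Recall that $\phi$ is a graded transformation precisely when each naturality square $\phi_f = (Ff, Gf, \phi_A, \phi_B)$ is an $(X,X')$-bigraded square in $\C$, for every $f : X \gc A \to B$ in $\A$. For reindexing I apply \ref{thm:reindex_bigr_sq} to $\phi_f$ and the morphism $(1_X,\beta):(X,Y') \to (X,X')$ in $\V \times \W^\rev$, obtaining the $(X,Y')$-bigraded square $(Ff, Gf, \beta^*(\phi_A), \beta^*(\phi_B)) = (\beta^*\phi)_f$. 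For composition, given $\phi : F \gc X' \Rightarrow G$ and $\psi : G \gc Y' \Rightarrow H$, the naturality squares $\phi_f$ (grade $(X,X')$) and $\psi_f$ (grade $(X,Y')$) share the middle vertical edge $Gf$, so their horizontal composite from \ref{thm:paste_bigr_sq} is the $(X, X' \otimes Y')$-bigraded square $(Ff, Hf, \psi_A \circ \phi_A, \psi_B \circ \phi_B) = (\psi \circ \phi)_f$; this confirms both that $\psi \circ \phi$ is a graded transformation and that its grade is $X' \otimes Y'$, precisely as composition in a right $\W$-graded category demands. For identities, \ref{thm:id_bigr_sq} shows directly that $(Ff, Ff, \gid_{FA}, \gid_{FB}) = (\gid_F)_f$ is an $(X,I)$-bigraded square. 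Once closure is established, the axioms (I)--(IV) of \ref{para:vgr_cat_conc} for $\tensor[^\V]{[\A,\C]}{_\W}$ reduce to the corresponding axioms in $\C_\W$, since two graded transformations agree as soon as their components do and all structure maps are defined componentwise; this completes (1).

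For (2) I define $\tensor[_\V]{[\B,\C]}{^\W} := \tensor[^{\W^\rev}]{[\B,\mathrm{C}^*\C]}{_{\V^\rev}}$, the right $\V^\rev$-graded (equivalently left $\V$-graded) category produced by part (1) applied to the left $\W^\rev$-graded category $\B$ and the $\W^\rev$-$\V^\rev$-bigraded category $\mathrm{C}^*\C$. By the relations $U_\ell^*\mathrm{C}^* = U_r^*$ of \ref{para:cats_gr_prod}, the underlying left $\W^\rev$-graded category of $\mathrm{C}^*\C$ is $\C_\W$, so the objects are exactly the right $\W$-graded functors $\B \to \C$, while the graded morphisms are graded transformations into $\mathrm{C}^*\C$, whose naturality squares in $\mathrm{C}^*\C$ correspond under \ref{para:flip_bigraded_square} to bigraded squares in $\C$. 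The first stated identification holds by this definition, and the second, $\tensor[^\V]{[\A,\C]}{_\W} = \tensor[_{\W^\rev}]{[\A,\mathrm{C}^*\C]}{^{\V^\rev}}$, follows because the right-hand side is (2) applied to $(\W^\rev,\V^\rev,\A,\mathrm{C}^*\C)$, which unwinds via the definition of (2) together with $\mathrm{C}^*\mathrm{C}^* = \id$ (and $(\V^\rev)^\rev = \V$, $(\W^\rev)^\rev = \W$) back to (1) applied to $(\V,\W,\A,\C)$. The only real care needed throughout is grade bookkeeping --- tracking which factor of $\V \times \W^\rev$ carries each grade and keeping the tensor orders straight, so that, for instance, horizontal composition of squares produces exactly the grade $X' \otimes Y'$ wanted for right $\W$-graded composition --- but I expect no genuinely new difficulty, since the associativity, interchange, and reindexing compatibilities that would otherwise be delicate have already been isolated as the bigraded-square lemmas of \S\ref{sec:bigr_sq}.
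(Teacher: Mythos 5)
Your proposal is correct and follows essentially the same route as the paper's proof: part (1) is established by defining composition, reindexing, and identities pointwise in $\C_\W$ and invoking the bigraded-square lemmas \ref{thm:paste_bigr_sq}, \ref{thm:reindex_bigr_sq}, and \ref{thm:id_bigr_sq} to verify closure under graded transformations, with the axioms inherited from $\C_\W$, and part (2) together with the stated identifications is deduced formally via $\mathrm{C}^*$. Your additional remarks on $\SET$-smallness and the explicit unwinding of the identification for (2) are correct elaborations of what the paper leaves implicit.
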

\begin{proof}
We prove (1), from which (2) follows by way of the latter equation. We define composition and reindexing in $[\A,\C]$ pointwise in terms of composition and reindexing in $\C_\W$. Explicitly, given graded transformations $\phi:F \gc X' \Rightarrow G$ and $\psi:G \gc Y' \Rightarrow H$, we define $\psi \circ \phi:F \gc X' \otimes Y'\Rightarrow H$ to consist of the composites $\psi_A \circ \phi_A:FA \gc X' \otimes Y' \to HA$ $(A \in \ob\A)$ in $\C_\W$. These constitute a graded transformation because for each $f:X \gc A \to B$ in $\A$ the horizontal composite $\psi_f \circ \phi_f = (Ff,Hf,\psi_A \circ \phi_A,\psi_B \circ \phi_B)$ is a bigraded square by \ref{thm:paste_bigr_sq}. Given $\beta:Z' \to X'$ in $\W$, we define $\beta^*(\phi):F \gc Z' \Rightarrow G$ to consist of the reindexings $\beta^*(\phi_A):FA \gc Z' \to GA$ $(A \in \ob\A)$ in $\C_\W$, which constitute a graded transformation because for each $f:X \gc A \to B$ in $\A$ we can apply \ref{thm:reindex_bigr_sq} to the naturality square $\phi_f = (Ff,Gf,\phi_A,\phi_B)$ to obtain a bigraded square $(1_X,\beta)^*(\phi_f) = (Ff,Gf,\beta^*(\phi_A),\beta^*(\phi_B))$. Identities in $[\A,\C]$ are formed pointwise. The axioms \ref{para:vgr_cat_conc}(I)--(IV) for $[\A,\C]$ follow from those for $\C_\W$.
\end{proof}

\begin{para}\label{para:fcat_huge}
By applying Theorem \ref{thm:vgr_func_cat} relative to the universe $\SET'$ rather than $\SET$, we obtain a similar result for huge $\A$, $\B$, $\C$, thus obtaining a huge right $\W$-graded category $[\A,\C]$ and a huge left $\V$-graded category $[\B,\C]$.
\end{para}

\begin{para}\label{para:vgr_func_cat_other_case}
Given a right $\W$-graded category $\B$, a $\V$-$\W$-bigraded category $\C$, and right $\W$-graded functors $F,G:\B \to \C$ as in part (2) of Theorem \ref{thm:vgr_func_cat}, a graded morphism $\phi:X \gc F \to G$ in $[\B,\C]$ is a \textbf{graded transformation} $\phi:X \gc F \Rightarrow G$, i.e.~a family of graded morphisms $\phi_A:X \gc FA \to GA$ $(A \in \ob\B)$ that is \textbf{right $\W$-graded natural} in the sense that for every graded morphism $f:A \gc X' \to B$ in $\B$ the quadruple $(\phi_A,\phi_B,Ff,Gf)$ is a bigraded square in $\C$, which we express also by saying that $\phi$ is \textbf{natural at $f$}. Indeed, this follows from \ref{para:flip_bigraded_square}.
\end{para}

\begin{para}
In the situation of \ref{thm:vgr_func_cat}, $[\A,\C]_0 = \LGCAT{\V}(\A,\CleftV)$ and $[\B,\C]_0 = \RGCAT{\W}(\B,\C_\W)$.
\end{para}

\begin{exa}\label{exa:vgr_func}
(1). Since every monoidal category $\V$ carries the structure of a $\V$-$\V$-bigraded category, every left $\V$-graded category $\A$ determines a right $\V$-graded category $[\A,\V]$, and every right $\V$-graded category $\B$ determines left $\V$-graded category $[\B,\V]$.

\medskip

\noindent (2). Given any monoid $R$ in $\V$, left $R$-modules in (the left $\V$-graded category underlying) a $\V$-$\W$-bigraded category $\C$ are the objects of a right $\W$-graded category $\tensor[_R]{\textnormal{Mod}}{}(\C) = [R,\C]$. In particular, taking $\V = \W$ and $\C = \V$, we find that left $R$-modules in $\V$ are the objects of a right $\V$-graded category $\tensor[_R]{\textnormal{Mod}}{}$. Given instead a monoid $S$ in $\W$ and a $\V$-$\W$-bigraded category $\C$, we obtain a left $\V$-graded category $\tensor{\textnormal{Mod}}{_S}(\C) = [S^\circ,\C]$ whose objects are right $S$-modules in $\C$.

\medskip

\noindent (3). Let $\C$ be a $\V$-$\W$-bigraded category, and let $X$ be an object of $\V$. 
By \ref{exa:two_sub_x} and \ref{thm:vgr_func_cat}, there is a right $\W$-graded category $[\tensor[_X]{\mathbbm{2}}{},\C]$ in which an object is given by a triple $(A,A',f)$ consisting of objects $A,A'$ of $\C$ and a graded morphism $f:X \gc A \to A'$ in $\C$. By \ref{exa:two_sub_x}, the singleton $\{u:X \gc 0 \to 1\}$ is a generating set of graded morphisms for $\tensor[_X]{\mathbbm{2}}{}$, so it follows from \ref{thm:gr_tr_gen_set} that a graded morphism $(\phi,\phi'):(A,A',f) \gc X' \to (B,B',g)$ in $[\tensor[_X]{\mathbbm{2}}{},\C]$ is given by an $(X,X')$-bigraded square $(f,g,\phi,\phi')$ in $\C$.

\medskip

For example, if $\C$ is a $\Box$-$\Box$-bigraded category for the restricted cubical site $\Box$ (\ref{exa:actegories}, \ref{exa:gr_cats}) and we take $X = 2$, then $[\tensor[_2]{\mathbbm{2}}{},\C]$ is a right $\Box$-graded category whose objects are \textit{homotopies} $f:2 \gc A \to A'$ in $\C$ in the sense of \ref{exa:gr_cats}. For example, taking $\C = \Top$ (\ref{exa:top-box-box-biact}), the objects of $[\tensor[_2]{\mathbbm{2}}{},\Top]$ are homotopies in the usual sense.

\medskip

\noindent (4). Given a monoidal category $\V$, we now consider the special case of \ref{thm:vgr_func_cat}(1) where we take $\W = 1$ to be the terminal monoidal category, for which right $1$-graded categories are ordinary categories. Given an ordinary category $\B$ and a left $\V$-graded category $\C$, we may regard $\C$ as a $\V$-$1$-bigraded category (\ref{exa:v1_1v_bigraded}), and thus we obtain a left $\V$-graded category $[\B,\C] = \tensor[_\V]{[\B,\C]}{^1}$ with $[\B,\C]_0 = \CAT(\B,\C_0)$.

\medskip

\noindent (5). Given an ordinary category $\B$ and a $\V$-$\W$-bigraded category $\C$ for monoidal categories $\V$ and $\W$, we may regard $\C$ as a left $(\V \times \W^\rev)$-graded category and apply (4) to obtain a left $(\V \times \W^\rev)$-graded category $[\B,\C] = \tensor[_{\V \times \W^\rev}]{[\B,\C]}{^1}$, so $[\B,\C]$ in this case is a $\V$-$\W$-bigraded category with $[\B,\C]_0 = \CAT(\B,\C_0)$.  Consequently, every full subcategory of $\CAT(\B,\V)$ underlies a $\V$-$\V$-bigraded category; e.g.~for any Lawvere theory $\T$, the category of $\T$-algebras in $\V$ underlies a $\V$-$\V$-bigraded category.
\end{exa}

Since a $(\V \times \W)$-graded category is a $\V$-$\W^\rev$-bigraded category, Theorem \ref{thm:vgr_func_cat} entails the following result in terms of left gradings only, with the notations of \ref{para:cats_gr_prod}:

\begin{cor}\label{thm:func_cat_for_product_graded_categories}
Let $\C$ be a (left) $(\V \times \W)$-graded category. Then (1) each $\V$-graded category $\A$ determines a (left) $\W$-graded category $[\A,\C]_r := \tensor[^\V]{[\A,\C]}{_{\W^\rev}}$ whose objects are $\V$-graded functors $F:\A \to U^*_\ell\C$, and (2) each $\W$-graded category $\B$ determines a $\V$-graded category $[\B,\C]_\ell := \tensor[_\V]{[\B,\C]}{^{\W^\rev}}$ whose objects are $\W$-graded functors $F:\B \to U^*_r\C$. With the notation of \ref{para:cats_gr_prod}, $[\A,\C]_r = [\A,\mathrm{C}^*\C]_\ell$ and $[\B,\C]_\ell = [\B,\mathrm{C}^*\C]_r$.
\end{cor}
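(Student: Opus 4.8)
The plan is to obtain this as a direct specialization of Theorem~\ref{thm:vgr_func_cat}. The key preliminary identification---flagged in the sentence preceding the statement---is that a $(\V \times \W)$-graded category is \emph{definitionally} a $\V$-$\W^\rev$-bigraded category, since $\V$-$\W^\rev$-bigraded means $(\V \times (\W^\rev)^\rev)$-graded $= (\V \times \W)$-graded by Definition~\ref{defn:bigraded}. I would therefore regard the given $\C$ as a $\V$-$\W^\rev$-bigraded category and apply Theorem~\ref{thm:vgr_func_cat} with $\W^\rev$ in the role of that theorem's second monoidal category.

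For part~(1), Theorem~\ref{thm:vgr_func_cat}(1), applied to the left $\V$-graded category $\A$ and the $\V$-$\W^\rev$-bigraded category $\C$, produces a \emph{right $\W^\rev$-graded} category $\tensor[^\V]{[\A,\C]}{_{\W^\rev}}$ whose objects are left $\V$-graded functors $\A \to \CleftV$. Here $\CleftV = U_\ell^*\C$ by Definition~\ref{defn:bigraded}, and a right $\W^\rev$-graded category is, by Definition~\ref{defn:right_vgr_cat}, precisely a left $\W$-graded category (as $(\W^\rev)^\rev = \W$, so $\RGCAT{\W^\rev} = \LGCAT{\W}$). This is the asserted $\W$-graded category $[\A,\C]_r$, with the stated objects. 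Part~(2) is symmetric: since a left $\W$-graded category $\B$ \emph{is} a right $\W^\rev$-graded category, Theorem~\ref{thm:vgr_func_cat}(2) with parameter $\W^\rev$ yields the left $\V$-graded category $\tensor[_\V]{[\B,\C]}{^{\W^\rev}} = [\B,\C]_\ell$, whose objects are right $\W^\rev$-graded functors $\B \to \C_{\W^\rev}$; and since $\C_{\W^\rev} = U_r^*\C$ and right $\W^\rev$-graded functors coincide with left $\W$-graded functors, these are exactly the left $\W$-graded functors $\B \to U_r^*\C$.

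The two displayed equations follow by substituting $\W^\rev$ for $\W$ in the corresponding equations of Theorem~\ref{thm:vgr_func_cat}. For instance the first becomes $\tensor[^\V]{[\A,\C]}{_{\W^\rev}} = \tensor[_\W]{[\A,\mathrm{C}^*\C]}{^{\V^\rev}}$, whose right-hand side unwinds, via the definition of $[-,-]_\ell$ relabelled for the $(\W \times \V)$-graded category $\mathrm{C}^*\C$, to $[\A,\mathrm{C}^*\C]_\ell$; the second equation is handled the same way, giving $[\B,\C]_\ell = [\B,\mathrm{C}^*\C]_r$. The only delicate point---and the sole real obstacle---is bookkeeping the iterated reverses: I must verify that the operator $\mathrm{C}^*$ arising from the specialized theorem, namely $\mathrm{C}^*:\GCAT{\V}{\W^\rev} \to \GCAT{\W}{\V^\rev}$, is literally the symmetry-induced isomorphism $\mathrm{C}^*:\LGCAT{\V \times \W} \to \LGCAT{\W \times \V}$ of \ref{para:cats_gr_prod}, so that $[\A,\mathrm{C}^*\C]_\ell$ and $[\B,\mathrm{C}^*\C]_r$ are interpreted consistently. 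Once the identifications $\GCAT{\V}{\W^\rev} = \LGCAT{\V \times \W}$ and $\GCAT{\W}{\V^\rev} = \LGCAT{\W \times \V}$ are unwound, these coincide on the nose, and no genuine computation is required.
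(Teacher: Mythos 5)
Your proposal is correct and follows exactly the paper's route: the paper derives this corollary by the same one-line specialization of Theorem \ref{thm:vgr_func_cat}, regarding the $(\V\times\W)$-graded category $\C$ as a $\V$-$\W^\rev$-bigraded category and substituting $\W^\rev$ for $\W$ throughout. Your careful unwinding of the iterated reverses and of the rôle of $\mathrm{C}^*$ is sound and merely makes explicit what the paper leaves implicit.
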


The reason for the notations introduced in \ref{thm:func_cat_for_product_graded_categories} will be apparent in \ref{para:func_cat_rep_gbifun_for_product-graded_cat},  \ref{para:enr_func_cat_gl}--\ref{thm:venr_func_cat_gr_func_cat}.

\begin{rem}\label{rem:charn_gr_morphs_in_func_cat_prod_graded}
In the situation of \ref{thm:func_cat_for_product_graded_categories}(1), if $F,G:\A \to U^*_\ell\C$ are $\V$-graded functors, then a graded morphism $\phi:X' \gc F \to G$ in the $\W$-graded category $[\A,\C]_r$ is a family of graded morphisms $\phi_A:X' \gc FA \to GA$ in $U_r^*\C$, i.e.~$\phi_A:(I,X') \gc FA \to GA$ in $\C$ $(A \in \ob\A)$, such that for each graded morphism $f:X \gc A \to B$ in $\A$ the quadruple $(Ff,Gf,\phi_A,\phi_B)$ is a bigraded square in $\C$, equivalently (by \ref{rem:bigr_sq_prod_gr}) $(r_X^{-1},\ell_{X'}^{-1})^*(Gf \circ \phi_A) = (\ell_X^{-1},r_{X'}^{-1})^*(\phi_B \circ Ff):(X,X') \gc FA \to GB$. Similarly, in view of \ref{para:vgr_func_cat_other_case}, if $F,G:\B \to U_r^*\C$ are $\W$-graded functors, then a graded morphism $\phi:X \gc F \to G$ in $[\B,\C]_\ell$ is a family of graded morphisms $\phi_A:X \gc FA \to GA$ in $U_\ell^*\C$ $(A \in \ob\B)$ such that for every $f:X' \gc A \to B$ in $\B$, $(\phi_A,\phi_B,Ff,Gf)$ is a bigraded square in $\C$.
\end{rem}

\section{Graded bifunctors and bigraded products}\label{sec:bifunctors}

\begin{defn}\label{defn:sesqfunc}
Let $\V$ and $\W$ be monoidal categories. Let $\A$ be a left $\V$-graded category, let $\B$ be a right $\W$-graded category, and let $\C$ be a $\V$-$\W$-bigraded category. A \textbf{($\V$-$\W$-)graded sesquifunctor} $F:\A,\B \to \C$ consists of (1) an assignment to each pair of objects $A \in \ob\A$, $B \in \ob\B$ an object $F(A,B) \in \ob\C$, (2) for each $B \in \ob\B$ a left $\V$-graded functor $F(-,B):\A \to \C$ given on objects by $A \mapsto F(A,B)$, and (3) for each $A \in \ob\A$ a right $\W$-graded functor $F(A,-):\B \to \C$ given on objects by $B \mapsto F(A,B)$. In view of the conventions of \ref{defn:bigraded}, the left $\V$-graded functors $F(-,B)$ have codomain $\CleftV = U^*_\ell\C$, while the right $\W$-graded functors $F(A,-)$ have codomain $\C_\W = U^*_r\C$.
\end{defn}

\begin{defn}\label{defn:commutes_under_F}
Let $F:\A,\B \to \C$ be a $\V$-$\W$-graded sesquifunctor, and let $f:X \gc A \to A'$ in $\A$ and $g:B \gc X' \to B'$ in $\B$ be graded morphisms. We say that \textbf{$f$ commutes with $g$ under $F$} and write $f \perp_F g$ if the following envelope diagram commutes:
\begin{equation}\label{eq:bifunc_ax}
\xymatrix{
X\gc F(A,B)\gc X' \ar[rr]^{X\gc F(A,g)}\ar[d]_{F(f,B) \gc X'} & & X \gc F(A,B')\ar[d]^{F(f,B')}\\
F(A',B) \gc X' \ar[rr]^{F(A',g)} & & F(A',B')
}
\end{equation}
The commutativity of this diagram requires precisely that
$$F_{fg} := (F(f,B),F(f,B'),F(A,g),F(A',g))$$
be a bigraded square in $\C$, which we may depict as follows:
$$
\xymatrix{
F(A,B) \ar@{-}[d]_{F(f,B)} \ar@{-}[r]^{F(A,g)}="s1" & F(A,B') \ar@{-}[d]^{F(f,B')}\\
F(A',B) \ar@{-}[r]^{F(A',g)}="t1" & F(A',B')
\ar@{}"s1";"t1"|(0.6){X,X'}
}
$$
\end{defn}

The terminology and notation that we introduce in \ref{defn:commutes_under_F} is a variation on that used in the very special case of Kronecker products for enriched algebraic theories in \cite[\S 5]{Lu:Cmt}.

\begin{para}\label{para:commutes_via_nat}
In the situation of \ref{defn:commutes_under_F}, the following are equivalent (with the terminology of \ref{defn:graded_transf} and \ref{para:vgr_func_cat_other_case}): (1) $f \perp_F g$, (2) the graded morphisms $F(A,g):F(A,B) \gc X' \to F(A,B')$ $(A \in \ob\A)$ are natural at $f$, (3) the graded morphisms $F(f,B):X \gc F(A,B) \to F(A',B)$ $(B \in \ob\B)$ are natural at $g$.
\end{para}

\begin{defn}\label{defn:bifunc}
A \textbf{($\V$-$\W$-)graded bifunctor} $F:\A,\B \to \C$ is a $\V$-$\W$-graded sesquifunctor such that $f \perp_F g$ for all graded morphisms $f$ in $\A$ and $g$ in $\B$.
\end{defn}

\begin{rem}\label{rem:bifun_vtimesw_gr}
The preceding definition specializes to a notion of graded bifunctor valued in a $(\V \times \W)$-graded category $\C$, as follows: If $\A$ is a (left) $\V$-graded category and $\B$ is a (left) $\W$-graded category, then a \textit{graded sesquifunctor (resp.~bifunctor)} $F:\A,\B \to \C$ is by definition a $\V$-$\W^\rev$-graded sesquifunctor (resp.~bifunctor). We now unpack the details of this. A graded sesquifunctor $F:\A,\B \to \C$ consists of $\V$-graded functors $F(-,B):\A \to U_\ell^*\C$ $(B \in \ob\B)$ and $\W$-graded functors $F(A,-):\B \to U_r^*\C$ $(A \in \ob\A)$ that agree on objects. Here we must retain the notations $U_\ell^*\C$ and $U_r^*\C$ that are rendered unnecessary in \ref{defn:sesqfunc} through the use of right and left gradings. The graded sesquifunctor $F$ is a graded bifunctor iff for all $f:X \gc A \to A'$ in $\A$ and $g:X' \gc B \to B'$ in $\B$ the graded morphisms $F(f,B)$, $F(f,B')$ in $U_\ell^*\C$ and $F(A,g)$, $F(A',g)$ in $U_r^*\C$ constitute a bigraded square in $\C$ in the sense of \ref{rem:bigr_sq_prod_gr}. At some cost of economy\footnote{By contrast, the use of envelope diagrams for bigraded categories in \ref{defn:commutes_under_F} obviates this. Indeed, ease of expression is one of the reasons why we have chosen to formulate graded bifunctors in terms bigraded categories, among other reasons discussed in \ref{para:cats_gr_pr_vs_vw_bigr}.} we can directly express the latter in elementary terms as in \ref{rem:bigr_sq_prod_gr}.
\end{rem}

\begin{prop}\label{para:bifun_gen_sets}
Let $F:\A,\B \to \C$ be a $\V$-$\W$-graded sesquifunctor, and let $\F$ and $\G$ be generating sets of graded morphisms for $\A$ and $\B$, respectively (\ref{para:vgr_subcat}). Then $F$ is a $\V$-$\W$-graded bifunctor iff $f \perp_F g$ for all graded morphisms $f$ in $\F$ and $g$ in $\G$.
\end{prop}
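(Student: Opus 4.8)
The plan is to fix the two families of naturality conditions that together characterise the bifunctor property and then reduce each to a check on the given generating sets, invoking \ref{thm:gr_tr_gen_set} twice. By \ref{para:commutes_via_nat}, the statement $f \perp_F g$ can be read in two symmetric ways: for fixed $g:B \gc X' \to B'$ in $\B$, the family $\bigl(F(f,B):X \gc F(A,B) \to F(A',B)\bigr)_{B \in \ob\B}$ being natural at $g$; and for fixed $f:X \gc A \to A'$ in $\A$, the family $\bigl(F(A,g):F(A,B) \gc X' \to F(A,B')\bigr)_{A \in \ob\A}$ being natural at $f$. The one-directional implication (bifunctor $\Rightarrow$ restricted condition) is immediate, so the content is the converse.

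First I would assume $f \perp_F g$ for all $f \in \F$ and $g \in \G$ and aim to extend to all $f$ in $\A$ and $g$ in $\B$. I would proceed in two stages. In the first stage, fix an arbitrary $g \in \G$ and consider the right $\W$-graded functor $F(-,g)$-data, i.e.~the family $\bigl(F(f,B)\bigr)_B$; more precisely, by \ref{para:commutes_via_nat}(3) the hypothesis $f \perp_F g$ for all $f \in \F$ says exactly that the family $\bigl(F(A,g)\bigr)_{A \in \ob\A}$, viewed as a candidate graded transformation $F(-,B) \gc X' \Rightarrow F(-,B')$ between the left $\V$-graded functors $F(-,B)$ and $F(-,B')$, is natural at every $f \in \F$. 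Since $\F$ generates $\A$, Proposition \ref{thm:gr_tr_gen_set} upgrades this to left $\V$-graded naturality in $A \in \A$, i.e.~to $f \perp_F g$ for \emph{all} graded morphisms $f$ in $\A$ (with this fixed $g \in \G$).

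In the second stage, I would now hold an arbitrary graded morphism $f$ in $\A$ fixed and vary $g$. By the first stage we know $f \perp_F g$ for every $g \in \G$; by \ref{para:commutes_via_nat}(2) this says the family $\bigl(F(A,g)\bigr)$ --- now regarded as data for the right $\W$-graded functors $F(A,-)$ --- is natural at every $g \in \G$. Here I would apply the symmetric form of the generating-set lemma: since $\G$ generates $\B$, the analogue of \ref{thm:gr_tr_gen_set} for right $\W$-graded naturality (obtained from the stated lemma by the flip $\mathrm{C}^*$ of \ref{para:cats_gr_prod}, under which right $\W$-graded naturality becomes left naturality and bigraded squares are preserved by \ref{para:flip_bigraded_square}) promotes naturality at every $g \in \G$ to naturality at all $g$ in $\B$, giving $f \perp_F g$ for all $g$. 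Combining the two stages yields $f \perp_F g$ for all $f$ in $\A$ and all $g$ in $\B$, so $F$ is a graded bifunctor.

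The only delicate point --- and the step I expect to require the most care --- is the precise bookkeeping in the two applications of \ref{thm:gr_tr_gen_set}: the lemma as stated is for \emph{left} $\V$-graded naturality of a graded transformation $\phi:F \gc X' \Rightarrow G$, so the second stage needs its right-handed counterpart. The cleanest route is to note that $f \perp_F g$ is by definition the assertion that $F_{fg}$ is a bigraded square in $\C$, and by \ref{para:flip_bigraded_square} this is equivalent to the corresponding bigraded square in $\mathrm{C}^*\C$; transporting the whole situation along $\mathrm{C}^*$ interchanges the roles of $\A$ and $\B$ and of left and right gradings, so the second stage is literally the first stage applied to the flipped sesquifunctor. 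This makes the argument symmetric and lets me invoke \ref{thm:gr_tr_gen_set} verbatim in both stages.
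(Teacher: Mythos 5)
Your argument is correct and is essentially the paper's own proof: the paper likewise applies \ref{thm:gr_tr_gen_set} together with \ref{para:commutes_via_nat} twice, first to pass from all $f$ in $\A$ to $f \in \F$ (with $g$ arbitrary) and then from all $g$ in $\B$ to $g \in \G$, with the right-handed instance of \ref{thm:gr_tr_gen_set} supplied implicitly by the flip of \ref{para:flip_bigraded_square}, exactly as you spell out. The only blemish is that you have interchanged the labels (2) and (3) of \ref{para:commutes_via_nat} in your two stages (naturality of $(F(A,g))_{A}$ at $f$ is item (2), naturality of $(F(f,B))_{B}$ at $g$ is item (3)), but the content you describe is the right one.
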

\begin{proof}
By \ref{thm:gr_tr_gen_set} and \ref{para:commutes_via_nat}, $F$ is a graded bifunctor iff $f \perp_F g$ for all graded morphisms $f$ in $\F$ and $g$ in $\B$, and by applying \ref{thm:gr_tr_gen_set} and \ref{para:commutes_via_nat} again the result follows.
\end{proof}

\begin{exa}
Let $\C$ be a $\V$-$\W$-bigraded category, $R$ a monoid in $\V$, and $S$ a monoid in $\W$. By default we regard $R$ and $S$ as one-object left $\V$- and $\W$-categories, respectively, so $S^\circ$ is a one-object right $\W$-category (\ref{para:lr_vcats}). By definition, an \textbf{$R$-$S$-bimodule in $\C$} is a $\V$-$\W$-graded bifunctor $M:R,S^\circ \to \C$ and (by \ref{exa:loc_rep_univ_elts_gen},  \ref{rem:rmod_in_vgr_cat}, \ref{para:bifun_gen_sets}) is equivalently given by an object $M$ of $\C$ and graded morphisms $\lambda:R \gc M \to M$ and $\rho:M \gc S \to M$ in $\C$ such that $(M,\lambda)$ is a left $R$-module in $\C$, $(M,\rho)$ is a right $S$-module in $\C$, and $\lambda$ commutes with $\rho$ in the sense that $(\lambda,\lambda,\rho,\rho)$ is a bigraded square.
\end{exa}

\begin{para}\label{para:2fun_bifun}
A \textbf{transformation} $\theta:F \Rightarrow G:\A,\B \to \C$ of $\V$-$\W$-graded sesquifunctors $F,G$ is by definition a family of morphisms $\theta_{AB}:F(A,B) \to G(A,B)$ in $\C_0$ $(A \in \ob\A, B \in \ob\B)$ such that (i) for each $B \in \ob\B$ the morphisms $\theta_{AB}$ are left $\V$-graded natural in $A \in \A$ and (ii) for each $A \in \ob\A$ the morphisms $\theta_{AB}$ are right $\W$-graded natural in $B \in \B$. Note that (i) and (ii) are well defined since $\C_0 = (\CleftV)_0 = (\C_\W)_0$ by \ref{para:cats_gr_prod}. Graded sesquifunctors and their transformations form a category $\tensor[_\V]{\textsf{GSes}}{_\W}(\A,\B;\C)$ with a full subcategory $\tensor[_\V]{\textsf{GBif}}{_\W}(\A,\B;\C)$ spanned by the graded bifunctors; we write these also as $\textsf{GSes}(\A,\B;\C)$ and $\textsf{GBif}(\A,\B;\C)$. Note that $\tensor[_\V]{\textsf{GSes}}{_\W}(\A,\B;\C)$ is a (conical) pullback in $\CAT$ of an evident cospan:
\begin{equation}\label{eq:seqfun_pb}\LGCAT{\V}(\A,U_\ell^*\C)^{\ob\B} \to \C_0^{\ob\A \times \ob\B} \leftarrow \RGCAT{\W}(\B,U_r^*\C)^{\ob\A}.\end{equation}
For fixed $\A$ and $\B$, these expressions are 2-functorial in $\C \in \GCAT{\V}{\W}$, so we can regard \eqref{eq:seqfun_pb} as a cospan of 2-functors whose pointwise (conical) pullback is a 2-functor $\tensor[_\V]{\textsf{GSes}}{_\W}(\A,\B;-):\GCAT{\V}{\W} \to \CAT$. In particular, if $F:\A,\B \to \C$ is a graded sesquifunctor and $R:\C \to \C'$ is a $\V$-$\W$-bigraded functor then we obtain a graded sesquifunctor $RF:\A,\B \to \C'$. Moreover, if $F$ is a graded bifunctor, then $RF$ is a graded bifunctor since $R$ preserves bigraded squares (\ref{thm:pres_bigr_sq}). Thus we obtain also a 2-functor $\tensor[_\V]{\textsf{GBif}}{_\W}(\A,\B;-):\GCAT{\V}{\W} \to \CAT$. The expressions in \eqref{eq:seqfun_pb} are \textit{not} in general 2-functorial in (both) $\A$ and $\B$, even though they are functorial on 1-cells in all three variables jointly; but see \ref{thm:2fun_bifun} regarding $\tensor[_\V]{\textsf{GBif}}{_\W}$. 
\end{para}

\begin{defn}\label{defn:boxtimes}
Let $\A$ be a left $\V$-graded category, and a let $\B$ be a right $\W$-graded category. The \textbf{($\V$-$\W$-)bigraded product} of $\A$ and $\B$ is the $\V$-$\W$-bigraded category $\A \boxtimes \B$ defined as follows: Firstly, $\ob(\A \boxtimes \B) = \ob \A \times \ob \B$. Secondly, a graded morphism $(f,g):X \gc (A,B) \gc X' \to (A',B')$ in $\A \boxtimes \B$ is a pair consisting of graded morphisms $f:X \gc A \to A'$ in $\A$ and $g:B \gc X' \to B'$ in $\B$. Composition and reindexing in $\A \boxtimes \B$ are defined componentwise, in the sense that $(f',g') \circ (f,g) = (f' \circ f,g' \circ g)$ for graded morphisms $(f,g):X \gc (A,B) \gc X' \to (A',B')$ and $(f',g'):Y \gc (A',B') \gc Y' \to (A'',B'')$, and if we are given morphisms $\alpha:Z \to X$ in $\V$ and $\beta:Z' \to X'$ in $\W$ then the reindexing of $(f,g)$ along $(\alpha,\beta)$ in $\A \boxtimes \B$ is $(\alpha,\beta)^*(f,g) = (\alpha^*(f),\beta^*(g)):Z \gc (A,B) \gc Z' \to (A',B')$.  Identities in $\A \boxtimes \B$ are also given componentwise.  Note that $\A \boxtimes \B$ satisfies the axioms \ref{para:vgr_cat_conc}(I)--(IV) because $\A$ and $\B$ do.
\end{defn}

\begin{prop}
There is a 2-functor $\boxtimes:\LGCAT{\V} \times \RGCAT{\W} \to \GCAT{\V}{\W}$ that is given on objects by $(\A,\B) \mapsto \A \boxtimes \B$. In detail, (1) given 1-cells $P:\A \to \A'$ in $\LGCAT{\V}$ and $Q:\B \to \B'$ in $\RGCAT{\W}$, we obtain a $\V$-$\W$-bigraded functor $P \boxtimes Q:\A \boxtimes \B \to \A' \boxtimes \B'$ given on objects by $(A,B) \mapsto (PA,QB)$ and on graded morphisms by $(f,g) \mapsto (Pf,Qg)$, and (2) given 2-cells $\xi:P \Rightarrow P':\A \to \A'$ in $\LGCAT{\V}$ and $\zeta:Q \Rightarrow Q':\B \to \B'$ in $\RGCAT{\W}$, we obtain a 2-cell $\xi \boxtimes \zeta:P \boxtimes Q \Rightarrow P' \boxtimes Q'$ in $\GCAT{\V}{\W}$ consisting of the morphisms $(\xi_A,\zeta_B):(PA,QB) \to (P'A,Q'B)$ in $(\A' \boxtimes \B')_0 = \A'_0 \times \B'_0$ $(A \in \ob\A, B \in \ob\B)$.
\end{prop}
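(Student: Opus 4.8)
The plan is to verify, in turn, that the assignment is well defined on 1-cells and on 2-cells and then that it respects all the compositions and identities making up the 2-functor axioms, exploiting throughout that every ingredient of $\A \boxtimes \B$ in \ref{defn:boxtimes} is defined componentwise. The guiding principle is that each condition to be checked splits into a $\V$-component and a $\W^\rev$-component, which are then supplied by the corresponding hypotheses on the two factors.

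First I would confirm that $P \boxtimes Q$ is a $\V$-$\W$-bigraded functor. Since reindexing, composition, and identities in both $\A \boxtimes \B$ and $\A' \boxtimes \B'$ are componentwise, and $(f,g) \mapsto (Pf,Qg)$ acts componentwise, preservation of reindexing, composition, and identities is inherited directly from the facts that $P$ is a left $\V$-graded functor and $Q$ a right $\W$-graded functor (\ref{para:vgr_func}); in particular the required naturality in $(X,X') \in \V \times \W^\rev$ of the hom-maps of $P \boxtimes Q$ separates into naturality of the hom-maps of $P$ in $X \in \V$ and of $Q$ in $X' \in \W$.

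The main point requiring care is the 2-cell assignment. By \ref{defn:bigraded} and \ref{para:vgr_transf}, a 2-cell in $\GCAT{\V}{\W}$ is a left $(\V \times \W^\rev)$-graded-natural transformation, so I must show the family $(\xi_A,\zeta_B)$ is natural at each graded morphism $(f,g):X \gc (A,B) \gc X' \to (A',B')$ of $\A \boxtimes \B$. Because composition and reindexing in $\A \boxtimes \B$ are componentwise and the unitors of $\V \times \W^\rev$ are the componentwise pairs of the unitors of $\V$ and of $\W^\rev$, the single naturality equation of \ref{para:vgr_transf} decomposes into its $\V$- and $\W^\rev$-components: the former is exactly naturality of $\xi$ at $f$, and the latter is exactly (left $\W^\rev$-graded, equivalently right $\W$-graded) naturality of $\zeta$ at $g$, both of which hold by hypothesis. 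I expect this step to be where one must slow down, since translating the $\W^\rev$-component back into the language of $\W$ interchanges the roles of the left and right unitors; working uniformly in $\V \times \W^\rev$ is what keeps this transparent.

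Finally, the 2-functor axioms — preservation of identity 1-cells and 2-cells, of composition of 1-cells, and of vertical and horizontal composition of 2-cells — follow at once, as each such equation is componentwise and thus reduces to the corresponding identity holding separately in $\LGCAT{\V}$ and $\RGCAT{\W}$. No genuinely new input beyond the componentwise bookkeeping is needed at this stage.
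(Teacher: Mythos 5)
Your proposal is correct and fills in exactly the componentwise verification that the paper dismisses with ``The verification is straightforward.'' In particular, your attention to the 2-cell naturality condition decomposing into the $\V$-component and the $\W^{\mathsf{rev}}$-component (with the left/right unitors interchanged under the reversal) is the right way to make the routine check precise, and no further input is needed.
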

\begin{proof}
The verification is straightforward.
\end{proof}

\begin{prop}\label{thm:bigr_prod}
(1) In the situation of Definition \ref{defn:boxtimes}, there is a graded bifunctor
$$\mathsf{Pair} = (-,?):\A,\B \to \A \boxtimes \B$$
whose left $\V$-graded functors $(-,B):\A \to \A \boxtimes \B$ $(B \in \ob\B)$ are given on objects by $A \mapsto (A,B)$ and on graded morphisms by $f \mapsto (f,\gid_B)$, and whose right $\W$-graded functors $(A,-):\B \to \A \boxtimes \B$ $(A \in \ob\A)$ are given analogously. (2) For each graded morphism $(f,g):X \gc (A,B) \gc X' \to (A',B')$ in $\A \boxtimes \B$, the bigraded square $\mathsf{Pair}_{fg} = ((f,B),(f,B'),(A,g),(A',g))$ has diagonal $(f,g)$. (3) The set of all graded morphisms of the form $(f,B)$ or $(A,g)$ is a generating set of graded morphisms in $\A \boxtimes \B$.
\end{prop}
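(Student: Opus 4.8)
The plan is to define $\mathsf{Pair}$ entirely componentwise and to let the componentwise structure of $\A \boxtimes \B$ (Definition \ref{defn:boxtimes}) reduce every verification to the axioms \ref{para:vgr_cat_conc}(I)--(IV) in $\A$ and in $\B$, together with unitor bookkeeping. First I would confirm that the assignments in (1) define a graded sesquifunctor in the sense of \ref{defn:sesqfunc}. For fixed $B \in \ob\B$ the assignment $(-,B)$ sends $A$ to $(A,B)$ and a graded morphism $f:X \gc A \to A'$ to $(f,\gid_B)$, regarded as a graded morphism $X \gc (A,B) \to (A',B)$ in the underlying left $\V$-graded category $U_\ell^*(\A \boxtimes \B)$ of \ref{para:cats_gr_prod}. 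Since reindexing and composition in this underlying category are computed componentwise, preservation of reindexing is immediate, while preservation of composition and of identities reduces to the essential identity law \ref{para:vgr_cat_conc}(IV) in $\B$, which gives $\gid_B \circ \gid_B = \gid_B$ after reindexing along the unitor, and to the componentwise equality $(\gid_A,\gid_B) = \gid_{(A,B)}$. The right $\W$-graded functors $(A,-)$ are handled symmetrically, and they agree with the $(-,B)$ on objects, so $\mathsf{Pair}$ is a well-defined graded sesquifunctor.

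The crux is part (2), from which the bifunctor condition in (1) will follow. Given $f:X \gc A \to A'$ in $\A$ and $g:B \gc X' \to B'$ in $\B$, I would compute componentwise the two composites appearing in the direct formulation \ref{thm:direct_formulation_of_bigr_squares} of bigraded squares. Applying the essential identity axiom \ref{para:vgr_cat_conc}(IV) in each component, the composite $(f,B') \circ (A,g)$ equals $(r_X^*(f),\, r_{X'}^*(g))$ and the composite $(A',g) \circ (f,B)$ equals $(\ell_X^*(f),\, \ell_{X'}^*(g))$, where the reindexings are along the unitors of $\V$ and $\W$ exactly as they occur in \ref{thm:direct_formulation_of_bigr_squares}. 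Reindexing the first along $(r_X^{-1},r_{X'}^{-1})$ and the second along $(\ell_X^{-1},\ell_{X'}^{-1})$, the functoriality of reindexing \ref{para:vgr_cat_conc}(I) collapses both sides to $(f,g)$. By \ref{thm:direct_formulation_of_bigr_squares} this shows simultaneously that $\mathsf{Pair}_{fg} = ((f,B),(f,B'),(A,g),(A',g))$ is a bigraded square and that its diagonal is $(f,g)$, which is part (2). Since $f$ and $g$ are arbitrary, this yields $f \perp_{\mathsf{Pair}} g$ for all graded morphisms (\ref{defn:commutes_under_F}), so $\mathsf{Pair}$ is a graded bifunctor (\ref{defn:bifunc}), completing (1).

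Finally, for (3), let $\G$ be the set of all graded morphisms of the form $(f,B)$ or $(A,g)$. Each identity $\gid_{(A,B)} = (\gid_A,\gid_B)$ equals $(\gid_A,B)$ and hence lies in $\G$. Given an arbitrary graded morphism $(f,g):X \gc (A,B) \gc X' \to (A',B')$ in $\A \boxtimes \B$, part (2) exhibits $(f,g)$ as the diagonal of $\mathsf{Pair}_{fg}$, that is, as the reindexing along $(\ell_X^{-1},\ell_{X'}^{-1})$ of the composite $(A',g) \circ (f,B)$ of two elements of $\G$. Since the $\V$-$\W$-bigraded subcategory $\langle \G \rangle$ generated by $\G$ is closed under composition and reindexing (\ref{para:vgr_subcat}), it follows that $(f,g) \in \langle \G \rangle$; as $(f,g)$ was arbitrary, $\langle \G \rangle = \A \boxtimes \B$ and $\G$ is a generating set of graded morphisms.

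I expect the main obstacle to be the unitor and $\W^\rev$ bookkeeping in part (2): one must track the left and right unitors correctly through the componentwise composites, keeping in mind that the right $\W$-graded component is governed by the monoidal structure of $\W^\rev$ as in \ref{defn:bigraded}, so that the reindexings produced by the essential identity law align precisely with those specified in \ref{thm:direct_formulation_of_bigr_squares}. Once both composites are identified as reindexings of $(f,g)$ along the mutually inverse unitor pairs, the remainder of the argument is purely formal.
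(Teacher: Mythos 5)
Your proposal is correct and follows essentially the same route as the paper's proof: verify sesquifunctoriality componentwise, compute the two composites $(f,B')\circ(A,g)$ and $(A',g)\circ(f,B)$ via the essential identity axiom so that both reindex to $(f,g)$ as required by \ref{thm:direct_formulation_of_bigr_squares}, and deduce (3) from the fact that the diagonal $(f,g)$ is a reindexing of a composite of generators. Your unitor bookkeeping in the $\W^\rev$ component is also handled correctly.
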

\begin{proof}
In view of the conventions of \ref{defn:bigraded} and \ref{defn:sesqfunc}, $(-,B)$ is required to be a left $\V$-graded functor valued in the left $\V$-graded category underlying $\A \boxtimes \B$, and this is easily seen in view of the definition of the latter $\V$-graded category (\ref{para:cats_gr_prod}, \ref{defn:bigraded}). Similar remarks apply to $(A,-)$. Let us write $(f,B):(X,I) \gc (A,B) \to (A',B)$, $(f,B'):(X,I) \gc (A,B') \to (A',B')$, $(A,g):(I,X') \gc (A,B) \to (A,B')$, $(A',g):(I,X') \gc (A',B) \to (A',B')$ as in \ref{thm:direct_formulation_of_bigr_squares}. Computing the composites $(f,B') \circ (A,g) = (f \circ \gid_A,\gid_{B'} \circ g):(X \otimes I,X' \otimes I) \gc (A,B) \to (A',B')$ and $(A',g) \circ (f,B) = (\gid_{A'} \circ f,g \circ \gid_B):(I \otimes X,I \otimes X') \gc (A,B) \to (A',B')$ and applying the essential identity axiom (\ref{para:vgr_cat_conc}), statements (1) and (2) follow from \ref{thm:direct_formulation_of_bigr_squares}. The diagonal $(f,g)$ in (2) is a reindexing of the composite $(f,B') \circ (A,g)$, so (3) holds.
\end{proof}

\begin{thm}\label{thm:bifun}
Let $\A$ be a left $\V$-graded category and $\B$ a right $\W$-graded category. Then there are isomorphisms
\begin{equation}\label{eq:bifun_rep}\GCAT{\V}{\W}(\A \boxtimes \B,\C) \cong \tensor[_\V]{\textnormal{\textsf{GBif}}}{_\W}(\A,\B;\C)\;,\end{equation}
2-natural in $\C \in \GCAT{\V}{\W}$, given by composition with $\mathsf{Pair}$. Equivalently, $\mathsf{Pair}$ is the unit of a representation of the 2-functor $\tensor[_\V]{\textnormal{\textsf{GBif}}}{_\W}(\A,\B;-):\GCAT{\V}{\W} \to \CAT$.
\end{thm}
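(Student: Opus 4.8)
The plan is to show that composition with $\mathsf{Pair}$ is an isomorphism of categories, 2-natural in $\C$, by constructing an explicit inverse on objects and then matching morphisms. The construction is forced: if $\bar F:\A\boxtimes\B\to\C$ is any $\V$-$\W$-bigraded functor with $\bar F\circ\mathsf{Pair}=F$, then since every graded morphism $(f,g):X\gc(A,B)\gc X'\to(A',B')$ is the diagonal of the bigraded square $\mathsf{Pair}_{fg}$ (\ref{thm:bigr_prod}(2)) and $\bar F$ preserves bigraded squares and their diagonals (\ref{thm:pres_bigr_sq}), we must have $\bar F(f,g)=\Delta(\bar F\,\mathsf{Pair}_{fg})=\Delta(F_{fg})$, the last equality using $\bar F\circ\mathsf{Pair}=F$. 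So I would \emph{define} $\bar F(A,B):=F(A,B)$ on objects and $\bar F(f,g):=\Delta(F_{fg})$ on graded morphisms, where $F_{fg}=(F(f,B),F(f,B'),F(A,g),F(A',g))$; this is a legitimate bigraded square precisely because $F$ is a bifunctor, i.e.\ $f\perp_F g$. The forcing argument simultaneously delivers uniqueness, so the substance is to verify that $\bar F$ is a $(\V\times\W^\rev)$-graded functor and that $\bar F\circ\mathsf{Pair}=F$.

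Functoriality of $\bar F$ I would check axiom by axiom using the lemmas of \S\ref{sec:bigr_sq}. Preservation of reindexing (equivalently, naturality of $\bar F$ in the grade) follows from \ref{thm:reindex_bigr_sq}: as the partial functors $F(-,B)$ and $F(A,-)$ preserve reindexing, one gets $F_{\alpha^*f,\,\beta^*g}=(\alpha,\beta)^*F_{fg}$, whose diagonal is $(\alpha,\beta)^*\Delta(F_{fg})$. Preservation of identities follows from \ref{thm:id_bigr_sq}, since $F_{\gid_A,\gid_B}$ has all edges identities. The crux is preservation of composition. Given composable $(f,g)$ and $(f',g')$, I would arrange the nine objects $F(A^{(i)},B^{(j)})$ into a $2\times2$ array of bigraded squares $F_{f,g},F_{f,g'},F_{f',g},F_{f',g'}$ (each a square since $F$ is a bifunctor), whose edges are supplied by the partial functors. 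Because $F(-,B)$ is a left $\V$-graded functor and $F(A,-)$ a right $\W$-graded functor, the outer edges of the pasted array are exactly $F(f'\circ f,-)$ and $F(-,g'\circ g)$, so \ref{thm:paste_bigr_sq} identifies the full paste with $F_{f'\circ f,\,g'\circ g}$ and computes its diagonal as $\Delta(F_{f',g'})\circ\Delta(F_{f,g})$; that is, $\bar F(f'\circ f,g'\circ g)=\bar F(f',g')\circ\bar F(f,g)$. This grade bookkeeping is the step I expect to demand the most care. The identity $\bar F\circ\mathsf{Pair}=F$ is then immediate: $\mathsf{Pair}$ sends $f$ to $(f,\gid_B)$ and $g$ to $(\gid_A,g)$, and by \ref{thm:id_bigr_sq} the squares $F_{f,\gid_B}$ and $F_{\gid_A,g}$ have diagonals $F(f,B)$ and $F(A,g)$; since composition of a sesquifunctor with a bigraded functor is computed componentwise on the partial functors (\ref{para:2fun_bifun}), $\bar F\circ\mathsf{Pair}$ and $F$ have the same partial graded functors and hence coincide.

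This establishes the bijection on objects; for the bijection on morphisms I would observe that a transformation of graded sesquifunctors $F\Rightarrow F'$ and a bigraded-natural transformation of the corresponding bigraded functors have literally the same underlying family of components $\theta_{(A,B)}\in\C_0$, so only the naturality conditions must be matched. By the generating-set principle (proved exactly as \ref{thm:gr_tr_gen_set}: via \ref{thm:id_bigr_sq}, \ref{thm:reindex_bigr_sq}, \ref{thm:paste_bigr_sq}, the graded morphisms at which $\theta$ is natural form a graded subcategory) together with \ref{thm:bigr_prod}(3), it suffices to test bigraded naturality at the generators $(f,B)$ and $(A,g)$; naturality at these is precisely conditions (i) and (ii) of \ref{para:2fun_bifun} defining a transformation of sesquifunctors. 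Thus $(-)\circ\mathsf{Pair}$ is bijective on objects and fully faithful, hence an isomorphism of categories. Finally, its 2-naturality in $\C$ reduces to the associativity $(RG)\circ\mathsf{Pair}=R(G\circ\mathsf{Pair})$ and the definition of the 2-functor $\tensor[_\V]{\textsf{GBif}}{_\W}(\A,\B;-)$ by post-composition (\ref{para:2fun_bifun}); since each component is an isomorphism, $\mathsf{Pair}$ is the counit of a representation as claimed.
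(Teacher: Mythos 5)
Your proposal is correct and follows essentially the same route as the paper's proof: the same forcing argument via \ref{thm:pres_bigr_sq} and \ref{thm:bigr_prod}(2) to define $\bar F(f,g)=\Delta(F_{fg})$, the same verification of functoriality via \ref{thm:id_bigr_sq}, \ref{thm:reindex_bigr_sq}, and the pasted $2\times2$ array of bigraded squares from \ref{thm:paste_bigr_sq}, and the same use of \ref{thm:gr_tr_gen_set} with the generating set of \ref{thm:bigr_prod}(3) for full faithfulness.
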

\begin{proof}
We have a functor $\widetilde{\mathsf{Pair}} = (-)\mathsf{Pair}:\GCAT{\V}{\W}(\A \boxtimes \B,\C) \to \textnormal{\textsf{GBif}}(\A,\B;\C)$, which we first show is bijective on objects. Let $F:\A,\B \to \C$ be a graded bifunctor. Any  bigraded functor $G:\A \boxtimes \B \to \C$ with $G\mathsf{Pair} = F$ must be given on objects by $(A,B) \mapsto F(A,B)$ and, in view of \ref{thm:pres_bigr_sq} and \ref{thm:bigr_prod}, must send each graded morphism $(f,g):X \gc (A,B) \gc X' \to (A',B')$ in $\A \boxtimes \B$ to the diagonal $\Delta(F_{fg})$ of the bigraded square $F_{fg}$ of \ref{defn:commutes_under_F}, so that
$$G(f,g) = \Delta(F_{fg})\;:\;X \gc F(A,B) \gc X' \to F(A',B').$$
For the existence of $G$, let us define $G:\A \boxtimes \B \to \C$ in the latter way and show that $G$ is a bigraded functor; the statement that $G\mathsf{Pair} = F$ then follows easily, using \ref{thm:id_bigr_sq}. Firstly, $G$ sends the identity $(\gid_A,\gid_B)$ on $(A,B)$ to the identity on $G(A,B) = F(A,B)$ as a consequence of \ref{thm:id_bigr_sq} and the fact that $F(A,-)$ and $F(-,B)$ preserve identities. The reindexing $(\alpha,\beta)^*(f,g)$ of a graded morphism $(f,g):X \gc (A,B) \gc X' \to (A',B')$ in $\A \boxtimes \B$ along a morphism $(\alpha,\beta):(Y,Y) \to (X,X')$ in $\V \times \W^\rev$ is by definition $(\alpha^*(f),\beta^*(g))$ and so is sent by $G$ to the diagonal of the bigraded square
$$
\xymatrix{
F(A,B) \ar@{-}[d]_{F(\alpha^*(f),B)\,=\,\alpha^*(F(f,B))} \ar@{-}[rrrr]^{F(A,\beta^*(g))\,=\,\beta^*(F(A,g))}="s1" & & & & F(A,B') \ar@{-}[d]^{F(\alpha^*(f),B')\,=\,\alpha^*(F(f,B'))}\\
F(A',B) \ar@{-}[rrrr]^{F(A',\beta^*(g))\,=\,\beta^*(F(A',g))}="t1" & & & & F(A',B')
\ar@{}"s1";"t1"|(0.6){Y,Y'}
}
$$
which by \ref{thm:reindex_bigr_sq} is precisely the reindexing $(\alpha,\beta)^*(\Delta(F_{fg})) = (\alpha,\beta)^*(G(f,g))$. Regarding composition, suppose we are also given $(f',g'):Y \gc (A',B') \gc Y' \to (A'',B'')$ in $\A \boxtimes \B$. Then the bigraded squares $F_{fg}$, $F_{fg'}$, $F_{f'g}$, $F_{f'g'}$ take the following forms:
\begin{equation}\label{eq:pf_pres_comp}
\xymatrix{
F(A,B) \ar@{-}[d]_{F(f,B)} \ar@{-}[rr]^{F(A,g)}="s1" && F(A,B')\ar@{-}[d]|{F(f,B')} \ar@{-}[rr]^{F(A,g')}="s2" && F(A,B'') \ar@{-}[d]^{F(f,B'')}\\
F(A',B) \ar@{-}[d]_{F(f',B)} \ar@{-}[rr]^{F(A',g)}="t1" && F(A',B') \ar@{-}[d]|{F(f',B')} \ar@{-}[rr]^{F(A',g')}="t2" && F(A',B'') \ar@{-}[d]^{F(f',B'')} \\
F(A'',B) \ar@{-}[rr]^{F(A'',g)}="u1" && F(A'',B') \ar@{-}[rr]^{F(A'',g')}="u2" && F(A'',B'')
\ar@{}"s1";"t1"|(0.6){X,X'}
\ar@{}"s2";"t2"|(0.6){X,Y'}
\ar@{}"t1";"u1"|(0.6){Y,X'}
\ar@{}"t2";"u2"|(0.6){Y,Y'}
}
\end{equation}
By \ref{thm:paste_bigr_sq} these compose to produce a bigraded square
\begin{equation}\label{eq:pf_comp2}
\xymatrix{
F(A,B) \ar@{-}[d]_{F(f',B) \circ F(f,B)} \ar@{-}[rrr]^{F(A,g') \circ F(A,g)}="s1" & & & F(A,B'') \ar@{-}[d]^{F(f',B'') \circ F(f,B'')}\\
F(A'',B) \ar@{-}[rrr]^{F(A'',g') \circ F(A'',g)}="t1" & & & F(A'',B'')
\ar@{}"s1";"t1"|(0.6){Y \otimes X,X' \otimes Y'}
}
\end{equation}
whose diagonal is the composite of the diagonals in \eqref{eq:pf_pres_comp} and so is precisely $\Delta(F_{f'g'}) \circ \Delta(F_{fg}) = G(f',g') \circ G(f,g)$. But the graded sesquifunctoriality of $F$ entails that \eqref{eq:pf_comp2} is precisely the square $F_{f' \circ f,g' \circ g'}$, and hence $G(f' \circ f,g' \circ g) = \Delta(F_{f' \circ f,g' \circ g'}) = G(f',g') \circ G(f,g)$. This shows that $\widetilde{\mathsf{Pair}}$ is bijective on objects; it is also fully faithful, because if $G,H:\A \boxtimes \B \to \C$ are bigraded functors, then by \ref{thm:gr_tr_gen_set} a family of morphisms $\delta = (\delta_{(A,B)}:G(A,B) \to H(A,B))$ is a 2-cell $G \Rightarrow H$  in $\GCAT{\V}{\W} = \LGCAT{\V \times \W^\rev}$ iff $\delta$ is natural at each graded morphism in the generating set given in \ref{thm:bigr_prod}(3), iff $\delta$ is a transformation $G\mathsf{Pair} \Rightarrow H\mathsf{Pair}$.
\end{proof}

\begin{cor}\label{thm:2fun_bifun}
There is a unique 2-functor
$$\tensor[_\V]{\textnormal{\textsf{GBif}}}{_\W}\;:\;\LGCAT{\V}^\op \times \textnormal{GCAT}_\W^\op \times \GCAT{\V}{\W} \to \CAT$$
that is given on objects by $(\A,\B,\C) \mapsto \tensor[_\V]{\textnormal{\textsf{GBif}}}{_\W}(\A,\B;\C)$ and makes the isomorphisms in \eqref{eq:bifun_rep} 2-natural in $\A \in \LGCAT{\V}$, $\B \in \RGCAT{\W}$, $\C \in \GCAT{\V}{\W}$.

\end{cor}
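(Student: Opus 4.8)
The plan is to obtain $\tensor[_\V]{\textnormal{\textsf{GBif}}}{_\W}$ by transport of structure along the isomorphisms \eqref{eq:bifun_rep}, exactly in the style of the proof of Lemma \ref{thm:contrav_func_hat}. First I would assemble a genuine 2-functor out of the representable left-hand side of \eqref{eq:bifun_rep}. Precomposing the hom-2-functor $\GCAT{\V}{\W}(-,?):\GCAT{\V}{\W}^\op \times \GCAT{\V}{\W} \to \CAT$ in its first variable with the opposite $\boxtimes^\op:\LGCAT{\V}^\op \times \RGCAT{\W}^\op \to \GCAT{\V}{\W}^\op$ of the 2-functor $\boxtimes$ yields a 2-functor
$$H:\LGCAT{\V}^\op \times \RGCAT{\W}^\op \times \GCAT{\V}{\W} \longrightarrow \CAT$$
given on objects by $H(\A,\B,\C) = \GCAT{\V}{\W}(\A \boxtimes \B,\C)$. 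Thus $H$ already has precisely the source and (up to \eqref{eq:bifun_rep}) the object-assignment required of $\tensor[_\V]{\textnormal{\textsf{GBif}}}{_\W}$.

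Next I would invoke Theorem \ref{thm:bifun}: for every $\A$, $\B$, $\C$ the functor $\widetilde{\mathsf{Pair}} = (-)\mathsf{Pair}:H(\A,\B,\C) \to \tensor[_\V]{\textnormal{\textsf{GBif}}}{_\W}(\A,\B;\C)$ is an isomorphism of categories. We therefore have an objectwise family of isomorphisms from the 2-functor $H$ to the object-assignment $(\A,\B,\C) \mapsto \tensor[_\V]{\textnormal{\textsf{GBif}}}{_\W}(\A,\B;\C)$. By transport of structure along these isomorphisms \cite[\S 1.10]{Ke:Ba}, there is a unique 2-functor $\tensor[_\V]{\textnormal{\textsf{GBif}}}{_\W}$ with the stated source and object-assignment for which $\widetilde{\mathsf{Pair}}$ constitutes a 2-natural isomorphism $H \cong \tensor[_\V]{\textnormal{\textsf{GBif}}}{_\W}$; concretely, its value on a 1-cell $(P,Q,R)$ of the source is the conjugate $\widetilde{\mathsf{Pair}} \circ H(P,Q,R) \circ \widetilde{\mathsf{Pair}}^{-1}$, and its value on 2-cells is obtained by whiskering $H$ applied to the 2-cell with the relevant copies of $\widetilde{\mathsf{Pair}}^{\pm 1}$. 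This makes \eqref{eq:bifun_rep} 2-natural in all three variables, and uniqueness is forced because the 2-naturality requirement leaves no freedom in the displayed formula.

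It then remains only to confirm that this three-variable structure is consistent with what is already known. Fixing $\A$ and $\B$ and letting $\C$ vary, the restriction of the transported 2-functor is a 2-functor in $\C$ rendering $\widetilde{\mathsf{Pair}}$ 2-natural in $\C$, so by the uniqueness clause of Theorem \ref{thm:bifun} (the representability of $\tensor[_\V]{\textnormal{\textsf{GBif}}}{_\W}(\A,\B;-)$ by $\A \boxtimes \B$ via $\mathsf{Pair}$) it coincides with the 2-functor supplied there. I expect no deep obstacle: the transport-of-structure step is purely formal once the objectwise isomorphisms of Theorem \ref{thm:bifun} are in hand. The only points needing genuine care are bookkeeping, namely verifying that $H$ is a 2-functor (which reduces to the already-established 2-functoriality of $\boxtimes$ together with that of the hom-2-functor), checking the variances so that the two contravariant slots of the source correctly feed the single contravariant slot of the hom-2-functor after composition with $\boxtimes^\op$, and noting that the hom-categories of $\GCAT{\V}{\W}$ are $\SET$-small so that $H$ indeed takes values in $\CAT$.
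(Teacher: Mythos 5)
Your proposal is correct and matches the paper's intended argument: the corollary is deduced from Theorem \ref{thm:bifun} by transport of structure along the objectwise isomorphisms $\widetilde{\mathsf{Pair}}$, using the 2-functoriality of $\boxtimes$ and of the hom-2-functor together with the uniqueness clause of \cite[\S 1.10]{Ke:Ba}, exactly as the paper does for the analogous Corollary \ref{thm:2func_vgr_func_cat}. Your additional consistency check against the previously defined 2-functor $\tensor[_\V]{\textnormal{\textsf{GBif}}}{_\W}(\A,\B;-)$ is a sensible (if not strictly necessary) piece of bookkeeping.
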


Applying the above definitions and results relative to $\SET'$, we may consider graded bifunctors and sesquifunctors $F:\A,\B \to \C$ also when $\A,\B,\C$ are huge, and we thus obtain a 2-functor $\tensor[_\V]{\textsf{GBIF}}{_\W}$ valued in the 2-category of huge categories.

Recalling that $\LGCAT{\V} = \RGCAT{\V^\rev}$ and $\RGCAT{\W} = \LGCAT{\W^\rev}$, we also have an isomorphism $\mathrm{C}^*:\GCAT{\V}{\W} \xrightarrow{\sim} \GCAT{\W^\rev}{\V^\rev}$ by \ref{defn:bigraded}. By \ref{para:flip_bigraded_square}, each $\V$-$\W$-graded bifunctor $F:\A,\B \to \C$ determines a $\W^\rev$-$\V^\rev$-graded bifunctor $F^\mathsf{swap}:\B,\A \to \mathrm{C}^*\C$ given by $F^\mathsf{swap}(-,A) = F(A,-)$ and $F^\mathsf{swap}(B,-) = F(-,B)$. Thus we obtain

\begin{prop}\label{thm:swap_gr_bifunc}
$\tensor[_\V]{\textnormal{\textsf{GBif}}}{_\W}(\A,\B;\C) \cong \tensor[_{\W^\rev}]{\textnormal{\textsf{GBif}}}{_{\V^\rev}}(\B,\A;\mathrm{C}^*\C)$, 2-naturally in $\A \in \LGCAT{\V}$, $\B \in \RGCAT{\W}$, $\C \in \GCAT{\V}{\W}$.
\end{prop}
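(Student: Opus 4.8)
The plan is to prove Proposition \ref{thm:swap_gr_bifunc} by combining the swap operation on bifunctors described in the preceding paragraph with the representability result of Theorem \ref{thm:bifun}. The cleanest route is \emph{not} to verify the claimed isomorphism of categories by hand, but rather to deduce it from the bigraded-product representation, so that 2-naturality comes for free. First I would observe that by Theorem \ref{thm:bifun} applied on both sides, we have $\tensor[_\V]{\textsf{GBif}}{_\W}(\A,\B;\C) \cong \GCAT{\V}{\W}(\A \boxtimes \B,\C)$ and $\tensor[_{\W^\rev}]{\textsf{GBif}}{_{\V^\rev}}(\B,\A;\mathrm{C}^*\C) \cong \GCAT{\W^\rev}{\V^\rev}(\B \boxtimes \A,\mathrm{C}^*\C)$, where in the second instance the bigraded product $\B \boxtimes \A$ is formed in the 2-category $\GCAT{\W^\rev}{\V^\rev}$, recalling that $\B$ is now regarded as a left $\W^\rev$-graded category and $\A$ as a right $\V^\rev$-graded category via $\LGCAT{\V} = \RGCAT{\V^\rev}$ and $\RGCAT{\W} = \LGCAT{\W^\rev}$.

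Next I would exhibit a canonical isomorphism $\mathrm{C}^*(\A \boxtimes \B) \cong \B \boxtimes \A$ in $\GCAT{\W^\rev}{\V^\rev}$. Concretely, applying $\mathrm{C}^*$ to the $\V$-$\W$-bigraded category $\A \boxtimes \B$ swaps the roles of the two gradings (by the defining property $U_\ell^*\mathrm{C}^* = U_r^*$, $U_r^*\mathrm{C}^* = U_\ell^*$ from \ref{para:cats_gr_prod}), and a graded morphism $(f,g)$ of $\A \boxtimes \B$ becomes exactly a graded morphism $(g,f)$ of $\B \boxtimes \A$ after the reindexing of grades along the symmetry; the match on objects is the identity on $\ob\A \times \ob\B = \ob\B \times \ob\A$ up to the evident flip. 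Using the isomorphism of 2-categories $\mathrm{C}^*:\GCAT{\V}{\W} \xrightarrow{\sim} \GCAT{\W^\rev}{\V^\rev}$ of \ref{defn:bigraded}, this gives a 2-natural chain
\begin{align*}
\GCAT{\V}{\W}(\A \boxtimes \B,\C) &\cong \GCAT{\W^\rev}{\V^\rev}(\mathrm{C}^*(\A \boxtimes \B),\mathrm{C}^*\C)\\
&\cong \GCAT{\W^\rev}{\V^\rev}(\B \boxtimes \A,\mathrm{C}^*\C),
\end{align*}
where the first isomorphism is the action of the 2-functor $\mathrm{C}^*$ on hom-categories and the second is induced by the isomorphism $\B \boxtimes \A \cong \mathrm{C}^*(\A \boxtimes \B)$ just established. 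Chaining these with the two representability isomorphisms yields the desired $\tensor[_\V]{\textsf{GBif}}{_\W}(\A,\B;\C) \cong \tensor[_{\W^\rev}]{\textsf{GBif}}{_{\V^\rev}}(\B,\A;\mathrm{C}^*\C)$.

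Finally I would check that the composite isomorphism sends a graded bifunctor $F$ to precisely $F^\mathsf{swap}$ as defined before the statement, by tracing the correspondence through the counit $\mathsf{Pair}$: under Theorem \ref{thm:bifun}, $F$ corresponds to the bigraded functor $G:\A \boxtimes \B \to \C$ with $G\mathsf{Pair} = F$, and transporting $G$ across $\mathrm{C}^*$ and the flip $\B \boxtimes \A \cong \mathrm{C}^*(\A \boxtimes \B)$ yields a bigraded functor whose composite with the pairing sesquifunctor for $\B \boxtimes \A$ has component left/right graded functors $F(A,-)$ and $F(-,B)$ in the swapped roles, which is exactly $F^\mathsf{swap}$. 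The 2-naturality in $\A,\B,\C$ follows automatically because every isomorphism in the chain is 2-natural: the representability isomorphisms are 2-natural by Corollary \ref{thm:2fun_bifun}, and the isomorphisms induced by the 2-functor $\mathrm{C}^*$ and by $\B \boxtimes \A \cong \mathrm{C}^*(\A \boxtimes \B)$ are 2-natural by functoriality. The main obstacle I anticipate is verifying carefully that the flip isomorphism $\B \boxtimes \A \cong \mathrm{C}^*(\A \boxtimes \B)$ is genuinely a $\W^\rev$-$\V^\rev$-bigraded isomorphism and is 2-natural in $\A$ and $\B$ jointly; this requires unwinding the grade-reindexing bookkeeping in Definition \ref{defn:boxtimes} against the behaviour of $\mathrm{C}^*$ on gradings, but it is a direct computation once the conventions of \ref{para:cats_gr_prod} and \ref{defn:bigraded} are pinned down.
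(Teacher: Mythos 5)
Your proposal is correct, but it takes a genuinely different route from the paper. The paper's argument is the short direct one contained in the paragraph preceding the statement: by \ref{para:flip_bigraded_square}, a quadruple is a bigraded square in $\C$ iff its flip is a bigraded square in $\mathrm{C}^*\C$, so the assignment $F \mapsto F^\mathsf{swap}$ (which merely exchanges the roles of the partial functors $F(-,B)$ and $F(A,-)$) is immediately well defined and invertible, and the 2-naturality is read off from the explicit description. You instead deduce the isomorphism from the representability of $\tensor[_\V]{\textnormal{\textsf{GBif}}}{_\W}(\A,\B;-)$ by $\A \boxtimes \B$ (Theorem \ref{thm:bifun} and Corollary \ref{thm:2fun_bifun}), together with a flip isomorphism $\mathrm{C}^*(\A \boxtimes \B) \cong \B \boxtimes \A$ in $\GCAT{\W^\rev}{\V^\rev}$. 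That flip isomorphism is indeed valid --- unwinding Definition \ref{defn:boxtimes} against \ref{para:cats_gr_prod} shows both sides have hom-presheaves $\A(X \gc A;A') \times \B(B \gc X';B')$ indexed by $(X',X) \in \W^\rev \times \V$, with matching componentwise composition --- but it is an extra lemma not appearing in the paper, and your final step of identifying the composite isomorphism with $F \mapsto F^\mathsf{swap}$ via $\mathsf{Pair}$ essentially reproduces the direct computation the paper performs anyway. What your route buys is that the existence and 2-naturality in all three variables come packaged for free from the universal property; what the paper's route buys is brevity, since \ref{para:flip_bigraded_square} already isolates exactly the symmetry needed and no auxiliary isomorphism of bigraded products is required.
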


\section{The relation between graded functor categories and bifunctors}\label{sec:reln_gr_func_cat_bifunc}

\begin{para}[\textbf{The evaluation bifunctor}]
Let $\C$ be a $\V$-$\W$-bigraded category and $\B$ a right $\W$-graded category. By Theorem \ref{thm:vgr_func_cat} we have a left $\V$-graded category $[\B,\C] = \tensor[_\V]{[\B,\C]}{^\W}$, and we now define a graded bifunctor $\mathsf{Ev}:[\B,\C],\B \to \C$ given on objects by $\mathsf{Ev}(F,B) = FB$.  For each right $\W$-graded functor $F:\B \to \C$, we define $\mathsf{Ev}(F,-) = F$.  For each object $B$ of $\B$, we define a left $\V$-graded functor $\mathsf{Ev}(-,B):[\B,\C] \to \C$ that sends each graded morphism $\phi:X \gc F \to G$ in $[\B,\C]$ to the graded morphism $\phi_B:X \gc FB \to GB$ in $\C$. Indeed, $\mathsf{Ev}(-,B)$ preserves composition, identities, and reindexing because these are defined pointwise in $[\B,\C]$. These data satisfy axiom \eqref{eq:bifunc_ax}, because of the naturality condition in the definition of graded morphisms in $[\B,\C]$ (\ref{para:vgr_func_cat_other_case}).
\end{para}

\begin{thm}\label{thm:func_cat_rep_bifun}
Let $\B$ be a right $\W$-graded category, and let $\C$ be a $\V$-$\W$-bigraded category.  Then there are isomorphisms
$$\LGCAT{\V}(\A,\tensor[_\V]{[\B,\C]}{^\W}) \cong \tensor[_\V]{\textnormal{\textsf{GBif}}}{_\W}(\A, \B;\C)$$
2-natural in $\A \in \LGCAT{\V}$. I.e., the left $\V$-category $\tensor[_\V]{[\B,\C]}{^\W}$ represents the 2-functor $\tensor[_\V]{\textnormal{\textsf{GBif}}}{_\W}(-,\B;\C):\LGCAT{\V}^\op \to \CAT$.
\end{thm}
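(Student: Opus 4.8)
The plan is to exhibit the evaluation bifunctor $\mathsf{Ev}:[\B,\C],\B \to \C$ (defined just above the statement) as the counit of the required representation, so that composition with $\mathsf{Ev}$ furnishes the claimed isomorphism. Concretely, given a left $\V$-graded functor $H:\A \to \tensor[_\V]{[\B,\C]}{^\W}$, I would form the $\V$-$\W$-graded sesquifunctor $\mathsf{Ev}\circ(H,\id_\B):\A,\B \to \C$ whose left $\V$-graded functors are $\mathsf{Ev}(-,B)\circ H$ $(B\in\ob\B)$ and whose right $\W$-graded functors are $\mathsf{Ev}(HA,-) = HA$ $(A\in\ob\A)$; that this is in fact a graded bifunctor follows from \ref{para:commutes_via_nat} together with the fact that $\mathsf{Ev}$ is one. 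This defines the forward map on objects.

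For the inverse, given a graded bifunctor $F:\A,\B\to\C$ I would define its transpose $\widehat{F}:\A \to \tensor[_\V]{[\B,\C]}{^\W}$ by $\widehat{F}A = F(A,-)$ on objects --- an object of $[\B,\C]$ since $F(A,-)$ is a right $\W$-graded functor $\B \to \C_\W$ (\ref{defn:sesqfunc}) --- and, for a graded morphism $f:X\gc A \to A'$ in $\A$, by letting $\widehat{F}f:X\gc \widehat{F}A \to \widehat{F}A'$ be the graded transformation (in the sense of \ref{para:vgr_func_cat_other_case}) with components $(\widehat{F}f)_B = F(f,B)$ $(B\in\ob\B)$. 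The crucial point is that this family really is right $\W$-graded natural: its naturality at a graded morphism $g:B\gc X'\to B'$ in $\B$ asks precisely that $(F(f,B),F(f,B'),F(A,g),F(A',g)) = F_{fg}$ be a bigraded square, which holds because $F$ is a bifunctor (\ref{defn:commutes_under_F}, \ref{defn:bifunc}). That $\widehat{F}$ preserves reindexing, composition, and identities then follows from the pointwise definition of these operations in $\tensor[_\V]{[\B,\C]}{^\W}$ (\ref{thm:vgr_func_cat}) together with the left $\V$-graded functoriality of each $F(-,B)$.

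I would then check that $\widehat{(-)}$ and $\mathsf{Ev}\circ(-,\id_\B)$ are mutually inverse. Since $\mathsf{Ev}(-,B)$ sends a graded morphism $\phi:X\gc G \to G'$ in $[\B,\C]$ to its component $\phi_B$, both round-trips reduce to unwinding definitions: from $F$ one recovers $F(f,B)$ as the $B$-component of $\widehat{F}f$, and from $H$ one recovers $H$ from the data of $\mathsf{Ev}\circ(H,\id_\B)$, uniqueness being forced by $HA = \mathsf{Ev}(HA,-)$ and $(Hf)_B = \mathsf{Ev}(-,B)(Hf)$. To upgrade the object bijection to an isomorphism of categories I would match $2$-cells: a left $\V$-graded natural transformation $\delta:H\Rightarrow K$ is a family $\delta_A \in [\B,\C]_0 = \RGCAT{\W}(\B,\C_\W)$, hence a doubly-indexed family $\theta_{AB}=(\delta_A)_B$ in $\C_0$, and right $\W$-graded naturality of each $\delta_A$ together with left $\V$-graded naturality of $\delta$ in $A$ translate exactly into the conditions (i) and (ii) defining a transformation of graded bifunctors (\ref{para:2fun_bifun}), using $\C_0 = (\CleftV)_0 = (\C_\W)_0$.

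Finally, $2$-naturality in $\A$ is the identity $\mathsf{Ev}\circ(HP,\id_\B) = (\mathsf{Ev}\circ(H,\id_\B))\circ(P,\id_\B)$ for $P:\A\to\A'$, i.e.\ associativity of whiskering, which is immediate. I expect the one genuinely substantive step to be the well-definedness of the transpose $\widehat{F}$ in the second paragraph --- specifically the observation that the bifunctor condition $f\perp_F g$ is \emph{exactly} the right $\W$-graded naturality needed for the components $F(f,B)$ to assemble into a graded transformation; everything else is routine verification reducing to \ref{para:commutes_via_nat}, \ref{para:vgr_func_cat_other_case}, and the pointwise structure of $\tensor[_\V]{[\B,\C]}{^\W}$.
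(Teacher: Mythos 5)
Your proposal is correct and follows essentially the same route as the paper's proof: the evaluation bifunctor $\mathsf{Ev}$ as counit, the transpose $F \mapsto F(A,-)$, $f \mapsto (F(f,B))_{B}$ with well-definedness supplied by \ref{para:commutes_via_nat}, and full faithfulness via the pointwise structure of $\tensor[_\V]{[\B,\C]}{^\W}$. You correctly identify the one substantive step (that $f\perp_F g$ is exactly the required right $\W$-graded naturality), which is also the crux of the paper's argument.
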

\begin{proof}
As the counit of the representation, we employ the evaluation bifunctor $\mathsf{Ev}:[\B,\C],\B \to \C$. Given a left $\V$-graded category $\A$, it suffices to show that the induced functor
$$\widetilde{\mathsf{Ev}}:\LGCAT{\V}(\A,[\B,\C]) \to \tensor[_\V]{\textsf{GBif}}{_\W}(\A,\B;\C)$$
is an isomorphism, where $\widetilde{\mathsf{Ev}}H = \mathsf{Ev}(H-,?):\A,\B \to \C$ for each $H:\A \to [\B,\C]$. Let $F:\A,\B \to \C$ be a graded bifunctor.  Then we obtain a left $\V$-graded functor $F^\sharp:\A \to [\B,\C]$ given on objects by $F^\sharp A = F(A,-)$ and on graded morphisms as follows.  Given $f:X \gc A \to A'$ in $\A$, the graded morphisms $F(f,B)$ $(B \in \ob\B)$ constitute a graded transformation $F^\sharp f = F(f,-):X \gc F(A,-) \Rightarrow F(A',-)$ by \ref{para:commutes_via_nat}.  The preservation of composition, identities, and reindexing by $F^\sharp$ follows from the fact that these are defined pointwise in $[\B,\C]$ and are preserved by $F$ in its first argument.  The resulting left $\V$-graded functor $F^\sharp:\A \to [\B,\C]$ clearly satisfies $\mathsf{Ev}(F^\sharp-,?) = F$ and is unique in this capacity.  Hence $\widetilde{\mathsf{Ev}}$ is bijective on objects, and it follows from the definition of morphisms in $\tensor[_\V]{\textsf{GBif}}{_\W}(\A,\B;\C)$ and the pointwise definition of composition and reindexing in $[\B,\C]$ that $\widetilde{\mathsf{Ev}}$ is fully faithful.
\end{proof}

Using \cite[\S 1.10]{Ke:Ba} we obtain

\begin{cor}\label{thm:2func_vgr_func_cat}
There is a unique 2-functor
$$\tensor[_\V]{[-,?]}{^\W}:\textnormal{GCAT}_\W^\op \times \GCAT{\V}{\W} \to \LGCAT{\V}$$
that is given on objects by $(\B,\C) \mapsto \tensor[_\V]{[\B,\C]}{^\W}$ and makes the isomorphisms in \ref{thm:func_cat_rep_bifun} 2-natural in $\A \in \LGCAT{\V}$, $\B \in \RGCAT{\W}$, and $\C \in \GCAT{\V}{\W}$. 
\end{cor}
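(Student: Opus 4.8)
The plan is to deduce the result as an instance of parametrized representability, exactly as in the proof of Lemma \ref{thm:contrav_func_hat}. The two ingredients are already in hand: by Corollary \ref{thm:2fun_bifun} the assignment $(\A,\B,\C) \mapsto \tensor[_\V]{\textnormal{\textsf{GBif}}}{_\W}(\A,\B;\C)$ is a single 2-functor $\LGCAT{\V}^\op \times \textnormal{GCAT}_\W^\op \times \GCAT{\V}{\W} \to \CAT$, while by Theorem \ref{thm:func_cat_rep_bifun}, for each fixed pair $(\B,\C)$ the 2-functor $\tensor[_\V]{\textnormal{\textsf{GBif}}}{_\W}(-,\B;\C):\LGCAT{\V}^\op \to \CAT$ is represented by $\tensor[_\V]{[\B,\C]}{^\W}$ via an isomorphism 2-natural in $\A$. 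Writing $\mathcal P := \textnormal{GCAT}_\W^\op \times \GCAT{\V}{\W}$, we thus have a 2-functor $\tensor[_\V]{\textnormal{\textsf{GBif}}}{_\W}:\LGCAT{\V}^\op \times \mathcal P \to \CAT$ whose restriction to each object of $\mathcal P$ is representable. First I would therefore invoke \cite[\S 1.10]{Ke:Ba} to obtain the unique 2-functor $\tensor[_\V]{[-,?]}{^\W}:\mathcal P \to \LGCAT{\V}$ sending each $(\B,\C)$ to $\tensor[_\V]{[\B,\C]}{^\W}$ and rendering the representing isomorphisms of Theorem \ref{thm:func_cat_rep_bifun} 2-natural in all three variables $\A$, $\B$, $\C$ jointly.

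To make clear why this applies, I would recall the mechanism underlying \cite[\S 1.10]{Ke:Ba}, which is the 2-categorical Yoneda lemma. A $1$-cell $(Q,R):(\B,\C) \to (\B',\C')$ in $\mathcal P$ — that is, a right $\W$-graded functor $Q:\B' \to \B$ together with a $\V$-$\W$-bigraded functor $R:\C \to \C'$ — induces, by 2-functoriality of $\tensor[_\V]{\textnormal{\textsf{GBif}}}{_\W}$ in its second and third arguments, a 2-natural transformation $\tensor[_\V]{\textnormal{\textsf{GBif}}}{_\W}(-,\B;\C) \Rightarrow \tensor[_\V]{\textnormal{\textsf{GBif}}}{_\W}(-,\B';\C')$ of 2-functors $\LGCAT{\V}^\op \to \CAT$. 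Transporting along the representing isomorphisms yields a 2-natural transformation $\LGCAT{\V}(-,\tensor[_\V]{[\B,\C]}{^\W}) \Rightarrow \LGCAT{\V}(-,\tensor[_\V]{[\B',\C']}{^\W})$, which the Yoneda lemma identifies with a unique $1$-cell $\tensor[_\V]{[\B,\C]}{^\W} \to \tensor[_\V]{[\B',\C']}{^\W}$ in $\LGCAT{\V}$; concretely this $1$-cell is $F \mapsto R \cdot F \cdot Q$. Likewise a $2$-cell in $\mathcal P$ induces a modification between the corresponding 2-natural transformations and hence, by Yoneda, a unique $2$-cell between the induced $1$-cells. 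Preservation of composites and identities, the remaining $2$-functor axioms, and uniqueness of $\tensor[_\V]{[-,?]}{^\W}$ then all follow from the uniqueness clause of Yoneda together with the 2-functoriality of $\tensor[_\V]{\textnormal{\textsf{GBif}}}{_\W}$.

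I expect no genuine obstacle here, since the statement is a formal consequence of the two cited results; the only points requiring care are bookkeeping rather than mathematics. First, the variance must be tracked correctly: because $\tensor[_\V]{\textnormal{\textsf{GBif}}}{_\W}$ is contravariant in its $\W$-graded argument, the representing $2$-functor is contravariant there too, which is why its domain is $\textnormal{GCAT}_\W^\op \times \GCAT{\V}{\W}$ rather than $\textnormal{GCAT}_\W \times \GCAT{\V}{\W}$. Second, one must use the genuinely $2$-categorical form of representability, handling $2$-cells via modifications and not merely the $1$-categorical Yoneda lemma; but this is precisely the content of \cite[\S 1.10]{Ke:Ba}, already applied in this form in Lemma \ref{thm:contrav_func_hat}. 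An optional sanity check is to verify directly that the explicit assignment $F \mapsto R \cdot F \cdot Q$ is a left $\V$-graded functor compatible with $\mathsf{Ev}$, confirming that the abstractly produced $2$-functor coincides with the expected concrete one.
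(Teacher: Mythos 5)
Your proposal is correct and matches the paper's argument: the paper likewise obtains the corollary as an immediate application of parametrized representability \cite[\S 1.10]{Ke:Ba}, combining the 2-functoriality of $\tensor[_\V]{\textnormal{\textsf{GBif}}}{_\W}$ from \ref{thm:2fun_bifun} with the objectwise representations of \ref{thm:func_cat_rep_bifun}. Your additional unpacking of the Yoneda mechanism and the explicit formula $F \mapsto R \cdot F \cdot Q$ is accurate but goes beyond what the paper records.
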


In view of \ref{thm:vgr_func_cat} and \ref{defn:bigraded}, we thus obtain also a 2-functor
$$\tensor[^\V]{[-,?]}{_\W} = \tensor[_{\W^\rev}]{[-,\mathrm{C}^*?]}{^{\V^\rev}}:\LGCAT{\V}^\op \times \GCAT{\V}{\W} \to \RGCAT{\W}$$
given on objects by $(\A,\C) \mapsto \tensor[^\V]{[-,?]}{_\W}$. Hence, by \ref{thm:bifun}--\ref{thm:swap_gr_bifunc}, \ref{thm:func_cat_rep_bifun}, \ref{thm:2func_vgr_func_cat} we obtain

\begin{cor}\label{thm:isos_bifun_funcat}
There are isomorphisms
\begin{align*}
\LGCAT{\V}(\A,\tensor[_\V]{[\B,\C]}{^\W}) \;\cong\; \tensor[_\V]{\textnormal{\textsf{GBif}}}{_\W}&(\A,\B;\C) \;\cong\;  \RGCAT{\W}(\B,\tensor[^\V]{[\A,\C]}{_\W})\\
& \rotatebox[origin=c]{-90}{$\cong$}\\
\GCAT{\V}{\W}&(\A \boxtimes \B,\C)
\end{align*}
2-natural in $\A \in \LGCAT{\V}$, $\B \in \RGCAT{\W}$, $\C \in \GCAT{\V}{\W}$.
\end{cor}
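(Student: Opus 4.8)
The plan is to realize all three of the displayed hom-categories as representing objects of the single 2-functor $\tensor[_\V]{\textnormal{\textsf{GBif}}}{_\W}(-,-;-)$ and then to glue the resulting 2-natural isomorphisms along their common vertex $\tensor[_\V]{\textnormal{\textsf{GBif}}}{_\W}(\A,\B;\C)$. Two of the three isomorphisms are already in hand. The isomorphism
$$\GCAT{\V}{\W}(\A \boxtimes \B,\C) \cong \tensor[_\V]{\textnormal{\textsf{GBif}}}{_\W}(\A,\B;\C)$$
is furnished by Theorem \ref{thm:bifun}, and Corollary \ref{thm:2fun_bifun} upgrades it to a 2-natural isomorphism in all three variables $\A \in \LGCAT{\V}$, $\B \in \RGCAT{\W}$, $\C \in \GCAT{\V}{\W}$. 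Likewise, the isomorphism
$$\LGCAT{\V}(\A,\tensor[_\V]{[\B,\C]}{^\W}) \cong \tensor[_\V]{\textnormal{\textsf{GBif}}}{_\W}(\A,\B;\C)$$
is furnished by Theorem \ref{thm:func_cat_rep_bifun} and is rendered 2-natural in all three variables by Corollary \ref{thm:2func_vgr_func_cat}.

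The one genuinely new step is the right-hand isomorphism, which I would obtain by running the same machinery in the reversed base. Recalling the definitional identities $\RGCAT{\W} = \LGCAT{\W^\rev}$ and $\LGCAT{\V} = \RGCAT{\V^\rev}$, the equation $\tensor[^\V]{[\A,\C]}{_\W} = \tensor[_{\W^\rev}]{[\A,\mathrm{C}^*\C]}{^{\V^\rev}}$ of Theorem \ref{thm:vgr_func_cat}, and the isomorphism $\mathrm{C}^*:\GCAT{\V}{\W} \xrightarrow{\sim} \GCAT{\W^\rev}{\V^\rev}$ of Definition \ref{defn:bigraded}, I would apply Theorem \ref{thm:func_cat_rep_bifun} and Corollary \ref{thm:2func_vgr_func_cat} with the monoidal categories $(\W^\rev,\V^\rev)$ in place of $(\V,\W)$ and with $\B$, $\A$, $\mathrm{C}^*\C$ playing the roles of the left-graded category, right-graded category, and bigraded category, respectively. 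This yields
$$\RGCAT{\W}(\B,\tensor[^\V]{[\A,\C]}{_\W}) = \LGCAT{\W^\rev}(\B,\tensor[_{\W^\rev}]{[\A,\mathrm{C}^*\C]}{^{\V^\rev}}) \cong \tensor[_{\W^\rev}]{\textnormal{\textsf{GBif}}}{_{\V^\rev}}(\B,\A;\mathrm{C}^*\C),$$
2-naturally in all three arguments. Composing on the right with the swap isomorphism of Proposition \ref{thm:swap_gr_bifunc}, namely $\tensor[_{\W^\rev}]{\textnormal{\textsf{GBif}}}{_{\V^\rev}}(\B,\A;\mathrm{C}^*\C) \cong \tensor[_\V]{\textnormal{\textsf{GBif}}}{_\W}(\A,\B;\C)$, identifies the target with the common vertex; since the identifications $\RGCAT{\W} = \LGCAT{\W^\rev}$ and $\mathrm{C}^*$ are 2-functorial and the swap isomorphism is itself 2-natural in $\A,\B,\C$, this composite remains 2-natural in all three variables.

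Finally I would assemble. All four corners are 2-functors $\LGCAT{\V}^\op \times \RGCAT{\W}^\op \times \GCAT{\V}{\W} \to \CAT$ with matching variances, and each of the three isomorphisms above is a 2-natural isomorphism onto the shared 2-functor $\tensor[_\V]{\textnormal{\textsf{GBif}}}{_\W}(\A,\B;\C)$; splicing them together through this shared vertex produces exactly the configuration of 2-natural isomorphisms asserted in the statement. There is no analytic content here, so the step I would treat most carefully is the variable bookkeeping in the reversed-base application: one must confirm that $\B$ is genuinely a left $\W^\rev$-graded category, that $\A$ is genuinely a right $\V^\rev$-graded category, and that $\mathrm{C}^*\C$ is a $\W^\rev$-$\V^\rev$-bigraded category, so that Theorem \ref{thm:func_cat_rep_bifun} applies verbatim and Proposition \ref{thm:swap_gr_bifunc} lands on the correctly oriented copy of $\textnormal{\textsf{GBif}}$. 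Everything else is a matter of keeping the reversal and swap conventions consistent.
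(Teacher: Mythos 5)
Your proposal is correct and follows essentially the same route as the paper: the left and bottom isomorphisms come from Theorems \ref{thm:func_cat_rep_bifun} and \ref{thm:bifun} with their 2-naturality upgrades, and the right-hand one is obtained exactly as in the text by applying \ref{thm:func_cat_rep_bifun} over $(\W^\rev,\V^\rev)$ to $\B$, $\A$, $\mathrm{C}^*\C$ and composing with the swap isomorphism of \ref{thm:swap_gr_bifunc}. The variable bookkeeping you flag is precisely what the identity $\tensor[^\V]{[-,?]}{_\W} = \tensor[_{\W^\rev}]{[-,\mathrm{C}^*?]}{^{\V^\rev}}$ in the paper encodes, so nothing is missing.
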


\begin{cor}
Every $\V$-$\W$-bigraded category $\C$ determines a 2-adjunction
$$\tensor[_\V]{[-,\C]}{^\W} \dashv \tensor[^\V]{[-,\C]}{_\W}:\LGCAT{\V}^\op \to \RGCAT{\W}\;.$$
Every left $\V$-graded category $\A$ determines a 2-adjunction
$$\A \boxtimes (-) \dashv \tensor[^\V]{[\A,-]}{_\W}:\GCAT{\V}{\W} \to \RGCAT{\W}\;.$$
Every right $\W$-graded category $\B$ determines a 2-adjunction
$$(-) \boxtimes \B \dashv \tensor[_\V]{[\B,-]}{^\W}:\GCAT{\V}{\W} \to \LGCAT{\V}\;.$$
\end{cor}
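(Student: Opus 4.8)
The plan is to obtain all three 2-adjunctions as direct consequences of the 2-natural isomorphisms assembled in Corollary~\ref{thm:isos_bifun_funcat}, by invoking the standard fact that, in the $\CAT$-enriched setting, a 2-adjunction $F \dashv G$ between 2-categories $\mathcal{X}$ and $\mathcal{Y}$ (with right adjoint $G \colon \mathcal{X} \to \mathcal{Y}$ and left adjoint $F \colon \mathcal{Y} \to \mathcal{X}$) is the same thing as an isomorphism $\mathcal{X}(F(-),?) \cong \mathcal{Y}(-,G(?))$ of 2-functors $\mathcal{Y}^\op \times \mathcal{X} \to \CAT$ \cite[\S 1.11]{Ke:Ba}. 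In each case the two 2-functors involved have already been constructed: the relevant instances of $\boxtimes$ and of the bracket come from the 2-functor $\boxtimes \colon \LGCAT{\V} \times \RGCAT{\W} \to \GCAT{\V}{\W}$, from Corollary~\ref{thm:2func_vgr_func_cat}, and from the 2-functor $\tensor[^\V]{[-,?]}{_\W}$ displayed immediately after it; so only the hom-isomorphisms remain to be identified.

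For the first adjunction I would fix $\C$ and take $\mathcal{X} = \LGCAT{\V}^\op$, $\mathcal{Y} = \RGCAT{\W}$, with right adjoint $G = \tensor[^\V]{[-,\C]}{_\W} \colon \LGCAT{\V}^\op \to \RGCAT{\W}$ and left adjoint $F = \tensor[_\V]{[-,\C]}{^\W} \colon \RGCAT{\W} \to \LGCAT{\V}^\op$, the latter being well-typed precisely because $\tensor[_\V]{[-,?]}{^\W}$ is contravariant in its first argument. Transposing in $\mathcal{X} = \LGCAT{\V}^\op$ gives $\mathcal{X}(F\B,\A) = \LGCAT{\V}(\A,\tensor[_\V]{[\B,\C]}{^\W})$ and $\mathcal{Y}(\B,G\A) = \RGCAT{\W}(\B,\tensor[^\V]{[\A,\C]}{_\W})$, so the required isomorphism, 2-natural in $\A$ and $\B$, is exactly the composite of the two outer isomorphisms of Corollary~\ref{thm:isos_bifun_funcat}.

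For the second and third adjunctions I would instead fix $\A$ (resp. $\B$) and let the remaining two variables move. For the second, with $F = \A \boxtimes (-) \colon \RGCAT{\W} \to \GCAT{\V}{\W}$ and $G = \tensor[^\V]{[\A,-]}{_\W} \colon \GCAT{\V}{\W} \to \RGCAT{\W}$, the defining isomorphism $\GCAT{\V}{\W}(\A \boxtimes \B,\C) \cong \RGCAT{\W}(\B,\tensor[^\V]{[\A,\C]}{_\W})$, 2-natural in $\B$ and $\C$, is read off the lower and right-hand entries of Corollary~\ref{thm:isos_bifun_funcat} (which in turn rest on \ref{thm:bifun} and \ref{thm:func_cat_rep_bifun}). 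The third is entirely symmetric: with $F = (-) \boxtimes \B \colon \LGCAT{\V} \to \GCAT{\V}{\W}$ and $G = \tensor[_\V]{[\B,-]}{^\W} \colon \GCAT{\V}{\W} \to \LGCAT{\V}$ one reads off $\GCAT{\V}{\W}(\A \boxtimes \B,\C) \cong \LGCAT{\V}(\A,\tensor[_\V]{[\B,\C]}{^\W})$, 2-natural in $\A$ and $\C$.

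I expect that the only point requiring care is the bookkeeping of variances — in particular the transpose $\LGCAT{\V}^\op(\tensor[_\V]{[\B,\C]}{^\W},\A) = \LGCAT{\V}(\A,\tensor[_\V]{[\B,\C]}{^\W})$ in the first adjunction, and the observation that the 2-naturality actually needed (in the two moving variables of each adjunction) is subsumed by the 2-naturality in all three variables already asserted in Corollary~\ref{thm:isos_bifun_funcat}. Since that stronger 2-naturality is in hand, there is no substantive obstacle: each adjunction follows by citing Corollary~\ref{thm:isos_bifun_funcat} together with \cite[\S 1.11]{Ke:Ba}.
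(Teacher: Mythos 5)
Your proposal is correct and matches the paper's intent: the corollary is stated without proof precisely because it follows immediately from the 2-natural isomorphisms of Corollary \ref{thm:isos_bifun_funcat} (together with the 2-functors already constructed in \ref{thm:2func_vgr_func_cat} and the discussion following it), by the standard representability characterization of 2-adjunctions. Your variance bookkeeping for the first adjunction is also the right way to read the typing in the statement.
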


\begin{para}\label{para:func_cat_rep_gbifun_for_product-graded_cat}
Given a $\V$-graded category $\A$, a $\W$-graded category $\B$, and a $(\V \times \W)$-graded category $\C$ (all three of which are left graded), we can form the category $\textsf{GBif}(\A,\B;\C) = \tensor[_\V]{\textnormal{\textsf{GBif}}}{_{\W^\rev}}(\A,\B;\C)$ of graded bifunctors $F:\A,\B \to \C$ (in the sense of \ref{rem:bifun_vtimesw_gr}) as well as the $(\V \times \W)$-graded category $\A \boxtimes \B$. By \ref{thm:isos_bifun_funcat}, we have isomorphisms
\begin{align*}
\LGCAT{\V}(\A,[\B,\C]_\ell) \;\cong\; \textsf{GBif}&(\A,\B;\C) \;\cong\;  \LGCAT{\W}(\B,[\A,\C]_r)\\
& \rotatebox[origin=c]{-90}{$\cong$}\\
\LGCAT{\V\times \W}&(\A \boxtimes \B,\C)
\end{align*}
2-natural in $\A \in \LGCAT{\V}$, $\B \in \LGCAT{\W}$, $\C \in \LGCAT{\V \times \W}$, with the notation of \ref{thm:func_cat_for_product_graded_categories}. See \ref{para:enr_func_cat_gl}--\ref{thm:venr_func_cat_gr_func_cat} for rationale for the placement of the subscripts ``$\ell$'' and ``$r$''.
\end{para}

\section{Background II: Enriched modules, presheaves, and Yoneda}\label{sec:enr_mod_psh_cocompl}

In this section we fix a huge biclosed base $\sfV$ in the sense of \ref{para:copower_cocompl}, and we recall some aspects of $\sfV$-enriched category theory that we use in the next section (and which are more often formulated with the category of small sets, $\Set$, in place of $\SET$).

We first review \textit{$\sfV$-modules} between $\sfV$-categories, as studied in \cite{Str:EnrCatsCoh,BCSW:VarEnr} in the setting of enrichment in a bicategory, and initially we consider right $\sfV$-categories (because the latter two papers do). Given huge right $\sfV$-categories $\A$ and $\B$, a \textbf{$\sfV$-module} or \textbf{$\sfV$-profunctor} $M:\A \modto \B$ is given by (1) objects $M(B,A)$ of $\sfV$ $(B \in \ob\B, A \in \ob\A)$; (2) morphisms $\lambda^{B'}_{BA}:\B(B,B') \otimes M(B',A) \to M(B,A)$ in $\sfV$ $(B,B' \in \ob\B, A \in \ob\A)$; (3) morphisms $\rho^{A'}_{BA}:M(B,A') \otimes \A(A',A) \to M(B,A)$ in $\sfV$ $(B \in \ob \B, A,A' \in \ob\A)$, such that the morphisms in (2) and (3) satisfy axioms that assert that $\lambda$ is associative and unital, $\rho$ is associative and unital, and $\lambda$ commutes with $\rho$ in the sense that for all $B,B' \in \ob\B$ and $A,A' \in \ob\A$ the following diagram (in which we omit the associator) commutes:
\begin{equation}\label{eq:vmod_actions_commute}
\xymatrix{
\B(B,B') \otimes M(B',A') \otimes \A(A',A) \ar[d]_{\lambda^{B'}_{BA'} \otimes 1} \ar[rr]^(.55){1 \otimes \rho^{A'}_{B'A}} && \B(B,B') \otimes M(B',A)  \ar[d]^{\lambda^{B'}_{BA}}\\
M(B,A') \otimes \A(A',A) \ar[rr]^{\rho^{A'}_{BA}} && M(B,A)
}
\end{equation}
We call $\lambda$ a \textit{(contravariant left) action} of $\B$ and $\rho$ a \textit{(covariant right) action} of $\A$.

Given huge right $\sfV$-categories $\A$ and $\B$, we write $\textsf{MOD}_\sfV(\A,\B)$ for the huge category in which an object is a $\sfV$-module $M:\A \modto \B$ and a morphism is a \textbf{transformation of $\sfV$-modules} $\phi:M \Rightarrow M':\A \modto \B$, i.e., a family of morphisms $\phi_{BA}:M(B,A) \to M'(B,A)$ in $\sfV$ $(B \in \ob\B, A \in \ob\A)$ that commute with the actions carried by $M$ and $M'$. Following \cite[\S 3]{Str:EnrCatsCoh}, for $\sfV$-modules $M:\A \modto \B$ and $N:\B \modto \C$ the \textit{composite} $N \otimes M:\A \modto \C$, if it exists, is the $\sfV$-module defined by taking $(N \otimes M)(C,A)$ $(C \in \ob\C, A \in \ob\A)$ to be the colimit in $\sfV$ of the (generally huge) diagram consisting of the spans
$$N(C,B) \otimes M(B,A) \xleftarrow{\rho \otimes 1} N(C,B')\otimes\B(B',B)\otimes M(B,A) \xrightarrow{1 \otimes \lambda} N(C,B') \otimes M(B',A)$$
with $B,B' \in \ob\B$. The actions $\lambda,\rho$ on $N \otimes M$ are induced by $\lambda$ for $N$ and $\rho$ for $M$. The composite $N \otimes M$ always exists if $\B$ is $\SET$-small. Given a $\sfV$-module $P:\A \modto \C$, a \textbf{bitransformation} $\beta:N,M \Rightarrow P$ (cf.~\cite[6.1]{GaShu}) is a family of morphisms $\beta_{CA}^B:N(C,B) \otimes M(B,A) \to P(C,A)$ $(C \in \ob\C, B \in \ob\B, A \in \ob\A)$ that commute with the left $\C$-action and the right $\A$-action and provide a cocone on the above diagram for each fixed pair $C,A$. If the composite $N \otimes M$ exists, then a transformation $\beta:N \otimes M \Rightarrow P$ is equivalently given by a bitransformation $\beta:N,M \Rightarrow P$.

Given right $\sfV$-functors $F:\A' \to \A$ and $G:\B' \to \B$ between huge right $\sfV$-categories and a $\sfV$-module $M:\A \modto \B$, the objects $M(GB,FA)$ $(B \in \ob\B', A \in \ob\A')$ underlie a $\sfV$-module $M(G,F):\A' \modto \B'$ with the evident actions, while $\sfV$-natural transformations $\phi:F \Rightarrow F'$, $\psi:G' \Rightarrow G$ give rise to a transformation $M(\psi,\phi):M(G,F) \Rightarrow M(G',F')$. 
Thus we obtain a 2-functor
\begin{equation}\label{eq:mod_prime_2func}\textsf{MOD}_\sfV:(\CAT_\sfV')^\op \times (\CAT_\sfV')^{\coop} \to \CAT'\end{equation}
that is given on objects by $(\A,\B) \mapsto \textsf{MOD}_\sfV(\A,\B)$ and on homs by the formula $(\textsf{MOD}_\sfV(F,G))M = M(G,F)$.

There is a bicategory $\RMOD{\sfV}$ whose objects, 1-cells, and 2-cells are ($\SET$-small) right $\sfV$-categories, $\sfV$-modules, and transformations, respectively. Given any huge right $\sfV$-category $\A$, we write simply $\A:\A \modto \A$ for the $\sfV$-module consisting of the hom-objects $\A(A',A)$ $(A',A \in \ob\A)$ with the actions given by composition, noting that if $\A$ is $\SET$-small then $\A:\A \modto \A$ is the identity 1-cell on $\A$ in $\RMOD{\sfV}$. The unitors $\B \otimes M \overset{\sim}{\Rightarrow} M,\;M \otimes \A \overset{\sim}{\Rightarrow} M:\A \modto \B$ in $\RMOD{\sfV}$ are induced by the actions $\lambda,\rho$.

The \textit{unit $\sfV$-category} $\mathbb{I}$ is the one-object right (or left) $\sfV$-category determined by the monoid $(I,\ell_I = r_I, 1_I)$ in $\V$, recalling that $\mathbb{I}^\circ = \mathbb{I}$ (\ref{para:lr_vcats}). Given a huge right $\sfV$-category $\C$, a $\sfV$-module $M:\C \modto \mathbb{I}$ is equivalently a right $\sfV$-functor $M:\C \to \sfV$, where we regard $\sfV$ as a huge right $\sfV$-category. We have isomorphisms $\textsf{MOD}_\sfV(\C,\mathbb{I}) \cong \textnormal{CAT}_\sfV'(\C,\sfV)$ 2-natural in $\C \in \textnormal{CAT}_\sfV'$. As observed in \cite{GP:GabrielUlmer}, a $\sfV$-module $M:\mathbb{I} \modto \C$ is instead a \textit{left} $\sfV$-functor $M:\C^\circ \to \sfV$, where we regard $\sfV$ as a huge left $\sfV$-category. We have isomorphisms $\textsf{MOD}_\sfV(\mathbb{I},\C) \cong \LCAT{\sfV}'(\C^\circ,\sfV)$ 2-natural in $\C \in \textnormal{CAT}_\sfV'$.  Objects $A$ of a right $\sfV$-category $\A$ may be identified with right $\sfV$-functors $A:\mathbb{I} \to \A$. Given a $\sfV$-module $M:\A \modto \B$ and objects $A \in \ob\A$, $B \in \ob\B$, we obtain $\sfV$-modules $M(B,1):\A \modto \mathbb{I}$ and $M(1,A):\mathbb{I} \modto \B$, i.e.~a right $\sfV$-functor $M(B,-):\A \to \sfV$ and a left $\sfV$-functor $M(-,A):\B^\circ \to \sfV$. Moreover, a $\sfV$-module $M:\A \modto \B$ is equivalently given by $\sfV$-modules $M(-,A):\mathbb{I} \modto \B$ $(A \in \ob\A)$ and $M(B,-):\A \modto \mathbb{I}$ $(B \in \ob\B)$ that agree on objects and whose left and right actions commute in the sense of \eqref{eq:vmod_actions_commute}. We identify $\RMOD{\sfV}(\mathbb{I},\mathbb{I})$ with $\sfV$, so that $M(B,A):\mathbb{I} \modto \mathbb{I}$ is the object $M(B,A)$ of $\sfV$.

Given huge right $\sfV$-categories $\A$, $\B$, $\C$ and $\sfV$-modules $L:\A \modto \C$ and $N:\B \modto \C$, a \textbf{right lifting} of $L$ along $N$ (cf.~\cite{Str:EnrCatsCoh,GaShu}) is a $\sfV$-module $L^N:\A \modto \B$ equipped with a bitransformation $\varepsilon:N,L^N \Rightarrow L$ that is universal in the sense that for every $\sfV$-module $M:\A \modto \B$ and every bitransformation $\beta:N, M \Rightarrow L$ there is a unique transformation $\beta':M \Rightarrow L^N$ such that $\beta^B_{CA} = \varepsilon^B_{CA} \cdot (1 \otimes \beta'_{BA}):N(C,B) \otimes M(B,A) \to L(C,A)$ for all $C \in \ob\C$, $B \in \ob\B$, $A \in \ob\A$. In the special case where $L,N:\mathbb{I} \modto \C$, the composite $N \otimes M:\mathbb{I} \modto \C$ exists for every $M:\mathbb{I} \modto \mathbb{I}$, i.e.~every $M \in \ob\V$, and is given by $(N \otimes M)C = NC \otimes M$ $(C \in \ob\C)$ with the evident action of $\C$, and a right lifting of $L$ along $N$ is precisely an object $L^N$ of $\sfV = \RMOD{\V}(\mathbb{I},\mathbb{I})$ with a transformation $\varepsilon:N \otimes L^N \Rightarrow L$ with the following property: The transposes $L^N \to LC^{NC}$ of the morphisms $\varepsilon_C:NC \otimes L^N \to LC$ $(C \in \ob\C)$ exhibit $L^N$ as a limit of the (generally huge) diagram
$$LC^{NC} \rightarrow LD^{\C(D,C) \otimes NC} \leftarrow LD^{ND}\;\;\;\;(C,D \in \ob\C)$$
in $\sfV$ induced by the left actions carried by $L$ and $N$, respectively, by way of the biclosed structure on $\sfV$. Moreover, for arbitrary $L:\A \modto \C$ and $N:\B \modto \C$, if for all $B \in \ob\B$, $A \in \ob\A$ a right lifting $L(1,A)^{N(1,B)}$ of $L(1,A):\mathbb{I} \to \C$ along $N(1,B):\mathbb{I} \to \C$ exists, then a right lifting $L^N:\A \modto \B$ exists and is formed pointwise in the sense that the universal bitransformation $\varepsilon:N,L^N \Rightarrow L$ consists of morphisms $\varepsilon_{CA}^B:N(C,B) \otimes L^N(B,A) \to L(C,A)$ $(C \in \ob\C, B \in \ob\B, A \in \ob\A)$ that exhibit $L^N(B,A)$ as a right lifting $L(1,A)^{N(1,B)}$. If $\C$ is $\SET$-small then a right lifting $L^N$ exists and is constructed pointwise, and if $\A,\B,\C$ are all $\SET$-small then $L^N$ is a right lifting of $L$ along $N$ in the bicategory $\RMOD{\sfV}$ (in the sense of \cite[\S 2]{Str:EnrCatsCoh}).

Given huge \textit{left} $\sfV$-categories $\A$ and $\B$, we write $\tensor[_\sfV]{{\textsf{MOD}}}{}(\A,\B) := \textsf{MOD}_{\sfV^\rev}(\A,\B)$, and we declare that a \textbf{$\sfV$-module} $M:\A \modto \B$ is by definition an object of the latter category. We define $\LMOD{\sfV} := \textnormal{MOD}_{\sfV^\rev}$.

\begin{para}[\textbf{Street's presheaf $\sfV$-category and Yoneda lemma}]\label{para:street_psh}
Let $\C$ be a ($\SET$-small) right $\sfV$-category.  Then there is a huge right $\sfV$-category $\cP\C$ whose objects are $\sfV$-modules $F:\mathbb{I} \modto \C$, equivalently, left $\sfV$-functors $F:\C^\circ \to \sfV$; indeed, this was shown in \cite{Str:EnrCatsCoh}, in the more general setting of categories enriched in a bicategory, and is discussed further in \cite{BCSW:VarEnr,GP:GabrielUlmer}.  As noted in \cite{BCSW:VarEnr}, the hom-object of $\cP \C$ associated to a pair of objects $F,G$ of $\cP \C$ is the right lifting $(\cP\C)(F,G) = G^F:\mathbb{I} \modto \mathbb{I}$ of $G$ along $F$.  Hence $(\cP\C)(F,G)$ is an object of $\sfV$ for which there is a bijection, natural in $X \in \sfV$, between morphisms $\phi:X \to (\cP\C)(F,G)$ and families of morphisms $\phi_A:FA \otimes X \to GA$ in $\sfV$ $(A \in \ob\C)$ that commute with the left $\C$-actions carried by $F$ and $G$.

The huge right $\sfV$-category $\cP\C$ is characterized up to isomorphism by the following universal property: By \cite[\S 3]{Str:EnrCatsCoh}, $\cP \C$ represents the 2-functor $\textsf{MOD}_\sfV(-,\C):(\CAT_\sfV')^\op \to \CAT'$, so that we have isomorphisms
\begin{equation}\label{eq:spsh}\CAT_\sfV'(\A,\cP\C) \;\;\cong\;\; \textsf{MOD}_\sfV(\A,\C)\end{equation}
2-natural in $\A \in \textnormal{CAT}_\sfV'$, and the counit of this representation is the $\sfV$-module $\mathsf{E}:\cP \C \modto \C$ given by $\mathsf{E}(A,F) = FA$ $(A \in \ob\C$, $F \in \ob\cP\C)$ with the evident actions. Hence each right $\sfV$-functor $H:\A \to \cP \C$ corresponds under this isomorphism to a $\sfV$-module $\bar{H} := \mathsf{E}(1,H):\A \modto \C$. Given right $\sfV$-functors $H:\A \to \cP \C$ and $K:\B \to \cP\C$, the $\sfV$-module $(\cP \C)(K,H):\A \modto \B$ is a right lifting of $\bar{H}:\A \modto \C$ along $\bar{K}:\B \modto \C$, i.e.~$(\cP \C)(K,H) \cong \bar{H}^{\bar{K}}$, by the pointwise nature of right liftings.

Street \cite{Str:EnrCatsCoh} established the following version of the Yoneda lemma. Given a $\sfV$-module $F:\mathbb{I} \modto \C$ the left unitor $\lambda_F:\C \otimes F \overset{\sim}{\Rightarrow} F$ in $\RMOD{\sfV}$ exhibits $F$ as a right lifting $F^\C$ of $F$ along $\C:\C \modto \C$, as its universal property is verified trivially. Hence, by the pointwise nature of right liftings, for each object $A$ of $\C$ the action morphisms $\lambda^{A}_{A'F} := \lambda^A_{A'}:\C(A',A) \otimes FA \rightarrow FA'$ $(A' \in\ob\C)$ constitute a transformation $\lambda^A_F:\C(1,A) \otimes FA \Rightarrow F$ that exhibits $FA$ as a right lifting $F^{\C(1,A)}$, so the transpose of $\lambda^A_F$ is an isomorphism
$$\lambda'_{F,A}\;:\;FA \xrightarrow{\sim} F^{\C(1,A)} = (\cP \C)(\C(1,A),F)$$
in $\sfV$.  Under the isomorphism \eqref{eq:spsh}, $\C:\C \modto \C$ corresponds to a right $\sfV$-functor $\y:\C \to \cP\C$, uniquely characterized by the equation $\bar{\y} = \C:\C \modto \C$ and given on objects by $\y A = \C(1,A)$. The right $\sfV$-functor $\y:\C \to \cP\C$ is fully faithful, because its structural morphisms $\y_{AB}:\C(A,B) \to (\cP\C)(\C(1,A),\C(1,B)) = \C(1,B)^{\C(1,A)}$ are precisely the isomorphisms $\lambda'_{\C(1,B),A}$.

A strengthening of the above Yoneda lemma is obtained as follows. While $\y$ corresponds under the isomorphism \eqref{eq:spsh} to $\bar{\y} = \C:\C \modto \C$, the identity $1_{\cP\C}$ corresponds to the counit $\bar{1}_{\cP\C} = \mathsf{E}:\cP\C \modto \C$ of the representation. Hence, by an observation above we find that $(\cP\C)(\y,1_{\cP\C}) \cong \bar{1}_{\cP\C}^{\bar{\y}} = \mathsf{E}^\C$, i.e.~$(\cP\C)(\y,1_{\cP\C}):\cP\C \modto \C$ is a right lifting of $\mathsf{E}:\cP\C \modto \C$ along $\C:\C \modto \C$. But the left action $\lambda:\C,\mathsf{E} \Rightarrow \mathsf{E}$ exhibits $\mathsf{E}$ itself as a right lifting $\mathsf{E}^\C$, because for all $A \in \ob\C$ and $F \in \ob\cP\C$ the morphisms $\lambda^A_{A'F}:\C(A',A) \otimes FA \rightarrow F$ $(A' \in\ob\C)$ exhibit $FA$ as $F^{\C(1,A)}$ as noted above. Thus we obtain an isomorphism of $\sfV$-modules
\begin{equation}\label{eq:yoneda}\lambda'\;:\;\mathsf{E} \;\overset{\sim}{\Rightarrow}\; (\cP\C)(\y,1_{\cP\C})\;:\;\cP\C \modto \C\end{equation}
with the above components $\lambda'_{F,A}$.
\end{para}

\begin{para}\label{para:psh_yoneda_left_vcats}
Applying \ref{para:street_psh} to $\sfV^\rev$, we associate to each ($\SET$-small) left $\sfV$-category $\C$ a huge left $\sfV$-category $\cP\C$ whose objects are $\sfV$-modules $F:\mathbb{I} \modto \C$, equivalently, right $\sfV$-functors $F:\C^\circ \to \sfV$, where we regard $\sfV$ as a huge right $\sfV$-category; each hom-object $(\cP\C)(F,G)$ is characterized by the existence of a bijection, natural in $X \in \sfV$, between morphisms $\phi:X \to (\cP\C)(F,G)$ and families of morphisms $\phi_A:X \otimes FA \to GA$ in $\sfV$ $(A \in \ob\C)$ that commute with the $\C$-actions carried by $F,G:\mathbb{I} \modto \C$. By the above, we have a fully faithful left $\V$-functor $\y:\C \to \cP\C$ and a Yoneda lemma for left $\sfV$-categories.
\end{para}

\section{Example: \texorpdfstring{$\V$}{V}-graded modules, presheaves, and Yoneda}\label{sec:vgr_mod_psh_yon}

For any monoidal category $\V$, we refer to $\hat{\V}$-modules in the sense of \S \ref{sec:enr_mod_psh_cocompl} as \textbf{$\V$-graded modules} or \textbf{$\V$-graded profunctors}. Given huge right $\V$-graded categories (resp.~huge left $\V$-graded categories) $\A$ and $\B$, we write $\textsf{GMOD}_\V(\A,\B) := \textsf{MOD}_{\hat{\V}}(\A,\B)$ (resp.~$\tensor[_\V]{{\textsf{GMOD}}}{}(\A,\B) := \tensor[_{\hat{\V}}]{{\textsf{MOD}}}{}(\A,\B)$). By \eqref{eq:mod_prime_2func} we have a 2-functor $$\textsf{GMOD}_\V := \textsf{MOD}_{\hat{\V}}:(\textnormal{GCAT}'_\V)^\op \times (\textnormal{GCAT}'_\V)^\coop \to \CAT'\;,$$ and similarly a 2-functor $\tensor[_\V]{{\textsf{GMOD}}}{} := \tensor[_{\hat{\V}}]{{\textsf{MOD}}}{}$. We have a bicategory $\RGMOD{\V} := \RMOD{\hat{\V}}$ (resp.~$\LGMOD{\V} := \LMOD{\hat{\V}}$) whose 1-cells are $\V$-graded modules between ($\SET$-small) right $\V$-graded categories (resp.~left $\V$-graded categories). 
 
With this terminology, $\V$-graded modules are examples of graded bifunctors, by the following result, in which we regard $\hat{\V}$ as a huge $\V$-$\V$-bigraded category as in \ref{exa:bigr_str_hatv}.

\begin{thm}\label{thm:mod_as_bifun}
There are isomorphisms
\begin{equation}\label{eq:mod_as_bifun}\textnormal{\textsf{GMOD}}_\V(\A,\B) \cong \tensor[_\V]{\textnormal{\textsf{GBIF}}}{_\V}(\B^\circ,\A;\hat{\V})\end{equation}
2-natural in $\A,\B \in \textnormal{GCAT}_\V'$. Also, there are isomorphisms
$$\tensor[_\V]{{\textnormal{\textsf{GMOD}}}}{}(\A,\B) \cong \tensor[_\V]{\textnormal{\textsf{GBIF}}}{_\V}(\A,\B^\circ;\hat{\V})$$
2-natural in $\A,\B \in \LGCAT{\V}'$.
\end{thm}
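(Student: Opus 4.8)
The plan is to unwind a $\hat{\V}$-module into the two families of functors it contains and then recognize these, together with the module's compatibility condition, as the data of a graded bifunctor into $\hat{\V}$. First I would invoke the equivalent description of $\hat{\V}$-modules recalled in \S\ref{sec:enr_mod_psh_cocompl}: a $\hat{\V}$-module $M:\A\modto\B$ between huge right $\hat{\V}$-categories amounts to a family of right $\hat{\V}$-functors $M(B,-):\A\to\hat{\V}$ $(B\in\ob\B)$ and a family of left $\hat{\V}$-functors $M(-,A):\B^\circ\to\hat{\V}$ $(A\in\ob\A)$ that agree on objects and whose left and right actions commute in the sense of \eqref{eq:vmod_actions_commute}. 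Regarding $\hat{\V}$ as a right (resp.\ left) $\hat{\V}$-category via its biclosed structure realizes it as the underlying right (resp.\ left) $\V$-graded category of the $\V$-$\V$-bigraded category $\hat{\V}$ of \ref{exa:bigr_str_hatv} (see \ref{exa:gr_str_hatv}). Hence the object-assignment $(B,A)\mapsto M(B,A)$ together with the right $\V$-graded functors $M(B,-)$ and the left $\V$-graded functors $M(-,A)$ is precisely the data of a $\V$-$\V$-graded sesquifunctor $F:\B^\circ,\A\to\hat{\V}$ as in \ref{defn:sesqfunc}, with $F(B,-)=M(B,-)$ and $F(-,A)=M(-,A)$.

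The crux is to show that the module-commutativity condition \eqref{eq:vmod_actions_commute} is equivalent to the assertion that $F$ is a graded bifunctor, i.e.\ that $f\perp_F g$ for all graded morphisms $f$ in $\B^\circ$ and $g$ in $\A$ (\ref{defn:commutes_under_F}, \ref{defn:bifunc}). Each instance of \eqref{eq:vmod_actions_commute} is a single morphism equation in $\hat{\V}$ out of the Day convolution $\B(B,B')\otimes M(B',A')\otimes\A(A',A)$. Transposing this equation in its two $\hat{\V}$-hom factors $\B(B,B')$ and $\A(A',A)$, via the Day-convolution correspondence of \S\ref{sec:day_conv} between morphisms out of a tensor and natural families of element-wise maps, I would rewrite it as a family of identities of natural transformations indexed by a graded morphism $f$ in $\B^\circ$ (an element of some $\B(B,B')X$) and a graded morphism $g$ in $\A$; and I would check that each such identity is exactly the statement that $F_{fg}=(F(f,A'),F(f,A),F(B',g),F(B,g))$ is a bigraded square in $\hat{\V}$ in the form made explicit in \ref{exa:bigr_sq_hatv}. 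This is the step I expect to be the main obstacle, since it is where the Day-convolution/Yoneda transposition, the reversal built into the formal opposite $\B^\circ$, and the explicit description of bigraded squares in $\hat{\V}$ must all be reconciled, with care needed over the associator.

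The rest is bookkeeping. On morphisms, a transformation of modules $\phi:M\Rightarrow M'$ is a family $\phi_{BA}:M(B,A)\to M'(B,A)$ in $\hat{\V}$ commuting with both actions; commuting with the left (resp.\ right) action is precisely left (resp.\ right) $\V$-graded naturality in the $\B^\circ$-variable (resp.\ $\A$-variable), so $\phi$ is exactly a transformation $F\Rightarrow F'$ of graded sesquifunctors in the sense of \ref{para:2fun_bifun}. Combined with the previous paragraph, which shows that the module condition holds iff $F$ is a bifunctor, and since the assignment is visibly bijective on objects and fully faithful, this yields the isomorphism of categories $\textnormal{\textsf{GMOD}}_\V(\A,\B)\cong\tensor[_\V]{\textnormal{\textsf{GBIF}}}{_\V}(\B^\circ,\A;\hat{\V})$.

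For 2-naturality in $\A,\B\in\textnormal{GCAT}_\V'$, I would compare the module-side functoriality of \eqref{eq:mod_prime_2func}, given by $M\mapsto M(G,F)$, with the reindexing on the bifunctor side supplied by \ref{thm:2fun_bifun}, noting that a right $\V$-graded functor $\B'\to\B$ induces $(\B')^\circ\to\B^\circ$ through the 2-functor $(-)^\circ$ of \ref{para:right_vgr} (which accounts for the $\co$-variance in the $\B$-slot); precomposition of $M$ in each argument then matches precomposition of $F$, and the verification on 2-cells is immediate. Finally, the second isomorphism follows by applying the first with $\V^\rev$ in place of $\V$, using $\widehat{\V^\rev}\cong\hat{\V}^\rev$ (so that $\tensor[_\V]{\textnormal{\textsf{GMOD}}}{}=\textnormal{\textsf{MOD}}_{\hat{\V}^\rev}=\textnormal{\textsf{GMOD}}_{\V^\rev}$) together with the swap isomorphism \ref{thm:swap_gr_bifunc} and the identification $\mathrm{C}^*\widehat{\V^\rev}\cong\hat{\V}$ of $\V$-$\V$-bigraded categories.
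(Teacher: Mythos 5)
Your proposal is correct and follows essentially the same route as the paper's proof: both decompose a $\hat{\V}$-module into the two families of graded functors agreeing on objects, transpose the action-commutativity condition \eqref{eq:vmod_actions_commute} via Day convolution into elementwise identities recognized as bigraded squares in $\hat{\V}$ via \ref{exa:bigr_sq_hatv}, match transformations, and deduce the second isomorphism from the first using $\V^\rev$ and \ref{thm:swap_gr_bifunc}. The only cosmetic difference is that the paper packages the sesquifunctor identification and the 2-naturality through a pullback of cospans and the 2-natural isomorphisms \eqref{eq:psh_isos}, where you verify naturality directly.
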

\begin{proof}
Given huge right $\V$-graded categories $\A$ and $\B$, $\tensor[_\V]{\textnormal{\textsf{GSES}}}{_\V}(\B^\circ,\A;\hat{\V})$ is the pullback of the cospan
$$\LGCAT{\V}'(\B^\circ,\hat{\V})^{\ob\A} \to \hat{\V}^{\ob\B \times \ob\A} \leftarrow \textnormal{GCAT}_\V'(\A,\hat{\V})^{\ob\B}$$
in $\CAT'$. But by \S \ref{sec:enr_mod_psh_cocompl}, we have isomorphisms
\begin{equation}\label{eq:psh_isos}\textsf{GMOD}_\V(\mathbb{I},\B) \cong \LGCAT{\V}'(\B^\circ,\hat{\V}),\;\;\textsf{GMOD}_\V(\A,\mathbb{I}) \cong \textnormal{GCAT}_\V'(\A,\hat{\V})\end{equation}
2-natural in $\A,\B \in \textnormal{GCAT}_\V'$. Thus $\tensor[_\V]{\textnormal{\textsf{GSES}}}{_\V}(\B^\circ,\A;\hat{\V})$ may be identified with the pullback of the cospan
$$\textsf{GMOD}_\V(\mathbb{I},\B)^{\ob\A} \to \hat{\V}^{\ob\B \times \ob\A} \leftarrow \textsf{GMOD}_\V(\A,\mathbb{I})^{\ob\B},$$
in $\CAT'$, so that an object $M$ of $\tensor[_\V]{\textnormal{\textsf{GSES}}}{_\V}(\B^\circ,\A;\hat{\V})$ is equivalently given by $\V$-graded modules $M(-,A):\mathbb{I} \modto \B$ $(A \in \ob\A)$ and $M(B,-):\A \modto \mathbb{I}$ $(B \in \ob\B)$ that agree on objects. Thus, by \S \ref{sec:enr_mod_psh_cocompl}, $\textnormal{\textsf{GMOD}}_\V(\A,\B)$ may be identified with the full subcategory of $\tensor[_\V]{\textnormal{\textsf{GSES}}}{_\V}(\B^\circ,\A;\hat{\V})$ spanned by the objects $M$ whose actions
$$\lambda_{B'A}^{B}:\B(B',B) \otimes M(B,A) \to M(B',A),\;\;\;\;\rho^{A}_{BA'}:M(B,A) \otimes \A(A,A') \to M(B,A')$$
in $\hat{\V}$ $(A,A' \in \ob\A, B,B' \in \ob\B)$ commute in the sense of \eqref{eq:vmod_actions_commute}, but we now show that $M$ satisfies the latter condition \eqref{eq:vmod_actions_commute} if and only if $M$ is a graded bifunctor. To this end, note that for each $B \in \ob\B$ the 
right $\V$-graded functor $M(B,-):\A \to \hat{\V}$ sends each graded morphism $g:A \gc X' \to A'$ in $\A$ to a graded morphism $M(B,g):M(B,A) \gc X' \to M(B,A')$ in $\hat{\V}$, i.e.~a natural transformation $M(B,g):M(B,A) \Rightarrow M(B,A')(-\otimes X'):\V^\op \to \SET$. The left $\V$-graded functor $M(-,A):\B^\circ \to \hat{\V}$ sends each graded morphism $f:B' \gc X \to B$ in $\B$ to a graded morphism $M(f,A):X \gc M(B,A) \to M(B',A)$ in $\hat{\V}$, i.e.~a natural transformation $M(f,A):M(B,A) \Rightarrow M(B',A)(X \otimes -)$. With this in mind, the above actions are induced (via \ref{sec:day_conv}) by the maps
$$M^A:\B(B'\gc X;B) \times M(B,A)Y \to M(B',A)(X \otimes Y)$$
$$M_B:M(B,A)Y \times \A(A \gc X';A') \to M(B,A')(Y \otimes X')$$
with $X,Y,X' \in \ob\V$ given by $M^A(f,\mu) = M(f,A)_Y(\mu)$ and $M_B(\mu,g) = M(B,g)_Y(\mu)$. Consequently, the commutativity of \eqref{eq:vmod_actions_commute} is equivalently the commutativity of the diagrams of the following form, in which we omit the use of the associator in $\V$:
$$
\xymatrix{
\B(B' \gc X;B) \times M(B,A)Y \times \A(A \gc X';A') \ar[d]_{M^A \times 1} \ar[r]^(.55){1 \times M_B} & \B(B'\gc X;B) \times M(B,A')(Y \otimes X') \ar[d]^{M^{A'}}\\
M(B',A)(X \otimes Y) \times \A(A \gc X';A') \ar[r]^{M_{B'}} & M(B',A')(X \otimes Y \otimes X')
}
$$
This requires that for all $f:B' \gc X \to B$ in $\B$ and $g:A \gc X' \to A'$ in $\A$ the following diagram commutes for each $Y \in \ob\V$:
$$
\xymatrix{
M(B,A)Y \ar[d]_{M(f,A)_Y} \ar[rr]^(.4){M(B,g)_Y} && M(B,A')(Y\otimes X') \ar[d]^{M(f,A')_{Y\otimes X'}}\\
M(B',A)(X \otimes Y) \ar[rr]^(.45){M(B',g)_{X \otimes Y}} && M(B',A')(X \otimes Y \otimes X')
}
$$
But by \ref{exa:bigr_sq_hatv} this is equivalently the requirement that the graded morphisms
$M(f,A)$, $M(f,A')$, $M(B,g)$, $M(B',g)$ form a bigraded square in $\hat{\V}$.

Thus we obtain isomorphisms of the form \eqref{eq:mod_as_bifun}, and their 2-naturality in each variable separately is entailed by the 2-naturality of the isomorphisms in \eqref{eq:psh_isos}. Thus the first claim is proved, and the second follows, since
\begin{align*}
\tensor[_\V]{{\textnormal{\textsf{GMOD}}}}{}(\A,\B) & = \textnormal{\textsf{GMOD}}_{\V^\rev}(\A,\B) \cong \tensor[_{\V^\rev}]{\textnormal{\textsf{GBIF}}}{_{\V^\rev}}(\B^\circ,\A;\widehat{\V^\rev})\\
& \cong \tensor[_{\V^\rev}]{\textnormal{\textsf{GBIF}}}{_{\V^\rev}}(\B^\circ,\A;\mathrm{C}^*\hat{\V}) \cong \tensor[_\V]{\textnormal{\textsf{GBIF}}}{_\V}(\A,\B^\circ;\hat{\V})
\end{align*}
2-na\-tur\-al\-ly in $\A,\B \in \LGCAT{\V}' = \textnormal{GCAT}'_{\V^\rev}$ by \ref{thm:swap_gr_bifunc}, since $\mathrm{C}^*\hat{\V} \cong \widehat{\V^\rev}$. 
\end{proof}

Applying Theorem \ref{thm:bifun} relative to $\SET'$, this entails the following:

\begin{cor}
There are isomorphisms $\textnormal{\textsf{GMOD}}_\V(\A,\B) \cong \tensor[_\V]{{\textnormal{GCAT}'}}{_\V}(\B^\circ \boxtimes \A,\hat{\V})$ 2-natural in $\A,\B \in \textnormal{GCAT}_\V'$. Similarly, there are isomorphisms $\tensor[_\V]{{\textnormal{\textsf{GMOD}}}}{}(\A,\B) \cong \tensor[_\V]{{\textnormal{GCAT}'}}{_\V}(\A \boxtimes \B^\circ,\hat{\V})$ 2-natural in $\A,\B \in \LGCAT{\V}'$.
\end{cor}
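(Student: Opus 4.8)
The plan is to obtain both isomorphisms as immediate consequences of two results already in hand: the identification of $\V$-graded modules with graded bifunctors valued in $\hat{\V}$ (Theorem~\ref{thm:mod_as_bifun}), and the representation of graded bifunctors by the bigraded product (Theorem~\ref{thm:bifun}, whose $2$-naturality in all three arguments is recorded in Corollary~\ref{thm:isos_bifun_funcat}). Since $\A$, $\B$, and $\hat{\V}$ are huge, I would apply these results relative to the universe $\SET'$ throughout, as signalled by the sentence preceding the corollary.

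First I would treat the first isomorphism. For huge right $\V$-graded categories $\A$ and $\B$, Theorem~\ref{thm:mod_as_bifun} supplies a $2$-natural isomorphism $\textnormal{\textsf{GMOD}}_\V(\A,\B) \cong \tensor[_\V]{\textnormal{\textsf{GBIF}}}{_\V}(\B^\circ,\A;\hat{\V})$. Here $\B^\circ$ is the formal opposite of $\B$ and is a huge left $\V$-graded category (\ref{para:right_vgr}), $\A$ is a huge right $\V$-graded category, and $\hat{\V}$ is a huge $\V$-$\V$-bigraded category (\ref{exa:bigr_str_hatv}), so the bigraded product $\B^\circ \boxtimes \A$ is defined. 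Applying Theorem~\ref{thm:bifun} relative to $\SET'$, with $\W = \V$ and taking its left $\V$-graded argument to be $\B^\circ$ and its right $\V$-graded argument to be $\A$, yields $\tensor[_\V]{\textnormal{GCAT}'}{_\V}(\B^\circ \boxtimes \A,\hat{\V}) \cong \tensor[_\V]{\textnormal{\textsf{GBIF}}}{_\V}(\B^\circ,\A;\hat{\V})$, and composing the two isomorphisms produces $\textnormal{\textsf{GMOD}}_\V(\A,\B) \cong \tensor[_\V]{\textnormal{GCAT}'}{_\V}(\B^\circ \boxtimes \A,\hat{\V})$.

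For the $2$-naturality in $\A,\B \in \textnormal{GCAT}_\V'$, I would combine the $2$-naturality of the first isomorphism (part of Theorem~\ref{thm:mod_as_bifun}) with that of the second. By Corollary~\ref{thm:isos_bifun_funcat} the bigraded-product representation is $2$-natural in its left and right graded arguments as well as in the bigraded slot; since $(-)^\circ:\RGCAT{\V}^\co \xrightarrow{\sim} \LGCAT{\V}$ and $\boxtimes$ are $2$-functors, the target $\tensor[_\V]{\textnormal{GCAT}'}{_\V}(\B^\circ \boxtimes \A,\hat{\V})$ acquires in $\A$ and $\B$ precisely the variance of $\textnormal{\textsf{GMOD}}_\V(\A,\B)$, namely $\op$ in the first argument and $\coop$ in the second. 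The composite of $2$-natural isomorphisms is then $2$-natural, as required.

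The second isomorphism is obtained identically from the second statement of Theorem~\ref{thm:mod_as_bifun}: for huge left $\V$-graded categories $\A,\B$ one has $\tensor[_\V]{{\textnormal{\textsf{GMOD}}}}{}(\A,\B) \cong \tensor[_\V]{\textnormal{\textsf{GBIF}}}{_\V}(\A,\B^\circ;\hat{\V})$, where now $\A$ is left $\V$-graded and $\B^\circ$ is right $\V$-graded, so $\A \boxtimes \B^\circ$ is defined; Theorem~\ref{thm:bifun} at $\SET'$ then gives $\tensor[_\V]{\textnormal{GCAT}'}{_\V}(\A \boxtimes \B^\circ,\hat{\V}) \cong \tensor[_\V]{\textnormal{\textsf{GBIF}}}{_\V}(\A,\B^\circ;\hat{\V})$, and composition yields the claim. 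I do not expect any genuine obstacle, since the statement is a formal consequence of results already established; the only point requiring care is the variance bookkeeping — tracking the $\op$ and $\coop$ decorations and the insertion of the formal-opposite $2$-isomorphism $(-)^\circ$ — together with the consistent use of the universe $\SET'$ appropriate to the huge categories $\A$, $\B$, and $\hat{\V}$.
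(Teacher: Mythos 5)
Your proposal is correct and is exactly the paper's argument: the corollary is stated immediately after Theorem \ref{thm:mod_as_bifun} with the remark ``Applying Theorem \ref{thm:bifun} relative to $\SET'$, this entails the following,'' i.e.\ one simply composes the $2$-natural isomorphism of Theorem \ref{thm:mod_as_bifun} with the bigraded-product representation of Theorem \ref{thm:bifun} (whose $2$-naturality in all arguments is Corollary \ref{thm:2fun_bifun}), carried out at the universe $\SET'$. Your additional care with the variance of $(-)^\circ$ and $\boxtimes$ is sound but not a departure from the paper's route.
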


\begin{exa}[\textbf{$\V$-graded hom-functors}]\label{exa:hom_bifunc}
Let $\B$ be a right $\V$-graded category. Under the isomorphism of Theorem \ref{thm:mod_as_bifun}, the identity $\V$-graded module $\B:\B \modto \B$ corresponds to a graded bifunctor $\B(-,?):\B^\circ,\B \to \hat{\V}$ consisting of right $\V$-graded functors $\B(A,-):\B \to \hat{\V}$ $(A \in \ob\B)$ and left $\V$-graded functors $\B(-,B):\B^\circ \to \hat{\V}$ $(B \in \ob\B)$. Explicitly, $\B(A,-)$ sends each graded morphism $g:B \gc X' \to B'$ in $\B$ to the graded morphism $\B(A,g):\B(A,B) \gc X' \to \B(A,B')$ in $\hat{\V}$ given by the family of maps $\B(A \gc Y;g):\B(A \gc Y;B) \to \B(A \gc Y\otimes X';B')$ $(Y \in \ob\V)$ sending each graded morphism $f:A \gc Y \to B$ to the composite $g \circ f:A \gc Y \otimes X' \to B'$. On the other hand, $\B(-,B)$ sends each graded morphism $f:A \gc X \to A'$ in $\B$ to the graded morphism $\B(f,B):X \gc \B(A',B) \to \B(A,B)$ in $\hat{\V}$ given by the maps $\B(f \gc Y;B):\B(A' \gc Y;B) \to \B(A \gc X \otimes Y;B)$ $(Y \in \ob\V)$ sending each graded morphism $g:A' \gc Y \to B$ to the composite $g \circ f:A \gc X \otimes Y \to B$.
\end{exa}

\begin{exa}[\textbf{The $\V$-graded Yoneda embedding and Yoneda lemma}]
Let $\B$ be a right $\V$-graded category. Then $\B^\circ$ is a left $\V$-graded category, so by regarding $\hat{\V}$ as a huge $\V$-$\V$-bigraded category as in \ref{exa:bigr_str_hatv}, we can apply \ref{para:fcat_huge} to obtain a huge right $\V$-graded category $[\B^\circ,\hat{\V}]$. By invoking Theorem \ref{thm:mod_as_bifun} and applying Corollary \ref{thm:isos_bifun_funcat} relative to $\SET'$, we obtain isomorphisms 
\begin{equation}\label{eq:gmod_psh}\textnormal{\textsf{GMOD}}_\V(\A,\B) \cong \tensor[_\V]{\textnormal{\textsf{GBIF}}}{_\V}(\B^\circ,\A;\hat{\V}) \cong \textnormal{GCAT}_\V'(\A,[\B^\circ,\hat{\V}])\end{equation}
2-natural in $\A \in \textnormal{GCAT}_\V'$. Therefore, in view of \ref{para:street_psh}, we deduce that $[\B^\circ,\hat{\V}]$ is isomorphic to Street's $\hat{\V}$-enriched presheaf category $\cP \B$, by way of an isomorphism $[\B^\circ,\hat{\V}] \cong \cP\B$ given on objects by the usual bijection between left $\V$-graded functors $F:\B^\circ \to \hat{\V}$ and $\V$-graded modules $F:\mathbb{I} \modto \B$. Under this isomorphism, Street's Yoneda embedding $\y:\B \to \cP\B$ is identified with a fully faithful right $\V$-graded functor $\y:\B \to [\B^\circ,\hat{\V}]$ that we call the (right) \textbf{$\V$-graded Yoneda embedding}.  Explicitly, $\y$ is the transpose of the graded bifunctor $\B(-,?):\B^\circ,\B \to \hat{\V}$ of \ref{exa:hom_bifunc}, so $\y$ is given on objects by $B \mapsto \B(-,B):\B^\circ \to \hat{\V}$ with the notation of \ref{exa:hom_bifunc} and sends each graded morphism $g:B \gc X' \to B'$ in $\B$ to a graded transformation $\B(-,g):\B(-,B) \gc X' \Rightarrow \B(-,B')$ whose components are the graded morphisms $\B(A,g):\B(A,B) \gc X' \to \B(A,B')$ in $\hat{\V}$ $(A \in \ob\B)$ described in \ref{exa:hom_bifunc}. By the $\hat{\V}$-enriched Yoneda lemma \eqref{eq:yoneda} together with Theorem \ref{thm:mod_as_bifun}, we obtain an isomorphism of graded bifunctors
\begin{equation}\label{eq:graded_yoneda_lemma}[\B^\circ,\hat{\V}](\y-,?) \cong \mathsf{Ev}\;:\;\B^\circ,[\B^\circ,\hat{\V}] \to \hat{\V}\end{equation}
where $\mathsf{Ev}$ is the evaluation bifunctor, i.e.~the counit of the representation in \eqref{eq:gmod_psh}. We express this isomorphism \eqref{eq:graded_yoneda_lemma} by saying also that
$$[\B^\circ,\hat{\V}](\B(-,B),F) \cong FB$$
\textit{$\V$-graded naturally in $B \in \B$ and $F \in [\B^\circ,\hat{\V}]$}, and we call this result the (right) \textbf{$\V$-graded Yoneda lemma}.
\end{exa}

\section{Example: Categories enriched in and graded by duoidal categories}\label{sec:duoidal}

In this section, we treat the special case in which $\V$ underlies a normal duoidal category $\dV$ (\ref{exa:duoidal}), and in this case we compare our graded functor categories and bifunctors with the enriched functor categories and bifunctors defined by Garner and L\'opez Franco \cite{GaLf} relative to normal duoidal categories.

As discussed in \ref{exa:duoidal}, a duoidal category $\dV = (\V,\dpr,J,\alpha,\lambda,\rho)$ is by definition a pseudomonoid in $\MCAT_\oplax$. Equivalently, a duoidal category $\dV$ is given by a monoidal category $\V = (\V,\otimes,I,a,\ell,r)$ whose underlying ordinary category is equipped with a second monoidal structure $\dpr,J,\alpha,\lambda,\rho$ together with a family of morphisms
$$\xi = \xi_{(Y,Y')(X,X')}:(Y \otimes X) \dpr (Y' \otimes X') \to (Y \dpr Y') \otimes (X \dpr X')$$
natural in $X,X',Y,Y' \in \V$, and morphisms $\mu:I \dpr I \to I$, $\nu:J \to I$, $\gamma:J \to J \otimes J$, satisfying certain axioms, given explicitly in \cite[6.1.1]{AgMa}; we call $\xi$ the \textit{interchanger}. Thus we also write $\dV = (\V,\dpr,J,\alpha,\lambda,\rho,\xi,\mu,\nu,\gamma)$, while we occasionally indicate both monoidal products explicitly by writing $\dV = (\V,\otimes,\dpr)$. A duoidal category $\dV$ is \textit{normal} if the opmonoidal functors $\dpr:\V \times \V \to \V$ and $J:1 \to \V$ are both normal (\ref{exa:comon_ch_base}), equivalently, if $\nu:J \to I$ is invertible \cite[\S 2.2]{GaLf}.  When $\dV$ is normal duoidal, we assume without loss of generality that $J = I$ and $\nu = 1_I$, we write simply  $\dV = (\V,\dpr)$ for brevity, and we employ the following notation from \cite[(2.2)]{GaLf}: Given objects $X,X' \in \ob\V$ we write $\sigma:X \dpr X' \to X \otimes X'$ and $\tau:X \dpr X' \to X' \otimes X$ to denote the composite morphisms
\begin{equation}\label{eq:sigma_tau}
\begin{array}{l}
X \dpr X' \xrightarrow{r^{-1} \,\dpr\, \ell^{-1}} (X \otimes I) \dpr (I \otimes X') \xrightarrow{\xi} (X \dpr I) \otimes (I \dpr X') \xrightarrow{\rho \otimes \lambda} X \otimes X'\\
X \dpr X' \xrightarrow{\ell^{-1} \,\dpr\, r^{-1}} (I \otimes X) \dpr (X' \otimes I) \xrightarrow{\xi} (I\dpr X') \otimes (X \dpr I) \xrightarrow{\lambda \otimes \rho} X' \otimes X.
\end{array}
\end{equation}
 
Given duoidal categories $\dV = (\V,\otimes,\dpr)$ and $\mathrm{W} = (\W,\otimes,\dpr)$, a \textit{strong duoidal functor}  $F:\dV \to \mathrm{W}$ is a functor $F:\V \to \W$ equipped with strong monoidal structures with respect to both $\otimes$ and $\dpr$, such that these monoidal structures commute with the transformations $\xi,\mu,\nu,\gamma$ carried by $\dV$ and $\mathrm{W}$, in the evident sense \cite{BooSt,GaLf}.

\begin{exa}[\textbf{Braided monoidal categories}]\label{exa:brmon}
If a monoidal category $\V$ carries a braiding $c$, then $\V$ underlies a normal duoidal category $\dV$ in which $\dpr = \otimes$ and $J = I$, with $\xi$ obtained in the evident way using the braiding and the associators \cite[Example 2]{GaLf}. In this case $\sigma$ and $\tau$ are the identity and the braiding $c:X \otimes X' \to X' \otimes X$, respectively, by \cite[\S 2.2]{GaLf}. Thus braided monoidal categories may be identified with certain normal duoidal categories, so in the sequel we implicitly regard them as such, writing them also as $\dV = (\V,c)$.
\end{exa}

\begin{para}\label{para:cstar}
For the remainder of \S \ref{sec:duoidal}, we fix a normal duoidal category $\dV = (\V, \dpr, J, \alpha, \lambda, \rho, \xi,$ $\mu, \nu, \gamma)$, and we assume as above that $J = I$ and $\nu = 1_I$. By definition, a \textbf{$\dV$-graded category} (resp.~\textbf{$\dV$-category}) is a (left) $\V$-graded category (resp.~$\V$-category) for the monoidal category $\V = (\V,\otimes,I)$. Given a $\dV$-graded category $\C$, recall from \ref{exa:duoidal} that the 2-functor $\dpr^*:\LGCAT{\V} \to \LGCAT{\V \times \V}$ sends $\C$ to a $(\V \times \V)$-graded category $\C_\dpr$. The objects of $\C_\dpr$ are the same as those of $\C$, and a graded morphism $f:(X,X') \gc A \to B$ in $\C_\dpr$ is a graded morphism $f:X \dpr X' \gc A \to B$ in $\C$. Given also a graded morphism $g:(Y,Y') \gc B \to C$ in $\C_\dpr$, we write the composite of $f$ and $g$ in $\C_\dpr$ as $g \circ_\dpr f:(Y \otimes X,Y' \otimes X') \gc A \to C$. Explicitly, $g \circ_\dpr f = \xi^*(g \circ f):(Y \otimes X)\dpr(Y' \otimes X') \gc A \to C$ is the reindexing of the composite $g \circ f:(Y \dpr Y') \otimes (X \dpr X') \gc A \to C$ in $\C$ along the interchanger $\xi:(Y \otimes X) \dpr (Y' \otimes X') \to (Y \dpr Y') \otimes (X \dpr X')$.

Given a (left) $\dV$-graded category $\C$, the $(\V \times \V)$-graded category $\C_\dpr$ has two underlying $\V$-graded categories, namely $U_\ell^*\C_\dpr$ and $U_r^*\C_\dpr$ with the notation of \ref{para:cats_gr_prod}, which we claim are both isomorphic to $\C$. Indeed, $\otimes$ and $\dpr$ share the same unit $I = J$, and the unitors $\rho$ and $\lambda$ for $\dpr$ are opmonoidal transformations (\ref{exa:duoidal}), so $\rho$ and $\lambda$ witness that the strong opmonoidal functors $U_\ell = (-,I), U_r = (I,-):\V \rightrightarrows \V \times \V$ are both pseudo-sections of $\dpr$ in $\MCAT_\oplax$, i.e.~$\rho:\dpr\,U_\ell \Rightarrow 1_\V$ and $\lambda:\dpr\,U_r \Rightarrow 1_\V$ are invertible 2-cells therein; hence, by \ref{thm:backward_ch_base_2functor}, the 2-functors $U^*_\ell,U^*_r:\LGCAT{\V \times \V} \rightrightarrows \LGCAT{\V}$ are both pseudo-retractions of $\dpr^*$. Under the resulting isomorphisms
\begin{equation}\label{eq:two_underl_vcats_iso}U^*_\ell\C_\dpr \cong \C \cong U^*_r\C_\dpr,\end{equation}
which are the identity on objects, each graded morphism $f:X \gc A \to B$ in $\C$ corresponds to graded morphisms
$$f_\ell:X \gc A \to B\;\textnormal{in $U^*_\ell\C_\dpr$}\;,\;\;\;\;\;\;\;\;f_r:X \gc A \to B\;\textnormal{in $U^*_r\C_\dpr$}$$
obtained as follows: By definition, $f_\ell$ and $f_r$ are the graded morphisms
\begin{equation}\label{eq:fell_fr}f_\ell:(X,I) \gc A \to B,\;\;\;\;\;f_r:(I,X) \gc A \to B\;\;\;\;\textnormal{in $\C_\dpr$}\end{equation}
obtained as the reindexings $\rho^*(f):X \dpr I \gc A \to B$ and $\lambda^*(f):I \dpr X \gc A \to B$ of $f$ in $\C$ along the unitors $\rho:X \dpr I \xrightarrow{\sim} X$ and $\lambda:I \dpr X \xrightarrow{\sim} X$.

We next show that bigraded squares in $\C_\dpr$ are given by the following data:
\end{para}

\begin{defn}\label{defn:duoi_vgr_sq}
Given a $\dV$-graded category $\C$, a \textbf{$\dV$-graded square} in $\C$ is a quadruple $(f,g,\phi,\phi')$ consisting of graded morphisms $f:X \gc A \to A'$, $g:X \gc B \to B'$, $\phi:X' \gc A \to B$, and $\phi':X' \gc A' \to B'$ in $\C$ such that the composites $g \circ \phi:X \otimes X' \gc A \to B'$ and $\phi' \circ f:X' \otimes X \gc A \to B'$ are related by the equation
$$\sigma^*(g \circ \phi) = \tau^*(\phi' \circ f):X \dpr X' \gc A \to B',$$
where $\sigma$, $\tau$ are the morphisms in \eqref{eq:sigma_tau}. We depict $\dV$-graded squares $(f,g,\phi,\phi')$ by diagrams \eqref{eq:bigr_sq_notn} just as for bigraded squares.

\medskip

For example, if $\dV$ is braided monoidal (so with $\dpr = \otimes$), then $\sigma$ and $\tau$ are the identity and the braiding $c:X \otimes X' \to X' \otimes X$, respectively, so the preceding equation requires precisely that
\begin{equation}\label{eq:vgr_sq_braided}g \circ \phi = c^*(\phi' \circ f):X \otimes X' \gc A \to B'.\end{equation}
\end{defn}

\begin{prop}\label{thm:bigr_sq_normal_duoidal}
Let $\C$ be a $\dV$-graded category. There is a bijective correspondence between $\dV$-graded squares in $\C$ and bigraded squares in $\C_\dpr$, under which each $\dV$-graded square $(f,g,\phi,\phi')$ in $\C$ corresponds to the bigraded square $(f_\ell,g_\ell,\phi_r,\phi'_r)$ in $\C_\dpr$.
\end{prop}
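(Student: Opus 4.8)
The plan is to reduce both notions to explicit equations between reindexed composites in $\C$ and to observe that these equations literally coincide. First I would record that the correspondence is a bijection at the level of underlying quadruples. By \eqref{eq:two_underl_vcats_iso} the isomorphisms $U_\ell^*\C_\dpr \cong \C \cong U_r^*\C_\dpr$ are identity-on-objects and bijective on graded morphisms, so the assignments $f \mapsto f_\ell$ and $\phi \mapsto \phi_r$ of \eqref{eq:fell_fr} carry a quadruple $f:X\gc A \to A'$, $g:X\gc B \to B'$, $\phi:X'\gc A \to B$, $\phi':X'\gc A' \to B'$ in $\C$ bijectively onto the quadruples of graded morphisms of types $(X,I)\gc A \to A'$, $(X,I)\gc B \to B'$, $(I,X')\gc A \to B$, $(I,X')\gc A' \to B'$ in $\C_\dpr$ that underlie a bigraded square there (\ref{rem:bigr_sq_prod_gr}). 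It therefore remains only to check that this bijection matches the two defining equations.

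To that end I would translate the bigraded-square equation for $(f_\ell,g_\ell,\phi_r,\phi'_r)$ supplied by \ref{rem:bigr_sq_prod_gr}, namely $(r_X^{-1},\ell_{X'}^{-1})^*(g_\ell \circ_\dpr \phi_r) = (\ell_X^{-1},r_{X'}^{-1})^*(\phi'_r \circ_\dpr f_\ell)$, back into $\C$. Recalling from \ref{para:cstar} that composition in $\C_\dpr$ is $g \circ_\dpr f = \xi^*(g\circ f)$ and from \ref{para:cov_ch_base_along_f} that reindexing in $\C_\dpr$ along $(\alpha,\alpha')$ is reindexing in $\C$ along $\alpha \dpr \alpha'$, I would use $f_\ell = \rho^*(f)$, $\phi_r = \lambda^*(\phi)$ together with naturality of composition \ref{para:vgr_cat_conc}(II) to obtain $g_\ell \circ \phi_r = (\rho \otimes \lambda)^*(g \circ \phi)$ and $\phi'_r \circ f_\ell = (\lambda \otimes \rho)^*(\phi' \circ f)$ in $\C$. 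Applying the interchanger instances $\xi$ and then the outer unitor reindexings, functoriality of reindexing \ref{para:vgr_cat_conc}(I) collapses each side into a single reindexing, with reindexing morphism $(\rho\otimes\lambda)\cdot\xi\cdot(r^{-1}\dpr\ell^{-1})$ on the left and $(\lambda\otimes\rho)\cdot\xi\cdot(\ell^{-1}\dpr r^{-1})$ on the right. By the definitions \eqref{eq:sigma_tau} these composites are exactly $\sigma$ and $\tau$, so the left side equals $\sigma^*(g\circ\phi)$ and the right side equals $\tau^*(\phi'\circ f)$.

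The conclusion is then immediate: the translated bigraded-square equation is precisely the equation $\sigma^*(g\circ\phi) = \tau^*(\phi'\circ f)$ defining a $\dV$-graded square (\ref{defn:duoi_vgr_sq}), so the bijection of quadruples restricts to a bijection between $\dV$-graded squares in $\C$ and bigraded squares in $\C_\dpr$. I expect the main obstacle to be purely the structural bookkeeping in the second step: one must confirm that the particular instance of the interchanger appearing in $g_\ell\circ_\dpr\phi_r$ is $\xi_{(X,I)(I,X')}$ and the one in $\phi'_r\circ_\dpr f_\ell$ is $\xi_{(I,X')(X,I)}$, that these match the respective lines of \eqref{eq:sigma_tau}, and that the unitor-decorated maps $r^{-1}\dpr\ell^{-1}$ and $\ell^{-1}\dpr r^{-1}$ land on the correct sides. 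Once the two composites of structural morphisms are recognized as $\sigma$ and $\tau$, the remaining verifications are routine applications of \ref{para:vgr_cat_conc}(I) and (II).
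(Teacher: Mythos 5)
Your proposal is correct and follows essentially the same route as the paper's proof: both unwind the bigraded-square equation of \ref{rem:bigr_sq_prod_gr} in $\C_\dpr$ into a single pair of reindexed composites in $\C$, and identify the resulting structural morphisms $(\rho\otimes\lambda)\cdot\xi\cdot(r^{-1}\dpr\ell^{-1})$ and $(\lambda\otimes\rho)\cdot\xi\cdot(\ell^{-1}\dpr r^{-1})$ with $\sigma$ and $\tau$ from \eqref{eq:sigma_tau}. Your explicit preliminary remark that the quadruple-level correspondence is a bijection is a minor addition the paper leaves implicit.
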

\begin{proof}
Let $f:X \gc A \to A'$, $g:X \gc B \to B'$, $\phi:X' \gc A \to B$, and $\phi':X' \gc A' \to B'$ be graded morphisms in $\C$, with corresponding graded morphisms
$$f_\ell:(X,I) \gc A \to A',\;g_\ell:(X,I) \gc B \to B',\; \phi_r:(I,X') \gc A \to B,\;\phi'_r:(I,X') \gc A' \to B'$$
in $\C_\dpr$ as in \eqref{eq:fell_fr}, which we can express as the reindexings
$$\rho^*(f):X \dpr I \gc A \to A',\;\rho^*(g):X \dpr I \gc B \to B',\;\lambda^*(\phi):I \dpr X' \gc A \to B,\;\lambda^*(\phi'):I \dpr X' \gc A' \to B'$$
in $\C$. In view of \ref{para:cstar}, the composites
$$g_\ell \circ_\dpr \phi_r:(X \otimes I,I \otimes X') \gc A \to B',\;\;\;\;\phi'_r \circ_\dpr f_\ell:(I \otimes X,X' \otimes I) \gc A \to B'$$
in $\C_\dpr$ are by definition the reindexings $\xi^*(\rho^*(g) \circ \lambda^*(\phi)):(X \otimes I) \dpr (I \otimes X') \gc A \to B'$ and $\xi^*(\lambda^*(\phi') \circ \rho^*(f)):(I \otimes X) \dpr (X' \otimes I) \gc A \to B'$
in $\C$ along the interchangers $\xi:(X \otimes I) \dpr (I \otimes X') \to (X \dpr I) \otimes (I \dpr X')$ and $\xi:(I \otimes X) \dpr (X' \otimes I) \to (I \dpr X') \otimes (X \dpr I)$, respectively. Hence, by \ref{rem:bigr_sq_prod_gr} and the definition of reindexing in $\C_\dpr$, we find that $(f_\ell,g_\ell,\phi_r,\phi'_r)$ is a bigraded square in $\C_\dpr$ if and only if $$(r^{-1} \dpr \ell^{-1})^*(\xi^*(\rho^*(g) \circ \lambda^*(\phi))) = (\ell^{-1}\dpr r^{-1})^*(\xi^*(\lambda^*(\phi') \circ \rho^*(f))):X \dpr X' \gc A \to B',$$
which simplifies (by the functoriality of reindexing and the naturality of composition, \ref{para:vgr_cat_conc}) to the equation $((\rho \otimes \lambda) \cdot \xi \cdot (r^{-1} \dpr \ell^{-1}))^*(g \circ \phi) = ((\lambda \otimes \rho) \cdot \xi \cdot (\ell^{-1} \dpr r^{-1}))^*(\phi' \circ f)$, i.e.~the equation $\sigma^*(g \circ \phi) = \tau^*(\phi' \circ f)$.
\end{proof}

Among braided monoidal categories $\dV$, symmetric monoidal categories are characterized by the property that $\dV$-graded squares can be `flipped', as follows:

\begin{prop}\label{thm:flipped_square}
The following are equivalent for a braided monoidal category $\dV$: (1) $\dV$ is symmetric monoidal; (2) for every $\dV$-graded category $\C$ and every $\dV$-graded square $(f,g,\phi,\phi')$ in $\C$, the quadruple $(\phi,\phi',f,g)$ is a $\dV$-graded square in $\C$.
\end{prop}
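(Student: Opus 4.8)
The plan is to unwind Definition \ref{defn:duoi_vgr_sq} on both sides of the claimed equivalence and reduce everything to a single identity in the monoidal category $\V$. A $\dV$-graded square $(f,g,\phi,\phi')$ with $f:X\gc A\to A'$, $g:X\gc B\to B'$, $\phi:X'\gc A\to B$, $\phi':X'\gc A'\to B'$ is governed by the equation $\sigma^*(g\circ\phi)=\tau^*(\phi'\circ f)$, where $g\circ\phi:X\otimes X'\gc A\to B'$ and $\phi'\circ f:X'\otimes X\gc A\to B'$. The ``flipped'' quadruple $(\phi,\phi',f,g)$ is a $\dV$-graded square in $\C$ precisely when $\sigma^*(f\circ g')\ldots$ — more carefully, when applying \ref{defn:duoi_vgr_sq} with the roles of the horizontal and vertical morphisms interchanged, so the governing equation becomes $\sigma^*(g'\circ f')=\tau^*(\cdots)$ with the composites now $\phi'\circ f:X'\otimes X\gc A\to B'$ and $g\circ\phi:X\otimes X'\gc A\to B'$ reindexed along the $\sigma,\tau$ for the grades $(X',X)$ rather than $(X,X')$. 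First I would write both conditions out explicitly, being careful that in the flipped square the grades $X$ and $X'$ exchange roles, so the relevant morphisms are $\sigma_{X'X},\tau_{X'X}:X'\dpr X\to\cdots$ in place of $\sigma_{XX'},\tau_{XX'}$.

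\textbf{The two directions.} For (1)$\Rightarrow$(2), I would use the braided case from \ref{exa:brmon}, where $\sigma$ is the identity and $\tau=c$ is the braiding. The original square condition then reads $g\circ\phi=c^*(\phi'\circ f)$ as in \eqref{eq:vgr_sq_braided}, and the flipped condition reads $\phi'\circ f=c^*(g\circ\phi)$ (with the braiding at grades $(X',X)$, namely $c_{X',X}:X'\otimes X\to X\otimes X'$, giving $c_{X',X}^*$). Substituting the original equation into the flipped one, the flipped condition becomes $\phi'\circ f=c_{X',X}^*(c_{X,X'}^*(\phi'\circ f))=(c_{X,X'}\cdot c_{X',X})^*(\phi'\circ f)$ by functoriality of reindexing (\ref{para:vgr_cat_conc}(I)). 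When $\dV$ is symmetric, $c_{X,X'}\cdot c_{X',X}=1_{X'\otimes X}$, so this holds automatically, giving (2).

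\textbf{The converse and the main obstacle.} For (2)$\Rightarrow$(1), the strategy is to exhibit a single $\dV$-graded category $\C$ and a single $\dV$-graded square whose flippability forces $c_{X,X'}\cdot c_{X',X}=1$, i.e.\ forces symmetry. The natural candidate is $\C=\V$ itself, regarded as a $\V$-graded category via \ref{exa:v_vgr}, or more economically a free/representable example built from the generic graded morphism $u:X\gc 0\to 1$ in $\tensor[_X]{\mathbbm{2}}{}$ of \ref{exa:two_sub_x}. Concretely I would take objects so that $f,g,\phi,\phi'$ can be chosen to be identities or coherence isomorphisms, reducing the flip equation to precisely the assertion $c_{X,X'}\cdot c_{X',X}=1_{X'\otimes X}$ for arbitrary $X,X'$, which is the definition of symmetry. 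The cleanest realization is to work in $\V$ with $A=A'=B=B'$ a chosen object and take $f,g,\phi,\phi'$ to be suitable unit/identity graded morphisms so that the original square holds trivially (e.g.\ via \ref{thm:id_bigr_sq}) while the flipped square's governing equation collapses to the symmetry condition. The main obstacle I anticipate is bookkeeping: choosing the test square so that it is genuinely a $\dV$-graded square (so that hypothesis (2) applies) while ensuring that flippability is equivalent to $c_{X,X'}\cdot c_{X',X}=1$ rather than some weaker consequence. I would verify this by taking the identity square $(\gid,\gid,\gid,\gid)$-type configuration in $\V$ at a copower, tracking the reindexings along $\sigma=1$ and $\tau=c$ carefully, and confirming that the collapse is exact. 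Once the reduction to $c_{X,X'}\cdot c_{X',X}=1_{X'\otimes X}$ for all $X,X'$ is established, the equivalence follows since this is exactly the symmetry axiom.
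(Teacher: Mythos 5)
Your direction (1)$\Rightarrow$(2) is correct and is exactly the paper's argument: with $\sigma=1$ and $\tau=c$, the original square gives $g\circ\phi=c_{X,X'}^*(\phi'\circ f)$, the flipped square asks for $\phi'\circ f=c_{X',X}^*(g\circ\phi)$, and substituting reduces this to $(c_{X,X'}\cdot c_{X',X})^*(\phi'\circ f)=\phi'\circ f$, which holds when $c$ is a symmetry. No issues there.

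For (2)$\Rightarrow$(1) you have the right overall strategy (test in $\C=\V$ and force $c_{X,X'}\cdot c_{X',X}=1$), but the step you flag as ``the main obstacle'' is in fact the entire content of this direction, and the concrete suggestion you offer --- an identity square ``$(\gid,\gid,\gid,\gid)$-type configuration'' justified via \ref{thm:id_bigr_sq} --- would fail. Identity graded morphisms have grade $I$, so flipping such a square only involves the components $c_{I,X'}$ and $c_{X',I}$ of the braiding, which are trivial by coherence; no constraint on $c_{X,X'}$ for general $X,X'$ can be extracted this way. The missing idea is that the braiding itself must appear as one of the four edges of the test square. The paper takes $f=r_X:X\gc I\to X$, $g=c_{X,X'}:X\gc X'\to X'\otimes X$, $\phi=r_{X'}:X'\gc I\to X'$, and $\phi'=1_{X'\otimes X}:X'\gc X\to X'\otimes X$ in $\V$. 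One then checks by hand (not via \ref{thm:id_bigr_sq}) that $(f,g,\phi,\phi')$ is a $\dV$-graded square: $g\circ\phi=c_{X,X'}\cdot r_{X\otimes X'}$ while $c_{X,X'}^*(\phi'\circ f)=r_{X'\otimes X}\cdot(c_{X,X'}\otimes I)=c_{X,X'}\cdot r_{X\otimes X'}$ by naturality of $r$. Hypothesis (2) then makes $(\phi,\phi',f,g)$ a $\dV$-graded square, whose governing equation $\phi'\circ f=c_{X',X}^*(g\circ\phi)$ unwinds, again by naturality of $r$, to $r_{X'\otimes X}=c_{X,X'}\cdot c_{X',X}\cdot r_{X'\otimes X}$ and hence to $c_{X,X'}\cdot c_{X',X}=1_{X'\otimes X}$. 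Without such a non-degenerate test square your argument does not close.
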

\begin{proof}
If (1) holds, then the characterization of $\dV$-graded squares in \eqref{eq:vgr_sq_braided} entails (2), since $c$ is its own inverse. Conversely, suppose (2). Letting $X,X' \in \ob\V$, we may regard the isomorphisms $f = r_X:X \otimes I \to X$, $g = c_{XX'}:X \otimes X' \to X' \otimes X$, $\phi = r_{X'}:X' \otimes I \to X'$, and $\phi' = 1_{X' \otimes X}:X' \otimes X \to X' \otimes X$ in $\V$ as graded morphisms $f:X \gc I \to X$, $g:X \gc X' \to X' \otimes X$, $\phi:X' \gc I \to X'$, $\phi':X' \gc X \to X' \otimes X$ in the (left) $\V$-graded category $\C := \V$. To show that the equation \eqref{eq:vgr_sq_braided} holds, first note that the composite graded morphisms $g \circ \phi:X \otimes X' \gc I \to X' \otimes X$ and $\phi' \circ f:X' \otimes X  \gc I \to X' \otimes X$ can be expressed (using coherence for monoidal categories) as the morphisms $g \circ \phi = c_{XX'} \cdot r_{X\otimes X'}:(X \otimes X') \otimes I \to X' \otimes X$ and $\phi' \circ f = r_{X' \otimes X}:(X' \otimes X) \otimes I \to X' \otimes X$ in $\V$, so that $c_{XX'}^*(\phi' \circ f) = r_{X'\otimes X} \cdot (c_{XX'} \otimes I) = c_{XX'} \cdot r_{X\otimes X'} = g \circ \phi$ by the naturality of $r$. Hence $(f,g,\phi,\phi')$ is a $\dV$-graded square, so $(\phi,\phi',f,g)$ is a $\dV$-graded square by (2). As an instance of \eqref{eq:vgr_sq_braided}, this means that $\phi' \circ f = c_{X'X}^*(g \circ \phi):X' \otimes X \gc I \to X' \otimes X$. Hence $r_{X' \otimes X} = c_{XX'} \cdot r_{X \otimes X'} \cdot (c_{X'X} \otimes I) = c_{XX'} \cdot c_{X'X} \cdot r_{X' \otimes X}:(X' \otimes X) \otimes I \to X' \otimes X$ by the naturality of $r$. From this it follows that $c_{XX'} \cdot c_{X'X} = 1_{X' \otimes X}$, and (1) is proved.
\end{proof}

For a normal duoidal category $\dV = (\V,\dpr)$, we next show that graded bifunctors valued in $\C_\dpr$ are equivalently given by the following notion:

\begin{defn}\label{para:duoidally_vgr_bifun}
Given $\dV$-graded categories $\A$, $\B$, $\C$, a \textbf{$\dV$-graded bifunctor} $F:\A,\B \to \C$ consists of $\V$-graded functors $F(-,B):\A \to \C$ $(B \in \ob\B)$ and $F(A,-):\B \to \C$ $(A \in \ob\A)$ that agree on objects and satisfy the following condition: For all $f:X \gc A \to A'$ in $\A$ and $g:X' \gc B \to B'$ in $\B$ the graded morphisms
$$F(f,B):X \gc F(A,B) \to F(A',B),\;\;F(f,B'):X \gc F(A,B') \to F(A',B'),$$
$$F(A,g):X' \gc F(A,B) \to F(A,B'),\;\;F(A',g):X' \gc F(A',B) \to F(A',B').$$
form a $\dV$-graded square $(F(f,B),F(f,B'),F(A,g),F(A',g))$ in $\C$, i.e.
$$\sigma^*(F(f,B') \circ F(A,g)) = \tau^*(F(A',g) \circ F(f,B)):X \dpr X' \gc F(A,B) \to F(A',B').$$
Through a straightforward variation on \ref{para:2fun_bifun}, $\dV$-graded bifunctors $F:\A,\B \to \C$ are the objects of a category $\textnormal{$\dV$-\textsf{GBif}}(\A,\B;\C)$.
\end{defn}

\begin{thm}\label{thm:duoidally_vgr_bifun}
Given a normal duoidal category $\dV = (\V,\dpr)$ and $\dV$-graded categories $\A$, $\B$, $\C$, there is a bijective correspondence between graded bifunctors $F:\A,\B \to \C_\dpr$ in the sense of \ref{rem:bifun_vtimesw_gr} and $\dV$-graded bifunctors $F:\A,\B \to \C$. Moreover, there is an isomorphism $\textnormal{\textsf{GBif}}(\A,\B;\C_\dpr) \cong \textnormal{$\dV$-\textsf{GBif}}(\A,\B;\C)$.
\end{thm}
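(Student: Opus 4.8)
The plan is to reduce the entire statement to two already-established facts: the pair of identifications $U_\ell^*\C_\dpr \cong \C \cong U_r^*\C_\dpr$ recorded in \eqref{eq:two_underl_vcats_iso}, and the dictionary of \ref{thm:bigr_sq_normal_duoidal} translating bigraded squares in $\C_\dpr$ into $\dV$-graded squares in $\C$. Almost everything then becomes bookkeeping, with the one genuine point being the compatibility of the two isomorphisms on underlying ordinary categories.

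First I would unwind the data on both sides. By \ref{rem:bifun_vtimesw_gr}, a graded sesquifunctor $F:\A,\B \to \C_\dpr$ (where $\C_\dpr$ is a $(\V \times \V)$-graded category) consists of an object-assignment together with $\V$-graded functors $F(-,B):\A \to U_\ell^*\C_\dpr$ $(B \in \ob\B)$ and $F(A,-):\B \to U_r^*\C_\dpr$ $(A \in \ob\A)$ that agree on objects. Composing with the isomorphisms \eqref{eq:two_underl_vcats_iso} converts these into $\V$-graded functors $\A \to \C$ and $\B \to \C$ agreeing on objects, which are precisely the data (ignoring the bifunctor axiom) of \ref{para:duoidally_vgr_bifun}. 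Since the maps in \eqref{eq:two_underl_vcats_iso} are isomorphisms of $2$-categories that are the identity on objects, this passage of data is bijective. Moreover, by \eqref{eq:fell_fr} the graded morphism $F(f,B)$ read in $U_\ell^*\C_\dpr$ is exactly $F(f,B)_\ell$, the image under $(-)_\ell$ of the $\C$-valued morphism $F(f,B)$, and likewise $F(A,g)$ read in $U_r^*\C_\dpr$ is $F(A,g)_r$.

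Next I would match the two conditions. By \ref{rem:bifun_vtimesw_gr} together with \ref{rem:bigr_sq_prod_gr}, the sesquifunctor $F$ is a graded bifunctor iff for all $f:X \gc A \to A'$ in $\A$ and $g:X' \gc B \to B'$ in $\B$ the quadruple $(F(f,B)_\ell, F(f,B')_\ell, F(A,g)_r, F(A',g)_r)$ is a bigraded square in $\C_\dpr$. By \ref{thm:bigr_sq_normal_duoidal} this holds if and only if $(F(f,B), F(f,B'), F(A,g), F(A',g))$ is a $\dV$-graded square in $\C$, which is exactly the defining condition in \ref{para:duoidally_vgr_bifun}. Hence the data-bijection restricts to a bijection between graded bifunctors $\A,\B \to \C_\dpr$ and $\dV$-graded bifunctors $\A,\B \to \C$, establishing the first assertion.

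Finally I would promote this to an isomorphism of categories. A morphism in $\textnormal{\textsf{GBif}}(\A,\B;\C_\dpr)$ is, by \ref{para:2fun_bifun}, a family $\theta_{AB}$ in $(\C_\dpr)_0$ that is $\V$-graded natural in $A$ relative to the $U_\ell^*\C_\dpr$-valued functors and $\V$-graded natural in $B$ relative to the $U_r^*\C_\dpr$-valued functors. Here I must check that $(\C_\dpr)_0$ is identified with $\C_0$ \emph{compatibly} by both isomorphisms in \eqref{eq:two_underl_vcats_iso}: for a grade-$I$ morphism $f$, \eqref{eq:fell_fr} gives $f_\ell = \rho^*(f)$ and $f_r = \lambda^*(f)$, and the unit coherence for the $\dpr$-structure forces $\rho_I = \lambda_I:I \dpr I \to I$, so the two identifications of underlying ordinary categories coincide. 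Transporting the two naturality requirements across \eqref{eq:two_underl_vcats_iso} then yields exactly the conditions defining a morphism in $\textnormal{$\dV$-\textsf{GBif}}(\A,\B;\C)$, and this correspondence is compatible with composites and identities since these are computed pointwise. Thus the object bijection upgrades to the desired isomorphism of categories. The main obstacle is precisely this last compatibility check: the two factors of a graded bifunctor are translated through \emph{different} isomorphisms ($U_\ell^*\C_\dpr \cong \C$ and $U_r^*\C_\dpr \cong \C$), and one must confirm they agree on underlying ordinary categories — the $\rho_I = \lambda_I$ observation — so that a single family $\theta_{AB}$ can satisfy both naturality conditions consistently under the translation.
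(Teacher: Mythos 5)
Your proposal is correct and follows essentially the same route as the paper: compose the two families of graded functors with the isomorphisms $U_\ell^*\C_\dpr \cong \C \cong U_r^*\C_\dpr$ of \eqref{eq:two_underl_vcats_iso} and invoke \ref{rem:bifun_vtimesw_gr} together with \ref{thm:bigr_sq_normal_duoidal} to match the bifunctor conditions. Your explicit check that the two identifications agree on underlying ordinary categories (via $\rho_I = \lambda_I$) is a detail the paper subsumes under ``the result follows straightforwardly,'' and it is correctly resolved.
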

\begin{proof}
Given a $\dV$-graded bifunctor $F:\A,\B \to \C$, the corresponding graded bifunctor $\A,\B \to \C_\dpr$ is obtained by composing the $\V$-graded functors $F(-,B):\A \to \C$ $(B \in \ob\B)$ and $F(A,-):\B \to \C$ $(A \in \ob\A)$ with the isomorphisms $\C \cong U^*_\ell\C_\dpr$ and $\C \cong U^*_r\C_\dpr$ of \eqref{eq:two_underl_vcats_iso}, respectively. Indeed, in view of \ref{rem:bifun_vtimesw_gr} and \ref{thm:bigr_sq_normal_duoidal}, this assignment provides the needed bijective correspondence, and the result follows straightforwardly.
\end{proof}

\begin{para}\label{para:2func_vgbif}
By \ref{thm:2fun_bifun} and \ref{thm:duoidally_vgr_bifun}, there is a 2-functor $\textnormal{$\dV$-\textsf{GBif}}:\LGCAT{\V}^\op \times \LGCAT{\V}^\op \times \LGCAT{\V} \to \CAT$ that makes the isomorphisms in \ref{thm:duoidally_vgr_bifun} 2-natural in $\A,\B,\C \in \LGCAT{\V}$.
\end{para}

Having shown that $\dV$-graded bifunctors for normal duoidal categories $\dV$ are examples of the graded bifunctors of \S \ref{sec:bifunctors}, we now construct $\dV$-graded functor categories as instances of the graded functor categories of \S \ref{sec:gr_func_cat}. In \ref{thm:duoi_venr_bifun}--\ref{thm:venr_func_cat_gr_func_cat} we compare these notions with the enriched bifunctors and functor categories of Garner and L\'opez Franco.

\begin{thm}\label{thm:func_cat_repn_duoidal}
Given a normal duoidal category $\dV = (\V,\dpr)$, there are 2-functors $[-,?]_r,[-,?]_\ell:\LGCAT{\V}^\op \times \LGCAT{\V} \rightrightarrows \LGCAT{\V}$ such that
$$\LGCAT{\V}(\A,[\B,\C]_\ell) \cong \textnormal{$\dV$-\textsf{GBif}}(\A,\B;\C) \cong \LGCAT{\V}(\B,[\A,\C]_r)$$
2-naturally in $\A,\B,\C \in \LGCAT{\V}$. Explicitly, these 2-functors send each pair of $\dV$-graded categories $\A,\C$ to $\dV$-graded categories $[\A,\C]_r$ and $[\A,\C]_\ell$ whose objects are (in both cases) $\V$-graded functors $F:\A \to \C$. A graded morphism $\phi:X' \gc F \to G$ in $[\A,\C]_r$ (resp. $[\A,\C]_\ell$) is a family of graded morphisms $\phi_A:X' \gc FA \to GA$ $(A \in \ob\A)$ in $\C$ such that for each graded morphism $f:X \gc A \to B$ in $\A$, $(Ff,Gf,\phi_A,\phi_B)$ (resp. $(\phi_A,\phi_B,Ff,Gf)$) is a $\dV$-graded square in $\C$. Furthermore, $[\A,\C]_r \cong [\A,\C_\dpr]_r$ and $[\A,\C]_\ell \cong [\A,\C_\dpr]_\ell$ 2-naturally in $\A,\C \in \LGCAT{\V}$, with the notations of \ref{thm:func_cat_for_product_graded_categories} and \ref{para:cstar}.
\end{thm}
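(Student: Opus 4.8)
The plan is to realize $[-,?]_r$ and $[-,?]_\ell$ as composites of the product-graded functor-category constructions of \ref{thm:func_cat_for_product_graded_categories} with the change-of-base 2-functor $\dpr^*$. Since $\dpr:\V\times\V\to\V$ is an opmonoidal functor, contravariant change of base (\ref{thm:backward_ch_base_2functor}, \ref{exa:duoidal}) supplies a 2-functor $\dpr^*:\LGCAT{\V}\to\LGCAT{\V\times\V}$ sending $\C$ to $\C_\dpr$, while specializing \ref{thm:func_cat_for_product_graded_categories} (made 2-functorial by \ref{thm:2func_vgr_func_cat} and the 2-functor constructed thereafter) to $\W=\V$ yields 2-functors $[-,?]_r,[-,?]_\ell:\LGCAT{\V}^\op\times\LGCAT{\V\times\V}\rightrightarrows\LGCAT{\V}$. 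I would then simply \emph{define} the desired 2-functors by precomposing with $\mathrm{id}\times\dpr^*$, i.e. $[\A,\C]_r:=[\A,\C_\dpr]_r$ and $[\B,\C]_\ell:=[\B,\C_\dpr]_\ell$. This gives 2-functors $\LGCAT{\V}^\op\times\LGCAT{\V}\to\LGCAT{\V}$ and establishes the final ("Furthermore") claim $[\A,\C]_r\cong[\A,\C_\dpr]_r$, $[\A,\C]_\ell\cong[\A,\C_\dpr]_\ell$ essentially by construction.

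Next I would obtain the representation isomorphisms by chaining two earlier results. First, \ref{para:func_cat_rep_gbifun_for_product-graded_cat} applied with $\W=\V$ to the $(\V\times\V)$-graded category $\C_\dpr$ gives
$$\LGCAT{\V}(\A,[\B,\C_\dpr]_\ell)\cong\textsf{GBif}(\A,\B;\C_\dpr)\cong\LGCAT{\V}(\B,[\A,\C_\dpr]_r)$$
2-naturally in $\A,\B\in\LGCAT{\V}$ and in $\C_\dpr\in\LGCAT{\V\times\V}$; because $\dpr^*$ is a 2-functor, this is in particular 2-natural in $\C\in\LGCAT{\V}$. Second, Theorem \ref{thm:duoidally_vgr_bifun} together with its 2-naturality (\ref{para:2func_vgbif}) gives $\textsf{GBif}(\A,\B;\C_\dpr)\cong\textnormal{$\dV$-\textsf{GBif}}(\A,\B;\C)$ 2-naturally in $\A,\B,\C\in\LGCAT{\V}$. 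Splicing these together produces the required 2-natural isomorphisms $\LGCAT{\V}(\A,[\B,\C]_\ell)\cong\textnormal{$\dV$-\textsf{GBif}}(\A,\B;\C)\cong\LGCAT{\V}(\B,[\A,\C]_r)$.

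For the explicit description I would unpack the product-graded constructions. By \ref{thm:func_cat_for_product_graded_categories} the objects of $[\A,\C_\dpr]_r$ are $\V$-graded functors $F:\A\to U_\ell^*\C_\dpr$, which the isomorphism $U_\ell^*\C_\dpr\cong\C$ of \eqref{eq:two_underl_vcats_iso} identifies with $\V$-graded functors $F:\A\to\C$; dually $[\A,\C_\dpr]_\ell$ has objects $\V$-graded functors $\A\to U_r^*\C_\dpr\cong\C$. For graded morphisms, \ref{rem:charn_gr_morphs_in_func_cat_prod_graded} describes a graded morphism $\phi:X'\gc F\to G$ in $[\A,\C_\dpr]_r$ as a family $\phi_A$ making each $(Ff,Gf,\phi_A,\phi_B)$ a bigraded square in $\C_\dpr$, and dually for $[\A,\C_\dpr]_\ell$. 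Finally, \ref{thm:bigr_sq_normal_duoidal} converts such bigraded squares in $\C_\dpr$ into $\dV$-graded squares in $\C$, the index pattern $(f_\ell,g_\ell,\phi_r,\phi'_r)$ there matching the roles of $Ff,Gf$ (which arise in $U_\ell^*\C_\dpr$) and $\phi_A,\phi_B$ (in $U_r^*\C_\dpr$), thereby yielding exactly the stated conditions $(Ff,Gf,\phi_A,\phi_B)$ and $(\phi_A,\phi_B,Ff,Gf)$.

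Most of the substance is thus inherited from prior results, so the proof is an assembly rather than a fresh computation. The points requiring care are the 2-naturality bookkeeping—especially naturality in the variable $\C$, which rests on $\dpr^*$ being genuinely 2-functorial and on \ref{para:2func_vgbif}—and the consistent tracking of the left/right gradings through the identifications \eqref{eq:two_underl_vcats_iso} and \ref{thm:bigr_sq_normal_duoidal}, so that the $\ell$- and $r$-roles of $Ff,Gf$ versus $\phi_A,\phi_B$ align correctly with the two variants $[-,?]_r$ and $[-,?]_\ell$. I expect this index-matching to be the only genuinely delicate step.
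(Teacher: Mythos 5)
Your proposal is correct and follows essentially the same route as the paper's proof: both form $[\A,\C_\dpr]_r$ and $[\A,\C_\dpr]_\ell$ via \ref{thm:func_cat_for_product_graded_categories}, transport along the identity-on-objects isomorphisms \eqref{eq:two_underl_vcats_iso}, use \ref{rem:charn_gr_morphs_in_func_cat_prod_graded} and \ref{thm:bigr_sq_normal_duoidal} to translate bigraded squares in $\C_\dpr$ into $\dV$-graded squares in $\C$, and then obtain the representation isomorphisms from \ref{para:func_cat_rep_gbifun_for_product-graded_cat} and \ref{thm:duoidally_vgr_bifun}. The only cosmetic difference is that the paper constructs $[\A,\C]_r$ as a category literally having $\V$-graded functors $F:\A\to\C$ as objects and then notes it is isomorphic to $[\A,\C_\dpr]_r$, whereas you define it to be $[\A,\C_\dpr]_r$ outright; this changes nothing of substance.
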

\begin{proof}
Let $\A$ and $\C$ be $\V$-graded categories. Since $\C_\dpr$ is a $(\V \times \V)$-graded category, we can apply \ref{thm:func_cat_for_product_graded_categories} to form the (left) $\V$-graded category $[\A,\C_\dpr]_r$, whose objects are $\V$-graded functors $\A \to U^*_\ell\C_\dpr$ and whose graded morphisms are certain families of graded morphisms in $U_r^*\C_\dpr$ (\ref{rem:charn_gr_morphs_in_func_cat_prod_graded}). But by \eqref{eq:two_underl_vcats_iso} we have identity-on-objects isomorphisms $L:\C \xrightarrow{\sim} U^*_\ell\C_\dpr$ and $R:\C \xrightarrow{\sim} U^*_r\C$ given on graded morphisms by $Lf = f_\ell$ and $Rf = f_r$, respectively.  Hence each object of $[\A,\C_\dpr]_r$ is of the form $LF$ for a unique $\V$-graded functor $F:\A \to \C$, and if $F,G:\A \rightrightarrows \C$ are $\V$-graded functors, then by \ref{rem:charn_gr_morphs_in_func_cat_prod_graded}, a graded morphism $X' \gc LF \to LG$ in $[\A,\C_\dpr]_r$ is equivalently given by graded morphisms $\phi_A:X' \gc FA \to GA$ in $\C$ $(A \in \ob\A)$ such that for each graded morphism $f:X \gc A \to B$ in $\A$ the quadruple $((Ff)_\ell,(Gf)_\ell,(\phi_A)_r,(\phi_B)_r)$ is a bigraded square in $\C_\dpr$, equivalently, by \ref{thm:bigr_sq_normal_duoidal}, $(Ff,Gf,\phi_A,\phi_B)$ is a $\dV$-graded square in $\C$. Thus $\V$-graded functors $F:\A \to \C$ are the objects of a $\V$-graded category $[\A,\C]_r$ isomorphic to $[\A,\C_\dpr]_r$ and, similarly, a $\V$-graded category $[\A,\C]_\ell$ isomorphic to $[\A,\C_\dpr]_\ell$. The remaining claims now follow, by \ref{para:func_cat_rep_gbifun_for_product-graded_cat} and \ref{thm:duoidally_vgr_bifun}.
\end{proof}

\begin{exa}\label{exa:gr_func_cat_symm_br}
If $\dV = (\V,c)$ is a symmetric monoidal category, regarded as duoidal, then $[\A,\C]_r = [\A,\C]_\ell$, by \ref{thm:flipped_square} and \ref{thm:func_cat_repn_duoidal}, and in view of the characterization of $\dV$-graded squares in \eqref{eq:vgr_sq_braided} we recover the $\V$-graded functor category $[\A,\C]$ that was studied by Wood in \cite[\S 1.6]{Wood:thesis} and coincides with the $\hat{\V}$-enriched functor category that is obtained as an instance of \cite{DayKe:EnrFuncCats} (and \cite[\S 2.2]{Ke:Ba}) in this case since $\hat{\V}$ is a symmetric monoidal closed category with $\SET$-small limits (\ref{sec:day_conv}).

But if $\dV = (\V,c)$ is a braided monoidal category that is not symmetric, then $[\A,\C]_r$ and $[\A,\C]_\ell$ are in general distinct despite having the same objects: For example, by \ref{exa:vgr_func}(3), \ref{thm:bigr_sq_normal_duoidal}, \ref{thm:func_cat_repn_duoidal}, an object of $[\tensor[_X]{\mathbbm{2}}{},\C]_r$ or $[\tensor[_X]{\mathbbm{2}}{},\C]_\ell$ is given by a graded morphism $f:X \gc A \to A'$ in $\C$, while a graded morphism $(\phi,\phi'):X' \gc f \to g$ in $[\tensor[_X]{\mathbbm{2}}{},\C]_r$ (resp.~$[\tensor[_X]{\mathbbm{2}}{},\C]_\ell$) is a pair of graded morphisms $\phi$, $\phi'$ of grade $X'$ in $\C$ such that $(f,g,\phi,\phi')$ (resp.~$(\phi,\phi',f,g)$) is a $\dV$-graded square, so the graded morphisms in $[\tensor[_X]{\mathbbm{2}}{},\C]_r$ and $[\tensor[_X]{\mathbbm{2}}{},\C]_\ell$ are in general distinct, by \ref{thm:flipped_square}.
\end{exa}
 
The following special case of the notion of $\dV$-graded square will play an important role in the remainder of this section:

\begin{defn}\label{defn:v-enr_sq}
A \textbf{$\dV$-enriched square} is a $\dV$-graded square in some $\dV$-enriched category $\C$ (regarded as $\dV$-graded by \ref{para:vcats_as_vgr_cats}).
\end{defn}

\begin{para}\label{para:venr_sq}
Unpacking the definition, a $\dV$-enriched square in a $\dV$-category $\C$ is precisely a quadruple $(f,g,\phi,\phi')$ of morphisms
\begin{equation}\label{eq:venr_sq}f:X \to \C(A,A'),\;g:X \to \C(B,B'),\;\phi:X' \to \C(A,B),\;\phi':X' \to \C(A',B')\end{equation}
in $\V$ such that the following diagram commutes:
\begin{equation}\label{eq:venr_sq_diag}
\xymatrix{
X \dpr X' \ar[d]_{\tau} \ar[r]^\sigma & X \otimes X' \ar[r]^(.35){g \otimes \phi} & \C(B,B') \otimes \C(A,B) \ar[d]^{m_{ABB'}}\\
X' \otimes X \ar[r]^(.35){\phi' \otimes f} & \C(A',B') \otimes \C(A,A') \ar[r]^(.6){m_{AA'B'}} & \C(A,B')
}
\end{equation}
\end{para}

\begin{para}\label{para:star-stable_epi-sink}
A family of morphisms $\alpha = (\alpha_k:Y_k \to X)_{k \in K}$ in a category $\X$ is an \textit{epi-sink} \cite{AHS} provided that for all $\beta,\gamma:X \rightrightarrows Z$ in $\X$, if $\beta \cdot \alpha_k = \gamma \cdot \alpha_k$ for all $k \in K$ then $\beta = \gamma$. For example, every colimit cocone is an epi-sink. A \textit{$\dpr$-stable epi-sink} in $\dV = (\V,\dpr)$ is a family of morphisms $\alpha = (\alpha_k:Y_k \to X)_{k \in K}$ in $\V$ such that for all $W \in \ob\V$ the families $(W \dpr \alpha_k)_{k \in K}$ and $(\alpha_k \dpr W)_{k \in K}$ are epi-sinks in $\V$. For each object $X$ of $\V$, the singleton family given by $1_X$ is a $\dpr$-stable epi-sink, as is the family of all morphisms with codomain $X$. Every left adjoint functor preserves epi-sinks, so if $\dV$ is \textit{$\dpr$-biclosed} \cite{GaLf} in the sense that the monoidal category $(\V,\dpr,I,\alpha,\lambda,\rho)$ is biclosed, then every epi-sink in $\V$ is $\dpr$-stable.
\end{para}

\begin{lem}\label{thm:star-stable_epi-sink}
Let $(\alpha_s:Y_s \to X)_{s \in S}$ and $(\beta_t:Z_t \to X')_{t \in T}$ be $\dpr$-stable epi-sinks in $\dV$, and let $\C$ be a $\dV$-category. Then a quadruple of morphisms $(f,g,\phi,\phi')$ as in \eqref{eq:venr_sq} is a $\dV$-enriched square iff $(f \cdot \alpha_s,g \cdot \alpha_s,\phi \cdot \beta_t, \phi' \cdot \beta_t)$ is a $\dV$-enriched square for all $s \in S$ and $t \in T$.
\end{lem}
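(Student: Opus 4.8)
The plan is to collapse the defining condition \eqref{eq:venr_sq_diag} for a $\dV$-enriched square into a single equation between two parallel morphisms $X \dpr X' \to \C(A,B')$, and then to observe that reindexing along $\alpha_s$ and $\beta_t$ corresponds exactly to precomposing those two morphisms with $\alpha_s \dpr \beta_t$. Concretely, write $u = m_{ABB'} \cdot (g \otimes \phi) \cdot \sigma_{X,X'}$ and $v = m_{AA'B'} \cdot (\phi' \otimes f) \cdot \tau_{X,X'}$, both of type $X \dpr X' \to \C(A,B')$; then \eqref{eq:venr_sq_diag} commutes precisely when $u = v$, so $(f,g,\phi,\phi')$ is a $\dV$-enriched square iff $u = v$.

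First I would record that $\sigma$ and $\tau$ are natural in both arguments, being composites of the natural transformations (unitors, interchanger, unitors) appearing in \eqref{eq:sigma_tau}. Fixing $s \in S$ and $t \in T$, the reindexed quadruple $(f \cdot \alpha_s, g \cdot \alpha_s, \phi \cdot \beta_t, \phi' \cdot \beta_t)$ is of the form \eqref{eq:venr_sq} with $X,X'$ replaced by $Y_s,Z_t$. Using bifunctoriality of $\otimes$ and the naturality squares $\sigma_{X,X'} \cdot (\alpha_s \dpr \beta_t) = (\alpha_s \otimes \beta_t) \cdot \sigma_{Y_s,Z_t}$ and $\tau_{X,X'} \cdot (\alpha_s \dpr \beta_t) = (\beta_t \otimes \alpha_s) \cdot \tau_{Y_s,Z_t}$, one computes that the two composites defining the reindexed square are $u \cdot (\alpha_s \dpr \beta_t)$ and $v \cdot (\alpha_s \dpr \beta_t)$. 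Thus $(f \cdot \alpha_s, g \cdot \alpha_s, \phi \cdot \beta_t, \phi' \cdot \beta_t)$ is a $\dV$-enriched square iff $u \cdot (\alpha_s \dpr \beta_t) = v \cdot (\alpha_s \dpr \beta_t)$. The forward implication is then immediate, since $u = v$ forces all these precomposites to agree.

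The converse reduces to showing that the family $(\alpha_s \dpr \beta_t \colon Y_s \dpr Z_t \to X \dpr X')_{s \in S,\, t \in T}$ is an epi-sink in $\V$, for then $u \cdot (\alpha_s \dpr \beta_t) = v \cdot (\alpha_s \dpr \beta_t)$ for all $s,t$ yields $u = v$, i.e.\ $(f,g,\phi,\phi')$ is a $\dV$-enriched square. I expect this epi-sink claim to be the crux, and it is exactly the place where $\dpr$-stability (rather than mere epi-sink-ness) of the two families is needed. The argument is a two-step cancellation resting on the factorization $\alpha_s \dpr \beta_t = (\alpha_s \dpr 1_{X'}) \cdot (1_{Y_s} \dpr \beta_t)$. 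Suppose $\theta, \theta' \colon X \dpr X' \to Z$ satisfy $\theta \cdot (\alpha_s \dpr \beta_t) = \theta' \cdot (\alpha_s \dpr \beta_t)$ for all $s,t$. Fixing $s$ and setting $\theta_s = \theta \cdot (\alpha_s \dpr 1_{X'})$, $\theta'_s = \theta' \cdot (\alpha_s \dpr 1_{X'})$, we obtain $\theta_s \cdot (Y_s \dpr \beta_t) = \theta'_s \cdot (Y_s \dpr \beta_t)$ for all $t$; since $(\beta_t)_t$ is $\dpr$-stable, the family $(Y_s \dpr \beta_t)_t$ is an epi-sink, so $\theta \cdot (\alpha_s \dpr 1_{X'}) = \theta' \cdot (\alpha_s \dpr 1_{X'})$. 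As this holds for every $s$ and $(\alpha_s)_s$ is $\dpr$-stable, the family $(\alpha_s \dpr 1_{X'})_s$ is an epi-sink, whence $\theta = \theta'$. This establishes the epi-sink claim and completes the proof. The only genuinely substantive ingredient is the two-stage use of $\dpr$-stability to build an epi-sink out of the $\dpr$-product of two epi-sinks; everything else is naturality of $\sigma,\tau$ and bifunctoriality of $\otimes$.
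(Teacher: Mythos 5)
Your proof is correct and follows essentially the same route as the paper: reduce the square condition to an equation of parallel morphisms out of $X \dpr X'$ via naturality of $\sigma$ and $\tau$, and show $(\alpha_s \dpr \beta_t)_{s,t}$ is an epi-sink by a two-step cancellation exploiting $\dpr$-stability. The only (immaterial) difference is that you factor $\alpha_s \dpr \beta_t$ as $(\alpha_s \dpr 1_{X'}) \cdot (1_{Y_s} \dpr \beta_t)$ whereas the paper uses $(X \dpr \beta_t) \cdot (\alpha_s \dpr Z_t)$.
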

\begin{proof}
The hypotheses entail that $(\alpha_s \dpr \beta_t:Y_s \dpr Z_t \to X \dpr X')_{(s,t) \in S \times T}$ is an epi-sink (using the fact that $\alpha_s \dpr \beta_t = (X \dpr \beta_t) \cdot (\alpha_s \dpr Z_t)$). The result now follows via \ref{para:venr_sq}, using the naturality of $\sigma$ and $\tau$.
\end{proof}

\begin{para}\label{para:duoidal_str_vhat}
Since $\dV = (\V,\dpr)$ is normal duoidal, it follows by \cite[4.8]{BooSt} that $\hat{\V}$ carries the structure of a (huge) normal duoidal category, which we denote by $\hat{\dV}$, in such a way that the Yoneda embedding $\mathsf{Y}:\V \to \hat{\V}$ underlies a strong duoidal functor $\mathsf{Y}:\dV \to \hat{\dV}$. Explicitly, the monoidal products $\otimes,\dpr:\hat{\V} \times \hat{\V} \rightrightarrows \hat{\V}$ are obtained by Day convolution from $\otimes,\dpr:\V \times \V \to \V$.
\end{para}

Next we compare $\dV$-graded squares with $\hat{\dV}$-enriched squares. We write $\tilde{f}:\mathsf{Y}X = \V(-,X) \Rightarrow \C(A,A') = \C(- \gc A;A'):\V^\op \to \SET$ for the natural transformation corresponding to a graded morphism $f \in \C(X \gc A;A')$ in a $\dV$-graded category $\C$.

\begin{lem}\label{thm:vgr-sq-vhat-enr-sq}
Let $f:X \gc A \to A'$, $g:X \gc B \to B'$, $\phi:X' \gc A \to B$, $\phi':X' \gc A' \to B'$ be graded morphisms in a $\dV$-graded category $\C$. Then $(f,g,\phi,\phi')$ is a $\dV$-graded square iff $(\tilde{f},\tilde{g},\tilde{\phi},\tilde{\phi'})$ is a $\hat{\dV}$-enriched square in the $\hat{\dV}$-category $\C$.
\end{lem}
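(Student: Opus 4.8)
The plan is to transport both conditions across the Yoneda bijections $\hat{\V}(\mathsf{Y}Z,\C(A,B)) \cong \C(A,B)(Z)$ and then to exploit the fact that $\mathsf{Y}:\dV \to \hat{\dV}$ is a strong duoidal functor (\ref{para:duoidal_str_vhat}). Throughout I regard $\C$ as the $\hat{\V}$-category it is by definition, and write $m$ for the relevant composition morphism of $\C$ in $\hat{\V}$. First I would record from \ref{sec:day_conv} the two translation facts: reindexing of a graded morphism along $\alpha$ corresponds to precomposing its Yoneda transform with $\mathsf{Y}\alpha$, and the composite of two graded morphisms corresponds under Yoneda to the $\hat{\V}$-composite of their transforms precomposed with the opmonoidal constraint $d$ of $\mathsf{Y}$. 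Concretely this gives
$$\widetilde{g \circ \phi} = m \cdot (\tilde{g} \otimes \tilde{\phi}) \cdot d_{X,X'}\,, \qquad \widetilde{\phi' \circ f} = m \cdot (\tilde{\phi'} \otimes \tilde{f}) \cdot d_{X',X}$$
as morphisms $\mathsf{Y}(X \otimes X') \to \C(A,B')$ and $\mathsf{Y}(X' \otimes X) \to \C(A,B')$ in $\hat{\V}$.

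Next, by \ref{defn:duoi_vgr_sq} the quadruple $(f,g,\phi,\phi')$ is a $\dV$-graded square iff $\sigma^*(g \circ \phi) = \tau^*(\phi' \circ f)$, and applying the reindexing fact together with the above this is equivalent, Yoneda being a bijection, to the equation of morphisms $\mathsf{Y}(X \dpr X') \to \C(A,B')$ in $\hat{\V}$
$$m \cdot (\tilde{g} \otimes \tilde{\phi}) \cdot d_{X,X'} \cdot \mathsf{Y}\sigma = m \cdot (\tilde{\phi'} \otimes \tilde{f}) \cdot d_{X',X} \cdot \mathsf{Y}\tau\,.$$
On the other hand, by \ref{para:venr_sq} applied to the normal duoidal category $\hat{\dV}$ and its $\hat{\dV}$-category $\C$, the quadruple $(\tilde{f},\tilde{g},\tilde{\phi},\tilde{\phi'})$ — whose grades are the representables $\mathsf{Y}X,\mathsf{Y}X' \in \ob\hat{\dV}$ — is a $\hat{\dV}$-enriched square iff $m \cdot (\tilde{g} \otimes \tilde{\phi}) \cdot \sigma_{\hat{\dV}} = m \cdot (\tilde{\phi'} \otimes \tilde{f}) \cdot \tau_{\hat{\dV}}$ as morphisms $\mathsf{Y}X \dpr \mathsf{Y}X' \to \C(A,B')$, where $\sigma_{\hat{\dV}},\tau_{\hat{\dV}}$ are the morphisms \eqref{eq:sigma_tau} computed in $\hat{\dV}$. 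So it remains to match these two equations.

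The key step, and the main obstacle, is to show that $\mathsf{Y}$ commutes with $\sigma$ and $\tau$, i.e. that
$$d_{X,X'} \cdot \mathsf{Y}\sigma = \sigma_{\hat{\dV}} \cdot d^{\dpr}_{X,X'}\,, \qquad d_{X',X} \cdot \mathsf{Y}\tau = \tau_{\hat{\dV}} \cdot d^{\dpr}_{X,X'}\,,$$
where $d^{\dpr}_{X,X'}:\mathsf{Y}(X \dpr X') \to \mathsf{Y}X \dpr \mathsf{Y}X'$ is the invertible opmonoidal constraint of $\mathsf{Y}$ for the product $\dpr$. I would prove this by expanding $\mathsf{Y}\sigma$ using the definition \eqref{eq:sigma_tau} of $\sigma$ as $(\rho \otimes \lambda) \cdot \xi \cdot (r^{-1} \dpr \ell^{-1})$ and pushing $\mathsf{Y}$ through each factor: since $\mathsf{Y}:\dV \to \hat{\dV}$ is strong duoidal, its constraints for $\otimes$ and for $\dpr$ are compatible with the unitors $r,\ell$ and $\rho,\lambda$ and with the interchanger $\xi$, so that conjugating each factor of \eqref{eq:sigma_tau} by the appropriate constraints reproduces the corresponding factor for $\hat{\dV}$, the intermediate constraints cancelling in pairs; the same computation handles $\tau$. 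This is pure coherence bookkeeping with the strong-duoidal constraints and is where the real work lies. Granting these identities, substituting them into the displayed $\dV$-graded-square equation rewrites its two sides as $m \cdot (\tilde{g} \otimes \tilde{\phi}) \cdot \sigma_{\hat{\dV}} \cdot d^{\dpr}_{X,X'}$ and $m \cdot (\tilde{\phi'} \otimes \tilde{f}) \cdot \tau_{\hat{\dV}} \cdot d^{\dpr}_{X,X'}$; since $d^{\dpr}_{X,X'}$ is invertible, precomposition with it is a bijection, so this equation holds iff the $\hat{\dV}$-enriched-square equation does. Hence $(f,g,\phi,\phi')$ is a $\dV$-graded square iff $(\tilde{f},\tilde{g},\tilde{\phi},\tilde{\phi'})$ is a $\hat{\dV}$-enriched square, as required.
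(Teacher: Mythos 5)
Your proposal is correct and follows essentially the same route as the paper: translate both conditions across the Yoneda bijections using the composition/reindexing facts of \ref{sec:day_conv}, and reduce everything to the compatibility of $\mathsf{Y}$ with $\sigma$ and $\tau$ (the identities $d_{X,X'}\cdot\mathsf{Y}\sigma=\sigma_{\hat{\dV}}\cdot d^{\dpr}_{X,X'}$ and $d_{X',X}\cdot\mathsf{Y}\tau=\tau_{\hat{\dV}}\cdot d^{\dpr}_{X,X'}$), which the paper likewise obtains from $\mathsf{Y}$ being strong duoidal, presenting the whole argument as one pasted diagram whose outer cells are exactly your two key identities. The coherence bookkeeping you flag as the remaining work is asserted, not expanded, in the paper as well, so your level of detail matches the original.
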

\begin{proof} $(\tilde{f},\tilde{g},\tilde{\phi},\tilde{\phi'})$ is a $\hat{\dV}$-enriched square iff the rectangle (1) in the following diagram commutes, but since $\mathsf{Y}:\dV \to \hat{\dV}$ is a strong duoidal functor, the cells marked (2) and (3) necessarily commute, where we write $d^\otimes$ and $d^\dpr$ for the (invertible) binary opmonoidal constraints associated to $\mathsf{Y}$:
$$
\xymatrix{
\mathsf{Y}(X \dpr X') \ar[d]_{\mathsf{Y}\tau} \ar[r]^{\mathsf{Y}\sigma} \ar[dr]|{d^\dpr} & \mathsf{Y}(X \otimes X') \ar@{}[d]|{(2)} \ar[dr]^{d^\otimes} & &\\
\mathsf{Y}(X' \otimes X) \ar@{}[r]|{(3)} \ar[dr]_{d^\otimes} & \mathsf{Y}X \dpr \mathsf{Y}X' \ar[d]_\tau \ar[r]^\sigma & \mathsf{Y}X \otimes \mathsf{Y}X' \ar@{}[d]|{(1)} \ar[r]^(.35){\tilde{g} \otimes \tilde{\phi}} & \C(B,B') \otimes \C(A,B) \ar[d]^{m_{ABB'}}\\
& \mathsf{Y}X' \otimes \mathsf{Y}X \ar[r]^(.4){\tilde{\phi'} \otimes \tilde{f}} & \C(A',B') \otimes \C(A,A') \ar[r]^(.6){m_{AA'B'}} & \C(A,B')
}
$$
Also, $(f,g,\phi,\phi')$ is a $\dV$-graded square iff $\sigma^*(g \circ \phi) = \tau^*(\phi' \circ f):X \dpr X' \gc A \to B'$. But by \ref{sec:day_conv} the two composites around the periphery of the diagram are precisely $\widetilde{\sigma^*(g \circ \phi)}, \widetilde{\tau^*(\phi' \circ f)}:\mathsf{Y}(X \dpr X') \rightrightarrows \C(A,B')$.
\end{proof}

\begin{lem}\label{thm:psh_enr_sq}
Let $\C$ be a $\dV$-graded category. Then a $\hat{\dV}$-enriched square in $\C$ is a quadruple $(f,g,\phi,\phi')$ consisting of natural transformations $f:P \Rightarrow \C(- \gc A;A')$, $g:P \Rightarrow \C(- \gc B;B')$, $\phi:Q \Rightarrow \C(- \gc A;B)$, $\phi':Q \Rightarrow \C(-\gc A';B')$ such that
$$f_X(p):X \gc A \to A',\;\;g_X(p):X \gc B \to B',\;\;\phi_{X'}(q):X' \gc A \to B,\;\;\phi'_{X'}(q):X' \gc A' \to B'$$
is a $\dV$-graded square in $\C$ for all $X,X' \in \ob\V$, $p \in PX$, $q \in QX'$.
\end{lem}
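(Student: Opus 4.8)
The plan is to reduce the statement about a $\hat{\dV}$-enriched square whose ``grades'' are the presheaves $P,Q \in \hat{\V}$ to the componentwise statement about $\dV$-graded squares, by precomposing with the canonical cocones of representables over $P$ and $Q$ and then invoking Lemmas \ref{thm:star-stable_epi-sink} and \ref{thm:vgr-sq-vhat-enr-sq}. First I would unpack the definition. Regarding $\C$ as a $\hat{\dV}$-category (\ref{para:graded_cats}), a $\hat{\dV}$-enriched square in the sense of \ref{defn:v-enr_sq} and \ref{para:venr_sq} applied to the base $\hat{\dV}$ is exactly a quadruple of morphisms $f\colon P \to \C(A,A')$, $g\colon P \to \C(B,B')$, $\phi\colon Q \to \C(A,B)$, $\phi'\colon Q \to \C(A',B')$ in $\hat{\V}$ --- that is, natural transformations with the sources indicated in the statement --- satisfying the defining diagram \eqref{eq:venr_sq_diag} for $\hat{\dV}$, with $\sigma,\tau$ the maps \eqref{eq:sigma_tau} in $\hat{\dV}$.

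Next I would produce the relevant $\dpr$-stable epi-sinks. For each $P \in \hat{\V}$ the family $(\tilde{p}\colon \mathsf{Y}X \to P)_{X \in \ob\V,\, p \in PX}$, with $\tilde{p}$ as in \ref{sec:day_conv}, is an epi-sink in $\hat{\V}$: if $\beta,\gamma\colon P \rightrightarrows R$ agree after precomposition with every $\tilde{p}$, then $\beta_X(p) = \gamma_X(p)$ for all $X,p$ by Yoneda, whence $\beta=\gamma$. Moreover, since the monoidal product $\dpr$ on $\hat{\V}$ is the Day convolution of $\dpr$ on $\V$ (\ref{para:duoidal_str_vhat}), it is biclosed (\ref{sec:day_conv}), so $\hat{\dV}$ is $\dpr$-biclosed and hence every epi-sink in $\hat{\V}$ is $\dpr$-stable (\ref{para:star-stable_epi-sink}). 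Applying Lemma \ref{thm:star-stable_epi-sink} relative to $\SET'$, with the huge normal duoidal category $\hat{\dV}$ in place of $\dV$, to the $\dpr$-stable epi-sinks $(\tilde{p})_{X,\,p \in PX}$ and $(\tilde{q})_{X',\,q \in QX'}$ therefore shows that $(f,g,\phi,\phi')$ is a $\hat{\dV}$-enriched square if and only if $(f \cdot \tilde{p},\, g \cdot \tilde{p},\, \phi \cdot \tilde{q},\, \phi' \cdot \tilde{q})$ is a $\hat{\dV}$-enriched square for all $X,X' \in \ob\V$, $p \in PX$, $q \in QX'$.

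It then remains to identify these precomposites. By the naturality of the Yoneda bijection (\ref{sec:day_conv}), the composite $f \cdot \tilde{p}\colon \mathsf{Y}X \Rightarrow \C(- \gc A;A')$ is precisely the natural transformation $\widetilde{f_X(p)}$ corresponding to the graded morphism $f_X(p)\colon X \gc A \to A'$, and likewise for $g \cdot \tilde{p}$, $\phi \cdot \tilde{q}$, and $\phi' \cdot \tilde{q}$. Hence the precomposed quadruple equals $(\widetilde{f_X(p)}, \widetilde{g_X(p)}, \widetilde{\phi_{X'}(q)}, \widetilde{\phi'_{X'}(q)})$, which by Lemma \ref{thm:vgr-sq-vhat-enr-sq} is a $\hat{\dV}$-enriched square exactly when $(f_X(p), g_X(p), \phi_{X'}(q), \phi'_{X'}(q))$ is a $\dV$-graded square in $\C$. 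Combining the two equivalences yields the lemma.

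I expect the only genuinely delicate point to be the verification that the canonical cocone of representables is a $\dpr$-\emph{stable} epi-sink; everything else is bookkeeping with the Yoneda correspondence of \ref{sec:day_conv}. This is handled cleanly by observing that $\dpr$ on $\hat{\V}$ is a Day convolution and therefore biclosed, so $\dpr$-stability is automatic via \ref{para:star-stable_epi-sink} and no direct cocontinuity computation with $\dpr$ is required.
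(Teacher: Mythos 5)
Your proposal is correct and follows essentially the same route as the paper's proof: exhibit the canonical families $(\tilde{p}\colon \mathsf{Y}X \to P)$ and $(\tilde{q}\colon \mathsf{Y}X' \to Q)$ as $\dpr$-stable epi-sinks (stability being automatic since $\hat{\dV}$ is $\dpr$-biclosed), then conclude via Lemmas \ref{thm:star-stable_epi-sink} and \ref{thm:vgr-sq-vhat-enr-sq}. The extra details you supply (the Yoneda identification $f \cdot \tilde{p} = \widetilde{f_X(p)}$ and the remark that the lemma is applied relative to $\SET'$ for the huge base $\hat{\dV}$) are exactly the bookkeeping the paper leaves implicit.
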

\begin{proof}
By the Yoneda lemma, there is an epi-sink $\gamma^P = (\tilde{p}:\mathsf{Y}X \to P)_{X \in \ob\V\hspace{-0.3ex},\,p \in PX}$ in $\hat{\V}$ with the notation of \ref{sec:day_conv}, and $\gamma^P$ is a $\dpr$-stable epi-sink since $\hat{\dV}$ is $\dpr$-biclosed. Similarly, $\gamma^Q = (\tilde{q}:\mathsf{Y}X' \to Q)_{X' \in \ob\V\hspace{-0.3ex},\,q \in QX'}$ is a $\dpr$-stable epi-sink. The result now follows from Lemmas \ref{thm:star-stable_epi-sink} and \ref{thm:vgr-sq-vhat-enr-sq}.
\end{proof}

By the characterization of $\dV$-enriched squares in \ref{para:venr_sq}, the following is an equivalent way of defining the notion of enriched bifunctor of Garner and L\'opez Franco \cite[Definition 10]{GaLf}:

\begin{defn}\label{defn:v-enr_bifunc}
Given a normal duoidal category $\dV = (\V,\dpr)$ and $\dV$-categories $\A$, $\B$, $\C$, a \textbf{$\dV$-enriched bifunctor} $F:\A,\B \to \C$ consists of (left) $\V$-functors $F(-,B):\A \to \C$ $(B \in \ob\B)$ and $F(A,-):\B \to \C$ $(A \in \ob\A)$ that agree on objects, such that for all $A,A' \in \ob\A$ and $B,B' \in \ob\B$ the morphisms
$$F(-,B)_{AA'},\;F(-,B')_{AA'},\;F(A,-)_{BB'},\;F(A',-)_{BB'}$$
constitute a $\dV$-enriched square in $\C$ of the following form:
$$
\xymatrix{
F(A,B) \ar@{-}[d]_{F(-,B)_{AA'}} \ar@{-}[rr]^{F(A,-)_{BB'}}="s1" & & F(A,B') \ar@{-}[d]^{F(-,B')_{AA'}}\\
F(A',B) \ar@{-}[rr]_{F(A',-)_{BB'}}="t1" & & F(A',B')
\ar@{}"s1";"t1"|(0.5){\A(A,A'),\,\B(B,B')}
}
$$
By \cite[\S 3.2]{GaLf}, $\dV$-enriched bifunctors $F:\A,\B \to \C$ are the objects of a category \linebreak $\textnormal{$\dV$-\textsf{Bif}}(\A,\B;\C)$ with the evident morphisms, and this yields a 2-functor $\textnormal{$\dV$-\textsf{Bif}}:\LCAT{\V}^\op$ $\times \LCAT{\V}^\op \times \LCAT{\V} \to \CAT$. We now show that $\dV$-enriched bifunctors are equivalently $\dV$-graded bifunctors between $\dV$-enriched categories.
\end{defn}

\begin{thm}\label{thm:duoi_venr_bifun}
Let $\dV = (\V,\dpr)$ be a normal duoidal category. Given $\dV$-categories $\A,\B,\C$, a $\dV$-enriched bifunctor $F:\A,\B \to \C$ is equivalently given by a $\dV$-graded bifunctor $F:\A,\B \to \C$ (where we regard $\A,\B,\C$ as $\dV$-graded categories via \ref{para:vcats_as_vgr_cats}), or equivalently, a graded bifunctor $F:\A,\B \to \C_\dpr$. Moreover, there are isomorphisms
$\textnormal{$\dV$-\textsf{Bif}}(\A,\B;\C) \cong \textnormal{$\dV$-\textsf{GBif}}(\A,\B;\C) \cong \textnormal{\textsf{GBif}}(\A,\B;\C_\dpr)$, 2-natural in $\A,\B,\C \in \LCAT{\V}$, where we omit from our notation the 2-functor $\mathsf{Y}_*:\LCAT{\V} \to \LGCAT{\V}$ of \ref{para:vcats_as_vgr_cats}.
\end{thm}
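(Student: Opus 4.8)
The plan is to reduce the claim to the correspondence already established in \ref{thm:duoidally_vgr_bifun}, which supplies the second isomorphism $\textnormal{\textsf{GBif}}(\A,\B;\C_\dpr) \cong \textnormal{$\dV$-\textsf{GBif}}(\A,\B;\C)$, and then to produce the remaining isomorphism $\textnormal{$\dV$-\textsf{Bif}}(\A,\B;\C) \cong \textnormal{$\dV$-\textsf{GBif}}(\A,\B;\C)$. Regarding $\A,\B,\C$ as locally representable $\dV$-graded categories via the $2$-equivalence of \ref{para:vcats_as_vgr_cats}, a $\dV$-enriched bifunctor (\ref{defn:v-enr_bifunc}) and a $\dV$-graded bifunctor (\ref{para:duoidally_vgr_bifun}) have literally the same underlying data: families of $\V$-graded functors $F(-,B):\A \to \C$ and $F(A,-):\B \to \C$ that agree on objects. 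So the essential task is to show that the $\dV$-enriched-square condition of \ref{defn:v-enr_bifunc} on this data is equivalent to the $\dV$-graded-square condition of \ref{para:duoidally_vgr_bifun}.

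First I would record the key computation at universal elements. By \ref{exa:loc_rep_univ_elts_gen} the counits $u_{AA'}:\A(A,A') \gc A \to A'$ form a generating set of graded morphisms for $\A$, and similarly for $\B$; moreover each $u_{AA'}$ corresponds to $1_{\A(A,A')}$, so applying the $\V$-graded functor $F(-,B)$ sends $u_{AA'}$ to the enriched structure morphism $F(-,B)_{AA'}$, viewed as a graded morphism of grade $\A(A,A')$. Hence the $\dV$-graded square $(F(u_{AA'},B),F(u_{AA'},B'),F(A,u_{BB'}),F(A',u_{BB'}))$ in $\C$ is exactly the quadruple $(F(-,B)_{AA'},F(-,B')_{AA'},F(A,-)_{BB'},F(A',-)_{BB'})$ of \ref{defn:v-enr_bifunc}, with grades $\A(A,A')$ and $\B(B,B')$; and by \ref{defn:v-enr_sq} together with \ref{para:venr_sq}, its being a $\dV$-graded square is precisely the requirement that this quadruple be a $\dV$-enriched square. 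Thus the $\dV$-enriched-bifunctor condition asserts exactly that the $\dV$-graded-square condition holds at the universal elements.

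It then remains to upgrade from the universal elements to arbitrary graded morphisms. Here I would pass through \ref{thm:duoidally_vgr_bifun}: the shared data is equally a graded sesquifunctor $\A,\B \to \C_\dpr$ in the sense of \ref{rem:bifun_vtimesw_gr} (using the isomorphisms \eqref{eq:two_underl_vcats_iso}), and by \ref{para:bifun_gen_sets} such a sesquifunctor is a graded bifunctor iff $f \perp_F g$ for $f,g$ ranging over the generating sets of universal elements. By \ref{thm:bigr_sq_normal_duoidal}, for each such $f,g$ the relation $f \perp_F g$ (a bigraded square in $\C_\dpr$) is equivalent to the corresponding $\dV$-graded square in $\C$, which by the previous paragraph is the $\dV$-enriched square of \ref{defn:v-enr_bifunc}. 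Combining these, the data is a graded bifunctor $\A,\B \to \C_\dpr$ — equivalently, a $\dV$-graded bifunctor, by \ref{thm:duoidally_vgr_bifun} — iff it is a $\dV$-enriched bifunctor. On morphisms, a transformation in each of the three categories is a family $\theta_{AB}:F(A,B)\to G(A,B)$ in $\C_0$ subject to the same two naturality conditions (in the sense of \ref{para:2fun_bifun}), so the object bijection extends to an isomorphism of categories.

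Finally I would verify $2$-naturality. Both $\textnormal{$\dV$-\textsf{Bif}}$ (via \ref{defn:v-enr_bifunc}) and $\textnormal{$\dV$-\textsf{GBif}}$ (via \ref{para:2func_vgbif}) are $2$-functors on $\LCAT{\V}^\op \times \LCAT{\V}^\op \times \LCAT{\V}$ whose functorial action is given by pre- and post-composing the constituent $\V$-(graded-)functors; since the identifications above are effected entirely by composing these constituents, they are compatible with this action, yielding the asserted $2$-naturality. The main obstacle is the bookkeeping in the central paragraph: keeping straight that applying a $\V$-graded functor to the universal element returns the enriched structure morphism (so that the two square conditions coincide at generators), and invoking the generating-set criterion \ref{para:bifun_gen_sets} in the right form to pass from generators to all graded morphisms — but no genuinely new computation is needed beyond \ref{thm:bigr_sq_normal_duoidal}.
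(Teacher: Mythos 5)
Your proof is correct, and it reaches the same waypoints as the paper's (the reduction to \ref{thm:duoidally_vgr_bifun}, and the observation that the enriched-bifunctor condition of \ref{defn:v-enr_bifunc} is exactly the $\dV$-graded-square condition evaluated at the universal elements $u_{AA'}$, $u_{BB'}$), but the mechanism you use to bridge generators and arbitrary graded morphisms is genuinely different. The paper works on the enriched side: it invokes Lemma \ref{thm:star-stable_epi-sink} with the $\dpr$-stable epi-sinks of \emph{all} morphisms into $\A(A,A')$ and $\B(B,B')$ (see \ref{para:star-stable_epi-sink}) to show that the single enriched square at the identities is equivalent to the family of enriched squares $(F(-,B)_{AA'}\cdot f,\ldots,F(A',-)_{BB'}\cdot g)$ for all $f,g$, which is then definitionally the $\dV$-graded bifunctor condition. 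You instead work on the graded side: you regard the data as a graded sesquifunctor into $\C_\dpr$ and invoke the generating-set criterion \ref{para:bifun_gen_sets} (via \ref{exa:loc_rep_univ_elts_gen} and the translation \ref{thm:bigr_sq_normal_duoidal}) to contract the bifunctor condition down to the universal elements. Both upgrades ultimately rest on the stability of the relevant squares under reindexing and composition, but your route bypasses Lemma \ref{thm:star-stable_epi-sink} entirely for this theorem, at the cost of routing through the $(\V\times\V)$-graded category $\C_\dpr$ twice (once to apply \ref{para:bifun_gen_sets}, once to return to $\dV$-graded bifunctors via \ref{thm:duoidally_vgr_bifun}); the paper's epi-sink argument is more self-contained on the enriched side and reuses a lemma it needs anyway for \ref{thm:psh_enr_sq}. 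Your treatment of the morphism level and of $2$-naturality matches what the paper compresses into ``since $\mathsf{Y}_*$ is fully faithful, the result follows.''
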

\begin{proof}
By \ref{para:star-stable_epi-sink} and \ref{thm:star-stable_epi-sink}, $\V$-functors $F(-,B):\A \to \C$ $(B \in \ob\B)$ and $F(A,-):\B \to \C$ $(A \in \ob\A)$ that agree on objects form a $\dV$-enriched bifunctor iff
$$(F(-,B)_{AA'} \cdot f,F(-,B')_{AA'} \cdot f,F(A,-)_{BB'} \cdot g,F(A',-)_{BB'} \cdot g)$$
is a $\dV$-enriched square for all $A,A' \in \ob\A$, $B,B' \in \ob\B$, $f:X \to \C(A,A')$, and $g:X' \to \C(B,B')$ in $\V$. Since $\mathsf{Y}_*$ is fully faithful, the result follows, using Theorem \ref{thm:duoidally_vgr_bifun} and \ref{para:2func_vgbif}.
\end{proof}

In the opposite direction, we now show that $\dV$-graded bifunctors for a normal duoidal $\dV$ are precisely $\hat{\dV}$-en\-rich\-ed bifunctors:

\begin{thm}\label{thm:hatv_enr_bifun}
Let $\dV = (\V,\dpr)$ be a normal duoidal category. Given $\dV$-graded categories $\A,\B,\C$, a $\dV$-graded bifunctor $F:\A,\B \to \C$ is precisely a $\hat{\dV}$-enriched bifunctor $F:\A,\B \to \C$. Moreover $\textnormal{$\dV$-\textsf{GBif}} = \textnormal{$\hat{\dV}$-\textsf{Bif}}:\LGCAT{\V}^\op \times \LGCAT{\V}^\op \times \LGCAT{\V} \to \CAT$.
\end{thm}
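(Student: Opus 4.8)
The plan is to show that the two notions have literally the same data and the same defining condition, so that the asserted equality of 2-functors holds on the nose. First I would observe that since a $\dV$-graded category is by definition a $\hat\V$-category (\ref{para:graded_cats}) and $\hat{\dV}$ has underlying monoidal category $(\hat\V,\otimes)$ (\ref{para:duoidal_str_vhat}), a $\dV$-graded category is the same thing as a $\hat{\dV}$-category, and a $\V$-graded functor is the same thing as a $\hat\V$-functor, hence a $\hat{\dV}$-functor. Consequently, in both \ref{para:duoidally_vgr_bifun} and \ref{defn:v-enr_bifunc} (applied to the huge base $\hat{\dV}$), the data of $F:\A,\B\to\C$ consist of the very same families of $\V$-graded functors $F(-,B):\A\to\C$ $(B\in\ob\B)$ and $F(A,-):\B\to\C$ $(A\in\ob\A)$ agreeing on objects. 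It therefore remains only to check that the two square conditions coincide.

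To compare the conditions, fix such families and fix $A,A'\in\ob\A$, $B,B'\in\ob\B$. In the sense of \ref{defn:v-enr_bifunc} applied to $\hat{\dV}$, the hom-objects $\A(A,A')=\A(-\gc A;A')$ and $\B(B,B')=\B(-\gc B;B')$ are objects of $\hat\V$, and the structural morphisms $F(-,B)_{AA'}$, $F(-,B')_{AA'}$, $F(A,-)_{BB'}$, $F(A',-)_{BB'}$ are $\hat\V$-morphisms, i.e.\ natural transformations out of $P:=\A(-\gc A;A')$ and $Q:=\B(-\gc B;B')$. I would apply Lemma \ref{thm:psh_enr_sq} to this quadruple: it is a $\hat{\dV}$-enriched square in $\C$ if and only if for all $X,X'\in\ob\V$, all $p\in PX$, and all $q\in QX'$ --- equivalently, all graded morphisms $p:X\gc A\to A'$ in $\A$ and $q:X'\gc B\to B'$ in $\B$ --- the quadruple $\bigl((F(-,B)_{AA'})_X(p),\,(F(-,B')_{AA'})_X(p),\,(F(A,-)_{BB'})_{X'}(q),\,(F(A',-)_{BB'})_{X'}(q)\bigr)$ is a $\dV$-graded square in $\C$.

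The key identification is that, by the description of how a $\V$-graded functor acts on graded morphisms (\ref{para:vgr_func}), the component $(F(-,B)_{AA'})_X(p)$ is exactly $F(p,B)$, and similarly the remaining three components are $F(p,B')$, $F(A,q)$, and $F(A',q)$. Hence the $\hat{\dV}$-enriched bifunctor condition asserts precisely that $(F(p,B),F(p,B'),F(A,q),F(A',q))$ is a $\dV$-graded square in $\C$ for all graded morphisms $p$ in $\A$ and $q$ in $\B$, which is verbatim the defining condition of a $\dV$-graded bifunctor (\ref{para:duoidally_vgr_bifun}). This shows that $\dV$-graded bifunctors and $\hat{\dV}$-enriched bifunctors $F:\A,\B\to\C$ are literally the same.

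Finally, for the equality of 2-functors I would note that a morphism in either $\textnormal{$\dV$-\textsf{GBif}}(\A,\B;\C)$ or $\textnormal{$\hat{\dV}$-\textsf{Bif}}(\A,\B;\C)$ is a family $\theta_{AB}:F(A,B)\to G(A,B)$ in $\C_0$ that is natural in each variable, where $\hat{\dV}$-enriched naturality coincides with $\hat\V$-enriched (that is, $\V$-graded) naturality; and the action of both 2-functors on $1$-cells of $\LGCAT{\V}$ is by pre- and post-composition with $\V$-graded functors. Since objects, morphisms, and functorial action all agree, the 2-functor $\textnormal{$\dV$-\textsf{GBif}}$ of \ref{para:2func_vgbif} equals $\textnormal{$\hat{\dV}$-\textsf{Bif}}$. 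The only real subtlety in all of this is the bookkeeping in the previous paragraph --- correctly matching each presheaf and each structural morphism to its role in Lemma \ref{thm:psh_enr_sq} and recognizing the evaluations $(F(-,B)_{AA'})_X(p)$ as the graded-functor images $F(p,B)$ --- and there is no essential obstacle beyond this once Lemmas \ref{thm:psh_enr_sq} and \ref{thm:vgr-sq-vhat-enr-sq} are in hand.
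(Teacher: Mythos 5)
Your proposal is correct and follows exactly the paper's route: the paper's proof is precisely the one-line observation that the claim follows from Lemma \ref{thm:psh_enr_sq} together with Definitions \ref{para:duoidally_vgr_bifun} and \ref{defn:v-enr_bifunc}, and your argument just spells out the bookkeeping that this reference leaves implicit. The identification of $(F(-,B)_{AA'})_X(p)$ with $F(p,B)$ and the matching of the quadruple to the format of Lemma \ref{thm:psh_enr_sq} are exactly the intended content.
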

\begin{proof}
This follows from Lemma \ref{thm:psh_enr_sq}, in view of Definitions \ref{para:duoidally_vgr_bifun} and \ref{defn:v-enr_bifunc}.
\end{proof}

\begin{para}\label{para:enr_func_cat_gl}
Garner and L\'opez Franco \cite{GaLf} show that if $\dV = (\V,\dpr)$ is a normal duoidal category that is complete and $\dpr$-biclosed (\ref{para:star-stable_epi-sink}), then the 2-functors $$\textnormal{$\dV$-\textsf{Bif}}(\A,-;\C),\textnormal{$\dV$-\textsf{Bif}}(-,\A;\C):\LCAT{\V}^\op \rightrightarrows \CAT$$
are representable whenever $\A$ and $\C$ are $\dV$-categories and $\A$ is small. In this case, we write the respective representing objects as $[\A,\C]_r^{\tinydV}$ and $[\A,\C]_\ell^{\tinydV}$ and call them \textit{the $\dV$-enriched functor categories of Garner and L\'opez Franco}; we follow the convention of \cite{GaLf} for the placement of the subscripts ``$r$'' and ``$\ell$''. Explicitly, the objects of $[\A,\C]_r^{\tinydV}$ and $[\A,\C]_\ell^{\tinydV}$ are (in both cases) $\V$-functors $F:\A \to \C$. To describe the hom-objects of the former, let us follow \cite{GaLf} in writing $[X,-]_r$ for the right adjoint of $X \dpr (-):\V \to \V$ $(X \in \ob\V)$. Let $F,G:\A \rightrightarrows \C$ be $\V$-functors. For each pair $A,B \in \ob\A$, we can form the following composite morphisms in $\V$:
$$m_{FA,GA,GB} \cdot (G_{AB} \otimes 1) \cdot \sigma\;:\;\A(A,B) \dpr \C(FA,GA) \to \C(FA,GB),$$
$$m_{FA,FB,GB} \cdot (1 \otimes F_{AB}) \cdot \tau\;:\;\A(A,B) \dpr \C(FB,GB) \to \C(FA,GB).$$
Taking the transposes of these morphisms, we obtain a diagram of the form
$$\C(FA,GA) \to [\A(A,B),\C(FA,GB)]_r \leftarrow \C(FB,GB)\;\;\;\;(A,B \in \ob\A)$$
in $\V$ whose limit serves as the hom-object $[\A,\C]_r^{\tinydV}(F,G)$. Composition in $[\A,\C]_r^{\tinydV}$ is defined using the universal property of such limits and the composition in $\C$, and $[\A,\C]_\ell^{\tinydV}$ is constructed analogously, using instead the right adjoints $[X,-]_\ell$ to the functors $(-) \dpr X$. 
\end{para}

\begin{para}\label{para:vgr_func_cat_gl}
We now show that for an arbitrary normal duoidal category $\dV$, the $\dV$-graded functor categories of \ref{thm:func_cat_repn_duoidal} can be regarded as examples of the enriched functor categories of Garner and L\'opez Franco relative to the base of enrichment $\hat{\dV}$. The (huge) normal duoidal category $\hat{\dV}$ has all $\SET$-small limits and is $\dpr$-biclosed, so if $\A$ and $\C$ are ($\SET$-small) $\dV$-graded categories then by applying \ref{para:enr_func_cat_gl} relative to the base of enrichment $\hat{\dV}$ and the universe $\SET$, we can form the $\hat{\dV}$-enriched functor categories $[\A,\C]_r^{\scriptscriptstyle\hat{\dV}}$ and $[\A,\C]_\ell^{\scriptscriptstyle\hat{\dV}}$ of Garner and L\'opez Franco.  By \ref{para:enr_func_cat_gl} and Theorems \ref{thm:hatv_enr_bifun} and \ref{thm:func_cat_repn_duoidal} we have isomorphisms $\LGCAT{\V}(\B,[\A,\C]_r^{\scriptscriptstyle\hat{\dV}}) \cong \textnormal{$\hat{\dV}$-\textsf{Bif}}(\A,\B;\C) = \textnormal{$\dV$-\textsf{GBif}}(\A,\B;\C) \cong \LGCAT{\V}(\B,[\A,\C]_r)$ 2-natural in $\B \in \LGCAT{\V}$. Thus we obtain the following, reasoning similarly also for $[\A,\C]_\ell^{\scriptscriptstyle\hat{\dV}}$.
\end{para}

\begin{thm}\label{thm:dhat_enr_func_cat}
Let $\A$ and $\B$ be $\dV$-graded categories for a normal duoidal category $\dV$. Then $[\A,\C]_r^{\scriptscriptstyle\hat{\dV}} \cong [\A,\C]_r$ and $[\A,\C]_\ell^{\scriptscriptstyle\hat{\dV}} \cong [\A,\C]_\ell$.
\end{thm}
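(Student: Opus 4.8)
The plan is to prove Theorem \ref{thm:dhat_enr_func_cat} by the Yoneda-style argument already sketched in \ref{para:vgr_func_cat_gl}: rather than comparing the two functor categories object-by-object and hom-by-hom, I would exhibit them as representing objects for the same representable 2-functor and invoke uniqueness of representing objects. Concretely, both $[\A,\C]_r^{\scriptscriptstyle\hat{\dV}}$ and $[\A,\C]_r$ live in $\LGCAT{\V} = \LCAT{\hat{\V}}$, so it suffices to produce a chain of isomorphisms
$$\LGCAT{\V}(\B,[\A,\C]_r^{\scriptscriptstyle\hat{\dV}}) \;\cong\; \LGCAT{\V}(\B,[\A,\C]_r)$$
2-natural in $\B \in \LGCAT{\V}$, and then apply the 2-categorical Yoneda lemma (\cite[\S 1.10]{Ke:Ba}) to conclude that the representing objects are isomorphic. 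The same argument, run with the left-handed variants throughout, handles $[\A,\C]_\ell^{\scriptscriptstyle\hat{\dV}} \cong [\A,\C]_\ell$.

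First I would assemble the chain of isomorphisms. Since $\hat{\dV}$ is a huge normal duoidal category that is complete and $\dpr$-biclosed (\ref{para:duoidal_str_vhat}, \ref{para:star-stable_epi-sink}), the representability result of Garner and L\'opez Franco recalled in \ref{para:enr_func_cat_gl}, applied relative to the base $\hat{\dV}$ and the universe $\SET$, gives the defining isomorphism
$$\LGCAT{\V}(\B,[\A,\C]_r^{\scriptscriptstyle\hat{\dV}}) \;\cong\; \textnormal{$\hat{\dV}$-\textsf{Bif}}(\A,\B;\C)$$
2-natural in $\B$, where the right-hand side is the category of $\hat{\dV}$-enriched bifunctors (here $\A,\B,\C$ are regarded as $\hat{\dV}$-categories, i.e.\ $\V$-graded categories). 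Next, Theorem \ref{thm:hatv_enr_bifun} identifies $\hat{\dV}$-enriched bifunctors with $\dV$-graded bifunctors, giving $\textnormal{$\hat{\dV}$-\textsf{Bif}}(\A,\B;\C) = \textnormal{$\dV$-\textsf{GBif}}(\A,\B;\C)$ as an equality of 2-functors (by that theorem's second assertion). Finally, Theorem \ref{thm:func_cat_repn_duoidal} supplies the representation
$$\textnormal{$\dV$-\textsf{GBif}}(\A,\B;\C) \;\cong\; \LGCAT{\V}(\B,[\A,\C]_r)$$
2-natural in $\B$. Composing these three isomorphisms yields the desired 2-natural isomorphism of representables.

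The one point demanding care — and the step I expect to be the main obstacle — is the bookkeeping around variance and the left/right conventions. The subscripts ``$r$'' and ``$\ell$'' are governed in \ref{para:enr_func_cat_gl} by the convention of Garner and L\'opez Franco, and in \ref{thm:func_cat_repn_duoidal} by the convention tied to $\sigma$ versus $\tau$ in the definition of $\dV$-graded square (\ref{defn:duoi_vgr_sq}); I must verify that these two conventions align so that $[\A,\C]_r^{\scriptscriptstyle\hat{\dV}}$ is matched with $[\A,\C]_r$ and not with $[\A,\C]_\ell$. This amounts to checking that the representable $\textnormal{$\hat{\dV}$-\textsf{Bif}}(\A,-;\C)$ appearing in \ref{para:enr_func_cat_gl} (representing object $[\A,\C]_r^{\scriptscriptstyle\hat{\dV}}$, contravariant in the first slot) corresponds under Theorems \ref{thm:hatv_enr_bifun} and \ref{thm:func_cat_repn_duoidal} to $\textnormal{$\dV$-\textsf{GBif}}(\A,-;\C)$ representing $[\A,\C]_r$ — equivalently, that both definitions place the ``moving'' variable $\B$ in the same argument of the bifunctor and both use $\sigma$ on the $g\otimes\phi$ side of the enriched square \eqref{eq:venr_sq_diag}. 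Since the definition of $\hat{\dV}$-enriched bifunctor in \ref{defn:v-enr_bifunc} is literally the $\dV$-enriched-square condition of \ref{para:venr_sq} with $\dV$ replaced by $\hat{\dV}$, and Theorem \ref{thm:hatv_enr_bifun} is proved precisely by matching these square conditions via Lemma \ref{thm:psh_enr_sq}, the alignment should hold on the nose; nonetheless I would state the correspondence explicitly and then note that the left-handed case follows by the symmetric argument, completing the proof.
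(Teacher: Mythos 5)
Your proposal is correct and matches the paper's own argument essentially verbatim: the paper likewise chains the representability isomorphism of \ref{para:enr_func_cat_gl} (applied over the base $\hat{\dV}$) with Theorems \ref{thm:hatv_enr_bifun} and \ref{thm:func_cat_repn_duoidal} to get $\LGCAT{\V}(\B,[\A,\C]_r^{\scriptscriptstyle\hat{\dV}}) \cong \textnormal{$\dV$-\textsf{GBif}}(\A,\B;\C) \cong \LGCAT{\V}(\B,[\A,\C]_r)$ 2-naturally in $\B$, and concludes by uniqueness of representing objects. Your attention to the alignment of the $r$/$\ell$ conventions is well placed but resolves as you expect, since both sides place the moving variable $\B$ in the second argument of the bifunctor.
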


Turning to the case where $\dV$ is complete and $\dpr$-biclosed, we now show that the $\dV$-enriched functor categories of Garner and L\'opez Franco can be regarded as examples of the $\dV$-graded functor categories of \ref{thm:func_cat_repn_duoidal}, which in turn are examples of the graded functor categories of \ref{thm:func_cat_for_product_graded_categories} and \ref{thm:vgr_func_cat}:

\begin{thm}\label{thm:venr_func_cat_gr_func_cat}
Let $\dV = (\V,\dpr)$ be a complete and $\dpr$-biclosed normal duoidal category, let $\A$ and $\C$ be $\dV$-categories, and suppose $\A$ is small. Then $[\A,\C]_r^{\tinydV} \cong [\A,\C]_r$ and $[\A,\C]_\ell^{\tinydV} \cong [\A,\C]_\ell$ in $\LGCAT{\V}$, where we regard the $\dV$-categories $\A$, $\C$, $[\A,\C]_r^{\tinydV}$, $[\A,\C]_\ell^{\tinydV}$ as $\dV$-graded categories via \ref{para:vcats_as_vgr_cats}, thus omitting applications of $\mathsf{Y}_*:\LCAT{\V} \to \LGCAT{\V}$ from our notation.
\end{thm}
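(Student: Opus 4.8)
The plan is to treat the case of $[\A,\C]_r$, the case of $[\A,\C]_\ell$ being entirely parallel (interchanging the roles of $\sigma,\tau$ and of the right adjoints $[X,-]_r$, $[X,-]_\ell$). The cleanest route is to pivot through the $\hat{\dV}$-enriched functor categories, which \ref{thm:dhat_enr_func_cat} already identifies with the $\dV$-graded functor categories: since $\A$ and $\C$ are in particular $\dV$-graded categories, that theorem gives $[\A,\C]_r \cong [\A,\C]_r^{\scriptscriptstyle\hat{\dV}}$ in $\LGCAT{\V}$. It therefore suffices to prove that the Garner–L\'opez Franco functor category relative to $\dV$, once embedded via $\mathsf{Y}_*$, agrees with the one relative to $\hat{\dV}$, i.e. $\mathsf{Y}_*[\A,\C]_r^{\tinydV} \cong [\A,\C]_r^{\scriptscriptstyle\hat{\dV}}$. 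This is a change-of-base statement for the GLF construction along the strong duoidal functor $\mathsf{Y}:\dV \to \hat{\dV}$ of \ref{para:duoidal_str_vhat}.

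First I would check that the two sides have the same objects: both are $\V$-functors $F:\A \to \C$, since $\A,\C$ are $\dV$-categories and $\V$-graded functors between locally representable $\V$-graded categories are exactly $\V$-functors (\ref{para:vcats_as_vgr_cats}), with $\mathsf{Y}_*$ fully faithful. Next, recall from \ref{para:enr_func_cat_gl} that each hom-object $[\A,\C]_r^{\tinydV}(F,G)$ is the $\V$-limit of the diagram $\C(FA,GA) \to [\A(A,B),\C(FA,GB)]_r \leftarrow \C(FB,GB)$ indexed by $A,B \in \ob\A$, whose legs are transposes of maps built from $\sigma$, $\tau$, the composition morphisms of $\C$, and the structural maps $F_{AB},G_{AB}$. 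The core computation is that $\mathsf{Y}$ carries this diagram and its limit to the analogous data defining $[\A,\C]_r^{\scriptscriptstyle\hat{\dV}}(F,G)$ in $\hat{\V}$: indeed $\mathsf{Y}$ preserves the $\SET$-small limit (the Yoneda embedding preserves limits, $\ob\A$ is small since $\A$ is small, and $\V$ is complete), it preserves $\sigma,\tau$ and the composition and product maps because it is strong duoidal, and it preserves the $\dpr$-internal homs, $\mathsf{Y}([X,Z]_r) \cong [\mathsf{Y}X,\mathsf{Y}Z]_r$ in $\hat{\dV}$, which I would verify from the adjunction $X \dpr (-) \dashv [X,-]_r$ and the Day-convolution formulas of \ref{sec:day_conv} via $\mathsf{Y}([X,Z]_r)(Y) = \V(Y,[X,Z]_r) \cong \V(X \dpr Y, Z) = \mathsf{Y}Z(X \dpr Y)$. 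Consequently $[\A,\C]_r^{\scriptscriptstyle\hat{\dV}}(F,G) \cong \mathsf{Y}\big([\A,\C]_r^{\tinydV}(F,G)\big)$ in $\hat{\V}$, naturally in $F,G$, so $[\A,\C]_r^{\scriptscriptstyle\hat{\dV}}$ is locally representable with hom-objects $[\A,\C]_r^{\tinydV}(F,G)$; matching composition (defined on both sides by the universal property of the limits and the composition of $\C$, which $\mathsf{Y}$ preserves) then yields $[\A,\C]_r^{\scriptscriptstyle\hat{\dV}} \cong \mathsf{Y}_*[\A,\C]_r^{\tinydV}$, and combining with \ref{thm:dhat_enr_func_cat} gives the claim.

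As an alternative that avoids \ref{thm:dhat_enr_func_cat}, one may compare hom-presheaves directly: chaining the representations \ref{thm:func_cat_repn_duoidal}, \ref{thm:duoi_venr_bifun}, and \ref{para:enr_func_cat_gl}, and using that the universal elements $u_{AB}$ generate $\A$ (\ref{exa:loc_rep_univ_elts_gen}) together with \ref{thm:gr_tr_gen_set}, one identifies $[\A,\C]_r(F,G)(X')$ with the set of families $(\phi_A:X' \to \C(FA,GA))_{A}$ for which $(F_{AB},G_{AB},\phi_A,\phi_B)$ is a $\dV$-enriched square for all $A,B$; since $\V(X',-)$ preserves the defining limit of $[\A,\C]_r^{\tinydV}(F,G)$ and the transposition across $[\A(A,B),-]_r$ turns the cone condition into exactly the enriched-square diagram \eqref{eq:venr_sq_diag} with $X = \A(A,B)$ (\ref{para:venr_sq}), this set is naturally $\V(X',[\A,\C]_r^{\tinydV}(F,G))$, giving the representability directly. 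Either way, the main obstacle is purely the bookkeeping: tracking the $\sigma$- and $\tau$-twisted structural maps through the transposes across the $\dpr$-biclosed adjunctions and matching them, term by term, with the $\dV$-(enriched-)square condition — conceptually routine but notationally delicate — and, in the first approach, confirming that $\mathsf{Y}$ commutes with the $\dpr$-internal homs used in the GLF recipe.
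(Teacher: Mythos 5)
Your main argument is exactly the paper's proof: the paper likewise reduces to showing $\mathsf{Y}_*[\A,\C]_r^{\tinydV} \cong [\A,\C]_r^{\scriptscriptstyle\hat{\dV}}$ (deduced from $\mathsf{Y}$ being strong duoidal and limit-preserving, applied to the construction of \ref{para:enr_func_cat_gl}) and then concludes via \ref{thm:dhat_enr_func_cat}; you merely spell out the preservation of the $\dpr$-internal homs and the defining limit that the paper dismisses as clear. Your computation $\mathsf{Y}([X,Z]_r)(Y) \cong \V(X \dpr Y,Z) \cong [\mathsf{Y}X,\mathsf{Y}Z]_r(Y)$ is correct, and the sketched alternative via direct comparison of hom-presheaves is a reasonable variant but not needed.
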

\begin{proof}
Since $\mathsf{Y}:\dV \to \hat{\dV}$ is a strong duoidal functor and preserves all limits, it is clear from the construction of the enriched functor categories in \ref{para:enr_func_cat_gl} that $\mathsf{Y}_*[\A,\C]_r^{\tinydV} \cong [\mathsf{Y}_*\A,\mathsf{Y}_*\C]_r^{\scriptscriptstyle\hat{\dV}}$ and $\mathsf{Y}_*[\A,\C]_\ell^{\tinydV} \cong [\mathsf{Y}_*\A,\mathsf{Y}_*\C]_\ell^{\scriptscriptstyle\hat{\dV}}$, and the result follows by \ref{thm:dhat_enr_func_cat}.
\end{proof}

\section{Appendix - Background III: Weighted colimits and cocompletion}\label{sec:weighted_colims}

In this section, we fix a huge biclosed base $\sfV$ in the sense of \ref{para:copower_cocompl}, and we recall background material used only in \ref{para:copower_cocompl} (and, in turn, in \ref{thm:copower_cocompl_vgr}).

\begin{para}[\textbf{Weighted colimits}]\label{para:weighted_colims}
Street \cite{Str:EnrCatsCoh} generalized the theory of enriched weighted colimits \cite{Ke:Ba} to the setting of enrichment in bicategories, employing general enriched modules as weights. We now recall the monoidal case of Gordon and Power's \cite{GP:GabrielUlmer} version of Street's theory, i.e.~the version for left $\sfV$-categories in which only presheaves are used as the weights. A \textbf{weight} (for colimits in left $\sfV$-categories) is a right $\sfV$-functor $W:\K^\circ \to \sfV$ for a given ($\SET$-small) left $\sfV$-category $\K$; thus a weight is equivalently a $\sfV$-module $W:\mathbb{I} \modto \K$ (\S \ref{sec:enr_mod_psh_cocompl}). Given also a huge left $\sfV$-category $\C$ and a left $\sfV$-functor $D:\K \to \C$, a \textbf{weighted colimit} $W * D$ is an object of $\C$ equipped with a right $\sfV$-natural transformation $\gamma:W \Rightarrow \C(D-,W * D):\K^\circ \to \sfV$, equivalently a 2-cell $\gamma:W \Rightarrow \C(D,W*D):\mathbb{I} \modto \K$ in $\LMOD{\sfV}$ such that for each object $A$ of $\C$ the induced morphism $\tilde{\gamma}_A:\C(W*D,A) \to (\cP\K)(W,\C(D-,A))$ is an isomorphism in $\sfV$.  Here $\tilde{\gamma}_A$ is induced (via \ref{para:psh_yoneda_left_vcats}) by the composite morphisms 
$\C(W*D,A) \otimes WK \xrightarrow{1 \otimes \gamma_K} \C(W*D,A) \otimes \C(DK,W*D) \xrightarrow{m} \C(DK,A)$ in $\sfV$ $(K \in \ob\K)$. 

Copowers (or tensors) are a special case of weighted colimits: Given a huge left $\sfV$-category $\C$, a left $\sfV$-functor $A:\mathbb{I} \to \C$ is given by an object $A$ of $\C$, while a right $\sfV$-functor $X:\mathbb{I}^\circ = \mathbb{I} \to \sfV$ is given by an object $X$ of $\sfV$. With these identifications, a weighted colimit $X * A$ is precisely a copower $X \cdot A$ in $\C$.
\end{para}

\begin{para}[\textbf{Cocompletion for a class of weights}]\label{para:cocompl}
Kelly studied free cocompletions of enriched categories with respect to a class of weights \cite[\S 5.7]{Ke:Ba}, and Power, Cattani, and Winskel \cite{PCW:Cocompl} extended Kelly's methods to the setting of enrichment in non-symmetric monoidal categories; a treatment in an even more general setting, with explicit proofs, is given in \cite[\S 12]{GaShu}. In particular, by \cite[Theorem 6]{PCW:Cocompl}, if $\C$ is a ($\SET$-small) left $\sfV$-category and $\Phi$ is a $\SET$-small set of weights, then there is a left $\sfV$-category $\Phi(\C)$ that is \textit{$\Phi$-cocomplete} (i.e.~admits all weighted colimits whose weights lie in $\Phi$) and is equipped with a fully faithful (left) $\V$-functor $K:\C \to \Phi(\C)$ such that for every $\Phi$-cocomplete left $\sfV$-category $\D$ the functor $(-) \circ K:\textnormal{$\Phi$-COCTS}(\Phi(\C),\D) \to \LCAT{\sfV}(\C,\D)$ is an equivalence, where $\textnormal{$\Phi$-COCTS}(\Phi(\C),\D)$ is the full subcategory of $\LCAT{\sfV}(\Phi(\C),\D)$ spanned by the $\sfV$-functors preserving $\Phi$-weighted colimits. Explicitly, $\Phi(\C)$ is the full sub-$\sfV$-category of $\cP\C$ obtained as the closure of the representables under $\Phi$-weighted colimits \cite[Theorem 6]{PCW:Cocompl}, and $K:\C \to \Phi(\C)$ sends each object $A$ of $\C$ to the representable right $\sfV$-functor $\C(-,A):\C^\circ \to \sfV$. The $\SET$-smallness of $\Phi(\C)$ follows from that of $\Phi$ and of $\C$, because $\Phi(\C)$ can be constructed by transfinite recursion just as in \cite[\S 3.5, 5.7]{Ke:Ba} (taking $\Phi$-colimits at successor steps and unions at limit steps). We refer readers also to \cite[Theorem 12.1]{GaShu} for an extension of this material on free $\Phi$-cocompletion to an even more general setting, with an explicit proof.
\end{para}

\bibliographystyle{amsplain}
\bibliography{bib}

\end{document}